\title{Dehn twists exact sequences through Lagrangian cobordism}
\newtheorem{thm}{Theorem}[section]
\newtheorem{prop}[thm]{Proposition}
\newtheorem{lemma}[thm]{Lemma}
\newtheorem{rmk}[thm]{Remark}
\newtheorem{question}[thm]{Question}
\newtheorem{defn}[thm]{Definition}
\newtheorem{corr}[thm]{Corollary}
\newtheorem{eg}[thm]{Example}
\theoremstyle{definition}
\newtheorem{example}[thm]{Example}
\newcommand{\wt}[1]{\widetilde{#1}}
\newcommand{\wh}[1]{\widehat{#1}}
\newcommand{\wc}[1]{\widecheck{#1}}
\newcommand{\ov}[1]{\overline{#1}}
\newcommand{\bdf}{\begin{defn}}
\newcommand{\edf}{\end{defn}}
\newcommand{\bthm}{\begin{thm}}
\newcommand{\ethm}{\end{thm}}
\newcommand{\blem}{\begin{lemma}}
\newcommand{\elem}{\end{lemma}}
\newcommand{\bcor}{\begin{corr}}
\newcommand{\ecor}{\end{corr}}
\newcommand{\bprop}{\begin{prop}}
\newcommand{\eprop}{\end{prop}}
\newcommand{\brmk}{\begin{rmk}}
\newcommand{\ermk}{\end{rmk}}
\newcommand{\bpf}{\begin{proof}}
\newcommand{\epf}{\end{proof}}
\newcommand{\bex}{\begin{example}}
\newcommand{\eex}{\end{example}}
\newcommand{\beq}{\begin{equation}}
\newcommand{\eeq}{\end{equation}}
\numberwithin{equation}{section}
\def\eD{\EuScript{D}}
\def\eE{\EuScript{E}}
\def\eF{\EuScript{F}}
\def\eG{\EuScript{G}}
\def\eH{\EuScript{H}}
\def\eL{\EuScript{L}}
\def\eU{\EuScript{U}}
\def\eV{\EuScript{V}}
\def\sA{\EuScript{A}}
\def\sB{\EuScript{B}}
\def\sC{\EuScript{C}}
\def\sG{\EuScript{G}}
\def\sI{\EuScript{I}}
\def\sM{\EuScript{M}}
\def\sP{\EuScript{P}}
\def\sY{\EuScript{Y}}
\def\C{\mathbb{C}}
\def\K{\mathbb{K}}
\def\bP{\mathbb{P}}
\def\R{\mathbb{R}}
\def\Z{\mathbb{Z}}
\def\CP{\mathbb{CP}}
\def\RP{\mathbb{RP}}
\def\bP{\mathbb{P}}
\def\bD{\mathbb{D}}
\def\fH{\mathbf{H}}
\def\fJ{\mathbf{J}}
\def\bt{\mathbbm{t}}
\def\bk{\mathbf{k}}
\def\cA{\mathcal{A}}
\def\cD{\mathcal{D}}
\def\cE{\mathcal{E}}
\def\cF{\mathcal{F}}
\def\cL{\mathcal{L}}
\def\cM{\mathcal{M}}
\def\cO{\mathcal{O}}
\def\cS{\mathcal{S}}
\def\CO{\mathcal{C}\mathcal{O}}
\def\fuk{\EuScript{F}uk}
\def\ov{\overline}
\def\a{\alpha}
\def\e{\epsilon}
\def\l{\lambda}
\def\la{\lambda}
\def\vp{\varphi}
\def\w{\omega}
\def\vp{\varphi}
\def\p{\phi}
\def\la{\langle}
\def\ra{\rangle}
\def\mr{\mu_{RS}}
\def\xkm2{\overline{X}_{k-2}}
\def\ul{\underline}
\def\mr{\mathring}
\def\lar{\looparrowright}
\def\qno{q_{-1}}
\def\vp{\varpi}
\begin{document}

\author{Cheuk Yu Mak\thanks{C. Y. M. is supported by NSF-grant DMS 1065927.} and Weiwei Wu\thanks{W. W. is supported by CRM-ISM postdoctoral fellowship.}}

\AtEndDocument{\bigskip{\footnotesize
  \textsc{Cheuk Yu Mak, School of Mathematics, University of Minnesota, Minneapolis, MN 55455} \par
  \textit{E-mail address}: \texttt{makxx041@math.umn.edu} \par

}\bigskip{\footnotesize
  \textsc{Weiwei Wu, Centre de recherches math\'ematiques,
      Universit\'e de Montreal,
      2920 Chemin de la tour,
      Montreal (Quebec) H3T 1J4} \par
  \textit{E-mail address}: \texttt{wuweiwei@crm.umontreal.ca} \par

}}

\date{September 25, 2015}
\maketitle

\begin{abstract}
In this paper we introduce the following new ingredients: (1) rework on part of the Lagrangian surgery theory; (2) constructions of Lagrangian cobordisms on product symplectic manifolds; (3) extending Biran-Cornea Lagrangian cobordism theory to the immersed category.

As a result, we manifest Seidel's exact sequences (both the Lagrangian version and the symplectomorphism version), as well as Wehrheim-Woodward's family Dehn twist sequence (including the codimension-1 case missing in the literature) as consequences of our surgery/cobordism constructions.

 Moreover, we obtain an expression of the autoequivalence of Fukaya category induced by Dehn twists along Lagrangian $\RP^n$, $\CP^n$ and $\mathbb{HP}^n$, which matches Huybrechts-Thomas's mirror prediction of the $\CP^n$ case modulo connecting maps.  We also prove the split generation of any symplectomorphism by Dehn twists in $ADE$-type Milnor fibers.

\end{abstract}

\tableofcontents

\section{Introduction}

\subsection{Motivations and overview}

The celebrated Lagrangian cobordism theory introduced by Biran and Cornea in their sequel papers \cite{BC13}\cite{BC2}\cite{BCIII} has achieved great success encapsulating information of the triangulated structures of the Fukaya category.  A particularly attractive application is that they establish the long-expected relation between Lagrangian surgeries \cite{LS91}\cite{PolSurg} and the mapping cones in Fukaya categories.

A primary purpose of this paper is to revisit such surgery-cobordism relations with emphasis on applications to Dehn twists.  The underlying philosophy of our approach is to understand the functors between Fukaya categories via Lagrangian cobordisms.  This functor-level point of view has been exploited in several other contexts by many authors \cite{MWWFunctor}\cite{WWfamily}\cite{Gan13} \cite{AS15} etc.

We explore this direction through the eyes of Lagrangian cobordisms and corresondences.  Intuitively, one may regard Lagrangian correspondences as symplectic mirrors of kernels of Fourier-Mukai transforms.  The observation is, almost all exact sequences involving Lagrangian Dehn twists can be interpreted as cone relations between these ``kernels".  Explicitly, Lagrangian cobordism constructions geometrically realize all these cones on the correspondence level and provides a completely analogous picture on the symplectic side, versus various twist constructions on derived categories. This point of view greatly simplifies the proof of several known exact sequences and leads to new cone relations in Floer theory such as Lagrangian $\CP^n$-twists, verifying a conjecture due to Huybrechts-Thomas.

To this end, much work needs to be done on the general geometric framework.  We first reconstructed and extended the well-known Lagrangian surgery from connected sums to fiber sums, using a coordinate-free approach.  The construction is intentionally designed to have many variants for our applications and future exploration.  Also, we extended Biran-Cornea's Lagrangian cobordism formalism to the immersed category.  This last part also contains new ingredients: we adapted a \textit{bottleneck} trick from \cite{BC2}
to immersed cases, which achieves compactness in some cases when the infinity ends are not even cylindrical.

\subsection{Flow surgeries and flow handles}

Recall that for two Lagrangians $L_1\pitchfork L_2=\{x\}$, their Lagrangian surgery at $x$ is given by adding an explicit Lagrangian handle in the Darboux chart \cite{LS91}\cite{PolSurg}.  Then a Lagrangian cobordism can be obtained by using ``half" of a Lagrangian handle of one dimension higher \cite{BC13}.  This line of thoughts has led to remarkable breakthroughs in both constructions of new examples of Lagrangian submanifolds and
cobordism theory.

To implement this construction to Lagrangian ``fiber sums" (surgery along clean intersections), the patching of local models requires more delicate consideration on the connection of normal bundles.  On top of that, in most of our applications, the main difficulty is to show that the resulting manifold is Hamiltonian isotopic to certain given Lagrangians, usually those obtained by Lagrangian Dehn twists.

Our basic idea to solve both problems at once is to use a reparametrized geodesic flow, mimicking the original construction of Dehn twist by Seidel, to produce a new Lagrangian surgery operation called the \textbf{flow surgery} (See Section \ref{s:surgery}).  This flow surgery recovers the usual Lagrangian surgery when the auxiliary data is chosen appropriately, but has much better flexibility.  For example, the resulting Lagrangian handle needs not be diffeomorphic to a puntured ball (or a bundle with punctured-ball fibers in the clean surgery case).  Moreover, the Biran-Cornea's cobordism construction via surgeries fits into this framework easily as well.

The main examples we have are the following (see Section \ref{s:AdmissToDehn} and \ref{s:ImmersedConstruction} for relevant definitions).

\bthm\label{t:surgeries} Let $S^n\subset M$ be a Lagrangian sphere and $S\subset M$ be a Lagrangian submanifold diffeomorphic to either $\RP^n$, $\CP^n$ or $\mathbb{HP}^n$.  Let $\tau_{S^n}$ and $\tau_S$ denote the corresponding Dehn twists. One has the following surgery equalities up to hamiltonian isotopies in $M\times M^-$:

\begin{enumerate}[(1)]
\item $(S^n\times (S^n)^-)\#\Delta_M=Graph(\tau_{S^n}^{-1})$,
\item $\wt C\#\Delta_M=Graph(\tau_C^{-1})$, where $C\subset M$ is a spherically coisotropic submanifold.
\item $(S\times S^-)\#(S\times S^-)\#\Delta_M=Graph(\tau^{-1}_S)$,
\item $S\#(S\#L)=S_\looparrowright \#L=\tau_S (L)$ for any Lagrangian $L$. Here $S_\looparrowright$ is an immersed Lagrangian sphere associated to $S$.
\item $\wt C_P\#\wt C_P\#\Delta_M=Graph(\tau_{C_P}^{-1})$, where $C_P\subset M$ is a projectively coisotropic submanifold.
\end{enumerate}

\ethm

The surgery equalities immediately lead to the existence of corresponding Lagrangian cobordisms.  Note that in case (1), a similar cobordism construction was established in \cite{AS15} using Lefschetz fibrations independently.

\brmk\label{r:RPHP} Formal proofs will only be given in the case of $S^n$ and $\CP^n$, since $\mathbb{HP}^n$ and $\RP^n$ cases will follow from the proof of $\CP^n$ word-by-word.  The common feature we used for these manifolds we used is the existence of a metric $g_S$ of the following property: for any point $x\in S$, the injectivity radius $x$ equals $\pi$, and $S\backslash B_x(\pi)$ is a smooth closed submanifold.

\ermk

We also include a detailed discussion on gradings involved in Lagrangian surgeries.  This benefits us in two aspects: we use a grading assumption to exclude bubblings in immersed Floer theory (Section \ref{s:immersedFloer}), and it allows us to compute the connecting maps later on (Section \ref{s:connectingMaps}).  But we emphasize the grading is a vital part of the foundation of Lagrangian surgeries for an intrinsic reason.  Consider the simple case when all involved Lagrangians are $\Z$-graded and embedded, according to the cone relation proved in \cite{BC13}, the algebra instructs a surgery happen only at \textbf{degree zero} cocycles.  This principle was noticed first by Paul Seidel \cite{SeGraded}.

 Such a principle interprets several known phenomena in a uniform way.  First of all the positive and negative surgeries at the same point should be viewed as two different cones $Cone(L_0\xrightarrow{c}L_1)$ and $Cone(L_1[-n]\xrightarrow{c^\vee[-n]}L_0)$, which are apriori very different.  When the resolved intersections have mixed degrees, in many cases this leads to obstructions in Floer theory, as exemplified in \cite[Chapter 10]{FOOO_Book}.  In better situations when resolved intersections have zero degree mod $N$, the surgery at least results in collapse of gradings, which can also be checked directly on the Maslov classes.

As for our applications, we extend this principle to clean surgeries.  The upshot is that, for two graded Lagrangians with $L_0\cap L_1=D$ being a clean intersection with zero Maslov index, $L_1$ and $L_0[dim(D)+1]$ can be glued as a graded Lagrangian.  This matches well with predictions from homological algebra dictated by Lagrangian Floer theory with clean intersections \cite{FOOO_Book}.  It also extends the surgery exact sequence to clean intersection case.

\subsection{Cone relations in functor categories via Lagrangian cobordisms}

From the surgery equalities in Theorem \ref{t:surgeries} and the corresponding cobordisms, we immediately recover Seidel's exact sequence and Wehrheim-Woodward's family Dehn twist sequence on the functor level, assuming all exactness/monotonicity conditions discussed in Section \ref{s:proofLES}:

\bthm[see \cite{WWfamily}]\label{t:sphereTwist} When $S^n\subset M$ is a Lagrangian sphere, there is a cone in the $Aut(Tw\fuk(M))$ that

\beq\label{e:sphereCone}\xymatrix{ hom(S^n,-)\otimes S^n\ar[r]& id\ar[d] \\ &\tau_{S^n}\ar[lu]^(.4){[1]}}\eeq

When $C\subset M$ be a spherically coisotropic submanifold, there is a cone in $Aut(Tw\fuk(M))$ that

\beq\label{e:familyCone}\xymatrix{ C^t\circ C\ar[r]& id\ar[d] \\ &\tau_{C}\ar[lu]^(.45){[1]}}\eeq

\ethm

New information is obtained through our methods.
In the Lagrangian sphere case, as pointed out to us by Octav Cornea, since our proof does not involve any energy estimates, it shows that Seidel's exact sequence holds over $\Z/2\Z$ in  monotone cases, versus over Novikov rings in the literature.  For the family Dehn twist case, in addition to the improvement in coefficients, we also cover the coisotropic dimension one case when $\pi_1(M)=1$.  For other symplectic manifolds, our method reduces the problem of proving exact sequences to checking the exactness/monotonicity condition for codimension one spherically coisotropic manifolds, which is way more concrete.

In another direction, since our construction holds for arbitrary symplectic manifolds, when combined with the general framework due to Fukaya-Oh-Ohta-Ono \cite{FOOO_Book}, it yields a proof of Seidel's exact sequence in arbitrary symplectic manifold.  This is part of an ongoing work \cite{WXgen}.

As a consequence of the cone relations in functor categories, we also consider the auto-equivalences of $Aut(\fuk(W))$, for $W$ a Milnor fiber of $ADE$-type singularities (The generalization of the result from $A$-type singularities to $DE$-type singularities was suggested to us by Ailsa Keating).  In \cite{SeGraded}\cite{Seidelbook} it was proved that $\fuk(W)$ is split generated by the vanishing cycles.  Moreover, in \cite{KS02} it is shown that there is a braid group embedded into $Symp_c(W)$ when $W$ is an $A_n$-Milnor fiber induced by Dehn twists along the standard vanishing cycles.  It is natural to ask whether this braid group indeed is the whole mapping class group, i.e. $\pi_0(Symp_c(W))$.  In $4$-dimensional cases, the explicit topology of the whole $Symp_c(W^4)$ can be completely understood \cite{Ev11}\cite{Wu14}, thus establishing the isomorphism $\pi_0(Symp_c(W^4))=Br_{n+1}$.

The situation in higher dimension, however, is much more complicated: it is still a widely open problem whether $\pi_0 Ham_c(B^{2m})=\{1\}$ for $m\ge 3$.  On the other hand, \cite{DE14} showed that there exists exotic parametrization of the sphere itself, so that the Dehn twist along the exotic parametrization already gives a symplectomorphism which is different from the standard Dehn twists (although \textit{a priori} it is still unclear if it isotropic to a composition of Dehn twists in $ADE$-type Milnor fiber).  In light of this result, it seems hard to expect in dimension$>4$ a similar nice description of $\pi_0(Symp_c(W^4))=Br_{n+1}$ as in low dimensions.

As a consequence, we turn to a categorical reduction of the problem.  In other words, we consider

\begin{question}\label{q:Eq} Does every $\phi\in Symp_c(W)$ induce an autoequivalence $\Phi_\phi\in Aut(\fuk(W))$ which is isomorphic to one that is induced by a composition of Dehn twists along vanishing cycles?

\end{question}

We are able to prove a weaker version of Question \ref{q:Eq}, which is a reminiscence of Seidel's split generation result of vanishing cycles, as well as the fully faithfulness of M'au-Wehrheim-Woodward's $A_\infty$-functor in certain subcategories, proved in \cite{AS10}\cite{Gan13}.

\bthm[Theorem \ref{t:AnFun}]\label{t:AnFun-0} Let $W$ be a Milnor fiber of an $ADE$-type singularity.  For any compactly supported symplectomorphism $\phi\in Symp_c(W)$, $\Phi_\phi\in D^\pi Aut(\fuk(W))$ is split generated by functors induced by Lagrangian Dehn twists along the standard vanishing cycles and their compositions.
\ethm

\subsection{The Huybrechts-Thomas conjecture and projective twists}
There is a natural extension of Dehn twists construction along spheres to arbitrary rank-one symmetric spaces, which is known for a long time.  As Seidel discovered the long exact sequence associated to a Dehn twist along spheres, the spherical twists, as the mirror auto-equivalences, also received much attention \cite{ST01}.  Also, such a cone relation on the $A$-side has become a foundational tool in the study of homological mirror symmetry, especially in the Picard-Lefschetz theory \cite{Seidelbook}.

  It has long been curious since that, what the auto-equivalence corresponding to the Dehn twists along a rank-one symmetric space is.  On the $B$-side, Huybrechts-Thomas \cite{HT06} defined \textit{$\bP^n$-objects} on derived categories of smooth algebraic varieties, as well as a corresponding new auto-equivalence called the \textit{$\bP^n$-twist}.  They then conjectured the $\bP^n$-twist is exactly the mirror auto-equivalence of the one induced by a Dehn twist along Lagrangian $\CP^n$ on the Fukaya categories.  Richard Harris studied the problem in $A_\infty$ contexts and formulated the corresponding algebraic twist on $A$-side \cite{Ha11}.  The only missing link to the actual geometry of Lagrangian submanifolds, remains unproved for years.

As an application of the surgery equalities in Theorem \ref{t:surgeries}, we show:

\bthm\label{t:HTtrue} Let $S\subset M$ be a Lagrangian $\CP^n$, and $L\subset M$ a Lagrangian submanifold.  Then Huybrechts-Thomas conjecture is true up to determination of connecting maps.\ethm

The proof of Theorem \ref{t:HTtrue} follows from the construction of a cobordism representing an iterated cone on the functor level, see Theorem \ref{t:surgeries} and Lemma \ref{l: graded identity for graph of CPm/2}.  Our method applies well on $\RP^n$ or $\mathbb{HP}^n$, and should extend to other Lagrangians whose geodesics are closed with rational proportions such as Cayley plane or their finite covers.  These are supposed to be the mirror of $\bP^n$-like objects except for a change in gradings for non-trivial self-hom's.  A family version of projective twist is also given, see Theorem \ref{t:familyPn}.

\brmk
While it is not difficult to find examples of Lagrangian $\RP^n$ in problems in symplectic topology \cite{Sher14}\cite{Wu14}, the search of interesting examples of Lagrangian $\CP^n$ is more intriguing.  In \cite{HT06} the authors suggested several sources of $\bP^n$-objects in derived categories.  An interesting instance is given by pull-back sheaves of a Lagrangian fibration on a hyperk\"ahler manifold.  From the SYZ point of view, this should correspond to a Lagrangian $\CP^n$ section on the SYZ mirror.  While the role of $\bP^n$ objects on either side of mirror symmetry remains widely open so far, it is interesting to know whether such objects split generate either side of mirror symmetry.
\ermk

\brmk

In a different direction, the $\bP^n$-cone relation should be interested in understanding some basic problems in symplectic topology, such as mapping class groups of a symplectic manifold and the search of exotic Lagrangian submanifolds.  For instance, while a Lagrangian $\CP^n$-twist is always smoothly isotopic to identity, it is usually not \textbf{Hamiltonian} isotopic to identity.  A simplest model result along this line is to generalize Seidel's twisted Lagrangian sphere construction \cite{Se99}: in the plumbing of three $T^*\CP^n$, the iterated Dehn twists along $\CP^n$ in the middle should generate an infinite subgroup in the symplectic mapping class group.

\ermk

\brmk
With Theorem \ref{t:surgeries} the projective twist cone formula easily generalizes to $\RP^n$ and $\mathbb{HP}^n$.  The only difference between the formulas is the grading shift of the first term, as specified in Theorem \ref{t:catPn}.

$\RP^n$ also gains a special feature: in this case the associated sphere $S_\looparrowright$ is equivalent to $\RP^n$ equipped with a nontrivial $\Z_2$-local system in the Fukaya category (see \cite{Da12}\cite{AB14}\cite{Sher14}).  Therefore, the iterated cone relation can be packaged into a long exact sequence without invoking the immersed Floer theory.

\ermk

\subsection{Immersed Lagrangian cobordism theory and computation of connecting maps}\label{s:introImm}

As a technique of independent interests, we have extended Biran-Cornea's Lagrangian cobordism formalism to the immersed objects, on the level of Donaldson-Fukaya category in Section \ref{s:immersedCob}.  The idea follows largely that of \cite{BC13} and \cite{AJ10}.  For the case at hand, we have restricted ourselves to the exact setting for simplification, and mostly followed Alston-Bao's exposition \cite{AB14}.  The upshot is as expected, that the existence of an immersed Lagrangian cobordism incurs a quasi-isomorphism between certain mapping cones coming from Floer theory.

However, the actual proof is far from straightforward.  The key issue is the cleanness of self-intersections of the immersed cobordisms, which is required for the well-behaviors of moduli spaces of pseudo-holomorphic curves.  It is not hard to establish a cobordism theory naively following Biran-Cornea's definition in embedded categories and assume the required geometric transversality, but this will not even cover the simplest application at hand.

\begin{figure}[h]
\centering

\includegraphics[scale=0.85]{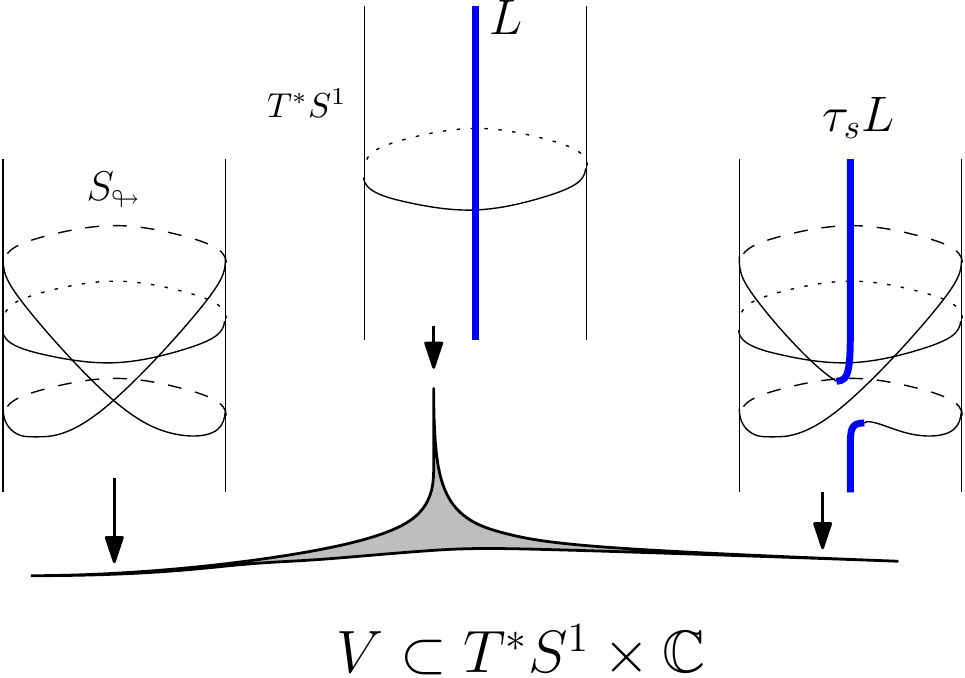}

\caption{An immersed Lagrangian cobordism: a surgery in $T^*S^1$}
\label{fig: sample cobordism}
\end{figure}

\bex The simplest instance of a projective twist can be demonstrated concretely in $M=T^*S^1$, see Figure 1.  Here we consider $S_\looparrowright\subset M$ be an immersed circle with a unique transverse immersed point.  $L$ is given by the cotangent fiber at a point, and we assume it passes through the unique immersed point of $S$.  While the base is regarded as an $\RP^1$, by definition, the surgery $S\#L=\tau_{\RP^1}L$, where the surgery is perform through one of the branches of $S$ at the immersed point.

This surgery can be recast into a Lagrangian cobordism in $T^*S^1\times\C$.  The cobordism can be constructed so that it naively satisfies Biran-Cornea's definition, i.e. outside $T^*S^1\times K$ for some compact set $K$, it is a union of products between rays and immersed Lagrangian submanifolds in $T^*S^1$.  However, it is evident that the self-intersection cannot be clean since they form a ray. In general any surgery process involving resolution of an immersed point will suffer from the same caveat.  Therefore, we need a modification for the Floer theory to be well-defined.

\eex

Our solution to the problem above is to use a \textit{bottleneck} trick, which is a specific perturbation on $L\times \R^\pm$ so that its projection has the shape of a double cone, see Section \ref{s:immersedCob} for details.  This idea was adapted from \cite{BC2}, where a \textit{bottleneck} referred to a particular intersection pattern between two infinity ends $L_i\times\R$, $i=0,1$. Although we only deal with Donaldson-Fukaya categories in our setting, considerable amounts of new issues need to be addressed since most of our curves cannot actually be projected, which is also the main catch to prove statements on the $A_\infty$ level.  This will appear independently in the future.

An application of this general immersed cobordism framework is to give an alternative proof to the projective twist formulae in the special case when $L\pitchfork S=\{x\}$ for any possibly immersed Lagranigan $L\subset M$.   Using Theorem \ref{t:surgeries} (4), this approach is closer to the more prevalent viewpoint on the relation between Dehn twists and surgeries.

 A bonus point of this alternative approach is we could ``compute" the connecting maps which is difficult for general Lagrangian cobordisms.  The immersed formalism along with a simple algebraic trick extract enough information to determine almost all relevant mapping cones up to quasi-isomorphisms we covered in this paper, see Section \ref{s:connectingMaps}.  In particular, when $L\pitchfork S=\{x\}$, and assuming an $A_\infty$ version of the immersed cobordism formalism, we are able to improve Theorem \ref{t:HTtrue} by matching Huybretches-Thomas conjecture also on the connecting morphisms, thus yielding an affirmative answer to their question.
 \bigbreak
 \noindent{\bf A note on coefficients.}\\\\
 Throughout this paper we will use coefficient $\Z/2$ or $\Lambda_{\Z/2}$.  Using characteristic zero coefficients is possible up to checking orientations for the general framework on Lagrangian cobordisms for \cite{BC13, BC2}.  As far as the coefficient is concerned, however, the Huybrechts-Thomas conjecture might not hold in general for $\C$-coefficients, for example, $\CP^{2k}$ are not spin thus will be constrained in Fukaya categories defined over $\Z/2$ in many cases.

\section*{Acknowledgement} This project was initiated from a discussion with Luis Haug, and have benefited significantly from conversations with Octav Cornea, who suggested the key idea of bottleneck among many other invaluable inputs.  We thank both of them for their help and interests over the whole period when this paper was being prepared.  Garrett Alston, Erkao Bao, Paul Biran, Kenji Fukaya, Ailsa Keating, Yanki Lekili, Conan Leung, Tian-Jun Li, Yin Li, Yong-Geun Oh, Kaoru Ono, Egor Shelukhin, Nick Sheridan and Richard Thomas gave insightful comments to our early draft.  C.-Y. M. thanks Tian-Jun Li for his continuous support and encouragement, and the authors are both grateful to CRM for providing an inspiring environment.

\vskip 5mm
\noindent$\bullet$ \textbf{Conventions.}\\

Throughout the paper, we assume any Lagrangian submanifold $L\subset M$ of a symplectic manifold $(M,\w)$ under consideration is \textit{exact}
or \textit{monotone}, which means:
\begin{itemize}
\item(exactness) $\w=d\theta$ for some $\theta\in\Omega^1(M)$, and $\theta|_L=df$ for some smooth function $f$ on $L$.
\item(monotonicity) For any $\alpha\in \pi_2(M,L)$, $\w(\alpha)=\lambda\mu(\alpha)$.  Here $\lambda>0$ and $\mu$ denotes the Maslov class.
\end{itemize}
All Lagrangians are assumed to be proper, and non-compact exact Lagrangian embeddings are assumed to have cylindrical end, unless specified otherwise.

The Hamiltonian vector field of a Hamiltonian function $h$ is defined by $\iota_{X_h}\w=\w(X_h,-)=-dh$ and the time-$t$ flow under $X_h$ is denoted as $\phi^h_t$.

We will also denote $M^-=(M,-\w)$ to be the \textit{negation of the symplectic manifold} $(M,\w)$.

\section{Dehn twist and Lagrangian surgeries}

\subsection{Dehn twist}

Let $S$ be a connected closed manifold equipped with a Riemannian metric $g(\cdot,\cdot)$ such that every geodesic is closed of length $2\pi$.
We identify $T^*S$ with $TS$ by $g$ and switch freely between the two.
The following lemma is well-known.

\begin{lemma}
The Hamiltonian $\vp: T^*S \to \mathbb{R}$ defined by
$$\vp(\xi)=\|\xi\|$$
for all $q\in S$ and $\xi \in T^*_qS$ has its Hamiltonian flow $X_{\vp}$ coincides with the normalized geodesic flow on $T^*S \backslash \{0_{section}\}$.

\end{lemma}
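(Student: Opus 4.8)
The plan is to work in a local trivialization adapted to the geodesic flow and compute both sides directly. First I would recall the canonical symplectic form $\w_{can} = d\lambda_{can}$ on $T^*S$, where $\lambda_{can}$ is the Liouville one-form, and observe that the function $\vp(\xi) = \|\xi\|_g$ is smooth away from the zero section (the norm is not smooth at $0$, which is exactly why the statement is restricted to $T^*S \setminus \{0_{section}\}$). Using the metric identification $T^*S \cong TS$, the level sets $\{\vp = r\}$ are the unit-cosphere bundles scaled by $r$, and the Hamiltonian vector field $X_\vp$ is tangent to these level sets since $\vp$ is preserved by its own flow.

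The key computation is to identify $X_\vp$ with the normalized (unit-speed) geodesic flow. I would use the well-known fact that the Hamiltonian flow of the \emph{energy} function $E(\xi) = \tfrac12\|\xi\|_g^2$ on $T^*S$ is precisely the geodesic flow (parametrized so that a covector of norm $r$ moves along its geodesic at speed $r$); this is the standard statement that geodesics are the projections of integral curves of $X_E$. Since $\vp = \sqrt{2E}$, on the region where $E > 0$ we have $d\vp = \tfrac{1}{\sqrt{2E}}\, dE = \tfrac{1}{\vp}\, dE$, hence by the defining relation $\iota_{X_\vp}\w_{can} = -d\vp = -\tfrac{1}{\vp}\, dE = \tfrac{1}{\vp}\,\iota_{X_E}\w_{can}$, so $X_\vp = \tfrac{1}{\vp} X_E$. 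Thus along the level set $\{\vp = r\}$ the flow $X_\vp$ is the geodesic flow rescaled in time by the constant factor $1/r$, which is exactly the \emph{normalized} geodesic flow: every unit-speed-after-reparametrization trajectory traverses its closed geodesic at unit speed, independent of $r$. This proves the two vector fields coincide on $T^*S \setminus \{0_{section}\}$.

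I would then briefly note why the reparametrization is well-defined globally: $\vp$ is smooth and strictly positive on $T^*S \setminus \{0_{section}\}$, so $1/\vp$ is a legitimate smooth factor there, and $X_\vp$ extends smoothly over this whole region even though $X_E$ vanishes on the zero section. The main (and essentially only) obstacle is bookkeeping rather than conceptual: one must be careful that the sign conventions for $\w_{can}$, for $\lambda_{can}$, and for the Hamiltonian vector field (here $\iota_{X_h}\w = -dh$, as fixed in the Conventions) are used consistently, since these conventions differ across the literature and an error would flip the direction of the geodesic flow. Once the conventions are pinned down, the identity $X_\vp = \vp^{-1} X_E$ and the classical description of $X_E$ as the geodesic flow finish the proof.
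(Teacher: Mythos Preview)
Your proof is correct. The paper does not actually supply a proof of this lemma: it simply introduces it with ``The following lemma is well-known'' and then moves on to define Dehn twist profiles. Your argument via the energy function $E=\tfrac{1}{2}\|\xi\|^2$ and the chain-rule identity $X_\vp=\vp^{-1}X_E$ is the standard derivation of this fact, so there is nothing to compare.
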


To define Dehn twist, we need to introduce an auxiliary function.
We first consider the case when $S$ is not diffeomorphic to a sphere.
For $\epsilon >0$ small, we define a \textbf{Dehn twist profile} to be a smooth function $\nu^{Dehn}_{\epsilon}: \mathbb{R}^+ \to \mathbb{R}$  such that

\begin{enumerate}[(1)]
\item $\nu^{Dehn}_{\epsilon}(r)=2\pi-r$ for $r \ll \epsilon$,

\item $0 < \nu^{Dehn}_{\epsilon}(r)< 2\pi$ for all $r < \epsilon$, and

\item $\nu^{Dehn}_{\epsilon}(r)=0$ for $r \ge \epsilon$

\end{enumerate}

\begin{defn}\label{defn: Dehn twist on cotangent bundles}
If $S$ is not diffeomorphic to a sphere, the model Dehn twist $(\tau_S,\nu^{Dehn}_{\epsilon})$ on $T^*S$ is given by
$$\tau_S(\xi)=\phi^{\vp}_{\nu^{Dehn}_{\epsilon}(\| \xi \|)}(\xi)$$
on $T^*S-\{0_{section}\}$ and identity on the zero section.
\end{defn}

We will simply write $\tau_S$ instead of $(\tau_S,\nu^{Dehn}_{\epsilon})$.

When $S$ is diffeomorphic to a sphere, the \textbf{spherical Dehn twist profile} $\nu^{Dehn}_{\epsilon}$ is picked with (1)(2) above replaced by

\begin{enumerate}[(1')]
\item $\nu^{Dehn}_{\epsilon}(r)=\pi-r$ for $r \ll \epsilon$, and

\item $0 < \nu^{Dehn}_{\epsilon}(r)< \pi$ for all $r < \epsilon$

\end{enumerate}

In this case, Dehn twist $(\tau_S,\nu^{Dehn}_{\epsilon})$ is defined analogously but antipodal map is used to extend smoothly along the zero section instead of the identity map.

\begin{example}\label{eg: RP^1 twist on cotangent bundle of circle}
Let $(q,p) \in \mathbb{R}/2\pi \mathbb{Z} \times \mathbb{R} = T^*S^1$ be equipped with the standard symplectic form $\omega_{S^1}=dp \wedge dq$.
For a spherical profile $\nu^{Dehn}_{\epsilon}$, $(\tau_{S^1},\nu^{Dehn}_{\epsilon})$ is defined by

\begin{displaymath}
 \tau_{S^1}(q,p)= \left\{
\begin{array}{lrr}
(q+\nu_\epsilon^{Dehn}(||p||)\frac{p}{||p||},p) &\text{for $p\neq0$}\\
(q+\pi,0) &\text{for $p=0$}
\end{array}
\right.
\end{displaymath}

Consider the double cover
$\iota_{double}: T^*S^1 \to T^*\mathbb{RP}^1=\mathbb{R}/2\pi \mathbb{Z} \times \mathbb{R}$ given by
$$\iota_{double}(q,p)=(2q,\frac{1}{2}p)=(\wt{q},\wt{p})$$
For $(\wt{q},\wt{p})=\iota_{double}(q,p) \in T^*\mathbb{RP}^1$, we define
$$T(\wt{q},\wt{p})=\iota_{double} \circ \tau_{S^1}(q,p)$$
which is independent of the choice of $(q,p)$ as lift of $(\wt{q},\wt{p})$.  It is an easy exercise to show that $T$ is Hamiltonian isotopic to $\tau_{\mathbb{RP}^1}$
for the push-forward Dehn twist profile.
Also, if we identify $T^*\mathbb{RP}^1$ with $T^*S^1$ so that $\tau_{S^1}$ is well-defined on $T^*\mathbb{RP}^1$,
then $T$ is also Hamiltonian isotopic to $\tau_{S^1}^2$ for an appropriate choice of spherical profile.
\end{example}

This example has the following well-known immediate generalizations.

\begin{lemma}\label{lemma: alternative definition of real projective twist}
Let $\iota_{double}: T^*S^n \to T^*\mathbb{RP}^n $ be the symplectic double cover obtained by double cover of the zero section.
For $(\wt{q},\wt{p})=\iota_{double}(q,p) \in T^*\mathbb{RP}^n$,
$$T(\wt{q},\wt{p})=\iota_{double} \circ \tau_{S^n}(q,p)$$
is well-defined and
$T$ is Hamiltonian isotopic to $\tau_{\mathbb{RP}^n}$ for an appropriate choice of auxiliary function defining $\tau_{\mathbb{RP}^n}$.

If $n > 1$, the choice of auxiliary function defining $\tau_{\mathbb{RP}^n}$ is irrelevant up to Hamiltonian isotopy.
\end{lemma}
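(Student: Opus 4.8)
The plan is to follow Example~\ref{eg: RP^1 twist on cotangent bundle of circle} essentially verbatim, only keeping track of an extra metric rescaling. Let $A\colon S^n\to S^n$ be the antipodal map and $\wh A\colon T^*S^n\to T^*S^n$ its cotangent lift: $\wh A$ generates a free $\Z/2$-action preserving the Liouville form, and $\iota_{double}$ is exactly the quotient $T^*S^n\to T^*S^n/\langle\wh A\rangle=T^*\RP^n$, so it is a symplectic double cover. First I would check that $T$ is well defined, i.e.\ that $\iota_{double}\circ\tau_{S^n}$ is $\wh A$-invariant. Since $A$ is an isometry of the round metric, $\wh A$ is a symplectomorphism with $\|\cdot\|\circ\wh A=\|\cdot\|$, hence it commutes with the Hamiltonian flow of $\|\cdot\|$ (the normalised geodesic flow); so $\tau_{S^n}\circ\wh A=\wh A\circ\tau_{S^n}$ off the zero section, while on the zero section $\tau_{S^n}=A$ trivially commutes with $A$. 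As $\wh A$ is the deck transformation of $\iota_{double}$, this equivariance gives $\iota_{double}\circ\tau_{S^n}\circ\wh A=\iota_{double}\circ\tau_{S^n}$, so $T(\wt q,\wt p):=\iota_{double}\circ\tau_{S^n}(q,p)$ is independent of the lift and smooth ($\tau_{S^n}$ smooth, $\iota_{double}$ a local diffeomorphism), and since locally $T=\iota_{double}\circ\tau_{S^n}\circ(\iota_{double})^{-1}$ it is a symplectomorphism of $T^*\RP^n$.

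Next I would recognise $T$ as a model Dehn twist on $T^*\RP^n$. Push the round metric of $S^n$ down to $\RP^n$ (possible since $A$ is an isometry) and rescale it so that geodesics are closed of length $2\pi$; with this metric $\iota_{double}$ rescales covector norms by a fixed positive constant and intertwines the two normalised geodesic flows up to a constant time rescaling. Hence off the zero section $T(\xi)=\phi^{\|\cdot\|}_{\bar\nu(\|\xi\|)}(\xi)$, where $\bar\nu$ is obtained from the spherical profile $\nu^{Dehn}_{\epsilon}$ by an affine change of source and target variables; in particular $0<\bar\nu<2\pi$, $\bar\nu\equiv0$ for large $r$, and the germ of $\bar\nu$ at $0$ is $2\pi-(\mathrm{const})\,r$. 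Moreover $A$ is the deck transformation of $S^n\to\RP^n$, so it descends to the identity on $\RP^n$, and therefore $T$ extends over the zero section of $T^*\RP^n$ by the identity. Comparing with Definition~\ref{defn: Dehn twist on cotangent bundles}, this shows $T=(\tau_{\RP^n},\bar\nu)$ (if one insists on condition~(1) verbatim one first composes with a Hamiltonian reparametrisation in the radial coordinate to normalise the linear germ), which already gives the first assertion with auxiliary function $\bar\nu$.

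Finally, for $n>1$ I would prove that $\tau_{\RP^n}$ is independent of the auxiliary function up to compactly supported Hamiltonian isotopy, which upgrades the first assertion to an arbitrary profile. Given two Dehn twist profiles $\nu_0,\nu_1$ (both supported in $[0,\epsilon]$) set $g=\nu_1-\nu_0$; since both equal $2\pi-r$ near $0$ and vanish for $r\ge\epsilon$, $g$ is supported in $\{0<\|\xi\|<\epsilon\}$, so the linear path $\nu_t=\nu_0+tg$ gives a symplectic isotopy $\phi^{\|\cdot\|}_{tg(\|\xi\|)}$ from $(\tau_{\RP^n},\nu_0)$ to $(\tau_{\RP^n},\nu_1)$, supported in $D^*_\epsilon\RP^n$ and generated by the time-independent vector field $g(\|\xi\|)X_{\|\cdot\|}$. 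The obstruction to realising this isotopy by a compactly supported Hamiltonian is the class of the compactly supported closed $1$-form $\iota_{g(\|\cdot\|)X_{\|\cdot\|}}\omega=\pm d\!\left(\int_0^{\|\xi\|}g\right)$ in $H^1_c(T^*\RP^n;\R)\cong H_{2n-1}(\RP^n;\R)$, which vanishes for $n>1$; hence the isotopy is Hamiltonian and the two model twists are Hamiltonian isotopic. The one genuinely delicate point I expect to be the main obstacle is the bookkeeping in the middle step — choosing the rescaled metric and checking that the push-forward profile $\bar\nu$ still yields a smooth extension by the identity across the zero section — the rest being the equivariance argument of Example~\ref{eg: RP^1 twist on cotangent bundle of circle} together with this standard cohomological vanishing. (When $n=1$ the group $H_{2n-1}(\RP^n;\R)=H_1(S^1;\R)$ is nonzero, the invariant being $\int_0^{\epsilon}g$; this is exactly why $n>1$ is needed, and why in Example~\ref{eg: RP^1 twist on cotangent bundle of circle} the profile has to be matched.)
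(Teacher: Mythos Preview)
The paper does not prove this lemma: it is stated immediately after Example~\ref{eg: RP^1 twist on cotangent bundle of circle} as a ``well-known immediate generalization'' and left without argument. Your proposal is therefore a genuine proof where the paper offers none, and the three steps (equivariance for well-definedness, identification with a model twist via the pushed-forward profile, profile-independence) are all correct and cleanly organised.

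One small inaccuracy in your final parenthetical: the class of $dG(\|\xi\|)$ in $H^1_c(T^*\RP^n;\R)$ is in fact always zero, including when $n=1$. The primitive $G(\|\xi\|)-G(\epsilon)$ is supported in the compact disc bundle $\{\|\xi\|\le\epsilon\}$ (it vanishes for $\|\xi\|>\epsilon$ and equals the constant $-G(\epsilon)$ near the compact zero section), so it is compactly supported regardless of whether $\int_0^\epsilon g$ vanishes. Equivalently, for $n=1$ the pairing of $dG(|p|)=g(|p|)\,\mathrm{sign}(p)\,dp$ with the generating line $\{pt\}\times\R$ is $\int_{\R}g(|p|)\,\mathrm{sign}(p)\,dp=0$ by oddness. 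So profile-independence actually holds for all $n$, and your cohomological detour (while yielding a valid proof for $n>1$) is not what separates the cases; the paper's restriction to $n>1$ is presumably caution or has in mind broader auxiliary choices. This does not affect the correctness of your argument for the lemma as stated.
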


\begin{lemma}
For $T^*S^2=T^*\mathbb{CP}^1$, $\tau_{S^2}^2$ is Hamiltonian isotopic to $\tau_{\mathbb{CP}^1}$.

\end{lemma}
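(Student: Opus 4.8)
The plan is to reduce the statement to the corresponding fact for $\mathbb{RP}^1$-versus-$S^1$ already recorded in Example~\ref{eg: RP^1 twist on cotangent bundle of circle}, by exhibiting $T^*S^2 = T^*\mathbb{CP}^1$ together with its geodesic flow as a ``fibered'' version of the $n=1$ picture over the equatorial family of great circles. First I would fix the round metric on $S^2$, under which every geodesic is a great circle of length $2\pi$; the antipodal quotient realizes $T^*\mathbb{RP}^2 = T^*\mathbb{CP}^1$ with the Fubini--Study metric, whose geodesics are also closed but of length $\pi$ (each geodesic on $\mathbb{CP}^1$ lifts to a great circle traversed once on $S^2$, which closes up after time $\pi$ because antipodal points are identified). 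This is precisely the discrepancy between the spherical Dehn twist profile (item $(1')$: $\nu(r)=\pi-r$ near $0$) on $S^2$ and the non-spherical profile (item $(1)$: $\nu(r)=2\pi-r$) on $\mathbb{CP}^1$, so doing $\tau_{S^2}$ twice corresponds to running the normalized geodesic flow on $S^2$ for total time $2(\pi - \|\xi\|) = 2\pi - 2\|\xi\|$ — which, reparametrizing $\|\xi\| \mapsto \tfrac12\|\xi\|$ as in the double cover $\iota_{double}$, matches the flow that defines $\tau_{\mathbb{CP}^1}$.

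Concretely, the key steps in order: (i) identify $\mathbb{CP}^1 = S^2(\tfrac12)$ with the Fubini--Study metric as the $\mathbb{Z}/2$-quotient $S^2 \to \mathbb{RP}^2$ of the round sphere, inducing a symplectic double cover $\iota_{double}\colon T^*S^2 \to T^*\mathbb{CP}^1$ (branched over/away from the zero section exactly as in Lemma~\ref{lemma: alternative definition of real projective twist} for $n=2$); (ii) observe that $\tau_{S^2}$ with a spherical profile descends under $\iota_{double}$ — because it commutes with the deck transformation (the antipodal map), which is why the extension across the zero section was defined using the antipodal map in Definition~\ref{defn: Dehn twist on cotangent bundles} — to a well-defined map $T$ on $T^*\mathbb{CP}^1$, and that $T^2 = \iota_{double}\circ \tau_{S^2}^2$ likewise descends; (iii) compute, using the formula $\tau_S(\xi) = \phi^{\vp}_{\nu(\|\xi\|)}(\xi)$ and the fact that $\phi^\vp$ is the normalized geodesic flow (the well-known lemma at the start of this subsection), that $\tau_{S^2}^2(\xi) = \phi^\vp_{2\pi - 2\|\xi\|}(\xi)$ for $\|\xi\|$ small, up to the ambiguity of the profile away from $0$; (iv) match this with the defining flow of $\tau_{\mathbb{CP}^1}$ (non-spherical profile, $\nu(r) = 2\pi - r$ near $0$) after the coordinate change $\wt p = \tfrac12 p$ built into $\iota_{double}$; (v) finally, absorb the difference between the genuine profiles (they agree only for $r \ll \epsilon$, and the $\epsilon$'s and tails differ) into a Hamiltonian isotopy, exactly as in the $n>1$ clause of Lemma~\ref{lemma: alternative definition of real projective twist}, using that on $T^*\mathbb{CP}^1$ (with $n=1$ here but the relevant Lagrangian $\mathbb{CP}^1$ being $2$-dimensional) any two Dehn twist profiles give Hamiltonian-isotopic twists — or, more elementarily, that two twists differing by reparametrizing the profile are conjugate by the Hamiltonian flow interpolating the profiles.

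The main obstacle is step (v) together with the bookkeeping of the normalization factor of $\tfrac12$: one must be careful that ``Hamiltonian isotopic'' is genuinely available here, i.e. that changing the Dehn twist profile (for a fixed target Lagrangian) produces Hamiltonian-isotopic symplectomorphisms of $T^*\mathbb{CP}^1$. The cleanest way is to note that for two profiles $\nu_0, \nu_1$ with the same qualitative behavior, the family $\tau_S^{(t)}$ built from $\nu_t = (1-t)\nu_0 + t\nu_1$ is a smooth path of compactly-supported symplectomorphisms supported in a disk bundle, and since $H^1$ of a disk bundle over $\mathbb{CP}^1$ vanishes, such a path is automatically Hamiltonian; alternatively one cites the corresponding statement proved for $\mathbb{RP}^1$ in Example~\ref{eg: RP^1 twist on cotangent bundle of circle} and for $\mathbb{RP}^n$, $n>1$, in Lemma~\ref{lemma: alternative definition of real projective twist}, whose proof applies verbatim. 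Everything else is a direct unwinding of the definitions and the geodesic-flow lemma.
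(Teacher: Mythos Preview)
There is a genuine conceptual error in your framing. You repeatedly identify $\mathbb{CP}^1$ with the antipodal quotient $\mathbb{RP}^2$ of $S^2$, writing for instance ``the antipodal quotient realizes $T^*\mathbb{RP}^2 = T^*\mathbb{CP}^1$'' and constructing a putative double cover $\iota_{double}\colon T^*S^2 \to T^*\mathbb{CP}^1$. But $\mathbb{CP}^1$ is \emph{not} $\mathbb{RP}^2$; rather $\mathbb{CP}^1 \cong S^2$ as smooth (and in fact symplectic) manifolds, so $T^*S^2 = T^*\mathbb{CP}^1$ is a literal identity --- there is no double cover in play, and the analogy with Lemma~\ref{lemma: alternative definition of real projective twist} is misplaced. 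Your steps (i) and (ii) therefore do not make sense as written.

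Stripped of the double-cover language, however, your steps (iii)--(v) contain the correct argument, and it is much simpler than you make it. On the common space $T^*S^2 = T^*\mathbb{CP}^1$ (with the round metric, geodesics of length $2\pi$), the two twists differ only in the profile: $\tau_{S^2}$ uses a spherical profile $\nu^S$ with $\nu^S(0)=\pi$ and extends across the zero section by the antipodal map, while $\tau_{\mathbb{CP}^1}$ uses a profile $\nu$ with $\nu(0)=2\pi$ and extends by the identity. Since $\phi^\vp$ preserves $\|\xi\|$, one computes directly $\tau_{S^2}^2(\xi)=\phi^\vp_{2\nu^S(\|\xi\|)}(\xi)$, which is a twist with profile $2\nu^S$ (value $2\pi$ at $0$) extending by $(\text{antipodal})^2 = \mathrm{id}$. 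Thus $\tau_{S^2}^2$ and $\tau_{\mathbb{CP}^1}$ are both of the form $\xi \mapsto \phi^\vp_{f(\|\xi\|)}(\xi)$ for decreasing functions $f$ with $f(0)=2\pi$ and $f\equiv 0$ outside a compact interval; interpolating linearly between the two profiles yields a compactly supported Hamiltonian isotopy (each map in the family is the time-$1$ flow of a radial Hamiltonian), exactly as in your step (v). The paper states this lemma without proof, treating it as a direct consequence of the definitions.
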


As usual, one may globalize the model Dehn twist.

\begin{defn}
A Dehn twist along $S$ in $M$ is a  compactly supported symplectomorphism defined by the model Dehn twist as above in a
Weinstein neighborhood of $S$ and extended by identity outside.
\end{defn}

For more details and the dependence of choices used to define $\tau_S$, see \cite{Se99} and \cite{Se03}.

\subsection{Lagrangian surgery through flow handles}\label{s:surgery}

\subsubsection{Surgery at a point}

We first recall the definition of a Lagrangian surgery at a transversal intersection from \cite{LS91}\cite{PolSurg} and \cite{BC13}.

\bdf\label{d:admissible}
Let $a(s),b(s)\in\R$.  A smooth curve $\gamma(s)=a(s)+ib(s) \in \mathbb{C}$ is called {\bf $\lambda$-admissible} if

$\bullet$ $(a(s),b(s))=(-s+\lambda,0)$ for $s \le 0$

$\bullet$ $a'(s),b'(s) <  0$ for $s \in (0,\epsilon)$, and

$\bullet$ $(a(s),b(s))=(0,-s)$ for $s \ge \epsilon$.

\edf

\begin{figure}[h]
\includegraphics[scale=0.6]{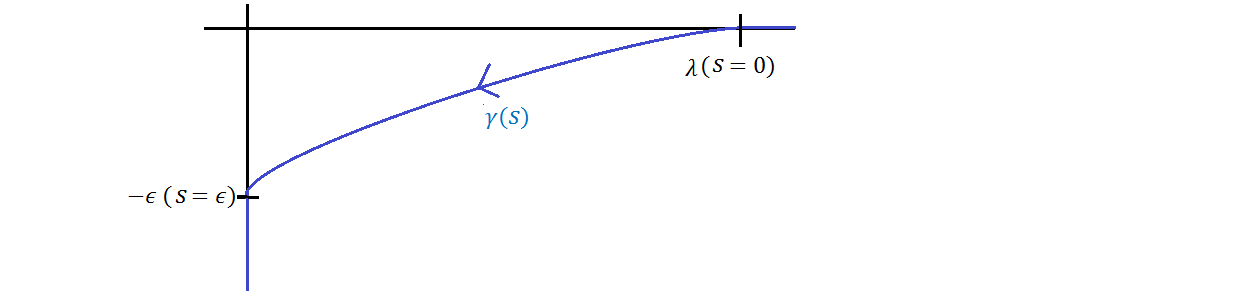}

\centering

\caption{Picture of an admissible curve.}
\label{figure1}
\end{figure}

The part of a $\lambda$-admissible curve with $s\in [0,\epsilon]$ can be captured by $\nu(r)=a(b^{-1}(-r))$.  The main property of an admissible curve can be rephrased as follows.
\begin{enumerate}[(1)]
\item $\nu_\lambda(0)=\lambda>0$, and $\nu_\lambda'(r)<0$ for $r\in(0,\epsilon)$.
\item $\nu_\lambda^{-1}(r)$ and $\nu_\lambda(r)$ has vanishing derivatives of all orders at $r=\lambda$ and $r=\epsilon$, respectively.
\end{enumerate}

\begin{figure}
\centering
\begin{subfigure}[b]{0.5\textwidth}
        \centering
        \includegraphics[height=1.2in]{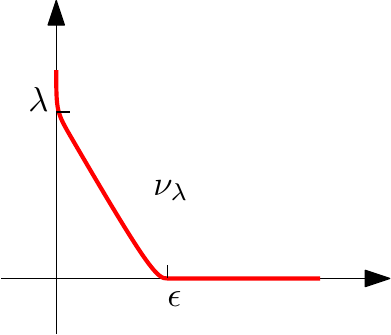} \caption{graph of an admissible function $\nu_\lambda$}
    \end{subfigure}%
    ~
    \begin{subfigure}[b]{0.5\textwidth}
        \centering
       \includegraphics[height=1.2in]{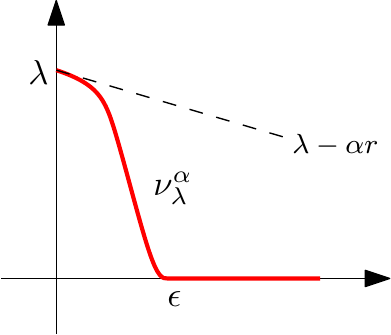} \caption{graph of a semi-admissible function $\nu_\lambda^\alpha$}
    \end{subfigure}
    \caption{Admissible and semi-admissible functions}
\end{figure}

Such a function will also be called \textbf{$\lambda$-admissible}.  We will frequently use the two equivalent descriptions  of admissibility interchangeably.

We also define a class of \textbf{semi-admissible functions}, by relaxing (2) to
\vskip3mm

\noindent$(2')$\quad $\nu_\l'(0)=-\alpha\in[-\infty,0]$.  Here $\a=\infty$ if $\nu_\l$ is admissible.
\break

Note that in all definitions of (semi-)admissibility there is an extra variable $\epsilon$.  We will see that the dependence on $\epsilon$ is not significant in this paper: we fix $\epsilon$ for each pair of Lagrangian submanifolds $(L_1, L_2)$ once and for all.  In any surgery constructions appearing later, the resulting surgery manifold yields a smooth family of isotopic Lagrangian submanifolds as $\epsilon$ vaires.  As a result we will suppress the dependence of $\epsilon$ unless necessary.

Given a $\lambda$-admissible curve $\gamma$, define the handle
$$H_{\gamma}=\{(\gamma(s)x_1,\dots,\gamma(s)x_n)| s,x_i \in \mathbb{R}, \sum x_i^2 =1 \} \subset \mathbb{C}^n$$
\begin{lemma}\label{lemma: Lagrangian hanlde is Lagrangian}
For an $\lambda$-admissible $\gamma$, $H_{\gamma}$ is a Lagrangian submanifold of $(\mathbb{C}^n,\sum dx_i \wedge dy_i)$.
\end{lemma}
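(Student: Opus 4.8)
The plan is to verify directly that $H_\gamma$ is an embedded $n$-dimensional submanifold on which the standard symplectic form $\omega_0 = \sum dx_i \wedge dy_i$ vanishes. Write $z = (z_1,\dots,z_n) \in \C^n$ with $z_j = x_j + i y_j$, and note that $H_\gamma$ is the image of the map $F\colon \R \times S^{n-1} \to \C^n$ given by $F(s, x) = \gamma(s)\cdot x = (\gamma(s)x_1, \dots, \gamma(s)x_n)$, where $x = (x_1,\dots,x_n)$ ranges over the unit sphere $\{\sum x_i^2 = 1\}$. The first step is a dimension count and smoothness check: away from $s = 0$ and $s = \epsilon$ the curve $\gamma$ has nonvanishing derivative, so $F$ is an immersion there; near those endpoints one uses that $\gamma(s)$ is real (respectively purely imaginary) and linear, so $F$ is a graph-type parametrization, hence still an embedding of a smooth manifold. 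One should also observe $H_\gamma$ is a smooth submanifold globally because the two coordinate pieces patch — this is essentially the content of conditions (1)--(3) in Definition \ref{d:admissible}.

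The core computation is the isotropy. Using coordinates $(s, x)$ with $x \in S^{n-1}$, a tangent vector to $S^{n-1}$ at $x$ is any $v \in \R^n$ with $\langle x, v\rangle = 0$. Then $dF(\partial_s) = \gamma'(s) x = (\gamma'(s)x_1, \dots, \gamma'(s)x_n)$ and $dF(v) = \gamma(s) v = (\gamma(s)v_1,\dots,\gamma(s)v_n)$. Identifying $\omega_0$ with $\mathrm{Im}\langle\cdot,\cdot\rangle_{\C^n}$ (the imaginary part of the standard Hermitian inner product), I would compute:
\[
\omega_0\big(dF(\partial_s), dF(v)\big) = \mathrm{Im}\sum_j \overline{\gamma'(s)x_j}\,\gamma(s) v_j = \mathrm{Im}\big(\overline{\gamma'(s)}\gamma(s)\big)\sum_j x_j v_j = \mathrm{Im}\big(\overline{\gamma'(s)}\gamma(s)\big)\langle x, v\rangle = 0,
\]
since $x_j, v_j$ are real and $\langle x, v\rangle = 0$. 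For two tangent vectors $v, w$ both tangent to $S^{n-1}$, $\omega_0(dF(v), dF(w)) = \mathrm{Im}\sum_j \overline{\gamma(s)v_j}\,\gamma(s) w_j = |\gamma(s)|^2\,\mathrm{Im}\sum_j v_j w_j = |\gamma(s)|^2\,\mathrm{Im}\langle v, w\rangle = 0$ because $v, w$ are real vectors. Hence $F^*\omega_0 = 0$, so $H_\gamma$ is isotropic, and since it has dimension $n = \tfrac12 \dim_\R \C^n$, it is Lagrangian.

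I do not expect any serious obstacle here; the statement is essentially a coordinate computation, and the slight subtlety is purely the smoothness/embeddedness of $H_\gamma$ at the two non-generic values $s = 0$ and $s = \epsilon$, which is handled by the explicit linear form of $\gamma$ there (and by noting that at $s=0$, where $\gamma(0) = \lambda \neq 0$, the sphere $\{\lambda x : \sum x_i^2 = 1\}$ is embedded, while for $s \geq \epsilon$ the handle is a piece of $i\R \cdot S^{n-1}$, again embedded). If one prefers to avoid the Hermitian-form shortcut, the same computation goes through writing $\gamma = a + ib$ and expanding $\sum dx_i \wedge dy_i$ directly, using $a'b - ab' $ as the relevant scalar factor; the vanishing still reduces to $\langle x, v\rangle = 0$ and to $v, w$ being real. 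This completes the proof.
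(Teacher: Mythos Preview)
Your proof is correct and follows essentially the same approach as the paper: the paper's one-line argument is simply the observation that $T_{\gamma(s)x}H_\gamma = \mathrm{Span}_\R\{\gamma'(s)x\} \oplus \gamma(s)T_xS^{n-1}$, which is exactly the tangent-space description you derive from $dF$. You have supplied the isotropy verification and the smoothness check at $s=0,\epsilon$ that the paper leaves implicit, but the method is the same.
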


\begin{proof}
It suffices to observe that $T_{\gamma(s)x}H_{\gamma}=Span_{\mathbb{R}}\{\gamma'(s)x\} \oplus \gamma(s)T_{x}S^{n-1}$,
for $x=(x_1,\dots,x_n) \in S^{n-1} \subset \mathbb{R}^n$.
\end{proof}

As a consequence, we have

\begin{corr}\label{c:SurgeryAtPoint}
Let $L_1,L_2 \subset (M,\omega)$ be two Lagrangians transversally intersecting at $p$.
Let $\iota:U \to M$ be a Darboux chart with a standard complex structure so that
$\iota^{-1}(L_1) \subset \mathbb{R}^n$ and $\iota^{-1}(L_2) \subset i\mathbb{R}^n$,
then one can obtain a Lagrangian $L_1\#_p L_2$ by attaching a Lagrangian handle $\iota(H_{\gamma})$ to $(L_1\cup L_2)\backslash\iota(U)$.
\end{corr}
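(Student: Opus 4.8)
The plan is to treat this as a purely local gluing problem inside the Darboux chart. First I would invoke Lemma~\ref{lemma: Lagrangian hanlde is Lagrangian}: $H_\gamma$ is a Lagrangian submanifold of $(\mathbb{C}^n,\sum dx_i\wedge dy_i)$, and since $\iota$ is a symplectomorphism onto its image, $\iota(H_\gamma)$ is automatically Lagrangian in $(M,\omega)$, so nothing needs to be checked on the symplectic side. The entire content is the smooth matching of $\iota(H_\gamma)$ with $(L_1\cup L_2)\setminus\iota(U)$ along the seam.

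Second, I would read off the ends of $H_\gamma$ from the rigid normal form in Definition~\ref{d:admissible}. For $s\le 0$ one has $\gamma(s)=\lambda-s\in\mathbb{R}_{>0}$, whence $\{\gamma(s)x : s\le 0,\,x\in S^{n-1}\}=\{v\in\mathbb{R}^n : |v|\ge\lambda\}$; for $s\ge\epsilon$ one has $\gamma(s)=-is$, whence $\{\gamma(s)x : s\ge\epsilon,\,x\in S^{n-1}\}=\{v\in i\mathbb{R}^n : |v|\ge\epsilon\}$; and the ``bending'' part $\{\gamma(s)x : 0\le s\le\epsilon\}$ lies in the closed ball of radius $C:=\max_{[0,\epsilon]}|\gamma|$. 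Choosing $R>\max(\lambda,\epsilon,C)$ gives the \emph{exact} equality of subsets of $\mathbb{C}^n$
\[
H_\gamma\setminus\overline{B_R(0)}=(\mathbb{R}^n\cup i\mathbb{R}^n)\setminus\overline{B_R(0)}.
\]
Shrinking $U$ so that $\overline{B_R(0)}\subset\iota^{-1}(U)$ and so that the chart straightens $L_1,L_2$ to the full coordinate planes intersected with $\iota^{-1}(U)$ (harmless, by the hypothesis on $\iota$), this translates to: $\iota(H_\gamma)$ coincides with $L_1\cup L_2$ on the open region $\iota(U)\setminus\iota(\overline{B_R(0)})$.

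Third, I would set $L_1\#_pL_2:=\bigl((L_1\cup L_2)\setminus\iota(B_R(0))\bigr)\cup\iota(H_\gamma)$ and check smoothness locally. Every point $x\in M$ lies in $\iota(U)$ or in the open set $M\setminus\iota(\overline{B_R(0)})$; in a neighborhood of the first kind $L_1\#_pL_2$ equals $\iota(H_\gamma)$ (using $(\mathbb{R}^n\cup i\mathbb{R}^n)\setminus B_R(0)\subset H_\gamma$), and in one of the second kind it equals $L_1\cup L_2$, which is a smooth submanifold there since its only non-manifold point $p$ lies in $\iota(B_R(0))$. Both local models are embedded submanifolds on which $\omega$ restricts to zero, so $L_1\#_pL_2$ is a smooth embedded Lagrangian submanifold; no new intersections are introduced near the seam because there $\iota(H_\gamma)$ is literally $L_1\cup L_2$. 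The only step that is not completely formal is the on-the-nose agreement $H_\gamma\setminus\overline{B_R(0)}=(\mathbb{R}^n\cup i\mathbb{R}^n)\setminus\overline{B_R(0)}$ established above: it is what allows the pieces to be pasted with no further perturbation, and it is precisely why Definition~\ref{d:admissible} requires $\gamma$ to be \emph{linear}, not merely asymptotically linear, on its two ends.
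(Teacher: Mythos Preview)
Your argument is correct and follows exactly the approach the paper has in mind: the paper states this corollary with no proof at all, writing only ``As a consequence, we have'' after Lemma~\ref{lemma: Lagrangian hanlde is Lagrangian}, and you have supplied the routine local-gluing details that justify that sentence. Your key observation---that the admissibility of $\gamma$ forces $H_\gamma$ to agree \emph{exactly} with $\mathbb{R}^n\cup i\mathbb{R}^n$ outside a suitable ball---is precisely the content the authors take for granted.
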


The Lagrangian $L_1\#_pL_2$ is called a \textbf{Lagrangian surgery} from $L_1$ to $L_2$ following \cite{LS91,PolSurg}.
Note that, the Lagrangian $L_2\#_pL_1$ obtained by performing Lagrangian surgery from $L_2$ to $L_1$ is in general not even smoothly isotopic to  $L_1\#_pL_2$.

Now, we present an new approach of performing Lagrangian surgery which also motivates the definition of Lagrangian surgery along clean intersections.

\begin{defn}\label{d:pointHandle}
Given the zero section $L\subset T^*L$, a Riemannian metric $g$ on $L$ (hence inducing one on $T^*L$) and a point $x\in L$, we define the {\bf flow handle} $H_{\nu}$ with respect to a $\lambda$-admissible function $\nu$ to be
$$H_\nu=\{\phi^{\vp}_{\nu(\| p \|)}(p)\in T^*L: p \in (T_x^*L)_\epsilon\backslash \{x\}  \}, $$
\noindent where $(T_x^*L)_\epsilon$ denotes the cotangent vectors at $x\in L$ with length $\le\epsilon$
\end{defn}

\brmk\label{r:tilde} $\phi^{\vp}_{\nu(||p||)}$ is the time-$1$ Hamiltonian flow of $\wt\nu(||p||)$, where $\wt\nu'(s)=\nu(s)$.  For this reason, the reader should keep in mind that $H_\nu$ is automatically Lagrangian for any choice of admissible $\nu$.  For our purpose, the discussion on $\nu$ will be more flexible so we suppress the role of the actual Hamiltonian function $\wt\nu$ unless otherwise specified.

\ermk

\blem\label{l:flowGluePoint} Let $S_\l(T^*_xL)$ be the radius $\lambda$-sphere in the tangent plane of $x$.  If $exp: S_\l(T^*_xL)\rightarrow L$ is an embedding, and $\partial H_\nu=exp(S_\l(T^*_xL))\subset L$ divides $L$ into two components, then $H_\nu$ glues with exactly one of the components to form a smooth Lagrangian submanifold coinciding with $T^*_x L$ outside a compact set for a $\l$-admissible $\nu$.

\elem

\bpf  The only thing to prove is the smoothness of gluing on $\partial H_\nu=exp(S_\l(T^*_x))$.  Note that near $\partial H_\nu$, the handle is a smooth section over the open shell $exp(B_\l(T^*_x)\backslash B_{\l-\delta}(T^*_x))$ which is a smooth open manifold.  Moreover, the section has vanishing derivatives for all orders on the boundary due to the assumption of admissibility on $\nu(r)$ near $r=0$.  The conclusion follows.

\epf

\begin{example}\label{lemma: flow handle=Lagrangian handle}
One may match the Lagrangian handle $H_\gamma$ and flow handle $H_\nu$ for admissible $\gamma$ and its corresponding admissible $\nu$ (See Definition \ref{d:admissible} and the paragraph after it) via the identification between $T^*\mathbb{R}^n$ and $\mathbb{C}^n$.

To see this, the flow handle is given by
$$H_{\nu}=\{\phi^{\vp}_{\nu(||p||)}(0,p)=(0+\frac{p}{\|p\|}\cdot\nu(||p||),p): p\in (T^*_0\R^n)_\e\}$$

We now identify $T^*\R^n$ with $\C^n$ by sending $(q,p)\mapsto q-ip$, which matches the symplectic form $dp\wedge dq$ and $\frac{1}{-2i}dz\wedge d\bar z=dx\wedge dy$.  Then by definition

\begin{eqnarray*}
(\nu(\| p \|)\frac{p}{\|p\|},p)&\mapsto&\nu(\| p \|)\frac{p}{\|p\|}-ip \\
&=& (a(s)+ib(s))x
\end{eqnarray*}
by a change of variable $s=b^{-1}(-||p||)$ and $x=\frac{p}{\|p\|}$.  By this identification, we will simply use $H_\nu$ to denote both handles.

\end{example}

\begin{corr}\label{c:FlowSurgPT}
Let $L_1,L_2 \subset (M,\omega)$ be two Lagrangians transversally intersecting at $p$.
Under the assumption in Lemma \ref{l:flowGluePoint}, one can obtain a Lagrangian $L_1\#^{\nu}_pL_2$
by gluing (1) $L_2\backslash U$, (2) the Lagrangian flow handle $H_\nu$, and (3) an open set in $L_1$ given by Lemma \ref{l:flowGluePoint}.
For appropriately chosen $\nu$, $L_1\#_p^{\nu}L_2$ coincides with $L_1 \#_pL_2$ defined in Corollary \ref{c:SurgeryAtPoint}.
\end{corr}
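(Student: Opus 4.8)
The plan is to reduce the statement to a computation inside a single Darboux chart, using Lemma~\ref{l:flowGluePoint} for the smoothness of the gluing to $L_1$ and Example~\ref{lemma: flow handle=Lagrangian handle} for the comparison with the surgery of Corollary~\ref{c:SurgeryAtPoint}. First I would fix the chart $\iota\co U\to M$ of Corollary~\ref{c:SurgeryAtPoint} centred at $p$, with the standard complex structure and the identification $\C^n\cong T^*\R^n$ via $(q,p)\mapsto q-ip$, arranged so that $\iota^{-1}(L_1)$ is the zero section $\R^n$ and $\iota^{-1}(L_2)$ is the fibre $i\R^n=T^*_0\R^n$. I would then equip $L_1$ with a Riemannian metric that restricts to the flat Euclidean metric on $\iota^{-1}(L_1\cap U)$, so that the flow $\phi^{\vp}_t$ of Definition~\ref{d:pointHandle} takes the linear form used in Example~\ref{lemma: flow handle=Lagrangian handle}. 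Shrinking $U$, choosing $\epsilon$ small and $\lambda<\epsilon$, I may assume $(T^*_0\R^n)_\epsilon\subset\iota^{-1}(U)$ and that the round sphere of radius $\lambda$ about $0$ lies in $\iota^{-1}(L_1\cap U)$.

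Next I would check the hypotheses of Lemma~\ref{l:flowGluePoint} with $L=L_1$ and $x=p$: since the metric is flat near $p$, $\exp\co S_\lambda(T^*_pL_1)\to L_1$ is an embedding onto the Euclidean $\lambda$-sphere about $p$, which divides $L_1$ into the bounded component $B_\lambda(p)$ and its complement. Lemma~\ref{l:flowGluePoint} then provides a smooth Lagrangian obtained by gluing $H_\nu$ onto $L_1\setminus B_\lambda(p)$ along $\partial H_\nu$; because $\nu$ is $\lambda$-admissible it vanishes to infinite order at $r=\epsilon$, so this Lagrangian coincides, for cotangent vectors $\xi\in T^*_pL_1$ with $\|\xi\|\ge\epsilon$, with the fibre $T^*_pL_1$, which under $\iota$ is exactly $\iota^{-1}(L_2\cap U)$. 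Consequently the handle also matches $L_2$ to infinite order near its outer boundary $\{\|\xi\|=\epsilon\}$ and glues smoothly onto $L_2\setminus U$. The union of the three pieces is therefore a smooth embedded submanifold of $M$, which is the definition of $L_1\#^\nu_pL_2$; it is Lagrangian because being Lagrangian is a local condition and each piece is Lagrangian, the handle $H_\nu$ by Lemma~\ref{lemma: Lagrangian hanlde is Lagrangian} and Remark~\ref{r:tilde}.

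For the final assertion I would take the $\lambda$-admissible curve $\gamma(s)=a(s)+ib(s)$ used to build $L_1\#_pL_2$ and its associated $\lambda$-admissible function $\nu(r)=a(b^{-1}(-r))$ from the paragraph after Definition~\ref{d:admissible}. By Example~\ref{lemma: flow handle=Lagrangian handle}, $(q,p)\mapsto q-ip$ carries $H_\nu$ onto $H_\gamma\cap\{0<s<\epsilon\}$; the remaining portions of $H_\gamma$ are $\{\gamma(s)x:s\le0\}=\{rx:r\ge\lambda\}=\iota^{-1}(L_1)\setminus B_\lambda(p)$ and $\{\gamma(s)x:s\ge\epsilon\}=\{-isx:s\ge\epsilon\}\subset i\R^n=\iota^{-1}(L_2)$. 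Hence $\iota(H_\gamma)$ attached to $(L_1\cup L_2)\setminus\iota(U)$ is precisely the union of $L_2\setminus U$, $\iota(H_\nu)$, and $L_1\setminus B_\lambda(p)$, i.e. $L_1\#_pL_2=L_1\#^\nu_pL_2$ for this choice of $\nu$.

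I expect the only genuinely delicate point to be the bookkeeping of the compatible choices of $U$, $\epsilon$ and $\lambda$: they must be arranged so that the flow handle is supported inside the chart, meets $L_1$ and $L_2$ only along the two gluing spheres (so that the result is embedded rather than merely immersed), and so that one discards the correct (bounded) component $B_\lambda(p)$ of $L_1\setminus\partial H_\nu$. Once these are fixed, the argument is a direct assembly of Lemma~\ref{l:flowGluePoint}, Lemma~\ref{lemma: Lagrangian hanlde is Lagrangian}, and Example~\ref{lemma: flow handle=Lagrangian handle}.
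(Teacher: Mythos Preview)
Your proposal is correct and follows exactly the approach the paper intends: the corollary is stated without proof in the paper, as it is a direct assembly of Lemma~\ref{l:flowGluePoint} (for the smooth gluing to $L_1$) and Example~\ref{lemma: flow handle=Lagrangian handle} (for the identification $H_\nu=H_\gamma$ under $(q,p)\mapsto q-ip$), together with the standard Darboux/Weinstein chart in which $L_1$ becomes the zero section with a flat metric and $L_2$ the cotangent fibre. Your bookkeeping of $U$, $\epsilon$, $\lambda$ and the choice of the bounded component is the only content beyond those citations, and you have it right.
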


\begin{example}\label{e: injectivity radius}
Let $r(p)$ be the injectivity radius of $p$.  For different choices of $\nu(r)$ with $\nu(0)<r(p)$, these handles will define a family of different Lagrangian surgeries which are all Lagrangian isotopic to each other.  In the case when $L_1$ is simply-connected, they are Hamiltonian isotopic.

The situation becomes more interesting when $\nu(0)>r(p)$.  Some simple instances are given by $S=\RP^n$, $\CP^n$ or any finite cover of rank-one symmetric space.  Take $\CP^n$ and its standard Fubini-Study metric as an example, for any $p\in S$, the flow surgery can be performed for $kr(p)<\nu(0)<(k+1)r(p)$ for any $k\in\Z$.  Later we will see that such surgeries are indeed iterated surgeries in the ordinary sense (although surgeries along clean intersections will be involved).

\end{example}

\begin{example}
A less standard example is essentially given by exotic spheres in \cite{Se14}.  Given any $f\in Diff^+(S^{n-1})$ and form an exotic sphere $S_f=B_-\cup_{f} B_+$.  There is a Riemannian metric so that all geodesics starting from $0_\pm$ are closed, through each other, and of the same length (\cite[Lemma 2.1]{Se14}).
 Take $p=0_-\in B_-$, when $\l$ is below the injectivity radius, the surgery is the original one considered in \cite{PolSurg}.  When $\nu(0)>r(p)$, the generalized surgery defined above is identified with an iterated surgery along $p$ and $q=0_+\in B_+$ in a successive order, which is exactly the family constructed in \cite{Se14} by the geodesic flow.

\end{example}

The following lemma can be found in \cite{Se99}, but we feel it instructive to sketch the proof from the point of view of flow handles to make our discussion complete.

\begin{lemma}[\cite{Se99}]\label{l:Surgery=Dehn}
Let $x \in S^n$ be a point and consider $L=\tau_{S^n}(T^*_{x}S^n) \subset T^*S^n$.
Then $S^n \#_x T^*_{x}S^n$ is Hamiltonian isotopic to $L$ by a compactly supported Hamiltonian.
\end{lemma}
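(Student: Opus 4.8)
The plan is to realize both $S^n \#_x T^*_x S^n$ and $L = \tau_{S^n}(T^*_x S^n)$ as flow handles built from the normalized geodesic flow $\phi^{\vp}$ on $T^*S^n$, and then exhibit an explicit Hamiltonian isotopy between the two by interpolating the auxiliary profile functions. First I would set up coordinates: since all geodesics on $S^n$ through $x$ are closed of length $2\pi$ and reconverge at the antipode $\bar x$ at time $\pi$, the conormal $T^*_x S^n$ behaves very concretely under $\phi^{\vp}$. Using Example \ref{lemma: flow handle=Lagrangian handle} and Corollary \ref{c:FlowSurgPT}, I would identify $S^n \#_x T^*_x S^n$ with a flow handle $H_\nu$ glued to a disk region of the zero section for a $\lambda$-admissible $\nu$ with $\lambda = \nu(0)$ chosen below the injectivity radius $\pi$; by Lemma \ref{l:flowGluePoint} this glues smoothly, since $\exp(S_\lambda(T^*_x S^n))$ is an embedded sphere dividing $S^n$ into the small disk around $\bar x$ and its complement.

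Next I would describe $L = \tau_{S^n}(T^*_x S^n)$ in the same language. By Definition \ref{defn: Dehn twist on cotangent bundles} (spherical profile version), $\tau_{S^n}(\xi) = \phi^{\vp}_{\nu^{Dehn}_\epsilon(\|\xi\|)}(\xi)$ away from the zero section, where $\nu^{Dehn}_\epsilon(r) = \pi - r$ for $r \ll \epsilon$ and the antipodal map is used along $0_S$. Applying this to $\xi \in T^*_x S^n$: for $\|\xi\|$ small the point $\phi^{\vp}_{\pi - \|\xi\|}(\xi)$ lies over a neighborhood of $\bar x$ (flowing almost time $\pi$ along the geodesic), so $\tau_{S^n}(T^*_x S^n)$ near $0_S$ looks like the conormal $T^*_{\bar x} S^n$, i.e. the antipodal fiber, while for $\|\xi\| \ge \epsilon$ it is just $T^*_x S^n$ itself. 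So $L$ is itself a flow handle $H_{\nu^{Dehn}_\epsilon}$ of exactly the type in Definition \ref{d:pointHandle}, except that its profile $\nu^{Dehn}_\epsilon$ satisfies $\nu^{Dehn}_\epsilon(0) = \pi$, which equals the injectivity radius rather than being strictly below it — the boundary of the handle collapses to the single point $\bar x$, and the handle caps off there rather than gluing to a disk of the zero section.

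The heart of the argument is then to connect these two pictures by a compactly supported Hamiltonian isotopy. I would construct a one-parameter family of admissible profiles $\nu_t$, $t \in [0,1]$, with $\nu_0 = \nu$ the small-$\lambda$ surgery profile and $\nu_1 = \nu^{Dehn}_\epsilon$, keeping $\nu_t$ supported in $\{r \le \epsilon\}$ throughout. By Remark \ref{r:tilde} each $H_{\nu_t}$ is the image of $T^*_x S^n$ under the time-$1$ flow of the Hamiltonian $\widetilde{\nu_t}(\|\cdot\|)$ (with $\widetilde{\nu_t}\,' = \nu_t$), so differentiating in $t$ produces an ambient Hamiltonian isotopy of $T^*S^n$ carrying $H_{\nu_0}$ to $H_{\nu_1}$; since all the $\nu_t$ agree for $r \ge \epsilon$, this isotopy is compactly supported. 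One then checks that attaching the corresponding region of the zero section is carried along consistently — here I would use that for $\lambda < \pi$ the flow handle genuinely needs the zero-section disk to close up, while as $\lambda \to \pi$ that disk shrinks to the point $\bar x$, matching the antipodal-map extension in Definition \ref{defn: Dehn twist on cotangent bundles}.

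The main obstacle I anticipate is precisely this last point: the profile $\nu^{Dehn}_\epsilon$ lives at the \emph{boundary} value $\nu(0) = \pi = $ injectivity radius, where the hypothesis of Lemma \ref{l:flowGluePoint} ($\exp$ an embedding on the $\lambda$-sphere) degenerates, so one cannot directly invoke the flow-handle gluing lemma at $t = 1$ and must instead argue that the family $H_{\nu_t}$ extends smoothly across $t = 1$ as a family of embedded Lagrangians — i.e. that the collapsing of $\partial H_{\nu_t}$ onto $\bar x$ is a smooth degeneration compatible with the antipodal extension. This is a local model computation near $\bar x$: in Darboux coordinates there, $\tau_{S^n}$ is exactly the linear model Dehn twist, and one verifies the interpolation stays smooth and Lagrangian. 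The rest (admissibility bookkeeping, compact support, the identification in Example \ref{lemma: flow handle=Lagrangian handle}) is routine.
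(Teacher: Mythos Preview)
Your overall strategy matches the paper's --- realize both Lagrangians as flow handles for the normalized geodesic flow and interpolate the profiles --- but the mechanism you propose for upgrading the Lagrangian isotopy to a Hamiltonian one has a gap. The function $\widetilde{\nu_t}(\|p\|)$ is \emph{not} smooth along the zero section: for an admissible $\nu_t$ with $\nu_t(0)=\lambda_t>0$ one has $\widetilde{\nu_t}(r)\sim\lambda_t r$ near $r=0$, and $\|p\|$ is only Lipschitz there, so no globally defined Hamiltonian flow on $T^*S^n$ arises this way. More to the point, for $t<1$ the surgered Lagrangian $S^n\#^{\nu_t}_x T^*_xS^n$ contains the shrinking zero--section disk $S^n\setminus B_{\lambda_t}(x)$, and your ``ambient Hamiltonian isotopy'' moves only the handle part $H_{\nu_t}$, leaving this piece unaccounted for. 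The sentence ``one then checks that attaching the corresponding region of the zero section is carried along consistently'' is precisely where the argument is incomplete.

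The paper avoids both issues with a small but decisive trick: it first chooses the admissible profile $\nu_\lambda$ so that $\nu_\lambda(r)=\nu^{Dehn}_\epsilon(r)$ for all $r\ge\delta$ (where $\delta>0$ is such that $\nu^{Dehn}_\epsilon(r)=\pi-r$ on $[0,\delta]$). Then $S^n\#_x T^*_xS^n$ and $L$ already coincide outside the small ball $T^*B_\delta(A(x))$, and inside that ball \emph{both} are graphs of exact $1$--forms over the zero section $B_\delta(A(x))$. Interpolating the two graphs is a compactly supported Lagrangian isotopy through graphs, hence manifestly Hamiltonian --- no singular ambient Hamiltonian is needed, and the ``degeneration at $\lambda=\pi$'' you flag is already absorbed into the graph description. (Incidentally, near $\bar x=A(x)$ the Lagrangian $L$ is not the antipodal fiber $T^*_{\bar x}S^n$ as you write; in the geodesic--ball chart around $\bar x$ it is, to first order, the graph $\{(q,p):q=p\}$, tangent to the diagonal rather than vertical.)
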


\begin{proof}

Let $A: S^n \to S^n$ be the antipodal map.
We consider open geodesic balls $B_{\pi}(x)$ and $B_{\pi}(A(x))$ of radius $\pi$ centered at $x$ and $A(x)$, respectively.
It gives two symplectomorphism $f_{x}: T^*B_{\pi}(x) \to T^*S^n\backslash T^*_{A(x)}S^n$ and $f_{A(x)}: T^*B_{\pi}(A(x)) \to T^*S^n\backslash T^*_{x}S^n$
under which we have
$$f_{x}^{-1}(L)= \{\nu^{Dehn}_{\epsilon}(|p|)\frac{p}{|p|},p) \in T^*B_{\pi}: p \in \mathbb{R}^n\backslash \{0\}\}$$
and
$$
f_{A(x)}^{-1}(L)=\{(\pi-\nu^{Dehn}_{\epsilon}(|p|))\frac{p}{|p|},p) \in T^*B_{\pi}: p \in B_{\epsilon}(0)\}
$$

On the other hand, suppose $\nu=\nu_{\lambda}$ is such that $\nu_{\lambda}(0)=\lambda < \pi=r(x)$.
Then $f_{x}^{-1}(H_{\nu_{\lambda}})$ is given by
$$f_{x}^{-1}(H_{\nu_{\lambda}})= \{(\nu_{\lambda}(|p|)\frac{p}{|p|},p) \in T^*B_{\pi}: p \in \mathbb{R}^n \backslash \{0\}\} \cup \{  (q,0)\in T^*B_{\pi}: q \in B_{\pi}(0) \backslash B_{\lambda}(0)\}
$$

Let $\delta>0$ be such that $\nu_\e^{Dehn}(r)=\pi-r$ for $r<\delta$. We can pick $\nu_{\lambda}$ such that $\nu_{\lambda}(r)=\nu^{Dehn}_{\epsilon}(r)$ for $r \ge \delta$.
The resulting $S^n \#_x T^*_{x}S^n$ hence coincides with $L$ outside $T^*B_{\delta}(A(x))$.
Inside $T^*B_{\delta}(A(x))$, even though $\nu^{Dehn}_{\epsilon}$ is not an admissible function, both $S^n \#_x T^*_{x}S^n$ and $L$ are graphs of the zero section. Therefore, $S^n \#_x T^*_{x}S^n$ is Lagrangian isotopic to $L$ and hence Hamiltonian isotopic to $L$ by a compactly supported Hamiltonian.
\end{proof}

\begin{rmk}\label{r:non-admissibleSurgery}
For semi-admissible $\nu^\a$ that is not admissible, the gluing with $L_1$ cannot be smooth in general.  Lemma \ref{l:Surgery=Dehn} is an instance when a surgery using a semi-admissible profile $\nu^{Dehn}_{\e}$ yields a smooth Lagrangian submanifold.  Intuitively, the lemma regards $\nu^{Dehn}_{\epsilon}$ as a degenerate case of an admissible function.  The point is that, when $\lambda=r(p)$, we only need to glue $Cl(H_{\nu})$ with $L_2\backslash U$, where $Cl(\cdot)$ denotes the closure.

In the case when a semi-admissible function defines a smooth Lagrangian surgery manifold, we will continue to denote it as $L_1\#_p^{\nu^\alpha_\l}L_2$.
This applies to other surgeries along clean intersections and will be used several more times in a parametrized version in the paper.

\end{rmk}

\subsubsection{Surgery along clean intersection}\label{s:ordinaryClean}

Let $L_1$ and $L_2$ be two Lagrangians in $(M,\omega)$ which intersect cleanly at a submanifold $D$.
In other words, we have $T_pD=T_pL_1 \cap T_pL_2$ for all $p \in D$.
The following well-known local proposition due to Pozniak allows us to extend the definition of flow handles to this case.

\begin{prop}[\cite{Poz99}]\label{p:Poz99}
Let $L_1,L_2 \subset (M,\omega)$ be two closed embedded Lagrangians with clean intersection at $L_1 \cap L_2 =D$.
Then there is a symplectomorphism $\varphi$ from a neighborhood $U$ of $0_{section}$in $T^*L_1$ to $M$ such that
$\varphi(0_{section})=L_1$ and $\varphi^{-1}(L_2)\subset N^*_D$, where $0_{section}$ is the zero section and $N^*_D$ is the conormal bundle of $D$ in $L_1$.

\end{prop}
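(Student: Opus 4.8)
The plan is to reduce the statement to two successive normal-form results: first a Weinstein-type neighborhood theorem for the single Lagrangian $L_1$, which identifies a neighborhood of $L_1$ in $M$ symplectomorphically with a neighborhood $U$ of the zero section in $T^*L_1$ so that $\varphi(0_{\text{section}})=L_1$; and second, a parametrized Moser argument carried out inside $T^*L_1$ that straightens $\varphi^{-1}(L_2)$ onto the conormal bundle $N^*_D$. The clean intersection hypothesis $T_pD = T_pL_1\cap T_pL_2$ is exactly what makes the second step possible: it guarantees that $D = L_1\cap\varphi^{-1}(L_2)$ is a smooth submanifold and that, along $D$, the tangent space $T_p\varphi^{-1}(L_2)$ is a Lagrangian subspace of $T_p(T^*L_1)$ meeting the zero-section tangent space precisely in $T_pD$; such a Lagrangian subspace is, by linear symplectic algebra, the tangent space to $N^*_D$. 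So to first order along $D$ the two Lagrangians $\varphi^{-1}(L_2)$ and $N^*_D$ already agree.

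First I would invoke the Lagrangian neighborhood theorem to get $\varphi_0\colon U_0\subset T^*L_1\to M$ with $\varphi_0(0_{\text{section}})=L_1$; replace $L_2$ by $\Lambda := \varphi_0^{-1}(L_2)$, a Lagrangian in $T^*L_1$ intersecting the zero section cleanly along $D$. Next, using the linear-algebra observation above together with a tubular-neighborhood/fiberwise-linearization argument over $D$, I would construct a diffeomorphism of a neighborhood of $D$ in $T^*L_1$, fixing the zero section, carrying $\Lambda$ to a submanifold that is $C^1$-tangent to $N^*_D$ along $D$; then a symplectic isotopy (generated by following the linear interpolation of the two primitives, à la Weinstein) makes them agree symplectically near $D$, at the cost of a further Moser correction. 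Finally, I would run the Moser/Weinstein trick once more — now comparing $\Lambda$ and $N^*_D$ as two Lagrangians that coincide to infinite order (or at least to first order, which suffices after one more interpolation) along $D$ — to produce a symplectomorphism $\psi$ of a neighborhood of the zero section in $T^*L_1$, equal to the identity on $0_{\text{section}}$, with $\psi(\Lambda)\subset N^*_D$. The composition $\varphi := \varphi_0\circ\psi^{-1}$ is the desired map.

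The main obstacle is the second step: carrying out the Moser deformation \emph{relative to} the submanifold $D$ and simultaneously preserving the zero section, i.e.\ interpolating between $\Lambda$ and $N^*_D$ through Lagrangians while controlling the generating Hamiltonians so that the resulting flow fixes $0_{\text{section}}$ pointwise and stays defined on a uniform neighborhood. This requires choosing primitives of the relevant closed one-forms that vanish appropriately along both the zero section and $D$, and checking that the interpolating two-forms remain nondegenerate near $D$ — a shrinking-the-neighborhood argument. I would handle it by working in coordinates adapted to the splitting $T(T^*L_1)|_D \cong TD \oplus T^*D \oplus (\text{fiber of } N^*_D)\oplus(\text{complement})$ coming from clean intersection, reducing to the standard relative Darboux/Weinstein lemma; since the statement is quoted from \cite{Poz99} I would keep this at the level of a sketch and refer there for the full details.
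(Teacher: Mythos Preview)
The paper does not give its own proof of this proposition: it is quoted as a known local normal-form result due to Po\'zniak \cite{Poz99} and used as a black box. So there is nothing in the paper to compare your argument against.

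That said, your outline is essentially the standard proof (and the one Po\'zniak gives): Weinstein neighborhood for $L_1$, followed by a relative Moser/Weinstein argument straightening $\varphi_0^{-1}(L_2)$ onto $N^*_D$ while fixing the zero section. One small correction: your claim that along $D$ the tangent space $T_p\varphi_0^{-1}(L_2)$ automatically \emph{equals} $T_pN^*_D$ is too strong. Clean intersection only gives $T_p\varphi_0^{-1}(L_2)\cap T_p0_{\text{section}}=T_pD$; there are many Lagrangian subspaces of $T_p(T^*L_1)$ with this property, and $T_pN^*_D$ is just one of them. What is true is that any two such Lagrangian subspaces are related by a linear symplectomorphism fixing $T_p0_{\text{section}}$, so a bundle-theoretic argument over $D$ (followed by the Moser step you describe) still produces the required $\psi$. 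This is exactly the ``main obstacle'' you flagged, and referring to \cite{Poz99} for the details is appropriate here since the paper does the same.
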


\begin{defn}\label{d:Dflow}
We define the {\bf flow handle for $D\subset L$} with respect to an admissible function $\nu$ to be
$$H_{\nu}^D=\{\phi^{\vp}_{\nu(\| \xi \|)}(\xi)\in T^*L: \xi \in (N^*_D)_{\epsilon} \backslash D  \}$$
where $(N^*_D)_{\epsilon}$ consists of covectors in the conormal bundle of $D$ in $L$ with length $\le \epsilon$.
\end{defn}

\blem\label{l:flowGlueClean} Let $S_{\lambda}(N^*_D)$
be the radius-$\lambda$ sphere bundle in conormal bundle of $D$.
If $exp: S_{\lambda}(N^*_D)\rightarrow L$ is an embedding, and $\partial H_{\nu_\l}^D\subset L$ divides $L$ into two components,
then $H_\nu^D$ glues with exactly one of the components to form a smooth Lagrangian submanifold coinciding with $N^*_D$ outside a compact set.

\elem

The proof is exactly the same as Lemma \ref{l:flowGluePoint} and we omit it.
As in the transversal intersection case, the surgery is always well-defined when we choose $\nu(0)=\lambda<r(D)$,
the injectivity radius of $D$ along normal directions.  Using Proposition \ref{p:Poz99} we globalize the construction as follows.

\begin{corr}\label{c:flowGlueClean}
Let $L_1,L_2 \subset (M,\omega)$ be two Lagrangians intersecting cleanly along $D$.
By choosing a metric on $L_1$, a symplectic embedding $\iota: T_\epsilon^*L_1 \to M$ such that  $\iota(0_{section})=L_1$ and $\iota^{-1}(L_2)\subset N^*_D$,
one can obtain a Lagrangian $L_1\#_D^{\nu} L_2$ by attaching a Lagrangian flow handle $\iota(H^D_{\nu})$ to $(L_1\backslash U_1)\cup ( L_2\backslash U_2)$, with $U_i\subset L_i$ appropriate open neighborhoods of $D$, and $\epsilon$ being sufficiently small.
\end{corr}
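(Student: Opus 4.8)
The plan is to deduce the corollary from Pozniak's normal form (Proposition~\ref{p:Poz99}) together with the local gluing statement of Lemma~\ref{l:flowGlueClean}. Fix a metric $g$ on $L_1$, let $r(D)$ denote the injectivity radius of $D$ along normal directions, pick $\lambda<r(D)$, and fix a $\lambda$-admissible function $\nu$ with $\nu(0)=\lambda$.

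First I would apply Proposition~\ref{p:Poz99} to obtain a symplectomorphism $\varphi$ from a neighborhood $U$ of the zero section in $T^*L_1$ onto a neighborhood of $L_1$ in $M$, with $\varphi(0_{section})=L_1$ and $\varphi^{-1}(L_2)\subset N^*_D$. Since everything that follows is supported near $D$, for $\epsilon>0$ sufficiently small the disc bundle $T_\epsilon^*L_1$ lies in $U$ and $(N^*_D)_\epsilon\subset\varphi^{-1}(L_2)$; put $\iota:=\varphi|_{T_\epsilon^*L_1}$, a symplectic embedding with $\iota(0_{section})=L_1$ and with $\iota^{-1}(L_2)$ equal to the conormal bundle throughout $(N^*_D)_\epsilon$. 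Because the normalized geodesic flow $\phi^{\vp}_t$ preserves $\|\cdot\|$, the flow handle $H^D_\nu$ of Definition~\ref{d:Dflow} is contained in $T_\epsilon^*L_1$, hence in the domain of $\iota$.

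Next I would run the model construction inside $T^*L_1$. For $\lambda<r(D)$ the map $\exp\colon S_\lambda(N^*_D)\to L_1$ is an embedding whose image $\partial H^D_\nu$ bounds the open $\lambda$-tubular neighborhood $U_1$ of $D$ in $L_1$, so the hypotheses of Lemma~\ref{l:flowGlueClean} are satisfied and $H^D_\nu\cup(L_1\setminus U_1)$ is a smooth Lagrangian in $T^*L_1$ coinciding with $N^*_D$ outside a compact set. Smoothness at the seam over $\partial U_1$ is exactly the argument of Lemma~\ref{l:flowGluePoint}: near $\partial U_1$ the handle is a section of $T^*(L_1\setminus U_1)$ whose $\infty$-jet vanishes along $\partial U_1$ by admissibility of $\nu$ near $r=0$, while near the sphere bundle $\{\|\xi\|=\epsilon\}$ the handle already coincides with $N^*_D$ by admissibility near $r=\epsilon$.

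Finally I would transport this picture to $M$ by $\iota$ and attach the remaining part of $L_2$. Setting $U_2:=L_2\cap\iota\big((N^*_D)_\epsilon\big)$, on the overlap shell where $\|\xi\|$ is near $\epsilon$ both $\iota(H^D_\nu)$ and $L_2\setminus U_2$ equal $\iota(N^*_D)$, so
$$L_1\#_D^{\nu}L_2:=(L_1\setminus U_1)\ \cup\ \iota(H^D_\nu)\ \cup\ (L_2\setminus U_2)$$
is a smooth embedded Lagrangian in $M$, and outside $\iota(T_\epsilon^*L_1)$ it coincides with $(L_1\cup L_2)\setminus\iota(U)$. I expect the only real work to be the bookkeeping just indicated: that a single $\epsilon$ can be chosen making $T_\epsilon^*L_1$ sit inside the Pozniak chart with $\iota^{-1}(L_2)$ honestly the conormal bundle there, and that the outer end of the handle is \emph{literally} $N^*_D$ rather than merely $C^0$-close to $L_2$. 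Both follow from the infinite-order vanishing built into $\lambda$-admissibility and from compactness of $D$; no geometric input is needed beyond Proposition~\ref{p:Poz99} and Lemma~\ref{l:flowGlueClean}.
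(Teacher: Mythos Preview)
Your proposal is correct and follows exactly the approach the paper intends: the paper states the corollary immediately after Lemma~\ref{l:flowGlueClean} with only the remark ``Using Proposition~\ref{p:Poz99} we globalize the construction as follows,'' and your write-up simply supplies the bookkeeping the paper omits (choosing $\epsilon$ small enough for the Pozniak chart, checking the handle sits inside $T_\epsilon^*L_1$, and matching the outer end with $L_2$).
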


As in Example \ref{e: injectivity radius}, we denote $L_1\#_D^{\nu} L_2$ by $L_1\#_D L_2$ if $\lambda < r(D)$.

\subsection{$E_2$-flow surgery and its family version}\label{s:E2surgery}

So far we have only used the geodesic flows on the whole $T^*L$
to construct Lagrangian handles, but more flexibility will prove useful in our applications.
Heuristically, our previous constructions have taken advantage of the fact that $||p||$ has a well-defined Hamiltonian flow on the whole cotangent bundle except for the zero section.
More crucially, the resulting flow handle should have an embedded boundary into $L_1$ (or at least fiber over its image).
Indeed, any Hamiltonian function with such properties will suffice for defining a meaningful Lagrangian handle.

A variant of the flow handle can therefore be defined as follows.
Let $L=K_1^{n-m} \times K_2^m$ be a product manifold equipped with product Riemannian metric.
Then there is an orthogonal decomposition $T^*L=E_1 \oplus E_2$ given by the two factors respectively.
 Let $D\subset L$ be of codimension $m$ and transverse to $\{p\} \times K_2$ for all $p \in K_1$.
 Suppose $\pi_2:T^*L\rightarrow E_2$ be the projection to $E_2$,
 one may then use the function $\vp_{\pi}=||\pi_2(\cdot)||_g$ to define a new flow handle.
 Note that $\vp_{\pi}=||\pi_2(\cdot)||_g$ is smooth on $T^*L\backslash E_1$.

\begin{defn}\label{d:subbundleHandle}
In the situation above, we define the {\bf $E_2$-flow handle for $D$} (or {\bf flow handle along $E_2$-direction}) with respect to an $\lambda$-admissible $\nu_\l$ to be
$$H_{\nu_{\l}}^{D,E_2}=\{\phi^{\vp_{\pi}}_{\nu_{\l}(\| \pi_2(\xi) \|)}(\xi)\subset T^*L: \xi \in (N^*_D)_{\epsilon,E_2} \backslash D  \}.$$
where $(N^*_D)_{\epsilon,E_2}$ consists of covectors $\xi$ in the conormal bundle of $D$ in $L$ such that $\|\pi_2(\xi)\| \le \epsilon$.
\end{defn}

We note that for any point $\xi=(\xi_1,\xi_2) \in E_1 \oplus E_2$,
$\phi^{\vp_{\pi}}_{t}(\xi)=(\xi_1,\phi^{\vp}_{t}(\xi_2))$
so $E_2$-flow is the normalized (co)geodesic flow on the second factor and trivial on the first factor.

Let $S_{\lambda}(E_2|_D)$
be the radius-$\lambda$ sphere bundle of $E_2$ over $D$.
We consider $exp^{E_2}_{\lambda}:S_{\lambda}(E_2|_D) \to L$, which is the exponential map restricted on $S_{\lambda}(E_2|_D)$
along the leaves of the foliation given by second factor.
We define the \textbf{$E_2$-injectivity radius} $r^{E_2}(D)$ of $D$ as the supremum of $\lambda$
such that $exp^{E_2}_{s}$ is an embedding for all $s < \lambda$.

\blem\label{l:flowGlueCleanE2}
Let $D \subset L=K_1 \times K_2$ be of dimension $n-m$ and transversal to $\{p\} \times K_2$ for all $p \in K_1$.
If $exp^{E_2}_\lambda: S_{\lambda}(E_2|_D)\rightarrow L$ is an embedding and
$\partial H_\nu^{D,E_2}\subset L$ divides $L$ into two components,
then $H_\nu^{D,E_2}$ glues with exactly one of the components of $L$ to
form a smooth Lagrangian submanifold coinciding with $N^*_D$ outside a compact set.

\elem

\begin{proof}
The proof is again similar as Lemma \ref{l:flowGlueClean}.
\end{proof}

Similar to the cases we considered before, if $L_1=K_1 \times K_2$ and $L_2$ are Lagrangians cleanly intersecting at $D$ as above,
we can add an $E_2$-flow handle to $L_1 \cup L_2$ outside a tubular neighborhood of $D$ to get
a new Lagrangian submanifold for $\lambda < r^{E_2}(D)$.
We will denote the resulting Lagrangian submanifold by $L_1\#_{D,E_2} L_2$, called the \textbf{surgery from $L_1$ to $L_2$}.
We remark that $H_{\nu}^{D,E_2}$ coincide with $N^*_D$ when $\| \pi_2(\xi) \| > \epsilon$, so for $E_2$ flow surgery,
we have to take out a neighborhood from $N^*_D$ slightly larger than the $\epsilon$-neighborhood of $D$ in $N^*_D$.
This fact is not essential when $\epsilon$ is small.

We now define a family version for $E_2$-flow surgery.  Assume the situation from Definition \ref{d:subbundleHandle} that we have a smooth manifold pair $(L,D)$ and a decomposition $T^*L=E_1\oplus E_2$.  Let $(\eL, \eD)$ be another smooth manifold pair so that:

\begin{enumerate}[(i)]
\item $(\eL, \eD)$ has a compatible fiber bundle structure over a smooth base $B$, that is,

    $$\xymatrix{ L\ar[r]& \eL\ar[d] \\ &B}\hskip 1cm\text{and, }\hskip 1cm \xymatrix{ D\ar[r]& \eD\ar[d] \\ &B}$$
where the two bundle structures are compatible with the inclusion $\eD\hookrightarrow\eL$.
\item The structure group $G\subset Isom(L)$, the isometry group of $L$, and it preserves $E_1$ and $E_2$.

\end{enumerate}

Assumptions above allow us to glue $T^*L$ via the given bundle data, yielding a symplectic fiber bundle $\eE\rightarrow B$ with fiber $T^*L$.  All previous symplectic constructions on $T^*L$ are now functorial hence can be glued over $B$.  For example, $N^*_D L$ glues into $N^*_\eD\eL$ hence fits into Pozniak's setting of clean intersection.  When $\eE$ is regarded as a vector bundle over $\eL$, it comes with a natural splitting $\eE=\eE_1\oplus\eE_2$ from local charts.  Hence, the $E_2$-handle $H_\nu^{D,E_2}$ can be constructed fiberwisely on $N^*\eD$, which gives a smooth handle $\eH_\nu\subset \eE$.  The fact that $\eH_\nu\hookrightarrow T^*\eL\supset\eE$ is indeed a Lagrangian embedding can be check on local charts $\eU\subset B$.

\begin{lemma}\label{l:familyE2flow}
  For two cleanly intersecting Lagrangians $\eL_0, \eL_1 \subset (M^{2n},\w)$, if $(\eL_0, \eD=\eL_0\cap\eL_1)$ satisfies (i)(ii) above,
  then family $E_2$-surgery between $\eL_0$ and $\eL_1$ can be performed and gives a
  Lagrangian submanifold $\eL_0 \#^{\nu}_{\eD,E_2} \eL_1$ of $(M,\w)$.
\end{lemma}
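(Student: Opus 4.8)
The plan is to reduce the family construction to the non-family $E_2$-surgery of Lemma \ref{l:flowGlueCleanE2} and Corollary \ref{c:flowGlueClean}, performed fiberwise over $B$ and patched together using the equivariance built into hypotheses (i)--(ii). First, since $\eL_0$ is a closed Lagrangian it is compact, so $B$ and the fiber $L=K_1\times K_2$ are compact; equip $L$ with its product metric $g_{K_1}\oplus g_{K_2}$, which by (ii) is invariant under the structure group $G$, and use the bundle data to glue the fiberwise cotangent bundles $T^*L$ into $\eE\to B$ together with the splitting $\eE=\eE_1\oplus\eE_2$, as in the paragraph preceding the lemma. Then apply Pozniak's Proposition \ref{p:Poz99} to the cleanly intersecting pair $\eL_0,\eL_1\subset(M,\w)$: for $\epsilon$ small we obtain a symplectic embedding $\iota\colon U\to M$ of a neighborhood $U\subset T^*\eL_0$ of the zero section with $\iota(0_{section})=\eL_0$ and $\iota^{-1}(\eL_1)\subset N^*_{\eD}\eL_0$, the latter being a neighborhood of the zero section near $\eD$; after shrinking $\epsilon$ we may assume the relevant part of $\eE\subset T^*\eL_0$ lies in $U$.

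Next I would build the $E_2$-flow handle fiberwise. Because $\eD$ meets each leaf $\{p\}\times K_2$ transversally, Definition \ref{d:subbundleHandle} produces in every fiber the handle $H_{\nu_\lambda}^{D,E_2}$ for a $\lambda$-admissible $\nu_\lambda$ with $\lambda<r^{E_2}(\eD)$ (positive by compactness, and with the separation hypothesis of Lemma \ref{l:flowGlueCleanE2} holding in each fiber). The entire ingredient list of this handle --- the function $\vp_\pi=\|\pi_2(\cdot)\|$, its Hamiltonian (normalized cogeodesic) flow, the truncated conormal sphere bundle $(N^*_D)_{\epsilon,E_2}$ --- is manufactured solely from the product metric, the splitting $E_1\oplus E_2$, and the pair $(L,D)$, all of which $G$ preserves; hence the construction is strictly $G$-equivariant, and the fiberwise handles assemble into one smooth submanifold $\eH_{\nu_\lambda}\subset\eE\subset T^*\eL_0$ that coincides with $N^*_{\eD}$ wherever $\|\pi_2\|>\epsilon$.

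Finally, define $\eL_0\#^{\nu}_{\eD,E_2}\eL_1$ by deleting from $\eL_0\cup\iota(\eL_1)$ the part of a tubular neighborhood of $\eD$ on which $\|\pi_2\|\le\epsilon$ and inserting $\iota(\eH_{\nu_\lambda})$; the $\|\pi_2\|>\epsilon$ behavior just noted makes the pieces match smoothly outside a compact set. That the outcome is an embedded Lagrangian is a local statement over $B$, precisely as asserted after Definition \ref{d:subbundleHandle}: over a trivializing chart $\eU\subset B$ one has $T^*(\eL_0|_{\eU})\cong T^*\eU\times T^*L$ with the split symplectic form, $N^*_{\eD}|_{\eU}$ is the product of the zero section of $T^*\eU$ with $N^*_DL$, and since $\vp_\pi$ ignores the $T^*\eU$-coordinates the construction in this chart is the product of the zero section of $T^*\eU$ with the fiberwise $E_2$-surgery of Lemma \ref{l:flowGlueCleanE2}; that lemma (whose smoothness claim rests on the infinite-order vanishing conditions in the definition of admissibility) then identifies each local piece as Lagrangian, and $G$-equivariance forces the local pictures to agree on overlaps, so they patch to a global smooth Lagrangian embedding into $(M,\w)$.

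I expect the main obstacle to be verifying that the $E_2$-handle construction is honestly $G$-equivariant rather than merely fiberwise well-defined up to isotopy --- that is, that the product metric, the induced sphere bundles and cogeodesic flow, and the identification of $\eE$ with a piece of $T^*\eL_0$ can all be chosen $G$-invariantly at once --- since this equivariance is exactly what lets the fiberwise handles glue into a genuine smooth submanifold; granting it, the remaining steps are routine consequences of Pozniak's proposition and Lemma \ref{l:flowGlueCleanE2}.
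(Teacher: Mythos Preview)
Your proposal is correct and follows essentially the same approach as the paper: the paper's argument is contained in the paragraph immediately preceding the lemma, where it sketches the fiberwise construction using $G$-equivariance (``all previous symplectic constructions on $T^*L$ are now functorial hence can be glued over $B$'') and notes that the Lagrangian property is checked on local trivializations $\eU\subset B$. Your write-up is simply a more detailed unpacking of that sketch, invoking Pozniak's proposition and Lemma~\ref{l:flowGlueCleanE2} exactly as intended.
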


\brmk It is easy to see that our construction works word-by-word as long as there is a decomposition of vector bundle $T^*L=E_1\oplus E_2$.  However, one needs to imposed technical conditions to make $exp_\lambda^{E_2}:S_\lambda\to L$ an embedding even for small $\lambda$.  An easy condition is to assume $E_2$ is integrable at least near $D$, but it should also work in some cases when $E_2$ is completely non-integrable near $D$ but integrable outside a small neighborhood.  Considerations along this line might result in delicate constructions of new Lagrangian submanifolds.

\ermk

\section{Perturbations: from surgeries to Dehn twists}\label{s:AdmissToDehn}

This section contains the technical part which passes from Lagrangian surgeries to Dehn twists in several applications.  The general idea is the same as Lemma \ref{l:Surgery=Dehn}, which may also interpreted as passing from admissible profiles to semi-admissible ones.
This is realized as local perturbations of the surgery Lagrangians.

  We first explain how this works in the $\CP^n$ case, then give a proof of Theorem \ref{t:surgeries}(1)(2)(3)(5) using family versions of this observation.

\subsection{Fiber version}

In this section, we are interested in $L$ being $\mathbb{RP}^n$, $\mathbb{CP}^{\frac{m}{2}}$ or $\mathbb{HP}^n$
equipped with the Riemannian metric such that every geodesic is closed of length $2\pi$.  All actual proofs will be given only in the case of $\CP^n$ but are easily generalized.
Let $x \in L$ be a point and $F_x$ its cotangent fiber inside $T^*L$.
We also let $D=\{y \in L| dist(x,y)=\pi\}$ be the \textbf{divisor opposite to $x$}.

\begin{lemma}\label{l:admissibleToDehnFiber}
Let $x \in L$ be a point and $\nu_{\lambda_i}$ be $\lambda_i$-admissible functions such that $(k-1)\pi< \lambda_i < k\pi$ for some
positive integer $k$ for both $i=1,2$.
Then $L\#^{\nu_{\lambda_i}}_xF_x$ are isotopic for $i=1,2$ by a compactly supported Hamiltonian.

Moreover, if we choose a semi-admissible function $\nu^{\alpha}_{k\pi}: (0,\infty) \to [0,k\pi)$ that is
monotonic decreasing and all orders of derivatives vanish at $r=\epsilon$
such that  $\nu^{\alpha}_{k\pi}(r)=k\pi-\alpha r$ near $r=0$ ($\alpha \ge 0$),
then $L\#^{\nu^{\alpha}_{k\pi}}_xF_x$ (See Remark \ref{r:non-admissibleSurgery})
is a smooth Lagrangian that is isotopic to $L\#^{\nu_{\lambda_i}}_xF_x$ by a compactly supported Hamiltonian.

Furthermore, these Hamiltonian isotopies can be chosen to be invariant under isometric action of $L$ fixing $x$.
\end{lemma}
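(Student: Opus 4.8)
The strategy is to reduce everything to a one-parameter family of profile functions $\{\nu_t\}$ interpolating between the given data, and to show that the resulting surgery Lagrangians form a smooth isotopy. First I would observe that, by Corollary \ref{c:FlowSurgPT}, each $L\#^{\nu_{\lambda_i}}_xF_x$ is obtained by gluing the flow handle $H_{\nu_{\lambda_i}}$ to $F_x\setminus U$ and to one component of $L\setminus\partial H_{\nu_{\lambda_i}}$, where $\partial H_{\nu_{\lambda_i}}=\exp(S_{\lambda_i}(T^*_xL))$. The key geometric input is the hypothesis recalled in Remark \ref{r:RPHP}: for $x\in L$ the cut locus $D=\{y: \mathrm{dist}(x,y)=\pi\}$ is a smooth closed submanifold, and $\exp_\lambda: S_\lambda(T^*_xL)\to L$ is an embedding for all $\lambda$ not a multiple of $\pi$, with image a smoothly varying hypersurface; at $\lambda=k\pi$ the image degenerates onto $D$ (or onto $x$ when $k$ is even, depending on parity — only the relevant case needs to be tracked). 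Consequently, for $\lambda_1,\lambda_2\in((k-1)\pi,k\pi)$, the two hypersurfaces $\exp_{\lambda_1}(S), \exp_{\lambda_2}(S)$ are isotopic in $L$ and the two ``capping'' components are diffeomorphic, so the combinatorial type of the gluing does not change.

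Next I would produce the actual isotopy. Choose a smooth path $\lambda(t)$ from $\lambda_1$ to $\lambda_2$ inside $((k-1)\pi,k\pi)$ and a smooth family $\nu_{\lambda(t)}$ of $\lambda(t)$-admissible functions (these exist and form a contractible space once $\epsilon$ is fixed, since admissibility is an open convex-ish condition on jets at the two endpoints $r=\lambda(t)$ and $r=\epsilon$). The associated handles $H_{\nu_{\lambda(t)}}$ vary smoothly as submanifolds of $T^*L$, so by Lemma \ref{l:flowGluePoint} the glued Lagrangians $L\#^{\nu_{\lambda(t)}}_xF_x$ form a smooth Lagrangian isotopy, supported in a fixed compact set. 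Since any compactly supported Lagrangian isotopy of exact/monotone Lagrangians is generated by a compactly supported Hamiltonian (the flux vanishes because the isotopy is compactly supported and $H^1$ of the ambient set we modify is controlled — or more simply, the isotopy extension theorem plus exactness), this gives the first claim. The ``Moreover'' statement is handled by the same mechanism run to the boundary: following the recipe of Lemma \ref{l:Surgery=Dehn} and Remark \ref{r:non-admissibleSurgery}, when $\lambda\to k\pi$ the admissible profile degenerates to the semi-admissible $\nu^\alpha_{k\pi}$, and one checks — exactly as in the proof of Lemma \ref{l:Surgery=Dehn}, where $\nu^{Dehn}_\epsilon$ played this role — that $\mathrm{Cl}(H_{\nu^\alpha_{k\pi}})$ still glues smoothly to $F_x\setminus U$ because near the degenerate boundary both $L\#^{\nu^\alpha_{k\pi}}_xF_x$ and the limit of $L\#^{\nu_{\lambda}}_xF_x$ are graphs over (a neighborhood of) $D$ of sections with matching infinite-order jets. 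Hence the semi-admissible surgery is smooth and sits in the same Hamiltonian isotopy class.

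For the last sentence, I would note that the isometry subgroup $G_x\subset\mathrm{Isom}(L)$ fixing $x$ acts on $T^*L$ preserving $F_x$, the geodesic-length function $\varphi=\|\cdot\|$, hence the flow $\phi^\varphi$, and therefore acts on the entire family of handles $H_{\nu_{\lambda(t)}}$. Since the profile $\nu_{\lambda(t)}$ depends only on the radial coordinate $\|p\|$, which is $G_x$-invariant, every Lagrangian in the isotopy is $G_x$-invariant; averaging the generating Hamiltonian over the compact group $G_x$ (or simply observing that the canonical generating Hamiltonian constructed from a $G_x$-invariant family is automatically invariant) yields a $G_x$-invariant compactly supported Hamiltonian realizing the isotopy.

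\textbf{Main obstacle.} The delicate point is the ``Moreover'' clause: verifying that the limiting semi-admissible surgery is genuinely a smooth submanifold when $\lambda$ hits $k\pi$, i.e.\ that the handle closes up smoothly across the degenerate locus $D$ and matches the zero-section-like behaviour on the other side. This is precisely the phenomenon isolated in Lemma \ref{l:Surgery=Dehn} and Remark \ref{r:non-admissibleSurgery}, and the work is to check that, despite $\nu^\alpha_{k\pi}$ not being admissible at $r=0$, the pieces one glues are graphs of sections agreeing to infinite order along $D$ — the parity of $k$ (whether $\exp_{k\pi}$ collapses $S(T^*_xL)$ to $D$ or to $\{x\}$, as in the sphere-vs-non-sphere dichotomy in Definition \ref{defn: Dehn twist on cotangent bundles}) has to be accounted for, but as Remark \ref{r:RPHP} indicates this runs word-for-word as in the $\mathbb{CP}^n$ case once the cut-locus structure is known.
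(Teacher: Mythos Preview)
Your proposal is correct and follows essentially the same approach as the paper: connectedness of the profile space for the first claim (the paper upgrades to a Hamiltonian isotopy via $H^1(L\#^{\nu_{\lambda_1}}_xF_x,\partial^\infty;\R)=0$, which is what your ``$H^1$ controlled'' remark gestures at), and $G_x$-invariance of the radial construction for the last.

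For the ``Moreover'' clause the paper is more concrete than your limiting sketch, and your phrase ``matching infinite-order jets'' is not quite the mechanism. The paper (treating $k=1$, $L=\CP^{m/2}$) goes the other direction: it first shows directly that the semi-admissible surgery is smooth by picking coordinates $(q_a,q_b)$ adapted to $D=\{q_a=0\}$ and computing $T^*U\cap H_{\nu^\alpha}=\{(q_a,q_b,p_a,0)\mid q_a=-\alpha p_a\neq 0\}$, so $Cl(H_{\nu^\alpha})$ is manifestly the smooth graph of $d(-\tfrac{1}{2\alpha}\mathrm{dist}^2(\cdot,D))$ near $D$; then it interpolates this graph to the zero section through graphs $d(f_t\circ\mathrm{dist}^2(\cdot,D))$ to reach an admissible $L\#^{\nu_\lambda}_xF_x$ with $\lambda<\pi$. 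So the handle does \emph{not} agree with the zero section to infinite order along $D$; rather, both pieces are graphs over a neighborhood of $D$ and one interpolates between them.
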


\begin{corr}\label{c:admissibleToDehnFiber}
For $\pi< \lambda < 2\pi$ and $L$ being $\mathbb{RP}^n$, $\mathbb{CP}^{\frac{m}{2}}$ or $\mathbb{HP}^n$,
$L\#^{\nu_{\lambda}}_xF_x$ is Hamiltonian isotopic to $\tau_L(F_x)$ for an admissible $\nu_\lambda$.
\end{corr}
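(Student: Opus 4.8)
The plan is to deduce Corollary~\ref{c:admissibleToDehnFiber} from Lemma~\ref{l:admissibleToDehnFiber} together with the computation of $\tau_L(F_x)$ in a Weinstein neighborhood, in exact parallel to how Lemma~\ref{l:Surgery=Dehn} handled the sphere case. First I would recall that by Lemma~\ref{l:admissibleToDehnFiber}, for any $\lambda \in (\pi, 2\pi)$ (so $k=2$) and any $\lambda$-admissible $\nu_\lambda$, the surgery $L \#^{\nu_\lambda}_x F_x$ is Hamiltonian isotopic (via a compactly supported Hamiltonian) to $L \#^{\nu^\alpha_{2\pi}}_x F_x$, where $\nu^\alpha_{2\pi}$ is a semi-admissible profile with $\nu^\alpha_{2\pi}(r) = 2\pi - \alpha r$ near $r=0$. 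So it suffices to identify $L \#^{\nu^\alpha_{2\pi}}_x F_x$ with $\tau_L(F_x)$ for one convenient choice of $\alpha$, and the natural choice is $\alpha = 1$, so that $\nu^\alpha_{2\pi}(r) = 2\pi - r$ near $r=0$, which is exactly the Dehn twist profile $\nu^{Dehn}_\epsilon$ from Definition~\ref{defn: Dehn twist on cotangent bundles} (recall $L$ is not a sphere here, so the profile condition is $\nu^{Dehn}_\epsilon(r) = 2\pi - r$ for $r \ll \epsilon$).

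Next I would carry out the local comparison explicitly. In the Weinstein neighborhood $T^*L$, the Dehn twist $\tau_L$ acts by $\tau_L(\xi) = \phi^{\vp}_{\nu^{Dehn}_\epsilon(\|\xi\|)}(\xi)$ off the zero section. Applied to the cotangent fiber $F_x = T^*_x L$, and using that the normalized cogeodesic flow for time $\nu^{Dehn}_\epsilon(\|\xi\|)$ moves the covector $\xi \in T^*_xL$ along the geodesic issuing from $x$ in direction $\xi/\|\xi\|$, one sees that $\tau_L(F_x)$ is the union of the flowed fiber over the geodesic ball $B_\pi(x)$ together with a piece lying over a neighborhood of the opposite divisor $D$. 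Concretely, on $T^*B_\pi(x)$ we get $\{(\nu^{Dehn}_\epsilon(\|p\|)\tfrac{p}{\|p\|}, p)\}$, exactly the graph-type description appearing in the proof of Lemma~\ref{l:Surgery=Dehn} but now with $\pi$ replaced by $2\pi$ and the profile descending from $2\pi$ to $0$ — geometrically the flow wraps once all the way around each closed geodesic. On the other side, near $D$, both $\tau_L(F_x)$ and the surgery $L \#^{\nu^{Dehn}_\epsilon}_x F_x$ are graphs over the zero section (over the part of $L$ swept out past distance $\pi$ from $x$), so they agree outside a compact set, and inside the remaining compact region both are exact sections, hence Lagrangian isotopic, hence Hamiltonian isotopic by a compactly supported Hamiltonian. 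This is the same ``degenerate admissible function'' argument flagged in Remark~\ref{r:non-admissibleSurgery}: a semi-admissible profile reaching the injectivity-radius-type threshold ($2\pi$, one full period) still produces a smooth Lagrangian because the closure of the handle glues to $F_x \setminus U$ without needing to glue to a piece of $L$ with matching infinite-order vanishing.

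For the main obstacle: the delicate point is the smoothness and the precise gluing pattern of $L \#^{\nu^{Dehn}_\epsilon}_x F_x$ near the opposite divisor $D$ when the profile is only semi-admissible and reaches the value $2\pi$ rather than staying below the injectivity radius. Unlike the generic case, the flow handle $H_{\nu^{Dehn}_\epsilon}$ now covers all of $L$ minus (a neighborhood of) $D$, and one must check that near $D = \{y : \mathrm{dist}(x,y) = \pi\}$ the closure of the handle meets the cotangent fiber directions correctly — here the hypothesis recorded in Remark~\ref{r:RPHP}, that $L \setminus B_x(\pi)$ is a smooth closed submanifold (namely $D$ itself, the ``projective subspace'' opposite $x$), is exactly what guarantees that the local model near $D$ is again that of a conormal-type clean intersection and that the resulting manifold is smooth. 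I would handle this by invoking Lemma~\ref{l:admissibleToDehnFiber}'s own assertion that $L\#^{\nu^\alpha_{2\pi}}_x F_x$ \emph{is} a smooth Lagrangian, so that the only remaining task in the corollary is the identification with $\tau_L(F_x)$, which is the soft ``both are exact sections over a compact set'' step. Finally I would remark that the $\mathbb{RP}^n$ and $\mathbb{HP}^n$ cases are word-for-word identical, as already stated in Remark~\ref{r:RPHP}, the only inputs being the closed-geodesics-of-length-$2\pi$ metric and the smoothness of the opposite divisor.
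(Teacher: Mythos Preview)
Your approach is the same as the paper's: invoke Lemma~\ref{l:admissibleToDehnFiber} with $k=2$ and take the semi-admissible profile $\nu^{\alpha}_{2\pi}$ with $\alpha=1$. The paper's proof is then a single sentence: when $\alpha=1$ and $k=2$, $\nu^{\alpha}_{k\pi}$ \emph{is} a Dehn twist profile, so $L\#^{\nu^{\alpha}_{2\pi}}_x F_x = \tau_L(F_x)$ by definition, and the corollary follows.

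Your second and third paragraphs are unnecessary and contain some geometric inaccuracies. Once you have chosen $\nu^{\alpha}_{2\pi}=\nu^{Dehn}_{\epsilon}$, the flow handle $H_{\nu^{Dehn}_{\epsilon}}$ is \emph{literally} $\tau_L(F_x\setminus\{x\})\cap\{\|p\|\le\epsilon\}$, and the closure adds back the point $x$ (since $\nu\to 2\pi$ returns the geodesic to $x$, and $\tau_L$ is the identity on the zero section). There is no separate isotopy step near $D$, and no ``both are exact sections over a compact set'' argument is needed: the two Lagrangians coincide on the nose. Your description of the handle ``covering all of $L$ minus a neighborhood of $D$'' and of gluing happening near $D$ is off --- the handle's base projection sweeps through $D$ (at $\nu=\pi$) and continues back to $x$; the only delicate closure point for $k=2$ is at $x$ itself, not at $D$. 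None of this breaks your argument, but it obscures the fact that the corollary is immediate from the lemma plus the definition of $\tau_L$.
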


\begin{proof}
Observe that when $\alpha=1$ and $k=2$, $\nu^{\alpha}_{k\pi}(r)$ is a Dehn twist profile.
The Corollary follows from Lemma \ref{l:admissibleToDehnFiber}.
\end{proof}

\begin{figure}[h]\label{fig: isotopy of admissible function 1}
\centering
\includegraphics[scale=1]{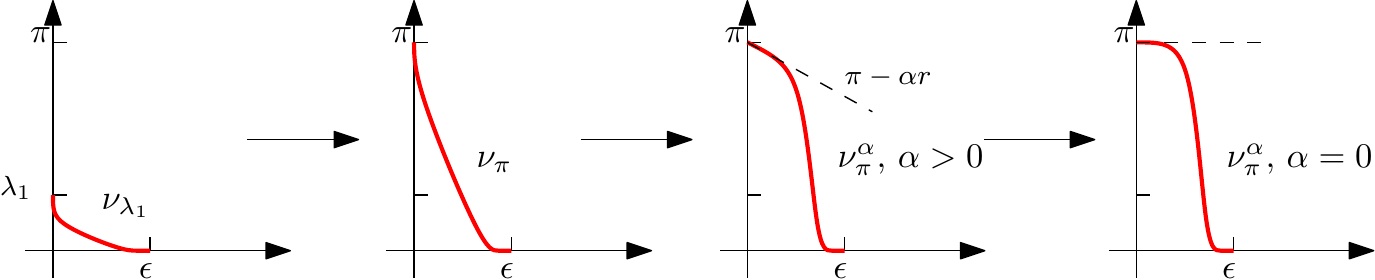}

\caption{Isotopy from $\nu_{\lambda_1}$ to $\nu_{\pi}$ to $\nu^{\alpha}_\pi$ (Dehn twist profile).}
\end{figure}

\begin{proof}[Proof of Lemma \ref{l:admissibleToDehnFiber}]
For the first statement, we observe that the space of $\lambda$-admissible function for $(k-1)\pi< \lambda < k\pi$ is connected.
A smooth isotopy $\{\nu_t\}$ from $\nu_{\lambda_1}$ to $\nu_{\lambda_2}$ in this space results in a smooth Lagrangian isotopy from $L\#^{\nu_{\lambda_1}}_xF_x$ to $L\#^{\nu_{\lambda_2}}_xF_x$ since $\partial H_{\nu^t}$ does not pass any critical locus.
This is a Hamiltonian isotopy because $H^1(L\#^{\nu_{\lambda_1}}_xF_x, \partial^\infty (L\#^{\nu_{\lambda_1}}_xF_x);\R)=0$ (cf. Example \ref{e: injectivity radius}).

For the second statement, we only consider the case that $k=1$ and $L=\mathbb{CP}^{\frac{m}{2}}$, and
the remaining cases are similar.
In this case, denote $\nu^\alpha=\nu_\pi^{\alpha}$, then $Cl(H_{\nu^{\alpha}})\backslash H_{\nu^{\alpha}}=D=\mathbb{CP}^{\frac{m}{2}-1}$.
We pick a local chart $U\subset L$ with local coordinates $(q_1,\dots,q_m)$ adapted to $D$ in the sense that
$U \cap D=\{q_1=q_2=0\}$ and $c(t)=(tq_1,tq_2,q_3,\dots,q_m)$ are normalized geodesics for any $(q_1,\dots,q_m)$.
It induces canonically a Darboux chart $T^*U$ in $T^*L$.
We write a point in $T^*U$ as $(q_a,q_b,p_a,p_b)$, where $q_a=(q_1,q_2)$, $q_b=(q_3,\dots,q_m)$ and similarly for $p_a$ and $p_b$.
Since $H_{\nu^{\alpha}}$ is defined by the geodesic flow, one may directly verify
$$T^*U \cap H_{\nu^{\alpha}}= \{ (q_a,q_b,p_a,0)| q_a=-\alpha p_a \neq 0\}$$
$$T^*U \cap D=\{ (0,q_b,0,0)\}$$
Therefore, it is clear that $H_{\nu^{\alpha}}$ and $D$ can be glued smoothly to become $Cl(H_{\nu^{\alpha}})$.
The gluing from $H_{\nu^{\alpha}}$ to $F_x-B_{\epsilon}$ is the same as the admissible case.
It results in a smooth Lagrangian $L\#^{\nu^{\alpha}}_xF_x$.

Finally, we want to show that $L\#^{\nu^{\alpha}}_xF_x$ is Hamiltonian isotopic to $L\#^{\nu_{\lambda_i}}_xF_x$.
We can assume $\alpha \neq 0$, by a Hamiltonian perturbation if necessary.  Locally near $D$, we have
\beq\label{e:loc} T^*U \cap (H_{\nu^{\alpha}} \cup D)=
\{ (-\alpha p_a,q_b,p_a,0)\}=\{(q_a,q_b,-\frac{1}{\alpha}q_a,0)\}\eeq
\beq T^*U \cap L= \{ (q_a,q_b,0,0) \}\eeq

It is clear that there is a small $\delta >0$
such that $(H_{\nu^{\alpha}} \cup D) \cap T^*B_{\delta}(D)$ is the graph of
$d(-\frac{1}{2 \alpha} dist^2(\cdot,D))$ over $B_{\delta}(D)$, where $B_{\delta}(D)$ is the $\delta$ neighborhood of $D$ in $L$. Take a smooth decreasing function $f(r):[0,\delta]\to\R$ so that $f=0$ near $r=0$ and $f(r)=-\frac{1}{2\a}r$ near $r=\delta$.
Denote $f_t(r)=tf(r)-(1-t)\frac{1}{2\a}r$.

Then the graph of $d(f_t \circ dist^2(\cdot,D))$ can be patched with $H_{\nu^{\alpha}} \backslash T^*B_{\delta}(D)$
to give a Hamiltonian isotopy from $L\#^{\nu^{\alpha}}_xF_x$ to $L\#^{\nu_{\lambda}}_xF_x$
for some admissible $\nu_{\lambda}$ with $0< \lambda < \pi$.
We remark that the Hamiltonian isotopy is invariant under the $Isom(L)_x$, isometric group of $L$ fixing $x$.
This concludes the proof.

\end{proof}

We remark that another point of view of the Lagrangian isotopy from $L\#^{\nu^{\alpha}}_xF_x$ to $L\#^{\nu_{\lambda}}_xF_x$ is that it is induced from a smooth isotopy relative to end points from the curve $\{(r,\nu^{\alpha}(r)) \in [0,\epsilon] \times [0,\pi] | r \in (0,\epsilon] \} \cup \{(0,\pi)\}$ to the $\lambda$-admissible curve defined by $\nu_{\lambda}$.

Later we will see that, when the surgery profile $\nu_\lambda$ has $\lambda$ exceeding the injectivity radius, there is no cobordism directly associated to such a surgery.  To fit such a surgery to the cobordism framework, in general we need to decompose the surgery into several steps.  The following lemma shows how this works in the case for $\CP^n$ (which easily generalizes to $\RP^n$ and $\mathbb{HP}^n$).

\begin{lemma}\label{l:cpnFiber}
Let $x \in \mathbb{CP}^{\frac{m}{2}}$ be a point and $F_x \subset T^*\mathbb{CP}^{\frac{m}{2}}$ the corresponding cotangent fiber.
Let $D=\{y \in \mathbb{CP}^{\frac{m}{2}}| dist(x,y)=\pi\}$ be the divisor opposite to $x$.
Then there is an embedded Lagrangian $Q_x \subset T^*\mathbb{CP}^{\frac{m}{2}}$ such that
\begin{enumerate}[(1)]
\item $Q_x=F_x$ away from a neighborhood of zero section,
\item $Q_x$ is Hamiltonian isotopic to $\mathbb{CP}^{\frac{m}{2}}\#_xF_x$,
\item $Q_x$ intersects cleanly with $\mathbb{CP}^{\frac{m}{2}}$ at $D$,
\item $\mathbb{CP}^{\frac{m}{2}}\#_D Q_x$ is Hamiltonian isotopic to $\tau_{\mathbb{CP}^{\frac{m}{2}}}(F_x)$
\end{enumerate}
As a result, as far as Hamiltonian isotopy class is concerned, we have $\mathbb{CP}^{\frac{m}{2}}\#_D(\mathbb{CP}^{\frac{m}{2}}\#_xF_x)=\tau_{\mathbb{CP}^{\frac{m}{2}}}(F_x)$.
\end{lemma}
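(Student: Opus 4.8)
The plan is to build $Q_x$ as an intermediate Lagrangian that interpolates between the "one-step" surgery $\CP^{m/2}\#_x F_x$ (which uses a profile of height $\lambda$ with $\pi<\lambda<2\pi$, forcing its boundary to pass through the divisor $D$) and a genuine two-step construction in which one first resolves the intersection at $x$ with a small admissible profile ($\lambda<\pi$), producing a Lagrangian that meets $\CP^{m/2}$ cleanly along $D$, and then performs a clean surgery along $D$. First I would run the flow-handle construction of Corollary \ref{c:FlowSurgPT} at the point $x$ with a \emph{small} $\lambda$-admissible profile $\nu_\lambda$, $0<\lambda<\pi=r(x)$; by Lemma \ref{l:flowGluePoint} this is well-defined, and the resulting Lagrangian $Q_x$ agrees with $F_x$ away from a neighbourhood of the zero section, giving (1). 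Since $\CP^{m/2}$ is simply connected, the relevant relative $H^1$ vanishes (cf. Example \ref{e: injectivity radius}), so the Lagrangian isotopy between different choices of small profile is Hamiltonian; combined with Corollary \ref{c:FlowSurgPT} this identifies $Q_x$ up to compactly supported Hamiltonian isotopy with $\CP^{m/2}\#_x F_x$, which is (2).

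For (3), the key point is a local-model computation exactly as in the proof of Lemma \ref{l:admissibleToDehnFiber}: the flow handle $H_{\nu_\lambda}$ is built from the normalized geodesic flow, and after time $\lambda$ close to (but below) $\pi$ the covectors in $F_x$ of length near $\epsilon$ have been flowed to a neighbourhood of the antipodal divisor $D=\{\mathrm{dist}(x,\cdot)=\pi\}$. One chooses the profile $\nu_\lambda$ so that on the relevant shell the handle becomes, in Pozniak-type Darboux coordinates $(q_a,q_b,p_a,p_b)$ adapted to $D$ (with $D=\{q_a=0\}$ and geodesics through $x$ given by scaling $q_a$), the conormal-type locus $\{q_a=-\alpha p_a\neq 0, p_b=0\}$, which limits onto $T^*U\cap D=\{(0,q_b,0,0)\}$. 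This shows $Q_x$ extends smoothly over $D$ and meets the zero section $\CP^{m/2}$ exactly along $D$, with the intersection clean because $T_pD=T_p\CP^{m/2}\cap T_pQ_x$ for $p\in D$ (the conormal directions are precisely the "missing" directions). Here I would reuse verbatim the local computation $T^*U\cap(H_{\nu^\alpha}\cup D)=\{(q_a,q_b,-\tfrac1\alpha q_a,0)\}$ from Lemma \ref{l:admissibleToDehnFiber}, since near $D$ the admissible profile with $\lambda$ slightly below $\pi$ and the semi-admissible profile $\nu^\alpha_\pi$ produce the same germ after reparametrization.

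For (4) I would perform the clean surgery $\CP^{m/2}\#_D Q_x$ of Corollary \ref{c:flowGlueClean} along $D$, choosing the clean-surgery profile to "complete the remaining angle": the total geodesic-flow time accumulated by the $x$-surgery (height $\lambda_1$) followed by the $D$-surgery (height $\lambda_2$) should add up to a profile equivalent, after the perturbation argument of Lemma \ref{l:admissibleToDehnFiber} and Corollary \ref{c:admissibleToDehnFiber}, to a Dehn twist profile $\nu^{Dehn}_\epsilon$ on $T^*\CP^{m/2}$, i.e. one reaching height $2\pi$ with slope $-1$ near $0$. Since $\CP^{m/2}$ has the property highlighted in Remark \ref{r:RPHP} (injectivity radius $\pi$ at every point, with $\CP^{m/2}\setminus B_x(\pi)=D$ a smooth closed submanifold), the composite flow handle is smooth and the argument of Lemma \ref{l:admissibleToDehnFiber} upgrades the Lagrangian isotopy between the composite profile and the Dehn twist profile to a compactly supported Hamiltonian isotopy; this gives $\CP^{m/2}\#_D Q_x\simeq\tau_{\CP^{m/2}}(F_x)$, and feeding (2) into this yields the final displayed equality $\CP^{m/2}\#_D(\CP^{m/2}\#_x F_x)=\tau_{\CP^{m/2}}(F_x)$ up to Hamiltonian isotopy. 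The main obstacle I anticipate is item (3): verifying that the intermediate Lagrangian $Q_x$, which is defined by a \emph{smooth} flow through a neighbourhood of the cut locus $D$, actually glues to a \emph{smooth} embedded Lagrangian meeting $\CP^{m/2}$ \emph{cleanly} (rather than with some worse tangency) — this is precisely where the special geometry of $\CP^{m/2}$ (that $D$ is a smooth closed submanifold and that geodesics refocus there) is essential, and where one must be careful that the profile is chosen so the handle's closure adds exactly the divisor $D$ and no more, mirroring Remark \ref{r:non-admissibleSurgery}.
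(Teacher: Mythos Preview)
There is a genuine gap in your construction of $Q_x$, and it occurs exactly where you anticipated trouble. You build $Q_x$ as $\CP^{m/2}\#_x^{\nu_\lambda}F_x$ with an \emph{admissible} profile of height $\lambda<\pi$. But then the flow handle $H_{\nu_\lambda}$ has boundary $\exp(S_\lambda(T^*_x\CP^{m/2}))$, the geodesic sphere of radius $\lambda<\pi$ about $x$, and by Lemma~\ref{l:flowGluePoint} the surgery glues in the whole component of $\CP^{m/2}$ outside that sphere. Consequently $Q_x$ agrees with the zero section on an open neighbourhood of $D$, so $Q_x\cap\CP^{m/2}$ is not the submanifold $D$ but a codimension-zero region; item (3) fails. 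Your attempt to invoke the local computation $T^*U\cap(H_{\nu^\alpha}\cup D)=\{(q_a,q_b,-\tfrac1\alpha q_a,0)\}$ does not help: that formula is valid for the \emph{semi-admissible} profile $\nu^\alpha_\pi$ with $\nu^\alpha_\pi(0)=\pi$, not for an admissible profile with $\lambda<\pi$, and the two do not have the same germ near $D$.

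The paper's fix is to take $Q_x=\CP^{m/2}\#_x^{\nu^0_\pi}F_x$ with the semi-admissible profile $\nu^0_\pi$ satisfying $\nu^0_\pi\equiv\pi$ near $r=0$ (so $\alpha=0$). With this choice the handle, for small $\|p\|$, is exactly $\phi^{\vp}_\pi$ applied to $F_x\setminus\{x\}$, which is the conormal bundle $N^*_D$ minus its zero section; the closure therefore meets $\CP^{m/2}$ cleanly precisely along $D$, giving (1) and (3) immediately, while (2) follows from Lemma~\ref{l:admissibleToDehnFiber}. For (4) this choice also pays off: since $Q_x$ is literally $N^*_D$ near $D$, performing $\CP^{m/2}\#_D^{\nu_\lambda}Q_x$ with $0<\lambda<\pi$ simply continues the geodesic flow, and the result is identical to $\CP^{m/2}\#_x^{\nu_{\lambda+\pi}}F_x$ for some admissible $\nu_{\lambda+\pi}$ with $\pi<\lambda+\pi<2\pi$, which is Hamiltonian isotopic to $\tau_{\CP^{m/2}}(F_x)$ by Corollary~\ref{c:admissibleToDehnFiber}. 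Your outline for (4) is in the right spirit but only works once $Q_x$ is built with the $\alpha=0$ semi-admissible profile rather than a sub-injectivity admissible one.
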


\begin{proof}

Choose a semi-admissible profile $\nu^0_\pi$ such that $\nu^0_\pi=\pi$ near $r=0$ and let $Q_x=L\#^{\nu^0_{\pi}}_xF_x$.  (1)(3) follows from definition, and (2) is a consequence of Lemma \ref{l:admissibleToDehnFiber}.

To see (4), note that near $D$,
$Q_x$ coincide with the $\epsilon'$-disk conormal bundle at $D$ for some $\epsilon'\ll\epsilon$.  Therefore, $\CP^{m/2}\#^{\nu_\lambda}_D Q_x$ is identical to $\CP^{m/2}\#_D^{\nu_{\lambda+\pi}}F_x$ for any $0<\lambda<\pi$ and an appropriate choice of $\nu_{\lambda+\pi}$ (see Figure \ref{fig:isotopy-admToDT2} for the demonstration).  The latter is then Hamiltonian isotopic to $\tau_{\CP^{m/2}}(F_x)$ by Corollary \ref{c:admissibleToDehnFiber}.
\end{proof}

\begin{figure}[h]
\centering
\includegraphics[scale=1.2]{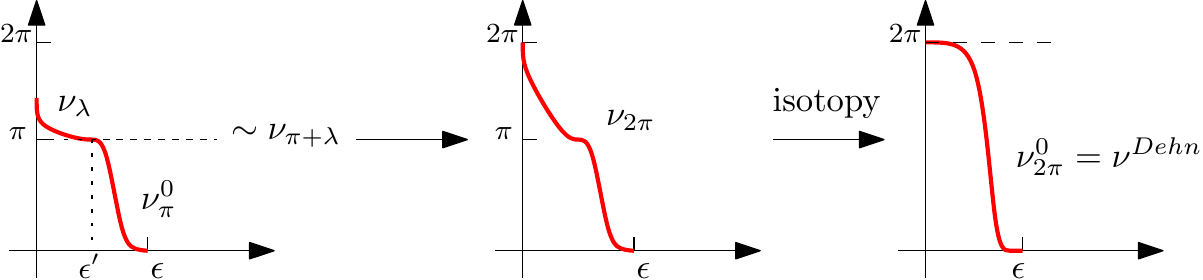}
\caption{Left and middle: identification of $\CP^{m/2}\#_D^{\nu_\lambda}G_x$ and $\CP^{m/2}\#_D^{\nu_{\lambda+\pi}}F_x$.  Right: isotopy from an admissible function to a Dehn twist profile.}
\label{fig:isotopy-admToDT2}
\end{figure}

Due to the symmetry of $\CP^{m/2}$, we have an alternative description to the Dehn twist of $F_{x_0}$, yielding another proof for Lemma \ref{l:cpnFiber}.  Essentially, this description only changes the role of the base and the fiber, but leads to a particularly handy criterion for the isotopy type of $\tau_{\CP^{m/2}}F_{x_0}$, which will be used in later sections. Denote the isometry group of $\CP^{m/2}$ as $G$ and the subgroup of it fixing $x_0$ as $G_{x_0}$. There is an induced $G_{x_0}$-action on $T^*\CP^{m/2}$.

\blem\label{l:rotLag}Let $\gamma(t)$ be a normalized geodesic on $\CP^{m/2}$ starting and ending at $x_0$.  Let $c(t)$ be a (rescaled) lift of $\gamma(t)$ in $T^*\CP^{m/2}$, that is, $c(t)=(\gamma(t),f(t)\gamma'(t))$ for some smooth $f(t)$ defined on $[0,2\pi]$ such that $f(\pi)\neq0$ and $f(0)>0$ (recall that $\gamma'(t)$ is identified with its dual).
Then the orbit $G_{x_0}\cdot c(t)$ is a Lagrangian which is possibly immersed.

Moreover, assume further

\begin{enumerate}[(a)]
\item $$\frac{d^n(f^{-1}(t))}{dt^n}(f(0))=0$$ for all $n\ge1$, and $f$ is strictly decreasing near $t=0$,
\item $f(2\pi)=0$ and $f'(t)<0$ when $t\in(2\pi-\delta,2\pi]$ for some small $\delta>0$.
\end{enumerate}

Then $G_{x_0} \cdot c(t)$ can be extended to a proper Lagrangian immersion $L_f$, such that
\begin{enumerate}[(i)]
\item it overlaps with $F_{x_0}$ along $\{ p \in F_{x_0}: \|p\|>f(0) \}$,
\item it is isotopic to $\tau_{\CP^{\frac{m}{2}}}F_{x_0}$ through a smooth family of Lagrangian immersions with property (i).
\end{enumerate}
\elem

\bpf For the first assertion, $G_{x_0}\cdot c(t)$ is the graph of $d(F \circ dist_{x_0}(\cdot))$ when $t\in(0,\pi)$, and $d(F \circ (2\pi-dist_{x_0}(\cdot)))$ when $t\in(\pi,2\pi)$, for $F'(t)=f(t)$.  The condition (a) guarantees the smoothness of gluing with $F_{x_0}$, and (b) the smoothness at $t=2\pi$.  The smoothness and Lagrangian properties at the critical set $D$ and $x_0$ can be checked directly and using that $f(\pi) \neq 0$ and Lagrangian property is a closed condition.

For the last isotopy statement, find an isotopy of smooth functions, within the class of those satisfying $(a)(b)$, from $f(t)$ to some $g(t)$ which is strictly monotonic (decreasing) in $[0,2\pi]$.  This induces an isotopy of Lagrangian immersions from $L_f$ to some $L_g$.

Consider $L\#^{\nu^{Dehn}}_{x_0}F_{x_0}$ as in Corollary \ref{c:admissibleToDehnFiber}.  This Lagrangian and $L_g$ are both $G_{x_0}$-invariant, it is not hard to check that with $\nu^{Dehn}=g^{-1}$, the two Lagrangians coincide.  The conclusion hence follows.
\epf

\brmk\label{r:rotLag} As the proof showed, one should heuristically regard $f(t)$ as the inverse function of certain admissible function $\nu$.

The possible immersion points appears if and only if there is $t_0<\pi$, such that $f(t_0)=-f(2\pi-t_0)$.  Otherwise, all above assertions can be improved to the class of embedded Lagrangians.

\ermk

\subsection{Product version}

In this section we prove Theorem \ref{t:surgeries} (1)(3). The proofs here are similar to that in the last section, and should be considered as family versions of it.
In this subsection, we use $S$ to denote $S^n$, $\mathbb{RP}^n$, $\mathbb{CP}^{\frac{m}{2}}$ or $\mathbb{HP}^n$
equipped with the Riemannian metric such that every closed geodesic is of length $2\pi$.

For the moment, let $S \subset (M,\w)$ be a Lagrangian sphere.
One may consider the clean surgery of $L_1=S\times S^-$ and $L_2=\Delta$ in $M\times M^-$.
In this case, they cleanly intersect at $D=\Delta_{S} \subset S \times S^-$.
In Definition \ref{d:subbundleHandle}, take $E_2=S \times (T^*S)^- \subset T^*S\times (T^*S)^-$,
$E_1=T^*S \times S^- \subset T^*S\times (T^*S)^-$
and an $\pi$-admissible function $\nu_\pi$.

Now consider a point $(p,p)\in \Delta$ in a Weinstein neighborhood of $L_1$, where $p$ can be considered as a point on $T_\epsilon^*S$ for a small $\epsilon>0$.
The flow in Definition \ref{d:subbundleHandle} defines a symplectomorphism fixing the first coordinate in $(T^*S \times (T^*S)^-) \backslash E_1$;
when restricted to $\Delta_{S}$, the
$E_2$-flow sends $(p,p)\mapsto(p, \phi^{\vp}_{\nu_\pi(||p||)}(p))$.  This is exactly the graph of $\tau_{S}^{-1}$ (the inverse owes to the negation of symplectic form on $M^-$), except that we have used an admissible profile for the handle which is not a Dehn twist profile. Lemma \ref{l:adjust} below ensures that this could be compensated by a local Hamiltonian perturbation.
Hence modulo Lemma \ref{l:adjust}, this shows that $(S\times S^-)\#^{\nu_{\pi}}_{\Delta_{S},E_2}\Delta=Graph(\tau^{-1}_{S})$.
The whole construction applies when $S$ is $\mathbb{RP}^n$, $\mathbb{CP}^{\frac{m}{2}}$ or $\mathbb{HP}^n$, except that the admissible profile has $\nu(0)=2\pi$.

\begin{figure}[h]
\includegraphics[scale=0.6]{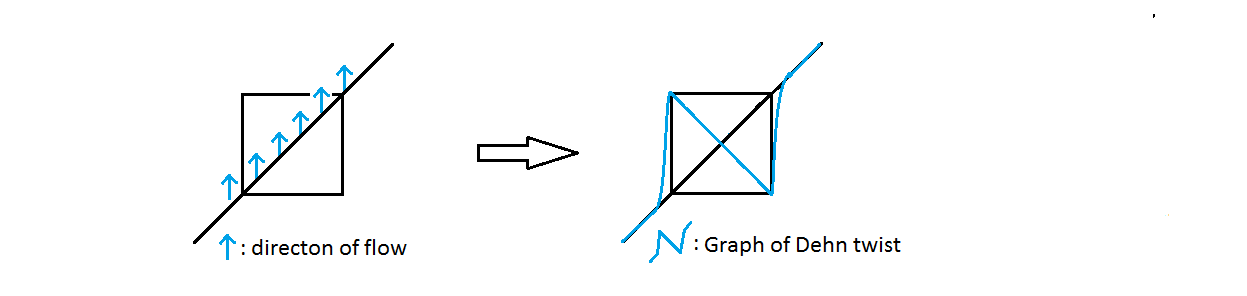}
\caption{$E_2$-flow surgery resulting the graph of Dehn Twist.}
\label{fig: GraphofDehnTwist}
\end{figure}

\begin{lemma}\label{l:adjust}
Let $S$ be $S^n$, $\mathbb{RP}^n$, $\mathbb{CP}^{\frac{m}{2}}$ or $\mathbb{HP}^n$.
Let $\nu_{\lambda_i}$ be $\lambda_i$-admissible functions such that $(k-1)\pi< \lambda_i < k\pi$ for some positive integer $k$ for both $i=1,2$.
Then the $E_2$-flow surged Lagrangian manifolds $(S\times S^-) \#^{\nu_{\lambda_i}}_{\Delta_S,E_2} \Delta$ above by $\nu_{\lambda_i}$ are Hamiltonian isotopic.

Moreover, if we choose a semi-admissible function $\nu_{k\pi}^{\alpha}$ such that  $\nu_{k\pi}^{\alpha}(r)=k\pi-\alpha r$ near $r=0$ ($\alpha \ge 0$),
then $(S\times S^-) \#^{\nu_{k\pi}^{\alpha}}_{\Delta_S,E_2} \Delta$
is a smooth Lagrangian that is Hamiltonian isotopic to $(S\times S^-) \#^{\nu_{\lambda_i}}_{\Delta_S,E_2} \Delta$.

Furthermore, these Hamiltonian isotopy can be chosen to be $Isom(S)$ invariant, where $Isom(S)$ is the diagonal isometry group in $Isom(S)\times Isom(S)$
acting on $T^*S \times (T^*S)^-$.
\end{lemma}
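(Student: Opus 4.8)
The plan is to mirror the proof of Lemma \ref{l:admissibleToDehnFiber} verbatim, transporting the argument from the cotangent fiber $F_x\subset T^*L$ to the conormal bundle $N^*_{\Delta_S}(S\times S^-)$ inside $T^*(S\times S^-)$ via the $E_2$-decomposition set up in Definition \ref{d:subbundleHandle}. The key structural observation is that, because the $E_2$-flow is just the normalized geodesic flow on the second factor (trivial on the first), the entire surgery construction $(S\times S^-)\#^{\nu}_{\Delta_S,E_2}\Delta$ is, fiberwise over $S$ (the first factor), the \emph{same} surgery model as $L\#^{\nu}_x F_x$ with $L$ running over the rank-one symmetric spaces in question. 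Thus every assertion follows by a parametrized (over the first factor $S$) version of the corresponding assertion in Lemma \ref{l:admissibleToDehnFiber}, and one only needs to check that the Hamiltonian isotopies produced there can be made smoothly in the $S$-parameter and $Isom(S)$-equivariant.

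First I would establish the first statement: for $(k-1)\pi<\lambda_i<k\pi$, the space of $\lambda$-admissible functions with $\lambda$ in this open interval is connected, so choose a smooth path $\{\nu_t\}$ from $\nu_{\lambda_1}$ to $\nu_{\lambda_2}$; since along this path the boundary sphere bundle $\partial H^{\Delta_S,E_2}_{\nu_t}$ never crosses the critical locus of $\exp^{E_2}$ (here one invokes exactly the hypothesis that $(k-1)\pi<\lambda<k\pi$ keeps us between consecutive conjugate radii of the closed-geodesic metric), this gives a smooth Lagrangian isotopy of the surged manifolds. To promote it to a Hamiltonian isotopy one checks $H^1$ of the surged Lagrangian rel its cylindrical end vanishes — this is the analogue of the computation in Example \ref{e: injectivity radius}, and here it holds because the surged Lagrangian is diffeomorphic to $\Delta\cong M$ away from a handle region that retracts onto something simply connected, or more simply one notes the isotopy is supported in a Weinstein neighborhood where the flux can be killed. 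For the second statement, with $\nu^\alpha_{k\pi}(r)=k\pi-\alpha r$ near $r=0$, I would pick, as in the $\CP^{m/2}$ computation, adapted local Darboux coordinates near $D=\Delta_S$ (split as $q_a,q_b,p_a,p_b$ with $E_2$-directions carrying $p_a$), verify directly that $H^{\Delta_S,E_2}_{\nu^\alpha}=\{(-\alpha p_a, q_b, p_a, 0)\}$ glues smoothly with $D$ to form $Cl(H_{\nu^\alpha})$, and that locally near $D$ this closure is the graph of $d(-\tfrac{1}{2\alpha}\,dist^2(\cdot,D))$; then interpolating $f_t(r)=tf(r)-(1-t)\tfrac{1}{2\alpha}r$ between this and the graph of an honest admissible profile with $\lambda<\pi$ produces the required Hamiltonian isotopy. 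The $Isom(S)$-equivariance comes for free in all of these steps because every function used ($\nu_t$, the distance-squared potentials, the interpolations) depends only on the $E_2$-norm, which the diagonal isometry group preserves by construction (condition (ii) of the family setup).

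The main obstacle is the second statement's smooth-gluing claim when $\lambda=k\pi$ hits the conjugate radius: the profile $\nu^\alpha_{k\pi}$ is only semi-admissible, so a priori the handle $H^{\Delta_S,E_2}_{\nu^\alpha}$ and the ``opposite divisor'' $D$ need not patch smoothly (cf. Remark \ref{r:non-admissibleSurgery}), and one must exhibit explicitly — as in the $\CP^{m/2}$ local-coordinate computation — that for these particular symmetric spaces the geodesic flow carries the conormal sphere bundle back onto a smooth submanifold so that the closure $Cl(H^{\Delta_S,E_2}_{\nu^\alpha})$ is genuinely a smooth Lagrangian. This requires Remark \ref{r:RPHP}'s common feature (every point has injectivity radius $\pi$ and the complement of a ball is a smooth closed submanifold), applied to the second factor uniformly over the first factor; making this uniform in the $S$-parameter and checking the Lagrangian condition at the critical set $D$ (where one uses that the Lagrangian condition is closed, so it suffices to verify it off $D$) is where the bookkeeping is genuinely needed, though it is no harder than the fiber case already carried out.
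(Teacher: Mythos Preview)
Your approach is essentially the same as the paper's, and the overall strategy is correct. Two points to sharpen:

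First, there is a slip in your second paragraph: you write ``adapted local Darboux coordinates near $D=\Delta_S$'', but the smooth-gluing issue for the semi-admissible profile $\nu^\alpha_{k\pi}$ occurs at the \emph{opposite} locus $D^{op}=\{(x,y)\in S\times S^-:\,dist(x,y)=\pi\}$, not at $\Delta_S$. You recognize this correctly in your final paragraph (``the `opposite divisor' $D$''), so this is only a notational inconsistency, but it should be fixed since the two sets play very different roles.

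Second, the paper flags a subtlety you gloss over: the neighborhood of $D^{op}$ in $S\times S^-$ is a $\CP^{m/2-1}$-bundle over $S$ (via projection to the first factor), and one picks a local trivialization $U^B\times U^F$ adapted to this bundle structure. The paper emphasizes that this product decomposition $U^B\times U^F$ is \emph{not} compatible with the ambient product $S\times S^-$; rather, each $\{q\}\times U^F$ sits inside the second $S$-factor. So your phrase ``fiberwise over $S$ (the first factor)'' and ``uniform in the $S$-parameter'' is morally right but requires care: the local computation \eqref{e:E2local} couples the two factors through the condition $\phi^{\vp}_{\nu^\alpha(\|p^B\|)}(q^B,p^B)=(-\alpha p^F_a,q^F_b,p^F_a,0)$, and the smoothness argument tracks how $p^B\to 0$ forces $p^F_a\to 0$ linearly. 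This is the parametrized analogue of \eqref{e:loc}, but the parametrization is along the bundle $D^{op}\to S$, not along the naive first factor. Once you set up these coordinates correctly, the rest of your argument (graph of $d(-\frac{1}{2\alpha}dist^2(\cdot,D^{op}))$, interpolation, $Isom(S)$-equivariance) goes through exactly as you describe and as the paper does.
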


We have the following Corollary whose proof is similar to Corollary \ref{c:admissibleToDehnFiber}

\begin{corr}\label{c:admissibleToDehnProduct}
For $\pi< \lambda < 2\pi$ and $S$ being $\mathbb{RP}^n$, $\mathbb{CP}^{\frac{m}{2}}$ or $\mathbb{HP}^n$ (resp. $0< \lambda < \pi$ and $S=S^n$),
$(S \times S^-)\#_{\Delta_S,E_2}^{\nu_{\lambda}}\Delta$ is Hamiltonian isotopic to $Graph(\tau_S^{-1})$.
\end{corr}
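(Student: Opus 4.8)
The plan is to reproduce, in the product/$E_2$ setting, the one-line argument of Corollary \ref{c:admissibleToDehnFiber}: exhibit a distinguished semi-admissible profile for which the $E_2$-flow surgery is, on the nose, the graph of $\tau_S^{-1}$, and then quote Lemma \ref{l:adjust} to pass from that profile to an arbitrary admissible $\nu_\lambda$ in the range claimed.

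First I would single out the relevant profile. When $\alpha=1$, the semi-admissible function $\nu^{\alpha}_{k\pi}$ appearing in Lemma \ref{l:adjust} satisfies $\nu^{1}_{k\pi}(r)=k\pi-r$ near $r=0$; for $k=1$ this is precisely the spherical Dehn twist profile, and for $k=2$ it is precisely the (non-spherical) Dehn twist profile of Definition \ref{defn: Dehn twist on cotangent bundles}. By the discussion immediately preceding Lemma \ref{l:adjust}, for any profile $\nu$ the $E_2$-flow defining $(S\times S^-)\#^{\nu}_{\Delta_S,E_2}\Delta$ fixes the first factor and, along $\Delta$, sends $(p,p)\mapsto(p,\phi^{\vp}_{\nu(\|p\|)}(p))$; taking $\nu=\nu^{Dehn}_{\epsilon}$ and invoking Definition \ref{defn: Dehn twist on cotangent bundles}, the map $\xi\mapsto\phi^{\vp}_{\nu^{Dehn}_{\epsilon}(\|\xi\|)}(\xi)$ is by definition $\tau_S$ on $T^*S\setminus 0_{section}$, and $Cl(H_{\nu^{Dehn}_{\epsilon}}^{\Delta_S,E_2})$ closes up along $\Delta$ over the opposite diagonal so as to match the extension of $\tau_S$ across the zero section (the identity for the projective spaces, the antipodal map for $S^n$). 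Hence $(S\times S^-)\#^{\nu^{Dehn}_{\epsilon}}_{\Delta_S,E_2}\Delta$ equals $Graph(\tau_S^{-1})$ up to a compactly supported Hamiltonian isotopy; the fact that this surgery with a non-admissible profile still produces a \emph{smooth} Lagrangian is exactly the ``smooth Lagrangian'' clause of Lemma \ref{l:adjust} (compare Remark \ref{r:non-admissibleSurgery} and the local computation in the proof of Lemma \ref{l:admissibleToDehnFiber}, carried out here fiberwise in the $E_2$-direction).

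Then I would simply invoke Lemma \ref{l:adjust}: for $S=S^n$ take $k=1$ and $0<\lambda<\pi$, and for $S=\RP^n,\CP^{m/2},\mathbb{HP}^n$ take $k=2$ and $\pi<\lambda<2\pi$. The ``moreover'' part of the Lemma then produces a compactly supported Hamiltonian isotopy from $(S\times S^-)\#^{\nu_{\lambda}}_{\Delta_S,E_2}\Delta$ to $(S\times S^-)\#^{\nu^{1}_{k\pi}}_{\Delta_S,E_2}\Delta$, i.e.\ to $Graph(\tau_S^{-1})$ by the previous paragraph. Composing the two isotopies gives the Corollary, with exactly the stated ranges.

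The only genuinely delicate step is the claimed identity $(S\times S^-)\#^{\nu^{Dehn}_{\epsilon}}_{\Delta_S,E_2}\Delta=Graph(\tau_S^{-1})$ up to isotopy: one must check that the closure of the $E_2$-flow handle built from a Dehn twist profile glues smoothly to $\Delta$ along the opposite diagonal and reproduces the correct zero-section extension (antipodal versus identity). This is the clean-intersection/product analogue of what is verified for a single intersection point in Lemma \ref{l:Surgery=Dehn}, and it is handled by the same local coordinate computation used in the proof of Lemma \ref{l:admissibleToDehnFiber}; everything after that is formal bookkeeping of the two Hamiltonian isotopies above.
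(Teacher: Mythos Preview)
Your proposal is correct and follows essentially the same approach as the paper. The paper's proof is literally one sentence (``similar to Corollary \ref{c:admissibleToDehnFiber}''), and the unpacking you give---identify $\nu^{1}_{k\pi}$ as a Dehn twist profile (with $k=1$ for $S^n$ and $k=2$ for the projective cases), observe that with this profile the $E_2$-flow surgery is on the nose $Graph(\tau_S^{-1})$ by the discussion preceding Lemma \ref{l:adjust}, then quote Lemma \ref{l:adjust} to pass to an arbitrary admissible $\nu_\lambda$ in the stated range---is exactly what that one sentence encodes.
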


\begin{proof}[Proof of Lemma \ref{l:adjust}]
The proof of the first statement is exactly the same as Lemma \ref{l:admissibleToDehnFiber}.
For the second statement, we again only consider the case that $k=1$ and $S=\mathbb{CP}^{\frac{m}{2}}$ and the remaining cases are similar.

Define $D^{op}=\{(x,y) \in S \times S| dist(x,y)=\pi\}$.
Projection to $x$ equips $D^{op}$ with a $\mathbb{CP}^{\frac{m}{2}-1}$-bundle structure over $S=\mathbb{CP}^{\frac{m}{2}}$.
Therefore, the neighborhood of $D^{op}$ in $S \times S$ is the total space of a fiber bundle $\wt\eV\rightarrow S$, whose fiber is a topological $\cO(1)$-bundle $\eV$ over $\mathbb{CP}^{\frac{m}{2}-1}$.
We pick a local trivialization $U^B \times U^F$ of $\wt\eV$ for $U^B\subset S$ and $U^F\subset \eV$.
Readers should note that, the product decomposition $U^B\times U^F$ is not compatible with that of $L=S\times S^-$
, but $\{q\} \times U^F$ is an open set of the second factor of $S$ for any $q \in U^B$.

Consider a choice of local coordinates
$(q^B,q^F)=(q^B_1,\dots,q^B_m,q^F_1,\dots,q^F_m)$
adapted to $D^{op}$ in the sense that
$(U^B \times U^F) \cap D^{op}=\{q^F_1=q^F_2=0\}$ and $c(t)=(q^B,tq^F_1,tq^F_2,q^F_3,\dots,q^F_m)$ are normalized geodesics for all  $(q^B,q^F)$.
It induces canonically a Weinstein neighborhood $T^*(U^B \times U^F)$ in $T^*(S\times S^-)$.
We write a point in $T^*(U^B \times U^F)$ as $(q^B,p^B,q^F_a,q^F_b,p^F_a,p^F_b)$, where $q^F_a=(q^F_1,q^F_2)$, $q^F_b=(q^F_3,\dots,q^F_m)$ and similarly for $p^F_a$ and $p^F_b$.
Since $H_{\nu^{\alpha}}$ is defined by the parametrized geodesic flow when restricted on the second factor of $T^*S$, we have a parametrized version of \eqref{e:loc}
\beq\label{e:E2local} \begin{aligned}&T^*(U^B \times U^F)  \cap H_{\nu^{\alpha}}\\
=&\{ (q^B,p^B,-\alpha p^F_a,q^F_b,p^F_a,0)| p^B \neq 0, \phi^{\vp}_{\nu^{\alpha}(\|p^B\|)}(q^B,p^B)=(-\alpha p^F_a,q^F_b,p^F_a,0)\}\end{aligned}\eeq

Here, both $\p^{\vp_B}_{\nu^\a_\pi(||p^B||)}(q^B,p^B)$ and $(-\alpha p^F_a,q^F_b,p^F_a,0)$ are considered as points in $T^*_\e S$ although they belong to different factors.
Therefore, in $ T^*(U^B \times U^F)  \cap H_{\nu^{\alpha}}$,
fixing $q^B$ and letting $p^B$ go to $0$ linearly leads to
fixing $q^F_b$ and letting $p^F_a$ go to zero linearly.  All above conclusions can be glued across different charts.  Therefore, one can see that $H_{\nu^{\alpha}}$ and $D^{op}$ can be glued smoothly.
The fact that $H_{\nu^{\alpha}}$ can be glued smoothly with $\Delta$ is
because all order of derivatives of $\nu^{\alpha}$ vanish at $r=\epsilon$.
It results in a smooth Lagrangian, which we denote as $(S\times S^-) \#^{\nu^{\alpha}}_{\Delta_S,E_2} \Delta$.

Finally, to show that $(S\times S^-) \#^{\nu^{\alpha}}_{\Delta_S,E_2} \Delta$ is Hamiltonian isotopic to $(S \times S^-) \#^{\nu_{\lambda_1}}_{\Delta_S,E_2} \Delta$.
We can choose $\{\nu^{t}\}_{t \in [\alpha,\infty) }$ interpolating $\nu^\alpha$ and $\nu_{\l_1}$
as in the proof of Lemma \ref{l:admissibleToDehnFiber}.
This is a smooth Lagrangian isotopy which is invariant under the diagonal $Isom(S)$ action.

\end{proof}

In parallel to Lemma \ref{l:cpnFiber}, we have the following.

\begin{lemma}\label{l:CPnCob}
Let $S$ be $\mathbb{RP}^n$, $\mathbb{CP}^{\frac{m}{2}}$ or $\mathbb{HP}^n$ and $D^{op}=\{(x,y)\in S \times S^-| dist(x,y)=\pi\}$.
Up to Hamiltonian isotopy in $T^*S \times (T^*S)^{-}$, we have
$(S \times S^-) \#_{D^{op},E_2}(( S \times S^-)
\#_{\Delta_S,E_2} \Delta)=Graph(\tau_{S}^{-1})$.
\end{lemma}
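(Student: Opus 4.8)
The strategy is to iterate the computation that produced Lemma \ref{l:CPnCob}'s single-step analogue (the discussion preceding Lemma \ref{l:adjust}, together with Corollary \ref{c:admissibleToDehnProduct}), exactly as Lemma \ref{l:cpnFiber} iterates the fiber-version surgery. First I would record the intermediate object: by the discussion before Lemma \ref{l:adjust}, using an $E_2$-flow handle with a semi-admissible profile $\nu_\pi^0$ (so $\nu_\pi^0 \equiv \pi$ near $r=0$, vanishing to all orders at $r=\epsilon$), set $Q:=(S\times S^-)\#^{\nu_\pi^0}_{\Delta_S,E_2}\Delta$. The parametrized local-coordinates computation in the proof of Lemma \ref{l:adjust} (equation \eqref{e:E2local}) shows that near $D^{op}$ this Lagrangian coincides with the $\epsilon'$-disk conormal bundle $N^*_{D^{op}}(S\times S^-)$ for some $\epsilon'\ll\epsilon$, and that away from the zero section it agrees with $\Delta$. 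By Lemma \ref{l:adjust}, $Q$ is Hamiltonian isotopic, $\Isom(S)$-equivariantly, to $(S\times S^-)\#^{\nu_{\lambda}}_{\Delta_S,E_2}\Delta$ for admissible $\nu_\lambda$ with $\pi<\lambda<2\pi$, hence to $Graph(\tau_S^{-1})$ by Corollary \ref{c:admissibleToDehnProduct}. That deals with the "inner" surgery.

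Next I would perform the outer $E_2$-flow surgery of $S\times S^-$ with $Q$ along $D^{op}$. Here one must check that $Q$ and $S\times S^-$ still intersect cleanly along $D^{op}$: this follows because near $D^{op}$, $Q$ is the conormal bundle $N^*_{D^{op}}$, and $\Isom(S)$-equivariance plus the transversality of $D^{op}$ to the $\{p\}\times K_2$ leaves guarantees the hypotheses of Lemma \ref{l:flowGlueCleanE2} (or rather its family analogue, Lemma \ref{l:familyE2flow}, applied with $(\widetilde L_0,\widetilde D)=(S\times S^-, D^{op})$ fibered over $S$ with $\CP^{m/2-1}$-fibers, exactly as in the proof of Lemma \ref{l:adjust}). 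Then, just as in the proof of Lemma \ref{l:cpnFiber}(4), because $Q$ equals the $\epsilon'$-conormal disk bundle near $D^{op}$, the handle for $(S\times S^-)\#^{\nu_\lambda}_{D^{op},E_2}Q$ is the "continuation" of the handle for $(S\times S^-)\#^{\nu_{\lambda+\pi}}_{D^{op},E_2}\Delta$: concatenating the two admissible profiles (the $\nu_\pi^0$-stretch that built $Q$, followed by the $\nu_\lambda$-stretch) yields a single admissible profile $\nu_{\lambda+\pi}$ with $\pi<\lambda+\pi<2\pi$. By Corollary \ref{c:admissibleToDehnProduct} again, $(S\times S^-)\#^{\nu_{\lambda+\pi}}_{D^{op},E_2}\Delta$ is Hamiltonian isotopic to $Graph(\tau_S^{-1})$, which gives the claim.

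I expect the main obstacle to be the bookkeeping around the second, "outer" surgery: one must verify that after replacing $\Delta$ by $Q$ the geometric hypotheses of the family $E_2$-surgery (Lemma \ref{l:familyE2flow}, and in particular the $E_2$-injectivity-radius / embeddedness condition for $exp^{E_2}_\lambda$ near $D^{op}$) are genuinely met — the subtlety is that $D^{op}$ is now opposite to a \emph{divisor's worth} of points, and the bundle $\widetilde{\mathcal V}\to S$ of topological $\mathcal O(1)$-bundles over $\CP^{m/2-1}$ from the proof of Lemma \ref{l:adjust} must carry the decomposition $\widetilde{\mathcal E}=\widetilde{\mathcal E}_1\oplus\widetilde{\mathcal E}_2$ coherently, so that the concatenation of profiles is legitimate fiberwise and glues over $S$. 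Once one grants that the local picture of $Q$ near $D^{op}$ is literally the conormal disk bundle (which \eqref{e:E2local} provides), the profile-concatenation argument and the final appeal to Corollary \ref{c:admissibleToDehnProduct} are routine, paralleling Lemma \ref{l:cpnFiber} verbatim; the $\RP^n$ and $\mathbb{HP}^n$ cases follow by the same argument per Remark \ref{r:RPHP}.
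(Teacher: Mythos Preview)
Your approach is exactly the paper's: build $Q=(S\times S^-)\#^{\nu^0_\pi}_{\Delta_S,E_2}\Delta$ with the semi-admissible profile $\nu^0_\pi$, observe that it meets $S\times S^-$ cleanly along $D^{op}$ as the conormal disk bundle there, perform the second $E_2$-surgery, and then concatenate profiles to land in the range covered by Corollary~\ref{c:admissibleToDehnProduct}.

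There is, however, a genuine slip in your treatment of the inner object. With $\nu^0_\pi$ you are in the case $k=1$ of Lemma~\ref{l:adjust}, so $Q$ is Hamiltonian isotopic to $(S\times S^-)\#^{\nu_\lambda}_{\Delta_S,E_2}\Delta$ for $0<\lambda<\pi$, \emph{not} $\pi<\lambda<2\pi$. Thus $Q$ is isotopic to the inner surgery $(S\times S^-)\#_{\Delta_S,E_2}\Delta$, but it is \emph{not} already isotopic to $Graph(\tau_S^{-1})$; indeed, if it were, the outer surgery would be superfluous and the Lemma would be trivial. The whole point of the second step is to push the profile from the $(0,\pi)$ regime into the $(\pi,2\pi)$ regime. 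Relatedly, in your concatenation step the composite should read $(S\times S^-)\#^{\nu_{\lambda+\pi}}_{\Delta_S,E_2}\Delta$: the clean intersection of $S\times S^-$ with $\Delta$ is along $\Delta_S$, not $D^{op}$. Once these two points are corrected, the profile-concatenation and the final appeal to Corollary~\ref{c:admissibleToDehnProduct} (now legitimately with $\pi<\lambda+\pi<2\pi$) go through exactly as in Lemma~\ref{l:cpnFiber}.
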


\begin{proof}
The proof is similar to that of Lemma \ref{l:cpnFiber} and we again assume $S=\mathbb{CP}^{\frac{m}{2}}$.
We use the function $\nu^{0}_{\pi}$ in Lemma \ref{l:cpnFiber} to define
$\cS=( S \times S )
\#_{\Delta_L,E_2}^{\nu^{0}_{\pi}} \Delta$, which is Hamiltonian isotopic to $( S \times S^-)
\#_{\Delta_S,E_2} \Delta$ by Lemma \ref{l:adjust}.
Now, $\cS$ intersects $S\times S^-$ cleanly along $D^{op}$.
We can perform another $E_2$-flow surgery using semi-admissible profiles from $S \times S^-$ to $\cS$ to obtain the result, by Lemma \ref{l:adjust} and Lemma \ref{l:cpnFiber}.

\end{proof}

\subsection{Family versions}\label{s:familyTwist}

One may also generalize the above example to the case of family Dehn twists \cite{WWfamily}.
Recall that a \textbf{spherically fibered coisotropic manifold} $i:C^{2n-l}\hookrightarrow M^{2n}$ is a coisotropic submanifold so that there is a fibration $\rho: C\rightarrow B^{2n-2l}$ over a symplectic base, while the fibers are null-leaves $S^l$.  In other words, $\rho^*\w_B=i^*\w_M$.  Moreover, we equip the fibers with round metric such that geodesics are closed of length $2\pi$ and ask the structure group of $\rho$ lies in $SO(l+1)$.

A neighborhood $U$ of $C$ can be symplectically identified with $T_\epsilon^*S^l\times_{SO(l+1)} P$, where $P$ is the principal $SO(l+1)$-bundle associated to $C$ and $T_\epsilon^*S^l$ consists of the cotangent vectors with norm less than $\epsilon$.  The \textbf{family Dehn twist $\tau_C$} can then be defined fiberwisely as the fiberwise Hamiltonian function $\wt\nu^{Dehn}_\epsilon(||p||)$ (see Remark \ref{r:tilde}) is preserved by the structure group.
With respect to the fiberwise metric $g^v$, the function $h(\cdot)=\wt\nu(||\cdot||_{g^v})$ defines a flow along fibers whose
time-$1$ map is the desired Dehn twist (with a continuation over $C$ defined by fiber-wise antipodal map on $C$).

Now consider a Lagrangian embedding $\wt C:=C\times_B C\hookrightarrow M\times M$.  Explicitly, the image of this map is

$$\wt C=\{(x, y)\in C\times C\subset M\times M: \pi(x)=\pi(y)\},$$

\noindent where $\pi: C\rightarrow B$ is the $S^l$-bundle projection.
Indeed, $\wt C=C^t\circ C$ is a composition Lagrangian in the sense of \eqref{e:composition}.  Here we have abused the notation by identifying $C$ with its Lagrangian image in $B\times M$ defined by

$$\{(x, y)\in B\times C\subset B\times M: \pi(y)=x\}.$$

Note that $\wt C$ is a fiber bundle with fiber $S^l\times S^l$ and structural group the diagonal $SO(l+1)$.

Consider a point $(x, p)\in U$ where Dehn twist is performed.  Here $x\in B$ and $p\in T_\epsilon^*S^l$: this is an abuse of notation because $p$ is only well-defined up to an $SO(l+1)$ action.  Any point contained in $\Delta\cap (U\times U^{-})\subset M\times M^-$ thus takes the form $((x,p),(x,p))$.
In this setting, the graph of $\tau_C^{-1} $ in $ U \times U^-$ consist of points
$$Graph(\tau_C^{-1})=\{((x,p),(x,\phi^{\vp}_{\nu^{Dehn}(\|p\|)}(p)))| x\in B, p\in T_\epsilon^*S^l\}$$
where $p$ is again only well-defined up to an $SO(l+1)$ action, and $Graph(\tau_C^{-1})$ coincides with $\Delta$ outside $U \times U^-$.

Similar as before, we want to realize $Graph(\tau_C^{-1})$ as a surgery from $\wt{C}$ to $\Delta$.
In this case, we want to perform a family $E_2$-surgery.

In the notation of Section \ref{s:E2surgery}, let $\eL=\wt C, \eD=\Delta\cap\wt C$.  The $S^l\times S^l$-bundle structure (over $B$) on $\wt C$ is the needed bundle structure on $\eL$.  The restriction of $\Delta$ on a fiber $T^*_\epsilon\eL_b$ for $b\in B$ is precisely $\Delta_{T^*_\epsilon S^l}$, and the fiberwise $E_2$-flow is taken as the $E_2$-flow along the second factor of $S^l$, as described in the previous subsection.

Hence the whole situation on a fiber $T_\epsilon^*\eL_b=T_\epsilon^*(S^l\times S^l)$ is identical to the one in the previous subsection and defines a global Lagrangian handle $\eH_{\nu_\pi}$ by patching the local trivializations.

By the same token, we can define {\bf projectively fibered coisotropic manifold} which is a coisotropic manifold with null-leaves complex (or real, quaternionic) projective spaces. Family Dehn twists for these spaces are defined similarly.

\begin{lemma}\label{l:admissibleToDehnFamily}
Let $C \subset (M,\omega)$ be a spherically (resp. projectively) coisotropic submanifold with base $B$.
Let $\nu_{\lambda_i}$ be $\lambda_i$-admissible functions such that $(k-1)\pi< \lambda_i < k\pi$ for some positive integer $k$ for both $i=1,2$.
Then the family $E_2$-flow surged Lagrangian manifolds $\widetilde{C} \#^{\nu_{\lambda_i}}_{\eD,E_2} \Delta$ above by $\nu_{\lambda_i}$ are Hamiltonian isotopic.

Moreover, if we choose a semi-admissible function $\nu_{k\pi}^{\alpha}: (0,\infty) \to [0,k\pi)$ such that
$\nu_{k\pi}^{\alpha}(r)=k\pi-\alpha r$ near $r=0$ ($\alpha \ge 0$), then
$\widetilde{C} \#^{\nu^{\alpha}_{k\pi}}_{\eD,E_2} \Delta$
is a smooth Lagrangian that is Hamiltonian isotopic to $\wt{C} \#^{\nu_{\lambda_i}}_{\eD,E_2} \Delta$ .
\end{lemma}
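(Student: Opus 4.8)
The plan is to deduce the family statement from the fiberwise statement already proved in Lemma \ref{l:adjust}, exploiting that every construction of Section \ref{s:E2surgery} is functorial with respect to the structure group and hence descends to the associated bundle $\wt C\to B$. Throughout I would work, as in Lemma \ref{l:adjust} and Remark \ref{r:RPHP}, only with the representative case $k=1$ and null-leaves $S=\CP^{m/2}$ (the $\RP^n$, $\mathbb{HP}^n$ and higher-$k$ cases being word-for-word the same, with only the value $\nu(0)=k\pi$ changing).

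\textbf{First statement.} For fixed range $(k-1)\pi<\lambda_i<k\pi$ the space of $\lambda_i$-admissible functions is connected, so choose a smooth path $\{\nu_t\}$ from $\nu_{\lambda_1}$ to $\nu_{\lambda_2}$ with $(k-1)\pi<\nu_t(0)<k\pi$ for all $t$. Fiberwise, $\partial\eH_{\nu_t}$ is the $\exp^{E_2}$-image of the radius-$\nu_t(0)$ sphere bundle of $E_2$ over $\eD$; by the hypothesis on the range this stays an embedding disjoint from the relevant critical loci, and $\eH_{\nu_t}$ keeps gluing to the same component. Patching over local trivializations of $\wt C\to B$ this yields a smooth family of Lagrangian submanifolds $\wt C\#^{\nu_t}_{\eD,E_2}\Delta$, i.e. a compactly supported Lagrangian isotopy; upgrading it to a Hamiltonian isotopy is then done exactly as in Lemma \ref{l:admissibleToDehnFiber} (the generating closed one-form vanishes at the cylindrical end and is exact, cf. Example~\ref{e: injectivity radius}, together with the exactness/monotonicity hypothesis).

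\textbf{Second statement.} For the semi-admissible profile $\nu^{\alpha}_{k\pi}$ one has $Cl(\eH_{\nu^{\alpha}})\setminus\eH_{\nu^{\alpha}}$ equal to the sub-bundle $\eD^{op}\to B$ whose fiber is the opposite divisor $D^{op}=\{(x,y)\in S\times S:\ dist(x,y)=\pi\}$. I would pick local trivializations of $\wt C\to B$ adapted to the structure group ($SO(l+1)$, resp. the projective isometry group) and, inside each fiber, coordinates adapted to $D^{op}$ exactly as in the proof of Lemma \ref{l:adjust}; the local description of $\eH_{\nu^{\alpha}}$ is then the parametrized identity \eqref{e:E2local}. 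This shows $\eH_{\nu^{\alpha}}$ glues smoothly to the $\eD^{op}$-bundle (because $\nu^{\alpha}(r)=k\pi-\alpha r$ near $r=0$) and smoothly to $\Delta$ (because all derivatives of $\nu^{\alpha}$ vanish at $r=\epsilon$), producing the smooth Lagrangian $\wt C\#^{\nu^{\alpha}_{k\pi}}_{\eD,E_2}\Delta$. For the Hamiltonian isotopy, near $\eD^{op}$ the union $\eH_{\nu^{\alpha}}\cup\eD^{op}$ is fiberwise the graph of $d\!\left(-\tfrac{1}{2\alpha}dist^2(\cdot,\eD^{op})\right)$; interpolating $f_t(r)=tf(r)-(1-t)\tfrac{1}{2\alpha}r$ as in Lemma \ref{l:adjust} and taking the graph of $d(f_t\circ dist^2(\cdot,\eD^{op}))$ fiberwise gives a Lagrangian isotopy to $\wt C\#^{\nu_{\lambda}}_{\eD,E_2}\Delta$ for some admissible $\nu_{\lambda}$ with $0<\lambda<\pi$; composing with the first statement joins $\nu^{\alpha}$ to any $\nu_{\lambda_i}$.

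\textbf{Main obstacle.} The only content beyond Lemma \ref{l:adjust} is global patching: one must check that the fiberwise smoothing of the handle along $\eD^{op}$ and the fiberwise Hamiltonian correction $f_t$ assemble into global smooth objects over $B$. This is exactly where hypotheses (i)(ii) of Section \ref{s:E2surgery} and the equivariance clauses of Lemmas \ref{l:admissibleToDehnFiber} and \ref{l:adjust} enter: since every ingredient is manufactured from structure-group-invariant data (the norm Hamiltonian $\vp$, the conormal bundle $N^*_{\eD}$, the function $dist(\cdot,\eD^{op})$, and the invariant interpolations $\nu^t$ and $f_t$), it descends to the associated bundle. I expect this bookkeeping, rather than any analytic input, to be the crux, and it is routine once the invariance has been recorded in the fiber case.
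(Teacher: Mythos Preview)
Your overall strategy—reduce to Lemma~\ref{l:adjust} fiberwise and patch using invariance under the structure group—is exactly the paper's approach, and your handling of the smoothness of the semi-admissible surgery and the global patching of handles is correct.

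There is, however, a genuine gap in your argument for why the resulting Lagrangian isotopy is \emph{Hamiltonian}. You write that this ``is done exactly as in Lemma~\ref{l:admissibleToDehnFiber} (the generating closed one-form vanishes at the cylindrical end and is exact \dots).'' But in the family setting the surged Lagrangian $\wt C\#^{\nu_t}_{\eD,E_2}\Delta$ is \emph{closed} (it is Hamiltonian isotopic to $Graph(\tau_C^{-1})\cong M$), so there is no cylindrical end, and $H^1(M;\R)$ need not vanish. Thus the $H^1$-vanishing argument from Lemma~\ref{l:admissibleToDehnFiber} (and implicitly Lemma~\ref{l:adjust}, where $H^1(S;\R)=0$) simply does not apply here. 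The exactness/monotonicity hypothesis on $M$ does not by itself force a Lagrangian isotopy of closed Lagrangians to be Hamiltonian.

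The paper fills this gap with a different argument: the fiberwise isotopy $\iota_{L,t}$ is already known to be Hamiltonian by Lemma~\ref{l:adjust}, hence the one-form $\alpha_0=\iota_{L,t}^*(\omega\oplus -\omega)(\partial_t\iota_{L,t},\cdot)$ is exact on each fiber. Because both the fiberwise symplectic form and the isotopy are invariant under the structure group (the diagonal $Isom(S)$), so is $\alpha_0$ \emph{and its primitive}; these invariant primitives then patch over $B$ to a global primitive of $\iota_{\eL,t}^*(\omega_M\oplus -\omega_M)(\partial_t\iota_{\eL,t},\cdot)$, proving exactness and hence Hamiltonian-ness. This is the step you should add; the invariance clause (``Furthermore, these Hamiltonian isotopies can be chosen to be $Isom(S)$ invariant'') in Lemma~\ref{l:adjust} is recorded precisely for this purpose.
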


\begin{corr}\label{l:fiberedTwist}
For spherically (resp. projectively) coisotropic submanifold $C$, the family $E_2$-flow clean surgery $\wt C\#_{\eD,E_2} \Delta$ (resp. $\wt C\#_{\eD^{op},E_2} \wt C\#_{\eD,E_2} \Delta$) is Hamiltonian isotopic to $Graph(\tau_C^{-1})$.
Here $\eD^{op}$ is a $D^{op}$-bundle over the base $B$ and $D^{op}$ is as in Lemma \ref{l:CPnCob}.
\end{corr}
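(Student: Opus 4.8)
The plan is to deduce Corollary \ref{l:fiberedTwist} from Lemma \ref{l:admissibleToDehnFamily} in exactly the same way that Corollary \ref{c:admissibleToDehnProduct} was deduced from Lemma \ref{l:adjust}, and that Lemma \ref{l:cpnFiber}/Lemma \ref{l:CPnCob} were obtained by iterating such a statement. Concretely, one works fiberwise over the symplectic base $B$: the neighborhood data already identifies everything near $C$ with a bundle over $B$ whose fiber is $T^*_\epsilon S^l$ (resp. $T^*_\epsilon\CP^{m/2}$, etc.), and all the surgery constructions of Section \ref{s:E2surgery} glue over $B$ by the compatibility assumptions (i)(ii) and Lemma \ref{l:familyE2flow}. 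So it suffices to verify the assertion on a single fiber and check $SO(l+1)$- (resp. structure-group-) invariance of the isotopies, which is built into the statements of Lemmas \ref{l:adjust} and \ref{l:admissibleToDehnFamily}.

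First I would treat the spherical case. On a fiber $T^*_\epsilon\eL_b = T^*_\epsilon(S^l\times S^l)$, the restriction of $\Delta$ is $\Delta_{T^*_\epsilon S^l}$ and the fiberwise $E_2$-flow is the normalized geodesic flow on the second $S^l$-factor; this is precisely the setup of the ``product version'' subsection, where it was shown that $(S^l\times (S^l)^-)\#^{\nu_\pi}_{\Delta_{S^l},E_2}\Delta = Graph(\tau^{-1}_{S^l})$ modulo the perturbation Lemma \ref{l:adjust}. Applying this fiberwise and patching local trivializations (using that the handle $\eH_{\nu_\pi}$ was already constructed globally by patching, as noted just before Lemma \ref{l:admissibleToDehnFamily}), the surged Lagrangian $\wt C\#^{\nu_\pi}_{\eD,E_2}\Delta$ globally agrees with $Graph(\tau_C^{-1})$ up to using an admissible profile in place of a Dehn-twist profile. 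Lemma \ref{l:admissibleToDehnFamily} then provides a (structure-group-invariant, hence globally defined) Hamiltonian isotopy from the $\nu_\pi$-surgery to the $\nu^\alpha_\pi$-surgery with $\alpha=1$, i.e.\ to the one built from a genuine Dehn-twist profile, which is $Graph(\tau_C^{-1})$ by the fiberwise computation in Section \ref{s:familyTwist}. This gives the first assertion.

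For the projective case the injectivity radius obstruction appears: the relevant profile has $\nu(0)=2\pi$, which exceeds $r(D)$, so a single clean surgery cannot directly realize it, exactly as in Lemma \ref{l:cpnFiber} and Lemma \ref{l:CPnCob}. The remedy is the same two-step decomposition, now carried out in families. Using the semi-admissible profile $\nu^0_\pi$ (constant $=\pi$ near $r=0$) one forms $\cS := \wt C\#^{\nu^0_\pi}_{\eD,E_2}\Delta$ fiberwise; Lemma \ref{l:admissibleToDehnFamily} shows this is Hamiltonian isotopic to the honest family $E_2$-surgery, and by the fiberwise analysis of Lemma \ref{l:cpnFiber} it is an embedded Lagrangian that coincides with (the conormal neighborhood built from) $\wt C$ near a $D^{op}$-bundle $\eD^{op}$ over $B$, intersecting $\wt C$ cleanly there. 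One then performs a second family $E_2$-surgery along $\eD^{op}$ from $\wt C$ to $\cS$; fiberwise this is the situation of Lemma \ref{l:CPnCob}, where $\wt C\#_{D^{op},E_2}(\wt C\#_{\Delta_S,E_2}\Delta)=Graph(\tau^{-1}_S)$, so patching over $B$ yields $\wt C\#_{\eD^{op},E_2}\wt C\#_{\eD,E_2}\Delta \simeq Graph(\tau_C^{-1})$.

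The main obstacle is not any single estimate — there are none, as the whole argument is Hamiltonian-isotopy bookkeeping — but rather making sure that every fiberwise construction and every fiberwise Hamiltonian isotopy is equivariant under the structure group (the diagonal $SO(l+1)$, resp. the isometry group of the projective fiber preserving the relevant point/divisor) so that it genuinely descends to a global object over $B$; this is why Lemmas \ref{l:adjust} and \ref{l:admissibleToDehnFamily} were stated with the invariance clauses, and the proof should explicitly invoke those clauses at each gluing step. A secondary technical point is checking that the clean-intersection hypothesis of Pozniak (Proposition \ref{p:Poz99}) and the embedding hypothesis on $\exp^{E_2}_\lambda$ hold for $\eD^{op}$ in the second step — this follows because near $\eD^{op}$ the Lagrangian $\cS$ is, fiberwise, exactly an $\epsilon'$-conormal neighborhood with $\epsilon'\ll\epsilon$, as established in the proof of Lemma \ref{l:cpnFiber}, and this description is structure-group-invariant hence global.
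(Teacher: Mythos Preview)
Your proposal is correct and follows essentially the same approach as the paper. The paper's own proof is a single sentence — ``immediate consequence of Lemma~\ref{l:admissibleToDehnFamily} by setting $k=1$ for the spherical case and $k=2$ for the projective cases'' — and your write-up is a faithful expansion of what is needed to make that sentence precise, in particular the two-step decomposition in the projective case (the family analogue of Lemma~\ref{l:CPnCob}) and the appeal to the structure-group invariance clauses to patch the fiberwise isotopies over $B$.
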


\begin{proof}[Proof of Lemma \ref{l:admissibleToDehnFamily}]

We give the proof for the spherical case and the other cases are similar.
Since the construction in Lemma \ref{l:adjust} is $SO(l+1)$ invariant, we can apply Lemma \ref{l:adjust} to $\wt{C}$ and $\Delta \cap \wt{U}$
to obtain the desired Lagrangian isotopy from $\wt{C} \#^{\nu_{\lambda_1}}_{\eD,E_2} \Delta$ to $\wt{C} \#^{\nu_{\lambda_2}}_{\eD,E_2} \Delta$
and from $\widetilde{C} \#^{\nu^{\alpha}}_{\eD,E_2} \Delta$  to $\widetilde{C} \#^{\nu_{\lambda_i}}_{\eD,E_2} \Delta$.

What remains to show is that the Lagrangian isotopies are Hamiltonian isotopies.
We prove the case where the Lagrangian isotopy is from
$\widetilde{C} \#^{\nu^{\alpha}}_{\eD,E_2} \Delta$  to $\widetilde{C} \#^{\nu_{\lambda_i}}_{\eD,E_2} \Delta$.
The other case is similar.
Denote the Lagrangian isotopy as $\iota_{\eL,t} : \eL \to M \times M^-$.
Notice that the Lagrangian isotopy $\iota_{\eL,t}$ restricting to each fiber $\iota_{L,t}:L \to T^*S^l \times (T^*S^l)^{-1}$ is a Hamiltonian isotopy and hence an exact isotopy
(i.e $\alpha_0=\iota_{L,t}^*(\omega_{can} \oplus -\omega_{can})(\frac{\partial \iota_{L,t}}{\partial t},\cdot)$ is exact).
Since the fiberwise symplectic form and the isotopy are $SO(l+1)$-invariant, so is $\alpha_0$ and its primitive.
These primitives on fibers can be patched together to form the primitive of
$\alpha=\iota_{\eL,t}^*(\omega_{M} \oplus -\omega_{M})(\frac{\partial \iota_{\eL,t}}{\partial t},\cdot)$
and hence $\iota_{\eL,t}$ is an exact, thus a Hamiltonian isotopy.

Alternatively, one may also patch the Hamiltonian isotopy from Lemma \ref{l:adjust}.  We will leave the details to the reader.

\end{proof}

Corollary \ref{c:fiberedTwist} is now an immediate consequence of \cite{l:admissibleToDehnFamily} by setting $k=1$ for spherical case and $k=2$ for the projective cases.

\section{Gradings and energy}\label{s:gradingEnergy}

In this section we discuss the gradings in Lagrangian surgeries.  We follow mostly the exposition in \cite{AB14} to review the definition of gradings in subsection \ref{s:reviewGrading}.
The subsequent subsections provide computation for a sufficient and necessary criterion to perform graded surgeries.
Starting from the next section, all surgeries between graded Lagrangian will be graded surgeries.
Our discussion stay in the $\Z$-graded case but the corresponding results for $\Z/N$-gradings can be obtained by modifying our argument using the setting in \cite{SeGraded} and the statements will be a mod-$N$ reduction of what we have here.

\subsection{Basic notions}\label{s:reviewGrading}

Let $(M^{2n},\omega)$ be an exact symplectic manifold with a primitive one form $\alpha$ for $\omega$, equipped with an $\omega$-compatible almost complex structure $J$ making $M$ pseudo-convex at infinity.
We also assume $2c_1(M)=0$ and fix once and for all a nowhere-vanishing section $\Omega^2$ of $(\Lambda_{\mathbb{C}}^{top}(T^*M,J))^{\otimes 2}$.

Let $L$ be a connected manifold without boundary (not necessarily compact) and $\iota_L:L \to M$ a proper exact Lagrangian immersion (i.e. $\iota^*_L \alpha$ is exact).
A {\bf grading} on $(L,\iota_L)$ (sometimes simply denoted as $\iota_L$) is defined as a continuous function $\theta_L: L \to \mathbb{R}$ such that $e^{2\pi i \theta_L}=Det^2_{\Omega} \circ D\iota_L$, where $Det^2_{\Omega}$ is defined as
$$Det^2_{\Omega}(\Lambda_p)=Det^2_{\Omega}(X_1,\dots,X_n)=\frac{\Omega(X_1,\dots,X_n)^2}{\|\Omega(X_1,\dots,X_n)^2\|} \in S^1$$
for any Lagrangian plane $\Lambda_p \subset T_pM$ at a point $p$ and any choice of a basis $\{X_1,\dots,X_n\}$ for $\Lambda_p$.

Given two transversal Lagrangian planes $\Lambda_0,\Lambda_1$ (of dimension $n$) at the same point with a choice of $\theta_0,\theta_1$ such that $e^{2\pi i \theta_j}=Det^2_{\Omega}(\Lambda_j)$ for both $j$, we can identify them as graded Lagrangian vector subspaces of $\C^n$.
The index of the pair $(\Lambda_0,\theta_0)$ and $(\Lambda_1,\theta_1)$ is defined as
\beq\label{e:index}Ind((\Lambda_0,\theta_0),(\Lambda_1,\theta_1))=n+\theta_1-\theta_0-2Angle(\Lambda_0,\Lambda_1)\eeq
where $Angle(\Lambda_0,\Lambda_1)=\sum\limits_{j=1}^n \beta_j$ and $\beta_j \in (0,\frac{1}{2})$ are such that there is a unitary basis $u_1,\dots,u_n$ of $\Lambda_0$ satisfying $\Lambda_1=Span_{\mathbb{R}}\{e^{2\pi i \beta_j}u_j\}_{j=1}^n$.

In general, when $\Lambda_0 \cap \Lambda_1 =\Lambda \neq \{0\}$, the definition of index for the pair
$(\Lambda_0,\theta_0)$ and $(\Lambda_1,\theta_1)$ is the same as above with the definition of $Angle(\Lambda_0,\Lambda_1)$ modified as follows.
Pick a path of Lagrangian planes $\Lambda_t$ from $\Lambda_0$ to $\Lambda_1$ such that

$\bullet$ $\Lambda \subset \Lambda_t \subset \Lambda_0 + \Lambda_1$ for all $t \in [0,1]$, and

$\bullet$ the image $\overline{\Lambda_t}$ of $\Lambda_t$ inside the symplectic vector space $(\Lambda_0+\Lambda_1)/\Lambda$ is the positive definite path from $\overline{\Lambda_0}$ to $\overline{\Lambda_1}$.

Let $\beta_t$ be a continuous path of real numbers such that $e^{2 \pi i \beta_t}=Det^2_{\Omega}(\Lambda_t)$. Then, the Lagrangian angle is defined as
$$2Angle(\Lambda_0,\Lambda_1)=\beta_1-\beta_0$$

\begin{defn}
For two graded Lagrangian immersions $(\iota_{L_1},\theta_1)$, $(\iota_{L_2},\theta_2)$ (not necessarily distinct), and points $p_j \in L_j$ for $j=1,2$ such that $\iota_1(p_1)=\iota_2(p_2)=p$,
the index for the ordered pair $(p_1,p_2)$ is
$$Ind(p_1,p_2)=Ind(((\iota_1)_*T_{p_1}L_1,\theta_1(p_1)),((\iota_2)_*T_{p_2}L_2,\theta_2(p_2)))$$
\end{defn}

We also use the notation $Ind(L_1|_p,L_2|_p)$ to denote $Ind(p_1,p_2)$ if $\iota_1^{-1}(p)=\{p_1\}$ and $\iota_2^{-1}(p)=\{p_2\}$.
Note that if $L_1$ intersects $L_2$ cleanly along $D$ and if $D$ is connected, than $Ind(L_1|_p,L_2|_p)=Ind(L_1|_q,L_2|_q)$ for all $p,q \in D$.
In this case, we denote the index as $Ind(L_1|_D,L_2|_D)$.

\begin{eg}\label{e:gradingShift}
For a graded Lagrangian immersion $(\iota_{L},\theta)$ and an integer $k$, $\iota_{L}[k]$ is defined as $\iota_{L}[k]=(\iota_{L},\theta-k)$.
In particular, we have $$Ind(\iota_{L_1}[k]|_D,\iota_{L_2}[k']|_D)=Ind(\iota_{L_1}|_D,\iota_{L_2}|_D)+k-k'$$
\end{eg}

\begin{eg}\label{e:IndexRnNRk}
Let $M=\mathbb{C}^{n}$ be equipped with the standard symplectic form, complex structure and complex volume form.
Let $L_1=\mathbb{R}^{n}=\{y_1=\dots=y_n=0\}$ and $L_2=\{x_1=\dots=x_{n-k}=y_{n-k+1}=\dots=y_n=0\}$ be
two Lagrangian planes for some $0 \le k \le n$.
We have $Det^2_{\Omega}(L_1)=1$ and $Det^2_{\Omega}(L_2)=(-1)^{n-k}$.
Let $\theta_{L_1}=n-k-1$ and $\theta_{L_2}=\frac{n-k}{2}$ be the grading of $L_1$ and $L_2$.
Then, we have $Ind(L_1|_0,L_2|_0)=(n)+\frac{n-k}{2}-(n-k-1)-2(n-k)(\frac{1}{4})=k+1$.
\end{eg}

\begin{rmk}\label{r:propagateGrading}
For a Lagrangian isotopy $\Phi=(\Phi_t)_{t \in [0,1]}: L \times [0,1] \to (M,\w)$, if $\Phi_0$ is equipped with grading $\theta_0$, then
the induced grading on $\Phi_1$ is defined as follows.
There is a uniquely way to extend $\theta_0: L \times \{0\} \to \R$ continuously to $\theta: L \times [0,1] \to \R$
such that $e^{2\pi i \theta(\cdot,t)}=Det^2_{\Omega} \circ D\Phi_t(\cdot)$ and the induced grading on $\Phi_1$ is defined by $\theta(\cdot,1)$.
\end{rmk}

\begin{eg}\label{Example: perturbation of double cover}
Let $L=\mathbb{R} \subset (\mathbb{R}^2,dx \wedge dy)$ and identify the latter with $\mathbb{C}$ equipped with the standard complex volume form.
Consider $h: \R \to \mathbb{R}$ given by $h(q)=c\frac{q^2}{2}$ for some constant $c$.
The graph of $dh$, $Graph(dh)$, is given by $\{(q,cq) \in T^*L| q \in L \}$.
By letting $q=x$ and $p=-y$ to identify $\mathbb{C}$ with $T^*L$, $Graph(dh)$ is given by $\{(x,-cx) \in \mathbb{C}\}$.
Under our convention of Hamiltonian flow, $Graph(dh)$ is the time $1$ Hamiltonian flow of $L$ under Hamiltonian $-h \circ \pi: T^*L \to \R$, where $\pi:T^*L \to L$ is the projection.
If we give a grading to $L$ and induces it to a grading on $Graph(dh)$ by the Hamiltonian isotopy,
then
\begin{displaymath}
Ind(L|_0, Graph(dh)|_0)= \left \{
\begin{array}{lr}
1  & \text{ if $c \le 0$} \\
0 & \text{ if $c >0$}
\end{array}
\right.
\end{displaymath}
In short, the index equals the Morse index of $h$ if $c \neq 0$.  We call the grading defined above an \textbf{induced grading on $Graph(dh)$}.
\end{eg}

\begin{eg}\label{e:MorseBottIndex}
Let $L=\mathbb{R}^n \subset (\mathbb{C}^n,dx_i \wedge dy_i)$.
Consider $h: L \to \mathbb{R}$ given by $h(q)=c\sum\limits_{j=1}^k \frac{q_j^2}{2}$.
If we let $q_i=x_i$ and $p_i=-y_i$ to identify $\mathbb{C}^n$ with $T^*L$
and give the induced grading to $Graph(dh)$ by a grading of $L$ and the Hamiltonian isotopy induced by $-h \circ \pi$,
then
\begin{displaymath}
Ind(L|_{\R^{n-k}}, Graph(dh)|_{\R^{n-k}})= \left \{
\begin{array}{lr}
n & \text{ if $c \le 0$} \\
n-k & \text{ if $c >0$}
\end{array}
\right.
\end{displaymath}
where $\R^{n-k}$ is the last $n-k$ $q_i$ coordinates.
\end{eg}

\begin{corr}\label{c:MorseBott}
Let $h:L \to \R$ be a Morse-Bott function with Morse-Bott maximum at critical submanifold $D_1$ of dimension $k_1$
and minimum at $D_2$ of dimension $k_2$.
If $L \subset T^*L$ is graded and $Graph(dh)$ is equipped with grading induced from that of $L$ and the Hamiltonian isotopy induced by $-h \circ \pi$,
then $Ind(L|_{D_1}, Graph(dh)|_{D_1})=n$ and $Ind(L|_{D_2}, Graph(dh)|_{D_2})=n-k_2$
\end{corr}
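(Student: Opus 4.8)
The plan is to reduce both computations to the linear models of Examples~\ref{Example: perturbation of double cover} and~\ref{e:MorseBottIndex} by putting $h$ into Morse--Bott normal form near $D_1$ and $D_2$; the point is that, for a clean intersection, the index is a purely local invariant along the intersection locus.

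First I would make that locality precise. Fix a point $p$ on a connected component of $D_1$ or $D_2$, so $p$ lies over a critical point $q_0$ of $h$. By the index formula~\eqref{e:index} together with the clean-intersection prescription for $Angle$, the integer $Ind(L|_p, Graph(dh)|_p)$ depends only on the tangent planes $T_pL$, $T_p Graph(dh)$ and on the number $\theta_{Graph(dh)}(p)-\theta_L(p)$. Since $dh(q_0)=0$, the point $p$ is a fixed point of the Hamiltonian isotopy $\phi_t$ of $-h\circ\pi$ used in Remark~\ref{r:propagateGrading}; hence $\theta_{Graph(dh)}(p)-\theta_L(p)$ is the winding of $t\mapsto Det^2_\Omega(D\phi_t(T_pL))$ over $[0,1]$. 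Because $\phi_t$ is the flow of a Hamiltonian pulled back from the base $L$, its linearization $D\phi_t$ at the fixed point $p$ --- and likewise the plane $T_p Graph(dh)$ --- is determined by the Hessian of $h$ at $q_0$. So every quantity entering $Ind(L|_p, Graph(dh)|_p)$ is a function of that Hessian alone, and in particular is unchanged if $\Omega^2$ and the auxiliary metric are replaced by the standard ones in a Darboux chart (a change of $\Omega^2$ shifts $\theta_L$ and $\theta_{Graph(dh)}$ by the same amount at $p$).

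Next I would invoke the Morse--Bott lemma: near $q_0$ there are coordinates $(q_1,\dots,q_n)$ on $L$ in which the component of $D_1$ (resp.\ $D_2$) through $q_0$ is a coordinate subspace and $h$ equals a constant minus (resp.\ plus) $\tfrac12(q_1^2+\dots+q_c^2)$, with $c$ the codimension of that component. Passing to the induced cotangent Darboux coordinates and the standard identification of the chart with $\C^n$, the zero section $L$ and $Graph(dh)$ become exactly the pair of Lagrangians of Example~\ref{e:MorseBottIndex} (whose parameter $k$ is this codimension $c$), and the propagated grading becomes the induced grading used there, since $\phi_t$ splits as the product of the one-dimensional flows of $\mp\tfrac12 q_j^2$ in the $c$ normal $\C$-factors and the identity in the remaining factors. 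Example~\ref{e:MorseBottIndex} --- which is precisely the sum over the $\C$-factors of the one-dimensional indices of Example~\ref{Example: perturbation of double cover} --- then yields the asserted values $Ind(L|_{D_1},Graph(dh)|_{D_1})=n$ and $Ind(L|_{D_2},Graph(dh)|_{D_2})=n-k_2$.

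The step I expect to be the main obstacle is the first: verifying rigorously that $Ind(L|_p, Graph(dh)|_p)$ depends on nothing but the Hessian of $h$ at $q_0$. Concretely one must check that the grading propagated onto $Graph(dh)$ at the fixed point $p$ is computed by the linearized flow, that this is independent of $\Omega^2$ and of the metric used to build the Weinstein chart, and that the clean-intersection $Angle$ is additive over the $\C$-factors of the model. Once these bookkeeping points are settled, everything else is Examples~\ref{Example: perturbation of double cover} and~\ref{e:MorseBottIndex} applied factor by factor.
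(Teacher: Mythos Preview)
Your approach is exactly what the paper intends: the corollary is stated immediately after Example~\ref{e:MorseBottIndex} with no proof, so the reduction via the Morse--Bott lemma to that linear model is the whole argument. Your care about locality of the index and independence of the choice of $\Omega^2$ is more than the paper spells out, but it is the right justification.

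There is, however, a numerical slip in your last step that you should not paper over. You correctly identify the parameter $k$ of Example~\ref{e:MorseBottIndex} with the \emph{codimension} of the critical submanifold. For the minimum $D_2$ of dimension $k_2$ this gives $k=n-k_2$, and the $c>0$ case of the example then yields
\[
Ind(L|_{D_2},Graph(dh)|_{D_2})=n-k=n-(n-k_2)=k_2,
\]
not $n-k_2$. So the value $n-k_2$ printed in the statement of Corollary~\ref{c:MorseBott} is a typo for $k_2$; your own reduction detects this, and you should trust it rather than the asserted value. The paper itself uses the corrected value later: in Lemma~\ref{l:immersedSphere} the minimum $D$ has dimension $m-2$ inside $L_0=\mathbb{CP}^{m/2}$ (so $n=m$), and the computed index is $m-2=k_2$, not $n-k_2=2$.
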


\bdf\label{d:branchJ}

For a Lagrangian immersion $(\iota_L,\theta_L)$, we define
$$R_L=R_{\iota_L}:=\{(p,q) \in L \times L| \iota(p)=\iota(q), p \neq q \}$$
and call it the \textbf{set of branch jump }.

\edf

\begin{eg}\label{Example: double cover}
Let $M=T^*\mathbb{RP}^n$ be equipped with the canonical one form and symplectic form with $n>1$.
Fix a choice of $J$ and $\Omega^2$.
Let $\iota_L: L=S^n \to \mathbb{RP}^n:=\ul{L}$ be the double cover of the zero section.
Note that we can equip $\ul{L}$ with a grading $\theta_{\ul L}: \mathbb{RP}^n \to \mathbb{R}$.
The lift of $\theta_{\ul L}$ to $\theta_L:L \to \mathbb{R}$ gives a $\mathbb{Z}/2\mathbb{Z}$ invariant grading of $L$ and hence $Ind(p,q)=n$ for any $(p,q) \in R_L$.
Converesely, if $\theta_L$ is any grading on $L$, then we must have $\theta_L(p)=\theta_L(q)$ for any $(p,q)\in R$
because $\theta_L(q)-\theta_L(p) \in \mathbb{Z}$ for any $(p,q)\in R$ and $\theta_L(q)-\theta_L(p)$ varies continuously with respect to  $(p,q)$.
\end{eg}

\subsection{Local computation for surgery at a point}

The grading of Lagrangian surgery in the local model was considered by Seidel \cite{SeGraded} already, and we include an account for completeness.
Let $H_{\gamma}$ be a Lagrangian handle.
We equip $\mathbb{C}^{n}$ with the standard complex volume form $\Omega=dz_1 \wedge \dots \wedge dz_{n}$.

\begin{lemma}[\cite{SeGraded}]\label{l:indexPoint}
Let $\mathbb{R}^n$ and $i\mathbb{R}^n$ be equipped with gradings $\theta_r$ and $\theta_i$, respectively.
Then, there is a grading $\theta_H$ on $H_{\gamma}$ and a unique integer $m$ such that
$\theta_H$ can be patched with $\theta_r+m$ and $\theta_i$ to give a grading on $\R^n \#_0 i\R^n$.
If $Ind((\mathbb{R}^n|_0,\theta_r),(i\mathbb{R}^n|_0,\theta_i))=1$, we have $m=0$.
\end{lemma}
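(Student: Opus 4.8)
The plan is to compute $Det^2_{\Omega}$ along the handle $H_{\gamma}$ explicitly, read off its grading, and then compare the two ends of the handle with the gradings of $\R^n$ and $i\R^n$.

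From the proof of Lemma~\ref{lemma: Lagrangian hanlde is Lagrangian} we have $T_{\gamma(s)x}H_{\gamma}=Span_{\R}\{\gamma'(s)x\}\oplus\gamma(s)T_xS^{n-1}$. I would complete $x$ to an orthonormal basis $x,v_2,\dots,v_n$ of $\R^n$; then by $\C$-multilinearity of $\Omega=dz_1\wedge\dots\wedge dz_n$ one gets $\Omega\big(\gamma'(s)x,\gamma(s)v_2,\dots,\gamma(s)v_n\big)=\pm\,\gamma'(s)\,\gamma(s)^{n-1}$, so $Det^2_{\Omega}(T_{\gamma(s)x}H_{\gamma})=e^{2i(\arg\gamma'(s)+(n-1)\arg\gamma(s))}$, where $\arg\gamma$ and $\arg\gamma'$ are continuous branches along the curve. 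Since a $\lambda$-admissible $\gamma$ misses the origin, $H_{\gamma}$ is connected (it is diffeomorphic to $\R\times S^{n-1}$; for $n=1$ it has two components and the argument below applies componentwise), so a grading $\theta_H$ on $H_{\gamma}$ is exactly a choice of continuous branch of $\tfrac{1}{\pi}\big(\arg\gamma'(s)+(n-1)\arg\gamma(s)\big)$, unique up to adding an integer.

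Next I would track these branches across $s\in[0,\epsilon]$. For $s\le 0$, $\gamma(s)=-s+\lambda\in\R_{>0}$ and $\gamma'(s)=-1$, so I take $\arg\gamma\equiv 0$ and $\arg\gamma'\equiv\pi$ there; this gives $Det^2_{\Omega}=e^{2\pi i}=1$, matching $\R^n$, and the $s\le 0$ part of $H_{\gamma}$ is literally contained in $\R^n$. For $s\ge\epsilon$, $\gamma(s)=-is$, $\gamma'(s)=-i$, and this part of $H_{\gamma}$ is contained in $i\R^n$. On $(0,\epsilon)$ the admissibility inequalities $a'(s)<0$, $b'(s)<0$ say exactly that $\gamma'(s)$ lies in the open third quadrant, while $a(s)>0>b(s)$ says $\gamma(s)$ lies in the open fourth quadrant; as each of these quadrants spans an interval of arguments of length $\tfrac{\pi}{2}<2\pi$, the continuous branches continued from $s\le 0$ must satisfy $\arg\gamma'(\epsilon)=\tfrac{3\pi}{2}$ and $\arg\gamma(\epsilon)=-\tfrac{\pi}{2}$. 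Hence the value $\theta_H^{+}$ of $\theta_H$ on the $i\R^n$-end exceeds its value $\theta_H^{-}$ on the $\R^n$-end by $\theta_H^{+}-\theta_H^{-}=\tfrac{1}{\pi}\big((\tfrac{3\pi}{2}-\pi)+(n-1)(-\tfrac{\pi}{2}-0)\big)=1-\tfrac{n}{2}$. Because $\R^n$ and $i\R^n$ carry constant gradings $\theta_r$, $\theta_i$ (their $Det^2_{\Omega}$ being constant), fixing the branch of $\theta_H$ so that $\theta_H^{+}=\theta_i$ determines $\theta_H$ uniquely, and then $\theta_H$ patches with $\theta_r+m$ on the $\R^n$-end for the unique integer $m=\theta_i-\theta_r+\tfrac{n}{2}-1$ (an integer since $e^{2\pi i\theta_i}=(-1)^n$). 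Finally, with $\Lambda_0=\R^n$, $\Lambda_1=i\R^n$ and unitary basis $e_1,\dots,e_n$ one has $i\R^n=Span_{\R}\{e^{2\pi i/4}e_j\}$, so $Angle(\R^n,i\R^n)=n/4$ and formula~\eqref{e:index} gives $Ind((\R^n,\theta_r),(i\R^n,\theta_i))=n+\theta_i-\theta_r-\tfrac{n}{2}=m+1$; in particular $m=0$ whenever this index equals $1$.

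The one step that needs care is the branch bookkeeping for $\arg\gamma'$ on $[0,\epsilon]$ — namely, that the open-third-quadrant constraint forces the net increment to be exactly $+\tfrac{\pi}{2}$ rather than $+\tfrac{\pi}{2}-2\pi$ or similar; this is why I emphasized the length-$\tfrac{\pi}{2}$ quadrant bound. Everything else is a direct consequence of the definitions of $Det^2_{\Omega}$ and of $\lambda$-admissibility.
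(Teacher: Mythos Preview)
Your proof is correct and follows essentially the same approach as the paper: compute $Det^2_{\Omega}$ along the handle via $e^{2i(\arg\gamma'(s)+(n-1)\arg\gamma(s))}$, track the continuous branches of $\arg\gamma$ and $\arg\gamma'$ using the quadrant constraints from $\lambda$-admissibility, and compare the resulting grading shift with the index formula~\eqref{e:index}. The paper frames the definition of $\theta_H$ slightly differently---it induces it from $\theta_i$ via the Hamiltonian isotopy realizing $H_\gamma=H_\nu$ as a flow handle, then reads off the shift by comparing with Example~\ref{e:IndexRnNRk}---whereas you fix the branch directly and derive the explicit relation $m=\operatorname{Ind}-1$; but the underlying computation and the treatment of the $n=1$ disconnected case are the same.
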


\begin{proof}
As shown in Example \ref{lemma: flow handle=Lagrangian handle}, $H_{\gamma}=H_{\nu}$ for some flow handle $H_{\nu}$.
Since $H_{\nu}$ is obtained by Hamiltonian flow of $i\R^n$,
$H_{\nu}$ is canonically graded by $\theta_i$ using the Hamiltonian isotopy.
We call this grading $\theta_H$ and continuously extend it on $Cl(H_{\nu})$.
Since $\R^n \cap Cl(H_{\nu})$ has one grading induced from $\theta_r$ and one induced from $\theta_H$,
$\theta_H|_{\R^n \cap Cl(H_{\nu})}-\theta_r|_{\R^n \cap Cl(H_{\nu})}$ is a locally constant integer-valued function.
If $\R^n \cap Cl(H_{\nu})$ is connected, then
there is a unique integer $m$ such that
$\theta_H|_{\R^n \cap Cl(H_{\nu})}=\theta_r|_{\R^n \cap Cl(H_{\nu})}+m$.
If $\R^n \cap Cl(H_{\nu})$ is not connected, then $n=1$ and one can check directly that the same conclusion holds.
As a result, this $m$ is the unique integer such that $\theta_H$ can be patched with $\theta_r+m$ and $\theta_i$ to give a grading on $\R^n \#_0 i\R^n$.
In what follows, we want to show that $m=0$ if $Ind((\mathbb{R}^n|_0,\theta_r),(i\mathbb{R}^n|_0,\theta_i))=1$.

Pick a point $x=(x_1,\dots,x_n) \in S^{n-1}$.
Let $c(s)=\gamma(s)x \in H_{\gamma}$ and denote the image curve as $Im(c)$, where $\gamma$ is an admissible curve (See Definition \ref{d:admissible}).
The Lagrangian plane $\Lambda_{s}$ at $c(s)$ is spanned by $\{\gamma'(s)x\} \cup \{\gamma(s)v_j\}_{j=2}^{n}$,
where $v_j \in T_{x}S^{n-1}$ forms an orthonormal basis.
(See also the proof of Lemma \ref{lemma: Lagrangian hanlde is Lagrangian}).
Therefore, we have
$$Det^2_{\Omega}(\Lambda_{s})=e^{i2(\arg(\gamma'(s))+(n-1)\arg(\gamma(s)))}$$
for all $s$.
There is a unique continuous function $\theta_{c}: Im(c) \to \mathbb{R}$ such that

$\bullet$ $\theta_{c}(c(s))=n-1$ for $s < 0$,

$\bullet$ $\theta_{c}(c(s))=\frac{n}{2}$ for $s > \epsilon$, and

$\bullet$ $e^{2 \pi i \theta_{c}(c(s))}=Det^2_{\Omega}(\Lambda_{s})$ for all $s$

Therefore, we have $\theta_c-\theta_H|_{Im(c)} \in \Z$ and $\theta_c$ describes the change of Lagrangian planes from $\R^n$ to $i\R^n$ along the handle.
By comparing with the Example \ref{e:IndexRnNRk} (for $k=0$), we can see that if the graded Lagrangians $\mathbb{R}^n$ and
$i\mathbb{R}^n$ inside $\mathbb{C}^n$ intersect at the origin of index $1$, then $m=0$.
This finishes the proof.
\end{proof}

\begin{corr}[\cite{SeGraded}]\label{c:ptIndex}
Let $\iota_i: L_i \to (M,\omega)$ for $i=1,2$ be two graded Lagrangian immersions with grading $\theta_1$ and $\theta_2$, respectively, intersecting transversally at a point $p$.
If $Ind((L_1|_p,\theta_1),(L_2|_p,\theta_2))=1$, then $\iota: L_1\#_pL_2 \to (M,\omega)$ can be equipped with a grading $\theta_{12}$ extending $\theta_1$ and $\theta_2$.
In this case, we call $L_1\#_pL_2$ together with its grading as a surgery from graded $L_1$ to $L_2$.
\end{corr}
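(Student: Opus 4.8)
The plan is to localize the problem near $p$ and reduce it to the local computation already carried out in Lemma~\ref{l:indexPoint}. By Corollary~\ref{c:SurgeryAtPoint} we may fix a Darboux chart $\iota\colon U \to M$ with the standard complex structure such that $\iota^{-1}(L_1)\subset\R^n$ and $\iota^{-1}(L_2)\subset i\R^n$, and (taking the surgery parameters $\lambda,\epsilon$ small relative to $U$) such that $L_1\#_pL_2$ is obtained by attaching $\iota(H_\gamma)$ to $(L_1\cup L_2)\setminus\iota(U)$, where $H_\gamma=H_\nu$ is the handle of Example~\ref{lemma: flow handle=Lagrangian handle}. Thus it suffices to produce a grading on $\iota(H_\gamma)$ which restricts to $\theta_1$ and $\theta_2$ on the two cylindrical ends of the handle, i.e.\ on the portions of $H_\gamma$ lying in $\R^n$ and in $i\R^n$ respectively; away from $\iota(U)$ we simply keep $\theta_1$ on $L_1$ and $\theta_2$ on $L_2$.

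The only point requiring care is the comparison between the globally fixed section $\Omega^2$ of $(\Lambda^{top}_\C T^*M)^{\otimes 2}$ and the local model's volume form $(dz_1\wedge\cdots\wedge dz_n)^{\otimes 2}$ used in Lemma~\ref{l:indexPoint}. On the contractible chart $\iota(U)$ these two sections differ by a nowhere-vanishing function, which we write as $e^{2\pi i h}$ for a continuous $h\colon\iota(U)\to\R$; hence a grading taken with respect to $\Omega^2$ corresponds, under $\iota$, to the same grading shifted by $h\circ\iota$ with respect to the standard form. This shift is a single continuous function, so it has no bearing on whether gradings can be patched, and since it is applied identically to $\theta_1$ and $\theta_2$ it does not change $Ind((L_1|_p,\theta_1),(L_2|_p,\theta_2))$ (the gradings enter that formula only through their difference). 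So in the local model we are handed gradings $\theta_r$ on $\R^n$ and $\theta_i$ on $i\R^n$ with $Ind((\R^n|_0,\theta_r),(i\R^n|_0,\theta_i))=1$. Lemma~\ref{l:indexPoint} then supplies a grading $\theta_H$ on $H_\gamma$ and an integer $m$ such that $\theta_H$ patches with $\theta_r+m$ and $\theta_i$, and it asserts $m=0$ precisely because the index is $1$. Transporting $\theta_H$ back through $\iota$ (undoing the shift by $h$) gives a grading on $\iota(H_\gamma)$ which, since $m=0$, agrees with $\theta_1$ on the $L_1$-end and with $\theta_2$ on the $L_2$-end; defining $\theta_{12}$ by gluing $\theta_1$ on $L_1\setminus\iota(U)$, $\theta_2$ on $L_2\setminus\iota(U)$, and this transported handle grading on $\iota(H_\gamma)$ produces the required global grading.

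I expect essentially all the content to sit in Lemma~\ref{l:indexPoint}; the rest is bookkeeping. The main (minor) obstacle is making the chart comparison precise — in particular verifying that the cylindrical ends of $H_\gamma$ genuinely overlap the unmodified parts of $L_1$ and $L_2$ on open subsets, so that ``patching'' literally means gluing two gradings that already coincide there, and pinning down that the hypothesis $Ind=1$ is exactly what forces the a priori ambiguity $m$ to vanish (via Example~\ref{e:IndexRnNRk}). Finally, if $L_1$ and $L_2$ are only immersed, the point $p$ is by assumption a transverse crossing of two embedded local branches, so the construction is unaffected and every other branch-jump point of $L_1$ or $L_2$ retains its original grading; hence $\theta_{12}$ is well defined on all of $L_1\#_pL_2$.
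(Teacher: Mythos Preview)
Your proposal is correct and follows exactly the intended approach: the paper states this corollary without proof, leaving it as an immediate consequence of Lemma~\ref{l:indexPoint} via localization to a Darboux chart. Your additional care in comparing the global $\Omega^2$ with the local standard volume form (and noting that the resulting continuous shift $h$ affects neither the index nor the patching) is a valid elaboration of what the paper leaves implicit.
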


\subsection{Local computation for surgery along clean intersection}

This subsection discuss the grading for Lagrangian surgery along clean intersection.
We start with ordinary clean surgery (See subsubsection \ref{s:ordinaryClean}).

\begin{lemma}\label{l:cleanIndex1}
Let $L_1,N^*_D \subset T^*L_1$ be equipped with gradings $\theta_r$ and $\theta_i$, respectively.
For any $\lambda$-admissible function $\nu$ such that $\lambda< r(D)$,
there is a grading $\theta_H$ on $H^D_{\nu}$ and a unique integer $m$
such that $\theta_H$ can be patched with $\theta_i,\theta_r+m$ to become a grading on $L_1\#_D^{\nu}N^*_D$.

Moreover, $m=0$ if and only if $Ind((L_1|_D,\theta_r),(N^*_D|_D,\theta_i))=dim(D)+1$.
\end{lemma}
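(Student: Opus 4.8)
The plan is to reduce this clean-intersection statement to the transversal point computation of Lemma \ref{l:indexPoint}, using the same ``flow handle is graded by the Hamiltonian isotopy'' mechanism. First I would observe, exactly as in the proof of Lemma \ref{l:indexPoint}, that $H^D_\nu$ is obtained from $(N^*_D)_\epsilon \setminus D$ by the Hamiltonian flow $\phi^\varphi_{\wt\nu(\|\xi\|)}$ (Remark \ref{r:tilde}), so the grading $\theta_i$ on $N^*_D$ propagates canonically to a grading $\theta_H$ on $H^D_\nu$ via Remark \ref{r:propagateGrading}, and extends continuously to $Cl(H^D_\nu)$. On the overlap $L_1 \cap Cl(H^D_\nu)$, which by Lemma \ref{l:flowGlueClean} is the open shell $exp(B_\lambda(N^*_D)\setminus B_{\lambda-\delta}(N^*_D))$, the two gradings $\theta_H$ and $\theta_r$ differ by a locally constant $\Z$-valued function; since $D$ is connected this shell is connected (for $\dim D \geq 1$; the $\dim D = 0$ case is covered by Lemma \ref{l:indexPoint} since then the surgery is a point surgery), so there is a unique integer $m$ with $\theta_H = \theta_r + m$ there. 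This $m$ is then forced to be the unique integer making the patched grading on $L_1\#_D^\nu N^*_D$ well-defined. So far this is formal bookkeeping parallel to the embedded-point case.

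The substantive part is identifying $m$ with the index condition. Here I would pass to a fiber of the normal directions: fix $p \in D$, and consider the conormal fiber $(N^*_D)_p \subset T^*_p L_1$, which together with the tangent $T_p D$ and the complementary directions gives a splitting $T_p(T^*L_1) \cong \C^{\dim D} \oplus \C^{n - \dim D}$, where the first factor sees $L_1$ and $N^*_D$ both as $\R^{\dim D}$ (the clean-intersection directions contribute trivially to the Lagrangian angle), and the second factor reproduces exactly the local model $\R^{n-\dim D}$ versus $i\R^{n-\dim D}$ of the transverse point surgery. The handle $H^D_\nu$, being built from a radial flow in the conormal fiber directions only, restricts on this transverse slice to precisely the point handle $H_\gamma$ of Lemma \ref{l:indexPoint} in dimension $n - \dim D$. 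Then I would run the same $\theta_c$ computation along a ray $c(s) = \gamma(s)x$ in the conormal fiber: $Det^2_\Omega$ of the moving Lagrangian plane along the handle picks up $e^{i2(\arg\gamma'(s) + (n-\dim D - 1)\arg\gamma(s))}$ from the transverse factor and a constant from the $D$-directions, so $\theta_c$ runs from $n - \dim D - 1$ (shifted by $\dim D$, i.e.\ from $n-1$ in the full index count) down to $\tfrac{n-\dim D}{2}$ plus the $D$-contribution. Comparing against Example \ref{e:MorseBottIndex} / Corollary \ref{c:MorseBott} — which computes $Ind(L_1|_D, Graph(dh)|_D)$ for $h$ a Morse–Bott function with minimum $D$ of the right codimension — pins down that $m = 0$ exactly when $Ind((L_1|_D,\theta_r),(N^*_D|_D,\theta_i)) = \dim D + 1$, with the ``$+1$'' coming from the single geodesic-flow direction transverse to $D$ inside each conormal fiber, just as the ``$+1$'' in Lemma \ref{l:indexPoint} came from the single handle direction.

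The main obstacle I anticipate is making the splitting $T_p(T^*L_1) \cong \C^{\dim D}\oplus \C^{n-\dim D}$ genuinely compatible with all three gradings simultaneously and with the choice of $\Omega^2$, so that the Lagrangian-angle contribution from the clean-intersection ($D$) directions really is constant along the handle and drops out of the $\theta_H$ versus $\theta_r$ comparison; this is where Pozniak's normal form (Proposition \ref{p:Poz99}) is doing the work, identifying a neighborhood of $D$ with $N^*_D \subset T^*L_1$ so that the flow is literally fiberwise. Once that compatibility is in place, the index count is a mechanical application of Example \ref{e:MorseBottIndex} and the rest follows as in Seidel's point computation. I would also double-check the boundary case $\dim D = 0$ and the possible disconnectedness of the overlap shell when $n - \dim D = 1$, which should again collapse to the $n=1$ subtlety already handled in Lemma \ref{l:indexPoint}.
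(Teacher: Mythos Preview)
Your approach is essentially the same as the paper's: propagate $\theta_i$ to $\theta_H$ via the Hamiltonian flow, then track $Det^2_\Omega$ along a radial curve $c(r)$ in a conormal fiber, observing that the $D$-directions contribute trivially and the transverse $(n-\dim D)$-dimensional part reproduces the point-handle phase computation, landing at $\theta_c(0)=n-k-1$ and $\theta_c(\epsilon)=\tfrac{n-k}{2}$. Two small corrections: the connectedness of the overlap shell depends on the \emph{codimension} $n-\dim D\ge 2$ (not on $\dim D\ge 1$), and the comparison you want is Example~\ref{e:IndexRnNRk} (which directly gives $Ind(\R^n|_{\R^k},N^*_{\R^k}|_{\R^k})=k+1$ with the gradings $n-k-1$ and $\tfrac{n-k}{2}$), not Example~\ref{e:MorseBottIndex}/Corollary~\ref{c:MorseBott}, which concern graphs of Morse--Bott functions rather than conormal bundles; your parenthetical ``shifted by $\dim D$'' is spurious, since the $D$-directions contribute $1$ to $\Omega$ and hence $0$ to the phase throughout.
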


Immediately from Lemma \ref{l:cleanIndex1}, we have

\begin{corr}
Let $L_1,L_2 \subset (M,\omega)$ be graded Lagrangians cleanly intersecting at $D$.
We can perform a graded surgery $L_1\#_DL_2$ from $L_1$ to $L_2$ along $D$ if and only if $Ind(L_1|_D,L_2|_D)=dim(D)+1$.
\end{corr}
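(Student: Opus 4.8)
The statement is an immediate globalization of Lemma~\ref{l:cleanIndex1}, so the plan is to set up the local model near $D$ and then quote that lemma. First I would apply P\'ozniak's neighborhood theorem (Proposition~\ref{p:Poz99}) to get a symplectomorphism $\varphi$ from a neighborhood of the zero section in $T^*L_1$ onto a neighborhood $U$ of $D$ in $M$ with $\varphi(0_{section})=L_1$ and $\varphi^{-1}(L_2)\subset N^*_D$. Pulling back the fixed reference section $\Omega^2$ along $\varphi$, the grading $\theta_1$ of $L_1$ restricts to a grading $\theta_r$ of the zero section and $\theta_2$ restricts to a grading $\theta_i$ of $N^*_D$; because $\varphi$ intertwines the reference data, the locally computed index $Ind((L_1|_D,\theta_r),(N^*_D|_D,\theta_i))$ agrees with $Ind(L_1|_D,L_2|_D)$, which is well-defined since $D$ is connected.

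Next I would recall from Corollary~\ref{c:flowGlueClean} that, with a $\lambda$-admissible $\nu$ and $\lambda<r(D)$, the Lagrangian $L_1\#_D L_2$ is built by gluing $(L_1\setminus U_1)\cup(L_2\setminus U_2)$ to the flow handle $\varphi(H^D_\nu)$; inside the chart this is precisely the local surgery $L_1\#^\nu_D N^*_D$ to which Lemma~\ref{l:cleanIndex1} applies. That lemma produces a grading $\theta_H$ on $H^D_\nu$ and a unique integer $m$ so that $\theta_H$ patches with $\theta_i$ on the $N^*_D$-side and with $\theta_r+m$ on the zero-section side. Since $\theta_1$ and $\theta_2$ are untouched outside $U$, a global grading on $L_1\#_D L_2$ restricting to $\theta_1$ on $L_1\setminus U_1$ and to $\theta_2$ on $L_2\setminus U_2$ exists exactly when the shift on the $L_1$-side vanishes, i.e.\ when $m=0$; and by the last clause of Lemma~\ref{l:cleanIndex1}, $m=0$ if and only if $Ind((L_1|_D,\theta_r),(N^*_D|_D,\theta_i))=\dim(D)+1$, i.e.\ if and only if $Ind(L_1|_D,L_2|_D)=\dim(D)+1$. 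Both implications follow at once.

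The only points that need care — but which are already contained in the cited lemmas — are that $\theta_H$ extends continuously over the closure of the handle (the critical sphere bundle $S_\lambda(N^*_D)$ where it fuses onto $L_1\setminus U_1$) and over the core $D$, and that $L_1\#_D L_2$ is connected, so that the obstruction to a global grading is measured by the single integer $m$ rather than by a family of locally constant shifts. I expect no genuine difficulty here; the one thing to keep straight is the orientation of the construction: the handle runs from the $L_2$-side to the $L_1$-side, so it is $\theta_1$ (not $\theta_2$) that must absorb the potential shift, which is what singles out the value $\dim(D)+1$ and distinguishes this surgery from the reversed one $L_2\#_D L_1$.
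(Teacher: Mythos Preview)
Your proposal is correct and follows the same approach as the paper: the paper simply states that the corollary follows ``Immediately from Lemma~\ref{l:cleanIndex1}'' without further argument, and you have spelled out precisely this deduction, invoking Po\'zniak's normal form to reduce to the local model and then reading off both directions from the ``$m=0$ if and only if'' clause of the lemma.
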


\begin{proof}[Proof of Lemma \ref{l:cleanIndex1}]
The first statement of the lemma follows as in the first paragraph of the proof of Lemma \ref{l:indexPoint}.
Therefore, we just need to prove that $m=0$ if and only if $Ind((L_1|_D,\theta_r),(N^*_D|_D,\theta_i))=dim(D)+1$.
Let $dim(D)=k$.

Pick a Darboux chart such that in local coordinates $N^*_D$ is represented by points of the form
$(q,p)=(q_b,0,0,p_f) =(q_1,\dots,q_k,0,\dots,0,p_{k+1},\dots,p_n)$, where the first $0$ in
$(q_b,0,0,p_f)$ are the last $n-k$ $q_i$ coordinates and the second $0$ are the first $k$ $p_i$ coordinates.
We also require $(q_1,\dots,q_k,tq_{k+1},\dots,tq_n)$ are normalized geodesics on $L_1$ as $t$ varies, for any $q_1,\dots,q_n$ such that $\sum_{j=k+1}^n q_j^2=1$.
As a result, the handle $H^{D}_{\nu}$ in local coordinates is given by (here, we suppose that the surgery is supported in a sufficiently small region relative to the Darboux chart)
$$\{ \phi^{\vp}_{\nu(\| p_f \|)}(q_b,0,0,p_f)=((q_b,\nu(\| p_f \|)\frac{p_f }{\|p_f \|},0,p_f) | q_b \in \R^k, p_f \in \R^{n-k} \}$$

We consider the standard complex volume form $\Omega=dz_1 \wedge \dots \wedge dz_{n}$ in the chart.
Let $e_{\pi_2} \in S^{n-k-1} \subset \R^{n-k}$ be a vector in the unit sphere of last $n-k$ $p_i$ coordinates.
Let
$$c(r)=(0,\nu(\| re_{\pi_2} \|)\frac{re_{\pi_2} }{\|re_{\pi_2} \|},0,re_{\pi_2})=(0,\nu( r )e_{\pi_2},0,re_{\pi_2})$$
be a smooth curve on $H^{D}_{\nu}$ for $r \in (0,\epsilon]$.
We define $c(0)=\lim_{r \to 0^+} c(r)$.

We want to understand how the Lagrangian planes change from $L_1$ to $N^*_D$ along the handle and
it suffices to look at how the Lagrangian planes change along $c(r)$.
The Lagrangian plane $\Lambda_{r}$ at $c(r)$ is spanned by
$$\{(e_j,0,0,0)\}_{j=1}^k \cup \{(0,\nu'(r)e_{\pi_2},0,e_{\pi_2})\}
\cup \{(0,\nu(r)\frac{e^{\perp}_j }{r},0,e^{\perp}_j)\}_{j=2}^{n-k}$$
where $e_j \in \R^k$ are coordinate vectors and $e^{\perp}_j$ form an orthonormal basis for orthogonal complement of $e_{\pi_2}$ in $\R^{n-k}$.

Then we have
$$Det^2_{\Omega}(\Lambda_{r})=e^{i2(\arg(\nu'(r)-\sqrt{-1})+(n-k-1)\arg(\frac{\nu(r)}{r}-\sqrt{-1}))}$$
for all $r$.
Here, the convention we use is still, $z_i=q_i-\sqrt{-1}p_i$.
Observe that $\nu'(\epsilon)=\frac{\nu(\epsilon)}{\epsilon}=0$.
When $r$ goes to $0$, $\nu'(r)$ decreases monotonically to $-\infty$.
Similarly, $\frac{\nu(r)}{r}$ increases monotonically to infinity because $r$ goes to zero and $\nu$ is bounded and positive.

In particular, $\arg(\nu'(r)-\sqrt{-1})$ increases from $\pi$ to $\frac{3\pi}{2}$ as $r$ increases and
$\arg(\frac{\nu(r)}{r}-\sqrt{-1}))$ decreases from $2\pi$ to $\frac{3\pi}{2}$ as $r$ increases.
Therefore, there is a unique continuous function $\theta_{c}: Im(c) \to \mathbb{R}$ such that

$\bullet$ $\theta_{c}(c(r))=n-k-1$ for $r= 0$,

$\bullet$ $\theta_{c}(c(r))=\frac{n-k}{2}$ for $r = \epsilon$, and

$\bullet$ $e^{2 \pi i \theta_{c}(c(r))}=Det^2_{\Omega}(\Lambda_{r})$ for all $r \in [0,\epsilon]$.

By Example \ref{e:IndexRnNRk}, we have
$Ind((\R^n|_{\R^k},n-k-1),(N^*(\R^k)|_{\R^k},\frac{n-k}{2}))=k+1$.
Hence, $m=0$ if and only if $Ind((L_1|_D,\theta_r),(N^*_D|_D,\theta_i))=k+1$.
\end{proof}

For the $E_2$-flow surgery, we use the setting in subsection \ref{s:E2surgery} and we have

\begin{lemma}\label{l:cleanIndex2}
Suppose $D\subset L=K_1 \times K_2$ is a smooth submanifold of dimension $k$ which is transversal to $\{p\} \times K_2$
for all $p \in K_1$.
Let $L,N^*_D \subset T^*L$ be equipped with gradings $\theta_r$ and $\theta_i$, respectively.
For any $\lambda$-admissible function $\nu$ such that $\lambda< r^{E_2}(D)$,
there is a grading $\theta_H$ on $H^{D,E_2}_{\nu}$ and a unique integer $m$ such that $\theta_H$ can be patched with $\theta_r+m$ and $\theta_i$
to become a grading on $L\#_{D,E_2}^{\nu}N^*_D$.

Moreover, we have $m=0$ if and only if $Ind((L|_D,\theta_r),(N^*_D|_D,\theta_i))=dim(D)+1$.
\end{lemma}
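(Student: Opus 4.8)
The plan is to rerun the argument of Lemma \ref{l:cleanIndex1} essentially verbatim, the only genuinely new ingredient being the identity $\phi^{\vp_{\pi}}_{t}(\xi_1,\xi_2)=(\xi_1,\phi^{\vp}_{t}(\xi_2))$ recorded after Definition \ref{d:subbundleHandle}: the $E_2$-flow is frozen in the $K_1$-factor, so no $K_1$-direction rotates as one crosses the handle, and the determinant computation therefore collapses onto the second factor where it is already known.

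First I would settle the existence statement. By Remark \ref{r:tilde}, $H^{D,E_2}_{\nu}$ is the image of $(N^*_D)_{\epsilon,E_2}\backslash D$ under the time-$1$ flow of the Hamiltonian $\wt\nu(\|\pi_2(\cdot)\|)$ (with $\wt\nu'=\nu$), which is smooth away from $E_1$; the transversality of $D$ to the leaves $\{p\}\times K_2$ forces $N^*_D\cap E_1=0$, so this flow is defined precisely where it is needed. Hence $H^{D,E_2}_{\nu}$ inherits the grading $\theta_i$ transported along this isotopy --- call it $\theta_H$ --- which I extend continuously over $Cl(H^{D,E_2}_{\nu})$. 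On the overlap with $L$ the difference $\theta_H-\theta_r$ is a locally constant integer-valued function, so there is a unique integer $m$ with $\theta_H=\theta_r+m$ along the component carrying $\partial H^{D,E_2}_{\nu}$; this $m$ is then the unique integer for which $\theta_H$ patches with $\theta_r+m$ and $\theta_i$ into a grading of $L\#^{\nu}_{D,E_2}N^*_D$, exactly as in the first paragraph of the proof of Lemma \ref{l:indexPoint} (the low-dimensional disconnected cases being checked directly as there).

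For the ``moreover'' part I would pick a Darboux chart on $T^*L$ induced from local coordinates $(q_1,\dots,q_n)$ on $L$ adapted simultaneously to the product $L=K_1\times K_2$ and to $D$: the coordinates $q_{k+1},\dots,q_n$ cutting out $D$ are chosen among the $K_2$-coordinates, and the rays $(q_1,\dots,q_k,tq_{k+1},\dots,tq_n)$ are normalized $K_2$-geodesics. Since the flow ignores the $K_1$-factor, in this chart $H^{D,E_2}_{\nu}$ has the same coordinate description $\{(q_b,\nu(\|p_f\|)\tfrac{p_f}{\|p_f\|},0,p_f)\}$ as the handle in Lemma \ref{l:cleanIndex1}, with $q_b$ tangent to $D$ and $p_f$ the (purely $E_2$) conormal directions. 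Along the model curve $c(r)=(0,\nu(r)e_{\pi_2},0,re_{\pi_2})$ the Lagrangian plane $\Lambda_r$ is then spanned by the $k$ frozen vectors $\partial_{q_j}$ together with the same rotating vectors in the $n-k$ conormal directions, so $Det^2_{\Omega}(\Lambda_r)=e^{i2(\arg(\nu'(r)-\sqrt{-1})+(n-k-1)\arg(\frac{\nu(r)}{r}-\sqrt{-1}))}$ exactly as before; comparing with Example \ref{e:IndexRnNRk} gives $Ind((\R^n|_{\R^k},n-k-1),(N^*(\R^k)|_{\R^k},\tfrac{n-k}{2}))=k+1$, whence $m=0$ if and only if $Ind((L|_D,\theta_r),(N^*_D|_D,\theta_i))=k+1=\dim(D)+1$.

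The one point that takes genuine care --- and the main (if mild) obstacle --- is extracting from the transversality hypothesis that every conormal direction of $D$ lies in the $E_2$-factor, i.e.\ $N^*_D\cap E_1=\{0\}$: this is what makes the adapted chart exist, makes the $E_2$-flow act on all of $(N^*_D)_{\epsilon,E_2}\backslash D$ through the region where $\vp_{\pi}$ is smooth, and reduces the plane-rotation bookkeeping to the already-settled non-product computation of Lemma \ref{l:cleanIndex1}. Once this is established, the remaining steps are word-for-word those of that lemma.
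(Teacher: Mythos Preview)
Your first paragraph (existence of $\theta_H$ and the unique integer $m$) is fine and matches the paper. The gap is in the second paragraph, specifically in the sentence ``the coordinates $q_{k+1},\dots,q_n$ cutting out $D$ are chosen among the $K_2$-coordinates''. If these are genuine $K_2$-coordinates (functions pulled back from $K_2$, so that the chart is a product chart and the $E_2$-flow really ``ignores'' the first $k$ coordinates), then $D=\{q_{k+1}=\cdots=q_n=0\}$ forces $D$ locally to be a slice $K_1\times\{pt\}$. The lemma, however, only assumes $D$ is transversal to the $K_2$-leaves, and the main application ($D=\Delta_S\subset S\times S^-$) is \emph{not} a product slice. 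If instead you allow the $q_f$-coordinates to depend on $q_b$ (geodesic normal coordinates on $K_2$ centred at the $q_b$-dependent point $D\cap(\{q_b\}\times K_2)$), then the chart is no longer a product chart, the $E_2$-flow does \emph{not} act by ``ignore $q_b,p_b$ and flow in $q_f,p_f$'', and the handle does not have the clean coordinate description you claim. Concretely, $N^*_D\cap E_1=\{0\}$ (which you correctly identify) does not say $N^*_D\subset E_2$: a conormal vector has a nonzero $E_1$-component which is carried along unchanged by the $E_2$-flow, so the $p_b$-coordinate of a handle point is not zero in product coordinates.

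The paper's proof faces this head on: it works in honest product coordinates, writes $D$ as a graph $q_f=q^D_f(q_b)$, records the induced function $p^D_b(q_b,p_f)$ describing $N^*_D$, and finds that the tangent plane $\Lambda_r$ of the handle is spanned by vectors of the form $(w^j_b,w^j_f,\kappa(r,w^j),0)$, $(0,\nu'(r)e_{\pi_2},p^D_b(0,e_{\pi_2}),e_{\pi_2})$, $(0,\tfrac{\nu(r)}{r}e^\perp_j,p^D_b(0,e^\perp_j),e^\perp_j)$ rather than the simpler vectors you import from Lemma~\ref{l:cleanIndex1}. The extra entries contribute a factor $K(r)=\sum_j\arg(1-\kappa_j(r,w^j)\sqrt{-1})$ to $Det^2_\Omega(\Lambda_r)$; the paper then lifts a second path of Lagrangian planes along $N^*_D$ from $c(\epsilon)$ back to the origin and checks that the same $K(\epsilon)$ appears there, so that the net grading comparison at the origin reduces to the numbers $n-k-1$ and $\tfrac{n-k}{2}$ and Example~\ref{e:IndexRnNRk} finishes as you say. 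Your argument is the special case where all of $w^j_f$, $\kappa$, $p^D_b$ vanish; to cover the lemma as stated you need this extra bookkeeping (or an explicit small-$\lambda$ perturbation argument showing the extra terms are negligible at the level of the integer $m$, which you have not supplied).
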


\begin{corr}
Let $L_1=K_1 \times K_2,L_2 \subset (M,\omega)$ be graded Lagrangians cleanly intersecting at $D$.
Suppose $D$ is transversal to $\{p\} \times K_2$ for all $p \in K_1$.
Then we can perform a graded $E_2$-flow surgery $L_1\#_{D,E_2}L_2$ from $L_1$ to $L_2$ along $D$ if and only if $Ind(L_1|_D,L_2|_D)=dim(D)+1$.
\end{corr}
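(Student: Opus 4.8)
The statement is an immediate consequence of Lemma \ref{l:cleanIndex2}, exactly as the preceding corollary followed from Lemma \ref{l:cleanIndex1}. The plan is as follows. First I would reduce to the local model. Fix a product Riemannian metric on $L_1=K_1\times K_2$; by Proposition \ref{p:Poz99} there is a symplectic embedding $\iota:T^*_\eps L_1\to M$ with $\iota(0_{section})=L_1$ and $\iota^{-1}(L_2)\subset N^*_D$. Since the metric is a product, $T^*L_1$ carries the orthogonal splitting $E_1\oplus E_2$ of Section \ref{s:E2surgery}, and the hypothesis that $D$ is transversal to $\{p\}\times K_2$ for all $p\in K_1$ is precisely what is needed for the $E_2$-flow surgery of Lemma \ref{l:flowGlueCleanE2}; thus, after choosing a $\lambda$-admissible $\nu$ with $\lambda<r^{E_2}(D)$, the Lagrangian $L_1\#_{D,E_2}L_2$ in $M$ is identified with $L_1\#^{\nu}_{D,E_2}N^*_D$ inside $T^*L_1$.

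Next, transport gradings through $\iota$: the grading $\theta_1$ on $L_1$ becomes a grading $\theta_r$ on the zero section and the grading $\theta_2$ on $L_2$ becomes a grading $\theta_i$ on $N^*_D$, with $Ind(L_1|_D,L_2|_D)=Ind((L_1|_D,\theta_r),(N^*_D|_D,\theta_i))$. Now I apply Lemma \ref{l:cleanIndex2}: there is a grading $\theta_H$ on the flow handle $H^{D,E_2}_{\nu}$ and a unique integer $m$ such that $\theta_H$ patches with $\theta_r+m$ and $\theta_i$ to a grading on $L_1\#^{\nu}_{D,E_2}N^*_D$, and $m=0$ if and only if $Ind((L_1|_D,\theta_r),(N^*_D|_D,\theta_i))=dim(D)+1$. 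By the convention adopted in Corollary \ref{c:ptIndex}, a graded surgery \emph{from} $L_1$ \emph{to} $L_2$ is one whose grading restricts to the given $\theta_1$ on the relevant open piece of $L_1$ (with no shift) and to $\theta_2$ on $L_2$; such a grading exists precisely when $m=0$, that is, precisely when $Ind(L_1|_D,L_2|_D)=dim(D)+1$.

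I do not anticipate a genuine obstacle. The only point requiring a little care is the very first step: one must check that Pozniak's normal-form chart $\iota$ can be chosen compatibly with the product decomposition $L_1=K_1\times K_2$ and the induced splitting $T^*L_1=E_1\oplus E_2$, so that the globally constructed $E_2$-flow surgery is literally the local one analyzed in Lemma \ref{l:cleanIndex2}. This is automatic once the metric on $L_1$ is taken to be a product and the chart is adapted to $D$ as in the proof of Lemma \ref{l:cleanIndex2}, since then the conormal directions of $D$ and the fiberwise geodesic flow along $E_2$ are determined exactly as in the model. Everything else is formal: the integer $m$ measures the failure of the handle grading to match $\theta_1$ without shift, and Lemma \ref{l:cleanIndex2} computes it in terms of the index.
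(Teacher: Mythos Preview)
Your proposal is correct and follows exactly the approach the paper intends: the corollary is stated without proof, immediately after Lemma \ref{l:cleanIndex2}, as a direct consequence of it (in precise analogy with how the previous corollary follows from Lemma \ref{l:cleanIndex1}). Your unpacking of the reduction via Proposition \ref{p:Poz99} and the identification of the graded-surgery condition with $m=0$ is the intended argument.
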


\begin{proof}[Proof of Lemma \ref{l:cleanIndex2}]

As explained before (cf. Lemma \ref{l:indexPoint} and Lemma \ref{l:cleanIndex1}), we just need to show that $m=0$ if and only if $Ind((L|_D,\theta_r),(N^*_D|_D,\theta_i))=dim(D)+1$. Again denote $k=dim(D)$.

Pick a chart compatible with the product structure on $L$ and
define $q_b=(q_1,\dots,q_{k}) \in L_1$ and $q_f=(q_{k+1},\dots,q_n) \in L_2$.
We also want that $(q_b,tq_f)$ is a geodesic with velocity one as $t$ varies, for any $q_b,q_f$ such that $|q_f|=1$.
We can also assume the origin belongs to $D$
and denote a basis of the tangent space of $D$ at origin $T_0D$ as
$\{w^1,\dots,w^k\}$ and $w^j=w^j_b+w^j_f$, where $w^j_b$ and $w^j_f$ are the $q_b$ and $q_f$ components of $w^j$, respectively.
Since $D$ is transversal to the second factor,
we can assume $w^j_b$ are the unit coordinate vectors in the $q_b$-plane for $1 \le j \le k$.
Moreover, there is a function $q^D_f(q_b)$ of $q_b$ near origin such that $(q_b,q^D_f(q_b)) \in D$.

 This chart gives a corresponding Darboux chart on $T^*L$ and
 we define $p^D_b$ as a function of $q_b,p_f$ near origin such that $(q_b,q^D_f(q_b),p^D_b(q_b,p_f),p_f) \in N^*_D$.
 Note that $p^D_b(\cdot,\cdot)$ is linear on the second factor.
Near the origin (close enough to origin such that $q^D_f(q_b)$ is well-defined), the handle $H^{D,E_2}_{\nu}$ in local coordinates is given by
$$\{ \phi^{\vp_{\pi}}_{\nu(\| p_b \|)}(q_b,q^D_f(q_b),p^D_b(q_b,p_f),p_f)=
(q_b,q^D_f(q_b)+\nu(\| p_f \|)\frac{p_f }{\|p_f \|},p^D_b(q_b,p_f),p_f) | q_b \in \R^k, p_f \in \R^{n-k} \}$$

We consider the standard complex volume form $\Omega=dz_1 \wedge \dots \wedge dz_{n}$ in the chart.
Let $e_{\pi_2} \in S^{n-k-1} \subset \R^{n-k}$ be a vector in the unit sphere in the $p_f$ coordinates.
Let
\begin{eqnarray*}
c(r)&=& \phi^{\vp_{\pi}}_{\nu(\| re_{\pi_2} \|)}(0,0,p^D_b(0,re_{\pi_2}),re_{\pi_2})\\
&=&(0,\nu(\| re_{\pi_2} \|)\frac{re_{\pi_2} }{\|re_{\pi_2} \|},p^D_b(0,re_{\pi_2}),re_{\pi_2})\\
&=&(0,\nu( r )e_{\pi_2},p^D_b(0,re_{\pi_2}),re_{\pi_2})
\end{eqnarray*}
 be a smooth curve in $H^{D,E_2}_{\nu}$ for $r \in (0,\epsilon]$.
We define $c(0)=\lim_{r \to 0^+} c(r)$.

The Lagrangian plane $\Lambda_{r}$ of $H^{D,E_2}_{\nu}$ at $c(r)$ is spanned by
$$\{(w^j_b,w^j_f,\kappa(r,w^j),0)\}_{j=1}^k \cup \{(0,\nu'(r)e_{\pi_2},p^D_b(0,e_{\pi_2}),e_{\pi_2})\}
\cup \{(0,\frac{\nu(r)}{r}e^{\perp}_j,p^D_b(0,e^{\perp}_j),e^{\perp}_j)\}_{j=2}^{n-k}$$
where  $\kappa(r,w^j)=\partial_{q_j} p^D_b(0,re_{\pi_2})=r (\partial_{q_j} p^D_b(0,e_{\pi_2}))$ is linear in $r$ and
$e^{\perp}_j$ form an orthonormal basis for orthogonal complement of $e_{\pi_2}$ in $\R^{n-k}$.
We note that $(0,\nu'(r)e_{\pi_2},p^D_b(0,e_{\pi_2}),e_{\pi_2})=c'(r)$ and the computation uses the fact that
$p^D_b(\cdot,\cdot)$ is linear on the second factor.

Let $\kappa_j(r,w^j)$ be the coefficient of $w^j_b$-component of $\kappa(r,w^j)$ (Here, we identify the $q_b$-plane and the $p_b$-plane).
Notice that
$$Det^2_{\Omega}(\Lambda_{r})=e^{i2(\sum\limits_{j=1}^k\arg(1-\kappa_j(r,w^j)\sqrt{-1}) +\arg(\nu'(r)-\sqrt{-1})+(n-k-1)\arg(\frac{\nu(r)}{r}-\sqrt{-1}))}$$
for all $r$ (Here, we use the fact that $w^j_b$ are unit coordinates vectors and we use the convention $z_i=q_i-\sqrt{-1}p_i$).
Let $K(r)=\sum\limits_{j=1}^k\arg(1-\kappa_j(r,w^j)\sqrt{-1})$.

Similar to Lemma \ref{l:cleanIndex1}, $\arg(\nu'(r)-\sqrt{-1})$ increases from $\pi$ to $\frac{3\pi}{2}$ as $r$ increases and
$\arg(\frac{\nu(r)}{r}-\sqrt{-1}))$ decreases from $2\pi$ to $\frac{3\pi}{2}$ as $r$ increases.
Therefore, there is a unique continuous function $\theta_{c}: Im(c) \to \mathbb{R}$ such that
\begin{itemize}
\item $\theta_{c}(c(r))=n-k-1+\frac{K(0)}{\pi}=n-k-1$ for $r= 0$,
\item $\theta_{c}(c(r))=\frac{n-k}{2}+\frac{K(\epsilon)}{\pi}$ for $r = \epsilon$, and
\item $e^{2 \pi i \theta_{c}(c(r))}=Det^2_{\Omega}(\Lambda_{r})$ for all $r \in [0,\epsilon]$.
\end{itemize}

On the other hand, we can lift a path of Lagrangian plane $\Lambda^N_r$ of $N^*_D$ over the path $c_2(r)=(0,0,p^D_b(0,re_{\pi_2}),re_{\pi_2})$
connecting the origin and $c(\epsilon)$.
The Lagrangian plane $\Lambda^N_r$ is spanned by
$$\{(w^j_b,w^j_f,\kappa(r,w^j),0)\}_{j=1}^k \cup \{(0,0,p^D_b(0,e_{\pi_2}),e_{\pi_2})\}
\cup \{(0,0,p^D_b(0,e^{\perp}_j),e^{\perp}_j)\}_{j=2}^{n-k}$$
Therefore, the grading of $N^*_D$ at origin is the grading of $N^*_D$ at $c(\epsilon)$ subtracted by $\frac{K(\epsilon)}{\pi}$.
If we extend $\theta_c$ continuously over $Im(c_2)$ (note: $Im(c) \cap Im(c_2)=\{c(\epsilon)\}$), then $\theta_c(c_2(0))=\frac{n-k}{2}$.

By an analogous calculation as in Example \ref{e:IndexRnNRk}, we have
$$Ind((L_1|_{D},n-k-1),
(N^*_D|_{D},\frac{n-k}{2}))=k+1$$
and by comparing it with $\theta_c$, the result follows.

\end{proof}

The following is a family version whose proof is similar.

\begin{corr}\label{c:familyE2Index}
Let $\mathcal{L}_0,\mathcal{L}_1 \subset (M^{2n},\w)$ as in Lemma \ref{l:familyE2flow} and let the dimension of $\mathcal{D}$ be $k$.
Assume $\mathcal{L}_0,\mathcal{L}_1$ are graded with grading $\theta_r$ and $\theta_i$.
Then $Ind((\mathcal{L}_1|_{\mathcal{D}},\theta_r),(N^*\mathcal{D}|_{\mathcal{D}},\theta_i))=k+1$
if and only if $\mathcal{L}_0\#_{\mathcal{D},E_2}^{\nu} \mathcal{L}_1$
has a grading such that the grading restricted to $\mathcal{L}_0,\mathcal{L}_1$ coincide with $\theta_r$ and $\theta_i$, respectively.
\end{corr}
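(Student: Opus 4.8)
The plan is to reduce to the fiberwise computation carried out in Lemma \ref{l:cleanIndex2} and to verify that the integer obstruction produced there is constant over the base $B$.

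First I would record the grading on the family handle. Recall from the paragraphs preceding Lemma \ref{l:familyE2flow} that $\eH_\nu$ is obtained by patching the local models $H^{D,E_2}_{\nu}$ over charts $\eU \subset B$, with transition maps lying in the structure group $G \subset Isom(L)$, which preserves the splitting $T^*L = E_1 \oplus E_2$. Because $G$ acts by isometries and $\Omega^2$ is fixed once and for all, pushing the grading $\theta_i$ of $N^*\mathcal{D}$ forward along the fiberwise Hamiltonian flow that generates $\eH_\nu$ yields a well-defined grading $\theta_H$ on $\eH_\nu$, exactly as in the first paragraph of the proof of Lemma \ref{l:indexPoint}; one then extends $\theta_H$ continuously to $Cl(\eH_\nu)$.

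Second, on the overlap $\mathcal{L}_0 \cap Cl(\eH_\nu)$ one has both the restriction of $\theta_r$ and the restriction of $\theta_H$; their difference is locally constant and integer-valued. By Lemma \ref{l:flowGlueCleanE2} the handle $\eH_\nu$ glues to exactly one component of $\mathcal{L}_0$ away from a neighborhood of $\mathcal{D}$, and on any single fiber $T^*_\epsilon \eL_b$ this integer is precisely the $m$ appearing in Lemma \ref{l:cleanIndex2}. The real content of this step is that $m$ does not depend on $b \in B$: it varies continuously with $b$, takes integer values, hence is constant on each connected component, and connectedness of $\mathcal{D}$ pins it to a single value. Likewise $Ind((\mathcal{L}_1|_{\mathcal{D}},\theta_r),(N^*\mathcal{D}|_{\mathcal{D}},\theta_i))$, being the index along a connected clean intersection, is computed in any fiber, and there Lemma \ref{l:cleanIndex2} gives $m = 0$ if and only if that index equals $k+1$.

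Finally I would assemble the conclusion: a grading on $\mathcal{L}_0\#_{\mathcal{D},E_2}^{\nu}\mathcal{L}_1$ whose restrictions to $\mathcal{L}_0$ and $\mathcal{L}_1$ are $\theta_r$ and $\theta_i$ exists exactly when $\theta_H$ can be patched with $\theta_r$ itself (rather than with $\theta_r + m$ for $m \neq 0$) and with $\theta_i$, that is, exactly when $m = 0$, that is, exactly when the index is $k+1$; this yields both implications simultaneously. The step I expect to be the main obstacle is the global consistency in the second paragraph --- making sure the fiberwise grading $\theta_H$ honestly descends to a single-valued function on $\eH_\nu$ and that the relation $\theta_H = \theta_r + m$ holds with the \emph{same} $m$ in every fiber, i.e.\ that the $G$-monodromy cannot twist the grading. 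This is where one genuinely uses $G \subset Isom(L)$ together with the global choice of $\Omega^2$, and it is the only place where the argument departs from a verbatim transcription of Lemma \ref{l:cleanIndex2}.
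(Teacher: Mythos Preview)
Your proposal is correct and follows essentially the same approach the paper intends: the paper's own proof of this corollary is simply the remark ``The following is a family version whose proof is similar,'' so you have in fact supplied more detail than the paper does, by spelling out how the grading $\theta_H$ is induced on the family handle via the fiberwise Hamiltonian flow and why the integer $m$ is globally constant by continuity over the base. Your identification of the only nontrivial point --- that the $G$-equivariance of the construction ensures $\theta_H$ and hence $m$ are well-defined globally --- is exactly right, and once that is in hand the fiberwise application of Lemma \ref{l:cleanIndex2} finishes both directions at once.
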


\subsection{Diagonal in product}
We recall from \cite{WWQuiltedFloer} how to associate the canonical grading to the diagonal in $M \times M^-$.

For a standard symplectic vector space $(\R^{2n},\w_{std})$ and its $N$-fold Maslov cover $Lag^N(\R^{2n},\Lambda_0)$ based at a graded Lagrangian plane $\Lambda_0$,
we can associate a $N$-fold Maslov cover $Lag^N(\R^{2n,-} \times \R^{2n},\Lambda_0^- \times \Lambda_0)$.
In particular, $\Lambda_0^- \times \Lambda_0$ is canonically graded.
For any Lagrangian plane $\Lambda \subset \R^{2n}$ and a path $\gamma$ from $\Lambda$ to $\Lambda_0$,
the induced path $\gamma^- \times \gamma$ from $\Lambda^- \times \Lambda$ to $\Lambda_0^- \times \Lambda_0$ gives an identification between
$Lag^N(\R^{2n,-} \times \R^{2n},\Lambda_0^- \times \Lambda_0)$ and
$Lag^N(\R^{2n,-} \times \R^{2n},\Lambda^- \times \Lambda)$, independent from the choice of $\gamma$.
This gives a canonical grading on $\Lambda^- \times \Lambda$.

To give a canonical grading to the diagonal $\Delta \subset \R^{2n,-} \times \R^{2n}$, it suffices to give once and for all an identification
between $Lag^N(\R^{2n,-} \times \R^{2n},\Lambda^- \times \Lambda)$ and $Lag^N(\R^{2n,-} \times \R^{2n},\Delta)$.
This is given by concatenation of two paths
$$(e^{Jt}\Lambda^- \times \Lambda)_{t \in [0, \frac{\pi}{2}]},\quad (\{(tx+Jy,x+tJy)| x,y\in \Lambda \})_{t \in [0,1]} $$
where $J$ is an $\w_{std}$-compatible complex structure on $\R^{2n}$.
This canonical grading induces a canonical grading on $\Delta_M \subset M^- \times M$ for any symplectic manifold $M$.

In the following lemma, we consider our symplectic manifold being $M=\C^{n,-}$ and compute
the index between a product Lagrangian with the diagonal $\Delta_{M}$.

\begin{lemma}[c.f. Section $3$ of \cite{WWQuiltedFloer}]\label{l: grading of diagonal}
For any graded Lagrangian subspace $\Lambda \subset \C^{n,-}$, we have
$$Ind(\Lambda^- \times \Lambda|_{\Delta_{\Lambda}},\Delta_{\C^{n,-}}|_{\Delta_{\Lambda}})=n$$
where $\Lambda^- \times \Lambda$ and $\Delta_{\C^{n,-}}$ are equipped with their canonical gradings in $\C^{n} \times \C^{n,-}$.
\end{lemma}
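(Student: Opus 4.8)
The plan is to reduce the computation to a very explicit model in $\C^n \times \C^{n,-}$ and to track the canonical gradings along the two concatenated paths used to define them. First I would fix a graded Lagrangian subspace $\Lambda \subset \C^{n,-}$; after applying a unitary change of coordinates (which does not affect any index computation) I may assume $\Lambda = \R^n$, so that $\Lambda^- \times \Lambda = \R^{n} \times \R^{n}$ sitting inside $\C^{n} \times \C^{n,-}$, and $\Delta_{\C^{n,-}} = \{(z,z) : z \in \C^{n}\}$. The diagonal $\Delta_\Lambda = \Delta_{\R^n}$ is the clean (in fact, here transverse after stabilizing, but genuinely clean of dimension $n$) intersection locus, so I would use the clean-intersection index formula \eqref{e:index} with the $Angle$ defined via a positive-definite path of Lagrangian planes through the intersection.

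The key steps, in order: (1) Read off the canonical grading on $\Lambda^- \times \Lambda$: by definition this is the grading transported from $\Lambda_0^- \times \Lambda_0$ along $\gamma^- \times \gamma$; since we have normalized $\Lambda = \Lambda_0 = \R^n$, the constant path works and the grading of $\R^n \times \R^n$ is simply (twice) the chosen base value, which I may take to be $0$. (2) Read off the canonical grading on $\Delta_{\C^{n,-}}$: by definition it is obtained from the grading on $\Lambda^- \times \Lambda$ by transporting along the concatenation of $(e^{Jt}\Lambda^- \times \Lambda)_{t\in[0,\pi/2]}$ and $(\{(tx+Jy, x+tJy)\})_{t\in[0,1]}$; so I must compute the total change of $\tfrac{1}{2\pi}\arg Det^2_\Omega$ along this concatenation. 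The first leg rotates the first factor $\R^n$ to $J\R^n = i\R^n$, contributing a grading change of $-n/2$ (each of the $n$ complex coordinates contributes $-1/2$, matching the $Det^2$ computation $1 \mapsto (-1)^n$ traversed through angle $\pi$); the second leg is a linear interpolation from $i\R^n \times \R^n$ to $\Delta_{\C^{n,-}}$ whose $Det^2_\Omega$ change I would compute directly from the spanning vectors $(e_j, 0), (0,e_j) \rightsquigarrow (te_j + i\,0, e_j + t\,i\,0)$ — more precisely, parametrizing the plane by $\{(tx+iy, x+ity)\}$ and computing $\det$ of the associated $n\times n$ complex matrix as a function of $t$, which is a path in $\C^*$ from the value at $t=0$ to the value at $t=1$; its winding contributes the remaining grading shift. (3) Having pinned down $\theta_0$ on $\Lambda^- \times \Lambda|_{\Delta_\Lambda}$ and $\theta_1$ on $\Delta_{\C^{n,-}}|_{\Delta_\Lambda}$, plug into \eqref{e:index}: $Ind = n + \theta_1 - \theta_0 - 2\,Angle(\Lambda^-\times\Lambda, \Delta_{\C^{n,-}})$, where the $Angle$ term is computed via the positive-definite path through the $n$-dimensional intersection $\Delta_\Lambda$ inside $(\Lambda^-\times\Lambda) + \Delta_{\C^{n,-}}$; this is essentially the same second-leg-type interpolation and its $\tfrac{1}{2\pi}\arg Det^2_\Omega$ change is what defines $2\,Angle$. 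The arithmetic should collapse to $Ind = n$.

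An alternative, and probably cleaner, route is to reduce directly to the already-proven Example \ref{e:IndexRnNRk} and Corollary \ref{c:MorseBott}: the diagonal $\Delta_{\C^{n,-}}$ is Hamiltonian isotopic (rel the relevant data) to the graph of $d h$ for a Morse–Bott function $h$ on $\R^n \times \R^n$ with critical locus $\Delta_{\R^n}$, and tracking the canonical grading through this isotopy via Remark \ref{r:propagateGrading}, then applying Corollary \ref{c:MorseBott} with $k_2 = n$, gives the index $2n - n = n$ on the nose. I would set up whichever of the two presentations makes the bookkeeping of the $\arg Det^2_\Omega$ increments least error-prone.

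The main obstacle I anticipate is not conceptual but a careful sign/branch-of-$\arg$ bookkeeping: the canonical grading is defined by two specific concatenated paths, and one must match the complex-volume-form conventions ($z_i = q_i - \sqrt{-1}\,p_i$ versus $q_i + i p_i$, and the $M^-$ versus $M$ factor ordering) precisely with those used in \cite{WWQuiltedFloer}, so that the contributions $-n/2$ from the $e^{Jt}$ leg, the winding from the linear interpolation, and the $2\,Angle$ correction add up correctly rather than off by an integer. Getting that normalization right — in particular verifying that the base grading choices on $\Lambda^-\times\Lambda$ and the transported grading on $\Delta$ are the genuinely \emph{canonical} ones and not shifted — is where the real care is needed; the rest is a finite unitary-coordinate computation.
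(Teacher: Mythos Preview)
Your main approach is exactly what the paper does: reduce to $\Lambda=\R^n$, set $\theta_0=0$, transport along the two legs to find $\theta_1=-n/2$ (the second leg has constant $Det^2_\Omega=e^{-in\pi}$, so no further change), compute $2\,Angle=n/2$ via an explicit positive-definite path through $\Delta_\Lambda$, and conclude via \eqref{e:index} with ambient Lagrangian dimension $2n$: $Ind=2n-\tfrac{n}{2}-0-\tfrac{n}{2}=n$. Just be careful that the ``$n$'' appearing in the general formula \eqref{e:index} is the Lagrangian dimension, which here is $2n$; your alternative via Corollary~\ref{c:MorseBott} is not actually cleaner, since you would still need to check that the Hamiltonian-isotopy-induced grading on the graph agrees with the canonical grading on $\Delta$, which amounts to the same computation.
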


\begin{proof}
It suffices to consider $\Lambda=\R^n \subset \C^{n,-}$ and $J=-J_{std}=-\sqrt{-1}$.
Let $z_i=x_i+\sqrt{-1}y_i$ be the coordinates of $\C^{n}$ and $w_i=u_i+\sqrt{-1}v_i$ be the coordinates of $\C^{n,-}$.
We consider the standard complex volume form $\Omega=dz_1 \wedge \dots \wedge dz_n \wedge d\bar{w_1} \wedge \dots \wedge d\bar{w_n}$ on $\C^{n} \times \C^{n,-}$ and
equip $\Lambda^- \times \Lambda$ with grading $0$.
We have $Det^2(e^{Jt}\Lambda^- \times \Lambda)=e^{-i2nt}$, which induces a grading of $-\frac{n}{2}$ on $e^{J\frac{\pi}{2}}\Lambda^- \times \Lambda$.
We also have $Det^2(\{(tx+Jy,x+tJy)| x,y\in \Lambda \})=e^{-in\pi}$ for all $t$ so the canonical grading on $\Delta$ is $-\frac{n}{2}$.

To calculate $Angle(\Lambda^- \times \Lambda,\Delta)$, we observe that $(\Lambda^- \times \Lambda) \cap \Delta=Span\{(\partial_{x_i}+\partial_{u_i})\}_{i=1}^n$.
We can use $\Lambda_t=(\Lambda^- \times \Lambda) \cap \Delta+Span\{(t(\partial_{y_i}+\partial_{v_i})+(1-t)(-\partial_{x_i}+\partial_{u_i}))\}$ from $\Lambda^- \times \Lambda $ to
$\Delta$ for the calculation of $Angle(\Lambda^- \times \Lambda,\Delta)$.
As a result, we have $2Angle(\Lambda^- \times \Lambda,\Delta)=\frac{n}{2}$ and hence
$$Ind(\Lambda^- \times \Lambda|_{\Delta_{\Lambda}},\Delta_{\C^{n,-}}|_{\Delta_{\Lambda}})=2n+(-\frac{n}{2})-0-\frac{n}{2}=n$$

\end{proof}

\begin{corr}\label{c:GradedSurgProduct}
 Let $L$ be a Lagrangian in $M$.
 With the canonical gradings of $L\times L \subset M \times M^-$ and $\Delta \subset M \times M^-$,
 one can perform graded clean surgery to obtain $(L\times L)[1] \#_{\Delta_L,E_2} \Delta $.

\end{corr}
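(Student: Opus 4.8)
The plan is to deduce the statement from the index criterion for graded $E_2$-flow surgery (the corollary following Lemma \ref{l:cleanIndex2}), with the required index supplied by the linear computation in Lemma \ref{l: grading of diagonal}. Write $L_1 = L\times L^- \subset M\times M^-$ (the Lagrangian denoted $L\times L$ in the statement) and $L_2 = \Delta = \Delta_M$; they intersect cleanly along $D = \Delta_L = \{(p,p): p\in L\}$, which is connected because $L$ is, and has dimension $n = \dim L$. The first step is to put $L_1$ in the product form required by Definition \ref{d:subbundleHandle}: take $K_1$ and $K_2$ to be the first and second $L$-factors of $L\times L^-$, so that $E_2$ is the cotangent direction of the second factor, and verify that $D$ is transverse to $\{p\}\times K_2$ for each $p\in K_1$ --- indeed $\Delta_L\cap(\{p\}\times L) = \{(p,p)\}$ and $T_{(p,p)}\Delta_L\oplus T_{(p,p)}(\{p\}\times L) = T_{(p,p)}(L\times L)$. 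Hence the $E_2$-flow surgery of Section \ref{s:E2surgery}, together with its grading version in Lemma \ref{l:cleanIndex2}, is available in this situation (with ambient symplectic manifold $M\times M^-$).

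By the corollary after Lemma \ref{l:cleanIndex2}, the graded surgery $(L\times L^-)[1]\#_{\Delta_L,E_2}\Delta$ is defined if and only if $Ind\big((L\times L^-)[1]|_{\Delta_L},\Delta|_{\Delta_L}\big) = \dim(\Delta_L)+1 = n+1$. By Example \ref{e:gradingShift} this equals $Ind\big((L\times L^-)|_{\Delta_L},\Delta|_{\Delta_L}\big)+1$, so it remains to check that the unshifted index is $n$. Since $\Delta_L$ is connected the index may be computed at a single point $(p,p)$, and in a Darboux chart centered there the tangent planes of $L_1$ and of $\Delta$ --- with their canonical gradings --- become the linear model $\Lambda^-\times\Lambda$ and $\Delta_{\C^{n,-}}$ of Lemma \ref{l: grading of diagonal}, where one takes the symplectic vector space $\C^{n,-}$ appearing in that lemma to be $T_pM^-$ and $\Lambda$ to be the image of $T_pL$. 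Lemma \ref{l: grading of diagonal} then gives the value $n$, and the corollary follows.

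Since all of the real content is packaged into the two cited results, there is no genuine obstacle; the only point that requires attention is the bookkeeping of conventions, namely that the canonical gradings of $L\times L^-$ and of $\Delta_M$ restrict, at a point of $D$, to the canonical gradings of $\Lambda^-\times\Lambda$ and $\Delta_{\C^{n,-}}$ in the linear model. This is immediate from the definition of the canonical grading as the pointwise construction recalled just before Lemma \ref{l: grading of diagonal}, once the negation is placed on the correct ($M^-$) factor as above.
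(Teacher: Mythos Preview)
Your proof is correct and follows exactly the same approach as the paper, which simply cites Lemma~\ref{l:cleanIndex2} and Lemma~\ref{l: grading of diagonal}; you have merely unpacked the one-line argument with the intermediate steps (the transversality check for the $E_2$-flow setup, the grading-shift bookkeeping from Example~\ref{e:gradingShift}, and the pointwise reduction to the linear model).
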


\begin{proof}
 This is a direct consequence of Lemma \ref{l:cleanIndex2} and Lemma \ref{l: grading of diagonal}.
\end{proof}

\begin{corr}\label{c: graded idenity for graph of Sn}
 There is a graded clean surgery identity $(S^n\times S^n)[1] \#_{\Delta_{S^n},E_2} \Delta = Graph(\tau_{S^n}^{-1})$.
\end{corr}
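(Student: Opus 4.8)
The plan is to combine the smooth surgery identity already established with the index computation for the diagonal, so that the statement becomes essentially the $L=S^n$ instance of Corollary \ref{c:GradedSurgProduct} together with Corollary \ref{c:admissibleToDehnProduct}. First I would recall that, forgetting gradings, the $E_2$-flow surgery of $S^n\times S^n$ and $\Delta$ along $\Delta_{S^n}$ is already understood: by Corollary \ref{c:admissibleToDehnProduct} (equivalently, the construction preceding Lemma \ref{l:adjust}, which realizes $(S^n\times S^n)\#^{\nu_\lambda}_{\Delta_{S^n},E_2}\Delta$ for $0<\lambda<\pi<r^{E_2}(\Delta_{S^n})$ as $Graph(\tau_{S^n}^{-1})$ after a compactly supported Hamiltonian perturbation), the underlying smooth Lagrangian of the surgery is Hamiltonian isotopic to $Graph(\tau_{S^n}^{-1})$ in $T^*S^n\times(T^*S^n)^-$. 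So the only thing to add is that the canonical gradings turn this into a \emph{graded} clean surgery with the stated shift $[1]$.

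Next I would run the grading bookkeeping. Equip $S^n\times S^n$ with the product of canonical gradings and $\Delta$ with its canonical grading. The clean intersection $\Delta_{S^n}$ has dimension $n$, and at each point the pair of tangent planes is the model pair $\Lambda^-\times\Lambda$ versus $\Delta$ inside $\C^n\times\C^{n,-}$ with $\Lambda=T_pS^n$; hence Lemma \ref{l: grading of diagonal} gives $Ind\big((S^n\times S^n)|_{\Delta_{S^n}},\Delta|_{\Delta_{S^n}}\big)=n$. Shifting the first factor, Example \ref{e:gradingShift} yields $Ind\big((S^n\times S^n)[1]|_{\Delta_{S^n}},\Delta|_{\Delta_{S^n}}\big)=n+1=\dim(\Delta_{S^n})+1$, which is exactly the numerical hypothesis of Lemma \ref{l:cleanIndex2} (this is the content of Corollary \ref{c:GradedSurgProduct}). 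Therefore the $E_2$-flow surgery can be performed as a graded surgery, and $(S^n\times S^n)[1]\#_{\Delta_{S^n},E_2}\Delta$ inherits a grading restricting to the two canonical ones on the pieces. Finally I would propagate this grading along the compactly supported Hamiltonian isotopy above (Remark \ref{r:propagateGrading}) to get a grading on $Graph(\tau_{S^n}^{-1})$; since the isotopy is compactly supported and both Lagrangians coincide with $\Delta$ near infinity, where the grading is forced to be the canonical grading of $\Delta$, the propagated grading is the natural one on $Graph(\tau_{S^n}^{-1})$, giving the claimed graded identity.

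The one place that needs care — and the main obstacle — is pinning down the integer shift: Lemma \ref{l:cleanIndex2} only produces \emph{some} integer $m$ with which the flow-handle grading patches, and the assertion that $m=0$ precisely after the $[1]$-shift rests entirely on the index being $n+1$, hence on the computation in Lemma \ref{l: grading of diagonal} (the value $2\,Angle(\Lambda^-\times\Lambda,\Delta)=n/2$ together with the canonical grading values $-n/2$ on both $\Lambda^-\times\Lambda$ and $\Delta$). Once that normalization is fixed, the rest is routine: the smooth identity is already proven, gradings propagate uniquely along Lagrangian isotopies, and the cylindrical behaviour at infinity removes any residual ambiguity in the grading on $Graph(\tau_{S^n}^{-1})$.
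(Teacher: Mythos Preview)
Your proposal is correct and follows essentially the same route as the paper, which simply cites Corollary \ref{c:admissibleToDehnProduct} for the ungraded Hamiltonian-isotopy identity and Corollary \ref{c:GradedSurgProduct} for the graded-surgery condition; you have merely unpacked the latter (via Lemma \ref{l: grading of diagonal}, Example \ref{e:gradingShift}, and Lemma \ref{l:cleanIndex2}) and made the propagation-of-grading step explicit. One small slip in your parenthetical: in the normalization of Lemma \ref{l: grading of diagonal} the grading of $\Lambda^-\times\Lambda$ is $0$, not $-n/2$ (only $\Delta$ carries $-n/2$), but this does not affect your argument since you correctly quote the index value $n$.
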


\begin{proof}
A direct consequence of Corollary \ref{c:admissibleToDehnProduct} and Corollary \ref{c:GradedSurgProduct}.
\end{proof}

\begin{lemma}\label{l: graded identity for graph of CPm/2}
 There is a graded clean surgery identity
 $$\mathbb{CP}^{\frac{m}{2}}\times \mathbb{CP}^{\frac{m}{2}} \#_{D^{op},E_2}
 ((\mathbb{CP}^{\frac{m}{2}}\times \mathbb{CP}^{\frac{m}{2}})[1] \#_{\Delta_{\mathbb{CP}^{\frac{m}{2}}},E_2} \Delta) = Graph(\tau_{\mathbb{CP}^{\frac{m}{2}}}^{-1})$$.
\end{lemma}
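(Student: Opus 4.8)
The plan is to obtain this as the graded refinement of Lemma~\ref{l:CPnCob}: the underlying Hamiltonian isotopy is already Lemma~\ref{l:CPnCob}, so only the grading bookkeeping for the two clean surgeries in its construction is new. Write $m=\dim\mathbb{CP}^{m/2}$ (real dimension), so $\dim(\mathbb{CP}^{m/2}\times\mathbb{CP}^{m/2})=2m$, $\dim\Delta_{\mathbb{CP}^{m/2}}=m$, and $\dim D^{op}=2m-2$. For the inner surgery, Lemma~\ref{l: grading of diagonal} gives $Ind\bigl((\mathbb{CP}^{m/2}\times\mathbb{CP}^{m/2})|_{\Delta_{\mathbb{CP}^{m/2}}},\Delta|_{\Delta_{\mathbb{CP}^{m/2}}}\bigr)=m$ for the canonical gradings, so after the shift by $[1]$ the index becomes $m+1=\dim\Delta_{\mathbb{CP}^{m/2}}+1$; hence by Lemma~\ref{l:cleanIndex2}, equivalently Corollary~\ref{c:GradedSurgProduct}, the $E_2$-flow surgery $\mathcal S:=(\mathbb{CP}^{m/2}\times\mathbb{CP}^{m/2})[1]\#_{\Delta_{\mathbb{CP}^{m/2}},E_2}\Delta$ is a graded clean surgery, carrying a grading restricting to the canonical gradings of $(\mathbb{CP}^{m/2}\times\mathbb{CP}^{m/2})[1]$ and of $\Delta$. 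Realizing $\mathcal S$ with the semi-admissible profile $\nu^0_\pi$ as in Lemma~\ref{l:cpnFiber}, it equals $\Delta$ outside a compact set, meets $\mathbb{CP}^{m/2}\times\mathbb{CP}^{m/2}$ cleanly along $D^{op}$, and near $D^{op}$ is the conormal bundle $N^*_{D^{op}}\subset T^*(\mathbb{CP}^{m/2}\times\mathbb{CP}^{m/2})$. The outer surgery is then a family $E_2$-surgery over the $\mathbb{CP}^{m/2-1}$-bundle $D^{op}\to\mathbb{CP}^{m/2}$, and by Corollary~\ref{c:familyE2Index} it is a graded clean surgery exactly when $Ind\bigl((\mathbb{CP}^{m/2}\times\mathbb{CP}^{m/2})|_{D^{op}},\mathcal S|_{D^{op}}\bigr)=\dim D^{op}+1=2m-1$.

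I would compute this index in two steps. Near $D^{op}$ the Lagrangian $\mathcal S=N^*_{D^{op}}$ is, after a small local Hamiltonian perturbation, the graph of $d\bigl(-\tfrac{1}{2\alpha}\mathrm{dist}^2(\cdot,D^{op})\bigr)$, a Morse--Bott maximum along $D^{op}$; so Corollary~\ref{c:MorseBott} gives index $2m$ relative to the grading on $N^*_{D^{op}}$ induced from the canonical grading of $\mathbb{CP}^{m/2}\times\mathbb{CP}^{m/2}$ via that perturbation. On the other hand, the grading that $\mathcal S$ actually carries near $D^{op}$ is this induced grading shifted by $[1]$, the shift being inherited from the $[1]$ in the definition of $\mathcal S$; I would verify this by propagating the grading of $\mathcal S$ from its $\Delta$-end, where it is the canonical grading of $\Delta$ (anchored by Lemma~\ref{l: grading of diagonal}), along the inner $E_2$-handle through its collapse onto $D^{op}$ — a computation of $Det^2_\Omega$ along an adapted curve entirely parallel to the proofs of Lemmas~\ref{l:cleanIndex1} and~\ref{l:cleanIndex2}. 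Since shifting the second argument by $[1]$ lowers the index by $1$, the two steps combine to $Ind\bigl((\mathbb{CP}^{m/2}\times\mathbb{CP}^{m/2})|_{D^{op}},\mathcal S|_{D^{op}}\bigr)=2m-1$, as required, so the outer surgery is a graded clean surgery. This last point — pinning the grading of $\mathcal S$ near $D^{op}$ to an exact $[1]$-shift — is the step I expect to be the main obstacle: one must control $Det^2_\Omega$ across the degeneration of the inner handle onto $D^{op}$, which is delicate because $D^{op}$ is a nontrivial $\mathbb{CP}^{m/2-1}$-bundle over $\mathbb{CP}^{m/2}$ and the conormal directions along $D^{op}$ mix both $\mathbb{CP}^{m/2}$ factors; an alternative route is a single local computation in the style of Lemma~\ref{l:cleanIndex2} applied to the composed two-step handle.

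Finally I would identify the resulting graded Lagrangian with $Graph(\tau_{\mathbb{CP}^{m/2}}^{-1})$ equipped with its canonical grading. By Lemma~\ref{l:CPnCob} the two are Hamiltonian isotopic through a compactly supported isotopy, and both coincide with $\Delta$ — carrying its canonical grading — outside a compact set; propagating the grading along the isotopy (Remark~\ref{r:propagateGrading}) and using that two gradings of a connected Lagrangian which agree on an open set are equal, we obtain the stated graded surgery identity. The $\mathbb{RP}^n$ and $\mathbb{HP}^n$ analogues follow verbatim, the only change being the grading shift of the corresponding intermediate Lagrangian.
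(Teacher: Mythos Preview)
Your proposal is correct and follows essentially the same approach as the paper: invoke Corollary~\ref{c:GradedSurgProduct} for the inner surgery, then compute the index at $D^{op}$ as $2m-1$ via Corollary~\ref{c:MorseBott} (giving $2m$) together with the $[1]$-shift inherited from the first factor (subtracting $1$), and conclude by Lemma~\ref{l:cleanIndex2}. The paper treats the step you flag as the ``main obstacle'' --- that the grading of $\mathcal S$ near $D^{op}$ is exactly the Morse--Bott induced grading shifted by $[1]$ --- as immediate, simply noting that the $-1$ ``comes from the grading shift of the first factor''; so your caution there is well-placed but the paper does not carry out any additional computation.
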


\begin{proof}
 By Corollary \ref{c:GradedSurgProduct}, we can obtain a graded Lagrangian
 $L=(\mathbb{CP}^{\frac{m}{2}}\times \mathbb{CP}^{\frac{m}{2}})[1] \#_{\Delta_{\mathbb{CP}^{\frac{m}{2}}},E_2} \Delta$.
 As explained in the proof of Lemma \ref{l:adjust} and Lemma \ref{l:CPnCob},
 $L$ is Hamiltonian isotopic to a Lagrangian $Q$ cleanly intersecting with
 $\mathbb{CP}^{\frac{m}{2}}\times \mathbb{CP}^{\frac{m}{2}}$ along $D^{op}$ such that $Q$ coincide with the graph of
 a Morse-Bott function with maximum at $D^{op}$ near $D^{op}$.
 Therefore, we have $Ind(\mathbb{CP}^{\frac{m}{2}}\times \mathbb{CP}^{\frac{m}{2}}|_{D^{op}}, Q|_{D^{op}})=2m-1$.
 Here the first term $2m$ follows by Corollary \ref{c:MorseBott} and the second term $-1$
 comes from the grading shift of the first factor of $L$.
 Since $D^{op}$ is of dimension $2m-2$, we get the result by applying Lemma \ref{l:cleanIndex2}.
 \end{proof}

The cases for $\mathbb{RP}^n$ and $\mathbb{HP}^n$ can be computed analogously.

\begin{lemma}\label{l:RpnHpnSurgIdentity}
 There are also graded clean surgery identities
 $$\mathbb{RP}^{n}\times \mathbb{RP}^{n}[1] \#_{D^{op},E_2}
 ((\mathbb{RP}^{n}\times \mathbb{RP}^{n})[1] \#_{\Delta_{\mathbb{RP}^{n}},E_2} \Delta) = Graph(\tau_{\mathbb{RP}^{n}}^{-1})$$
 and
 $$\mathbb{HP}^{n}\times \mathbb{HP}^{n}[-2] \#_{D^{op},E_2}
 ((\mathbb{HP}^{n}\times \mathbb{HP}^{n})[1] \#_{\Delta_{\mathbb{HP}^{n}},E_2} \Delta) = Graph(\tau_{\mathbb{HP}^{n}}^{-1})$$
 where $D^{op}$ are defined similar to Lemma \ref{l: graded identity for graph of CPm/2}.
\end{lemma}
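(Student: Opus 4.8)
The plan is to run the proof of Lemma \ref{l: graded identity for graph of CPm/2} essentially verbatim, adapting only the dimension count --- the one piece of input that genuinely depends on $S$ is the real codimension of the cut locus of a point, which is $1$ for $\mathbb{RP}^n$ and $4$ for $\mathbb{HP}^n$, as opposed to $2$ for $\mathbb{CP}^{\frac{m}{2}}$. Write $S$ for $\mathbb{RP}^n$ or $\mathbb{HP}^n$. First I would invoke Corollary \ref{c:GradedSurgProduct} to produce the graded Lagrangian $L=(S\times S)[1]\#_{\Delta_S,E_2}\Delta$ --- this is the inner surgery appearing in the statement, and the $[1]$ shift is exactly the one the corollary provides. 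Then, as in the proofs of Lemma \ref{l:adjust} and Lemma \ref{l:CPnCob} (both already stated for all three spaces), I would deform $L$ by a Hamiltonian isotopy to a Lagrangian $Q$ which meets $S\times S$ cleanly along $D^{op}=\{(x,y)\in S\times S^-:\ dist(x,y)=\pi\}$ and coincides near $D^{op}$ with the graph of $dh$ for a Morse--Bott function $h$ on $S\times S$ whose maximum locus is $D^{op}$.

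The core of the proof is then the index bookkeeping for the outer surgery. By Corollary \ref{c:MorseBott}, the graph of $dh$ over the Lagrangian $S\times S$ has index $\dim(S\times S)$ along the Morse--Bott maximum $D^{op}$, and the $[1]$ shift on the first factor of $L$ (hence of $Q$) contributes $-1$, so $Ind(S\times S|_{D^{op}},Q|_{D^{op}})=\dim(S\times S)-1$, i.e.\ $2n-1$ for $\mathbb{RP}^n$ and $8n-1$ for $\mathbb{HP}^n$. Since $D^{op}$ fibers over $S$ with fiber the cut locus of a point of $S$, its dimension is $2n-1$ for $\mathbb{RP}^n$ and $8n-4$ for $\mathbb{HP}^n$. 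To invoke Lemma \ref{l:cleanIndex2} (or its family version Corollary \ref{c:familyE2Index}) one needs the index along $D^{op}$ to equal $\dim D^{op}+1$, which forces an additional $[1]$ shift of the ambient Lagrangian in the $\mathbb{RP}^n$ case (raising $2n-1$ to $2n$) and an additional $[-2]$ shift in the $\mathbb{HP}^n$ case (lowering $8n-1$ to $8n-3$); this is precisely why the two identities read $\mathbb{RP}^n\times\mathbb{RP}^n[1]\#_{D^{op},E_2}(\cdots)$ and $\mathbb{HP}^n\times\mathbb{HP}^n[-2]\#_{D^{op},E_2}(\cdots)$. With the graded outer surgery in hand, Lemma \ref{l:CPnCob} --- which is stated for $\mathbb{RP}^n$ and $\mathbb{HP}^n$ as well --- identifies the underlying Lagrangian of the iterated surgery with $Graph(\tau_S^{-1})$, and the proof is complete.

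The only step I expect to require genuine care is this grading bookkeeping: keeping the two sign conventions straight --- a shift $[k]$ means $\theta\mapsto\theta-k$, and shifting the first argument of $Ind$ by $[k]$ changes the index by $+k$ --- and confirming that the index predicted by Corollary \ref{c:MorseBott}, plus the inner $[1]$ shift, plus the outer shift, lands exactly on $\dim D^{op}+1$ in each case. Everything else --- the Hamiltonian isotopy producing $Q$, the Morse--Bott picture near $D^{op}$, and the underlying ungraded surgery identity --- is already provided, word for word, by Lemma \ref{l:adjust}, Lemma \ref{l:CPnCob}, and the $\mathbb{CP}^{\frac{m}{2}}$ computation of Lemma \ref{l: graded identity for graph of CPm/2}.
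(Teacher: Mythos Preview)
Your proposal is correct and follows precisely the approach the paper indicates: the paper simply states that the $\mathbb{RP}^n$ and $\mathbb{HP}^n$ cases ``can be computed analogously'' to Lemma~\ref{l: graded identity for graph of CPm/2}, and your argument carries out exactly that computation, with the correct dimension and grading bookkeeping for the outer surgery shift.
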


For family Dehn twist, we have (See Corollary \ref{l:fiberedTwist})

\begin{lemma}\label{l:familyGrade}
There are graded clean surgery identities
$$\wt C_S[1] \#_{\eD,E_2} \Delta = Graph(\tau_{C_S}^{-1})$$
$$\wt C_R[1]\#_{\eD^{op},E_2} \wt C_R[1]\#_{\eD,E_2} \Delta=Graph(\tau_{C_R}^{-1})$$
$$\wt C_C\#_{\eD^{op},E_2} \wt C_C[1]\#_{\eD,E_2} \Delta=Graph(\tau_{C_C}^{-1})$$
$$\wt C_H[-2]\#_{\eD^{op},E_2} \wt C_H[1]\#_{\eD,E_2} \Delta=Graph(\tau_{C_H}^{-1})$$
where $C_S$ (resp. $C_R,C_C,C_H$) is a spherically (resp. real projectively, complex projectively, quaternionic projectively) coisotropic submanifold.

\end{lemma}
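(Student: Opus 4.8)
The plan is to upgrade the ungraded Hamiltonian isotopies of Corollary~\ref{l:fiberedTwist} to graded surgery identities by invoking the family index criterion of Corollary~\ref{c:familyE2Index}. The only thing to check is that, once the indicated shifts are applied, the Maslov index along each surgery locus equals $\dim(\text{locus})+1$; by Corollary~\ref{c:familyE2Index} this is precisely the condition under which the $E_2$-surgery carries a grading restricting to the prescribed gradings on its two pieces. Since $Ind(L_1|_D,L_2|_D)$ depends only on the germ of the configuration along the connected locus $D$, every such computation localizes to a single fibre of the coisotropic neighborhood $U\cong T^*_\epsilon(\text{null leaf})\times_{G} P$, where $\wt C$ restricts to the product of the null leaf with itself and $\Delta$ restricts to the diagonal of $T^*(\text{null leaf})$ --- precisely the configurations treated in Corollary~\ref{c: graded idenity for graph of Sn}, Lemma~\ref{l: graded identity for graph of CPm/2} and Lemma~\ref{l:RpnHpnSurgIdentity}. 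The $G$-invariance of the whole construction, already used in Lemmas~\ref{l:adjust}--\ref{l:admissibleToDehnFamily}, together with the naturality of $Det^2_\Omega$, lets the fibrewise gradings on $\wt C$, $\Delta$ and on the handle $\eH_\nu$ be glued over $B$.

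For the spherical case I would take $\eL_0=\wt C_S$, $\eL_1=\Delta$, $\eD=\Delta_{C_S}$, and compute $Ind(\wt C_S|_\eD,\Delta|_\eD)$ at a point $(x,x)$. The tangent space $T_{(x,x)}(M\times M^-)$ splits symplectically as a \emph{fibre part} $\C^l\oplus\C^{l,-}$, spanned by the null and conormal directions of $C_S$ at $x$ (doubled with opposite sign), and a \emph{base part} identified with $T_{\rho(x)}B\oplus T_{\rho(x)}B$ carrying opposite symplectic forms on the two summands; both $T\wt C_S$ and $T\Delta$ are adapted to this splitting, so the index is the sum of the two contributions. On the fibre part $\wt C_S$ and $\Delta$ restrict to $\R^l\times\R^l$ and to the diagonal, contributing $l$ by Lemma~\ref{l: grading of diagonal} (applied with $n=l$); on the base part both restrict to the same graded diagonal, contributing $\dim B$. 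Hence $Ind(\wt C_S|_\eD,\Delta|_\eD)=l+\dim B=\dim\eD$, one short of $\dim\eD+1$, which forces the shift $\wt C_S[1]$; Corollary~\ref{c:familyE2Index} then gives the first identity, exactly as in Corollary~\ref{c: graded idenity for graph of Sn} with the base directions contributing equally to source dimension and to the index.

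The projective cases run identically, now with two $E_2$-surgeries, and on a fibre the picture is literally the two-step construction of Lemma~\ref{l: graded identity for graph of CPm/2} and Lemma~\ref{l:RpnHpnSurgIdentity}. The $[1]$-shift on the inner copy of $\wt C$ comes from Corollary~\ref{c:GradedSurgProduct} (the first surgery $\wt C[1]\#_{\eD,E_2}\Delta$). The shift on the outer copy is read off by applying Corollary~\ref{c:MorseBott} to the Morse-Bott description near $\eD^{op}$ of the Hamiltonian representative $Q$ of the first surgery --- constructed as in Lemmas~\ref{l:cpnFiber} and~\ref{l:CPnCob} --- and equals $[0]$, $[1]$, $[-2]$ for $\CP^{m/2}$, $\RP^n$, $\mathbb{HP}^n$ respectively, the difference arising from the values of $Det^2_\Omega$ on these zero sections. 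The base directions again contribute $\dim B$ to both the index along $\eD^{op}$ and to $\dim\eD^{op}$, so they do not affect the count, and Corollary~\ref{c:familyE2Index} upgrades each ungraded identity of Corollary~\ref{l:fiberedTwist} to the stated graded one.

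The step I expect to be the main obstacle is making the patching of the first paragraph rigorous: one must verify that the fibrewise gradings are genuinely equivariant under the structure group so that they assemble over $B$, and that along the base directions $\wt C$ and $\Delta$ agree not merely as Lagrangians but as \emph{graded} Lagrangians, so that the ``base contribution'' to the index is exactly $\dim B$ with no hidden shift. Both reduce to naturality of $Det^2_\Omega$ plus the $G$-invariance established earlier, but they are the only genuinely new ingredient beyond the fibrewise bookkeeping, which is identical to that of the non-family lemmas.
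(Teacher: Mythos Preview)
Your proposal is correct and matches the paper's approach: the paper states Lemma~\ref{l:familyGrade} without proof, treating it as the evident family version of Corollary~\ref{c: graded idenity for graph of Sn}, Lemma~\ref{l: graded identity for graph of CPm/2} and Lemma~\ref{l:RpnHpnSurgIdentity} via Corollary~\ref{c:familyE2Index} and the $G$-invariance already established in Lemma~\ref{l:admissibleToDehnFamily}. Your fibre/base splitting and the resulting index bookkeeping are exactly the details the paper leaves implicit.

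One minor imprecision: the different outer shifts $[0],[1],[-2]$ do not come from ``values of $Det^2_\Omega$ on the zero sections'' but from the codimension of the opposite divisor in the null leaf (namely $2,1,4$ for $\CP^{m/2},\RP^n,\HP^n$); following the computation in Lemma~\ref{l: graded identity for graph of CPm/2}, after the Morse--Bott maximum contributes the full leaf dimension and the inner $[1]$ is accounted for, the required outer shift is $2-\operatorname{codim}_{\text{leaf}}(D)$.
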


 \subsection{Primitive function on an exact Lagrangian under surgeries}

\begin{eg}\label{e:primitiveGraph}
Let $h:L \to \R$ be a function and $Graph(dh)$ be the graph of $dh$.
With respect to the canonical one form, $Graph(dh)$ is an exact Lagrangian with primitive $h$.
\end{eg}

The following lemma
shows that for exact Lagrangian obtained from ordinary clean surgery, the value of primitive function {\bf decreases along handle}.

\begin{lemma}\label{l:localEnergyClean}
Let $L_1,D,N^*_D,H^{D}_{\nu},c(r)$ be as in (the proof) of Lemma \ref{l:cleanIndex1}.
For the canonical one form $\alpha \in \Omega^1(T^*L_1)$, we have $\int_{c} \alpha < 0$.

In particular, if $L_1\#_{D}^{\nu}N^*_D$ is an exact Lagrangian with primitive $f$ with respect to $\alpha$ (cf. Lemma \ref{l:exactSimCob}),
then $f(c(0))>f(c(\epsilon))$.
\end{lemma}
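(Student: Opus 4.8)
The plan is to compute $\int_c \alpha$ directly in the local Darboux coordinates set up in the proof of Lemma~\ref{l:cleanIndex1}, using the explicit parametrization of the curve $c(r)$ on the handle $H^D_\nu$. Recall from that proof that, with the convention $z_i = q_i - \sqrt{-1}\,p_i$ and the canonical one-form $\alpha = \sum_i p_i\,dq_i$ (up to the usual sign, which I must fix consistently with the Hamiltonian flow convention $\iota_{X_h}\omega = -dh$ stated in the Conventions), the curve is
$$c(r) = \bigl(0,\ \nu(r)e_{\pi_2},\ 0,\ r e_{\pi_2}\bigr), \qquad r \in (0,\epsilon],$$
where the four blocks are $q_b \in \R^k$, the remaining $q_f$-coordinates, the first $k$ momenta $p_b$, and the remaining momenta $p_f$. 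Along $c$ only the $q_f$-block $\nu(r)e_{\pi_2}$ and the $p_f$-block $r e_{\pi_2}$ vary, so $\int_c \alpha$ reduces to a one-variable integral $\int_0^\epsilon (r)\,\frac{d}{dr}\bigl(\nu(r)\bigr)\,dr = \int_0^\epsilon r\,\nu'(r)\,dr$ (with $|e_{\pi_2}|=1$).

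First I would verify the sign of $\alpha$ restricted to the handle against the construction in Remark~\ref{r:tilde}, where $\phi^\varphi_{\nu(\|p\|)}$ is realized as the time-$1$ flow of $\widetilde\nu(\|p\|)$ with $\widetilde\nu' = \nu$; this is exactly Example~\ref{e:primitiveGraph} applied fiberwise, so the handle is (locally, over the shell where the section is single-valued) a graph of $d(\text{something})$ and the relevant primitive is controlled. Then the core computation is to observe that $\nu$ is $\lambda$-admissible with $\lambda < r(D)$, so by property (1) of admissibility $\nu'(r) < 0$ for all $r \in (0,\epsilon)$, while $r > 0$ on the range of integration; hence the integrand $r\,\nu'(r)$ is strictly negative on $(0,\epsilon)$ and $\int_c \alpha = \int_0^\epsilon r\,\nu'(r)\,dr < 0$. (If the sign convention forces $\alpha = -\sum p_i\,dq_i$ on $H^D_\nu$ the integrand becomes $-r\nu'(r) > 0$, but then the primitive one traces is $-\int_c\alpha$, and the statement ``decreases along the handle'' is what the lemma literally asserts, so I would phrase the conclusion as: moving along $c$ from $c(\epsilon)$ toward $c(0)$, i.e.\ from the boundary gluing with $N^*_D$ inward, the primitive strictly increases, equivalently $f(c(0)) > f(c(\epsilon))$.)

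For the ``in particular'' clause: if $L_1 \#^\nu_D N^*_D$ is exact with primitive $f$ with respect to $\alpha$, then by definition $df = \iota^*\alpha$ along the surgered Lagrangian, so $f(c(\epsilon)) - f(c(0)) = \int_c \iota^*\alpha = \int_c \alpha < 0$, giving $f(c(0)) > f(c(\epsilon))$ directly. The main obstacle I anticipate is purely bookkeeping rather than conceptual: getting the sign of the canonical one-form and of the Hamiltonian flow to line up — one must be careful that the identification $T^*\R^n \cong \C^n$ via $(q,p)\mapsto q - \sqrt{-1}p$ used throughout Section~\ref{s:gradingEnergy}, combined with the paper's convention $\iota_{X_h}\omega = -dh$, produces the correct orientation for the handle, so that the monotonicity of $\nu$ genuinely translates into a strict sign for $\int_c\alpha$. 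Once that is pinned down the inequality is immediate from $\nu' < 0$ and $r > 0$.
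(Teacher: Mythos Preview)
Your proposal is correct and matches the paper's proof essentially verbatim: the paper writes $c(r)=(0,\nu(r)e_{\pi_2},0,re_{\pi_2})$, $c'(r)=(0,\nu'(r)e_{\pi_2},0,e_{\pi_2})$, and computes $\alpha|_{c(r)}(c'(r))=\langle re_{\pi_2},\nu'(r)e_{\pi_2}\rangle = r\nu'(r)<0$ since $\nu'<0$ on $(0,\epsilon)$. Your extended discussion of sign conventions is unnecessary here --- the canonical one-form is $\alpha=\sum p_i\,dq_i$ and the pairing $\langle p_f,\dot q_f\rangle$ gives the integrand directly, so the computation is a one-liner.
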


\begin{proof}
Using the notation from the proof of Lemma \ref{l:cleanIndex1},
we have $c(r)=(0,\nu(r)e_{\pi_2},0,re_{\pi_2})$
and $c'(r)=(0,\nu'(r)e_{\pi_2},0,e_{\pi_2})$.
As a result,
$$\alpha|_{c(r)}(c'(r))= \la re_{\pi_2}, \nu'(r)e_{\pi_2} \ra=r\nu'(r) <0$$
for $0 < r < \epsilon$ since
$\nu'<0$. The result follows.
\end{proof}

 The case for $E_2$-flow clean surgery is similar.

\begin{lemma}\label{l:localEnergyClean2}
Let $L,D,N^*_D,H^{D,E_2}_{\nu},c(r)$ be as in (the proof) of Lemma \ref{l:cleanIndex2}.
For the canonical one form $\alpha \in \Omega^1(T^*L)$, we have $\int_{c} \alpha < 0$.

In particular, if $L\#_{D,E_2}^{\nu}N^*_D$ is an exact Lagrangian with primitive $f$ with respect to $\alpha$ (cf. Lemma \ref{l:exactSimCob}),
then $f(c(0))>f(c(\epsilon_c))$.
\end{lemma}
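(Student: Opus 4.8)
The plan is to reproduce, essentially verbatim, the argument of Lemma \ref{l:localEnergyClean}, now fed with the explicit curve $c(r)$ constructed in the proof of Lemma \ref{l:cleanIndex2}. Recall from there that, in the Darboux chart adapted to the product structure of $L$ and to $D$, and writing points as $(q_b,q_f,p_b,p_f)$, the curve is
\[
c(r)=\bigl(0,\;\nu(r)e_{\pi_2},\;p^D_b(0,re_{\pi_2}),\;re_{\pi_2}\bigr),\qquad r\in(0,\epsilon_c],
\]
where $p^D_b(\cdot,\cdot)$ is the function cutting out $N^*_D$, linear in its second argument. Differentiating and exploiting that linearity ($p^D_b(0,re_{\pi_2})=r\,p^D_b(0,e_{\pi_2})$) gives
\[
c'(r)=\bigl(0,\;\nu'(r)e_{\pi_2},\;p^D_b(0,e_{\pi_2}),\;e_{\pi_2}\bigr).
\]

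Next I would evaluate the canonical one-form $\alpha=\langle p_b,dq_b\rangle+\langle p_f,dq_f\rangle$ along $c$. The decisive point is that the $q_b$-component of $c(r)$ is identically zero, so the term $\langle p_b,dq_b\rangle$ --- the only place where the possibly complicated function $p^D_b$ enters --- contributes nothing, and what remains is
\[
\alpha|_{c(r)}(c'(r))=\langle re_{\pi_2},\nu'(r)e_{\pi_2}\rangle=r\,\nu'(r)<0
\]
for $0<r<\epsilon_c$, since $|e_{\pi_2}|=1$ and $\nu$ is $\lambda$-admissible (hence $\nu'<0$ on $(0,\epsilon_c)$). Integrating over $r$ yields $\int_c\alpha<0$, exactly parallel to Lemma \ref{l:localEnergyClean}. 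One must only keep the sign convention for $\alpha$ consistent with the convention $z_i=q_i-\sqrt{-1}p_i$ fixed earlier, which is the same convention used in that lemma.

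For the ``in particular'' statement, observe that $c$ is a path inside $L\#_{D,E_2}^{\nu}N^*_D$ running from $c(0)=\lim_{r\to 0^+}c(r)$, a point on the $L$-piece of the surgery, to $c(\epsilon_c)\in N^*_D$. If $f$ is a primitive of $\alpha$ restricted to $L\#_{D,E_2}^{\nu}N^*_D$ (as in Lemma \ref{l:exactSimCob}), then $f(c(\epsilon_c))-f(c(0))=\int_c df=\int_c\alpha<0$, so $f(c(0))>f(c(\epsilon_c))$. I do not anticipate any real obstacle: the entire content is the short computation above, the only subtlety being the verification that the $q_b$-coordinate of $c$ is constant, which is built into the definition of $c$ in Lemma \ref{l:cleanIndex2} (it is taken along a single $E_2$-leaf through the origin).
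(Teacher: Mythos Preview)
Your proof is correct and is exactly the argument the paper has in mind: the paper itself omits the proof of this lemma, saying only ``The case for $E_2$-flow clean surgery is similar,'' and your computation carries out that similarity precisely. The one substantive observation you need, and make, is that the $q_b$-component of $c(r)$ vanishes identically, so the $p^D_b$ term in $\alpha$ drops out and the integrand reduces to the same $r\nu'(r)$ as in Lemma~\ref{l:localEnergyClean}.
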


If $\iota_L$ is an exact Lagrangian immersion with primitive $f_L$ (ie. $f_L:L \to \mathbb{R}$ is such that $df_L=\iota_L^*\alpha$),
then we define the {\bf energy of branch jump} $E: R_L \to \mathbb{R}$
$$E(p,q)=-f_L(p)+f_L(q)$$
which is independent of choice of primitive $f_L$.

\begin{eg}
Using the setup in Example \ref{Example: double cover}, then $E:\ul L \to \mathbb{R}$ is identically
zero because $E$ is a locally constant function on $R$ and $E(p,q)=-E(q,p)$.  This applies to any double covers.
\end{eg}

\section{Review of Lagrangian Floer theory, Lagrangian cobordisms and quilted Floer theory}\label{s:review}

We first fix conventions for Lagrangian Floer theory in the rest of the paper, which follows that of \cite{Seidelbook}.  Note that this is different from the homology convention of \cite{BC13}.

Let $L_0, L_1\subset (M,\w)$ be a pair of transversally intersecting Lagrangians.  For a generic one-parameter family of $\w$-compatible almost complex structure $\mathbf{J}=J_t$, let
\beq\label{e:Floer}\begin{aligned}\cM(p_-,p_+)=\{&u: \R \times [0,1] \to M:\\
&u_s(s,t)+J_t(u(s,t))u_t(s,t)=0,\\
& u(s,0) \in L_0\text{ and }u(s,1) \in L_1\\
&\lim_{s\rightarrow +\infty}u(s,t)=p_+,\\
&\lim_{s\rightarrow -\infty}u(s,t)=p_-\}.\end{aligned}\eeq

Then the Floer cochain complex $CF^*(L_0,L_1)$ is generated by $L_0 \cap L_1$ and equipped with a differential by counting rigid elements from $\cM(p_-,p_+)$, i.e.

$$dp_+=\sum_{p_-\in L_0\cap L_1} \#\cM^0(p_-, p_+)p_-$$
The higher operations are defined analogously by counting holomorphic polygons as in \cite{Seidelbook}.  We refer thereof for the definition of the Fukaya category and will not repeat it here.

\bdf\label{d:cob} Let $L_i, L_j'\subset (M,\w)$, $1\le i\le k$, $1\le j\le k'$ be a collection of Lagrangian submanifolds.  A Lagrangian cobordism $V$ from $(L_1,\dots,L_k)$ to $(L_{k'}',\dots,L_1')$ is an embedded Lagrangian submanifold in $M\times \C$ so that the following condition hold.

$\bullet$ There is a compact set $K \subset \mathbb{C}$ such that $V-(M \times K)=(\sqcup_{i=1}^k L_i \times \gamma_i) \sqcup (\sqcup_{j=1}^{k'} L_j' \times \gamma_j')$ called the {\bf ends}, where $\gamma_i=(-\infty,x_i)\times\{a_i\}$ and $\gamma_j'=(x_j',\infty) \times \{b_j'\}$ for some $x_i,a_i,x_j',b_j'$ such that $a_1<\dots<a_k$ and $b_1' < \dots < b_{k'}'$.

\edf

For the grading we always choose the quadratic complex volume form on $M \times \mathbb{C}$
to be the quadratic complex volume form on $M$ times the standard one on $\mathbb{C}$.
When $V$ is graded, the restriction induces a grading on each end.
On an end, say $L_i \times \gamma_i$, we denote the induced grading as $\theta_{ii}$.
Since an end is a product Lagrangian, we can associate a grading $\theta_i$ to $L_i$ by requiring
$\theta_i(p)=\theta_{ii}(p \times z)$ for all $p \in L_i$ and $z \in \gamma_i$.  The same rule applies to $L_j' \times \gamma_j'$.
We use this grading convention between a cobordism and its fiber Lagrangians over its ends throughout.

\begin{figure}[h]
\includegraphics[scale=0.6]{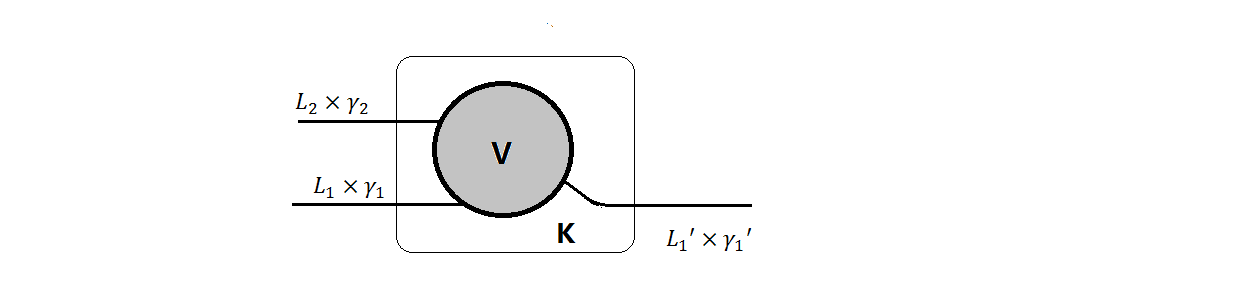}

\caption{Projection of a Lagrangian cobordism.}
\label{fig: sample cobordism}
\end{figure}

The main result we will utilize from Biran-Cornea's Lagrangian cobordism formalism reads:

\begin{thm}[\cite{BC2}]\label{thm: Cone from Lagrangian cobordism}
If there exists a graded monotone (or exact) Lagrangian cobordism from monotone (or exact) Lagrangians
$(L_1[k-1],L_2[k-2],\dots,L_k)$ to $(L'_{k'}[k'-1],L'_{k'-1}[k'-2],\dots,L_1')$, then there is an isomorphism between iterated cones in $\cD^{\pi}\cF uk(M)$,
$$Cone(L_1\rightarrow L_2\rightarrow\cdots\rightarrow L_k)\cong Cone(L'_{k'}\rightarrow L'_{k'-1}\rightarrow\cdots\rightarrow L'_{1})$$
Here $Cone(L_1\rightarrow L_2\rightarrow\cdots\rightarrow L_k)= Cone(\cdots Cone(Cone(L_1\rightarrow L_2)\rightarrow L_3)\rightarrow\cdots\rightarrow L_k)$.
\end{thm}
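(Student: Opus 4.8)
The plan is to follow Biran--Cornea's original argument \cite{BC13,BC2}, which I now outline. The starting point is to turn the cobordism $V\subset M\times\C$ into a cone-decomposition machine by probing it with product test Lagrangians. Fix an admissible test Lagrangian $N\subset M$ and a generic properly embedded curve $\gamma\subset\C$ which outside a compact set is a union of horizontal rays, positioned so that it crosses all the positive ends $L_i\times\gamma_i$ transversally and exactly once each while avoiding the negative ends (or the reverse). One then considers the Floer-type moduli spaces of $\mathbf J$-holomorphic strips and polygons in $M\times\C$ with boundary alternately on $V$ and on $N\times\gamma$. The almost complex structure $\mathbf J$ is chosen to split as $J_M\oplus j_{\C}$ near infinity, so that for any such curve $u$ the composition $\pi_\C\circ u$ is holomorphic; by the open mapping theorem and a maximum principle applied to the ends (which are products of Lagrangians in $M$ with rays in $\C$), the image $\pi_\C\circ u$ is confined to a fixed compact region of $\C$, and the $M$-component is confined as well. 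Monotonicity (or exactness, together with the grading to pin down the indices) then bounds the energy and rules out disc and sphere bubbling, so the moduli spaces are compact of the expected dimension with the standard boundary strata.

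Next I would package these counts algebraically. Counting the curves that slide past the ends one at a time exhibits $CF(N,V\times\gamma)$ as an iterated mapping cone whose associated graded pieces are exactly the complexes $CF(N,L_i)$, carrying the shifts $[k-1],[k-2],\dots,[0]$ forced by the winding of the $\C$-factor between consecutive horizontal ends --- which is precisely why the grading convention on the ends of $V$ records $(L_1[k-1],L_2[k-2],\dots,L_k)$ and why the unshifted $L_i$ reappear in $Cone(L_1\to L_2\to\cdots\to L_k)$. Performing the same computation with $\gamma$ pushed past the negative ends instead realizes $CF(N,V\times\gamma')$ as $Cone(L'_{k'}\to\cdots\to L'_1)$ applied to $N$. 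Since $\gamma$ and $\gamma'$ are isotopic through admissible curves in $\C$ (one drags the probe from one side of the compact part of $V$ to the other), the two complexes are canonically quasi-isomorphic; carried out on the chain level with the full $A_\infty$-module structure, and since this holds functorially in the test object $N$, this yields the claimed isomorphism of iterated cones in $\cD^\pi\cF uk(M)$.

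I expect the main obstacle to be the analytic bookkeeping rather than the algebra: establishing Gromov-type compactness for the moduli spaces of holomorphic polygons in the non-compact target $M\times\C$ with boundary on the non-compact Lagrangians $V$ and $N\times\gamma$ requires the split-at-infinity choice of $\mathbf J$, a careful confinement argument for both the $\C$- and $M$-components, and control of the behaviour near the cylindrical ends; and upgrading from the homology-level cone relation to the $A_\infty$/$\cD^\pi\cF uk$ statement --- the ``functor'' packaging of the curve counts --- is where most of Biran--Cornea's technical effort lies. For the purposes of this paper the statement is simply imported from \cite{BC2}, so these points are not reproved here.
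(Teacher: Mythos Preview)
Your proposal is correct and matches the paper's treatment: this theorem is imported from \cite{BC2} without reproof, and the outline you give is an accurate summary of Biran--Cornea's argument (and indeed is essentially the template the paper itself follows when extending the result to the immersed setting in Section~\ref{s:immersedCob}). One small notational slip: you write $CF(N,V\times\gamma)$ where you mean $CF(N\times\gamma,V)$ --- the test object is $N\times\gamma$, not the cobordism --- but the intended meaning is clear.
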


Note that, $CF^*(K, Cone(L_1\rightarrow L_2\rightarrow\cdots\rightarrow L_k))=CF^*(K,L_1[k-1]) \oplus \dots \oplus CF^*(K,L_k)$ as a graded vector space for
any graded Lagrangian $K$ transversally intersecting $L_i$.
It explains the seemingly weird grading shift of the Lagrangians $L_i,L_j'$ for the cobordism.

We dedicate the rest of this section to quilted Floer theory developed in \cite{WWQuiltedFloer}\cite{WWFunctor}\cite{WWComposition}\cite{MWWFunctor}.

\begin{defn}
 Given a sequence of symplectic manifolds $M_0,\dots,M_{r+1}$, a generalized Lagrangian correspondence $\underline{L}=(L_{01},\dots,L_{r(r+1)})$ is a sequence such that $L_{i(i+1)} \subset M_i^- \times M_{i+1}$ are embedded Lagrangian submanifolds for all $i$.
A cyclic generalized Lagrangian correspondence is one such that $M_0=M_{r+1}$.
\end{defn}

For a Lagrangian correspondence $L_{01} \subset M_0^- \times M_1$, $L_{01}^t \subset M_1^- \times M_0$ is defined to be $L_{01}^t=\{(x,y)| (y,x) \in L_{01}\}$.
Given two Lagrangian correspondences $L_{01} \subset M_0^- \times M_1$ and $L_{12} \subset M_1^- \times M_2$, the geometric composition is defined as
\beq\label{e:composition}L_{01} \circ L_{12}=\{(x,z)| \exists y \text{ such that } (x,y) \in L_{01} \text{ and } (y,z) \in L_{12}\}\eeq

For the composition to work nicely, we require that:
\begin{itemize}
\item the projection $\pi_{02}: L_{01} \times_{M_1} L_{12} \to L_{01} \circ L_{12}$ is an embedding, where $L_{01} \times_{M_1} L_{12}$ is the fiber product and $\pi_{02}$ is the projection forgetting $M_1$ factor
\item $L_{01}\times L_{12}$ intersects $M_0^-\times \Delta\times M_2$ transversally in $M_0^-\times M_1\times M_1^-\times M_2$.
\end{itemize}

In this case, the composition $L_{01} \circ L_{12}$ is called {\it embedded}.
 One is referred to Section \ref{s:familyTwist} for a non-trivial example of Lagrangian correspondence and comopsition coming from coisotropic embeddings.

For a cyclic generalized Lagrangian correspondence $\underline{L}$, the \textbf{quilted Floer cohomology} is defined to be
$$HF^*(\underline{L})=HF^*(L_{01} \times L_{23} \dots L_{(r-1)r},L_{12} \times L_{34} \dots L_{r(r+1)})$$
in $M_0^- \times M_1 \times \dots \times M_{r-1} \times M_r$ if $r$ is odd, and
$$HF^*(\underline{L})=HF^*(L_{01} \times L_{23} \dots L_{r(r+1)},L_{12} \times L_{34} \dots L_{(r-1)r} \times \Delta_{M_0})$$
in $M_0^- \times M_1 \times \dots \times  M_r^- \times M_{r+1}$ if $r$ is even.

It is worth pointing out that for the quilted Floer cohomology to be well-defined, $\underline L$ needs to satisfy a stronger monotonicity condition (\cite[Definitnion 4.1.2(b)]{WWQuiltedFloer}).  A sufficient condition for this stronger monotonicity to hold for $\underline L=(L_0, L_{01}, L_1)$ is when $\pi_1(L_{01})=1$ (\cite[Lemma 4.1.3]{WWQuiltedFloer}).
We refer readers to \cite{WWQuiltedFloer} for further details on monotonicity, as well as orientation, grading, exactness, and so forth for a generalized Lagrangian correspondence.  The following theorems summarize main properties that will concern us.

\begin{thm}[Theorem 5.2.6 of \cite{WWQuiltedFloer}]\label{thm: quilted Kunneth formula}
For a monotone (or exact) cyclic generalized Lagrangian correspondence $\underline{L}$ such that
\begin{itemize}
\item $M_i$ are monotone with the same monotonicity constant,
\item the minimal Maslov numbers of $L_{i(i+1)}$ are at least three,
\item $M_0=M_{r+1}$ is a point,
\item $L_{i(i+1)}=L_i \times L_{i+1}$ for Lagrangians $L_i \subset M_i$ and $L_{i+1} \subset M_{i+1}$ for some $1 \le i < r$
\end{itemize}
then there is a canonical isomorphism
$$HF^*(\underline{L})=HF^*(L_{01},L_{12},\dots,L_{(i-1)i},L_i) \otimes HF^*(L_{i+1},L_{(i+1)(i+2)},\dots,L_{r(r+1)})$$
with coefficients in a field.
\end{thm}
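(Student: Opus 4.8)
The plan is to show that, for a suitable choice of Floer data, the quilted Floer \emph{chain} complex computing $HF^*(\underline{L})$ is the tensor product of the two chain complexes computing the factors on the right-hand side, and then to invoke the algebraic Künneth theorem over a field.

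First I would unwind the definitions. Since $M_0=M_{r+1}$ is a point, $L_{01}$ is an honest Lagrangian in $M_1$ and $L_{r(r+1)}$ an honest Lagrangian in $M_r$, so $\underline{L}$ is really a linear chain of Lagrangian correspondences and the $\Delta_{M_0}$ appearing in the $r$ even case is trivial. The hypothesis $L_{i(i+1)}=L_i\times L_{i+1}$ splits this chain at position $i$ into a \emph{left} chain $\underline{L}^{\ell}=(L_{01},\dots,L_{(i-1)i},L_i)$ in $M_0,\dots,M_i$ and a \emph{right} chain $\underline{L}^{r}=(L_{i+1},L_{(i+1)(i+2)},\dots,L_{r(r+1)})$ in $M_{i+1},\dots,M_{r+1}$. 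On generators the splitting is immediate: a tuple of intersection points defining a generator of $CF^*(\underline{L})$ restricts to a generator of $CF^*(\underline{L}^{\ell})$ in the first coordinates and to one of $CF^*(\underline{L}^{r})$ in the last coordinates, and this is a bijection; with the product grading convention the gradings add, so $CF^*(\underline{L})\cong CF^*(\underline{L}^{\ell})\otimes CF^*(\underline{L}^{r})$ as graded vector spaces.

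The geometric heart is to match the differentials. I would choose the perturbation data (domain-dependent almost complex structures and Hamiltonian perturbations) on the quilted strip to be \emph{split} along the seam labelled $L_{i(i+1)}=L_i\times L_{i+1}$: a product almost complex structure on $M_i^-\times M_{i+1}$ near that seam and perturbations that do not couple the two factors. With such data the quilted Cauchy--Riemann problem for $\underline{L}$ decouples along that seam, so a quilted holomorphic map with the split seam condition is precisely a pair: a quilted holomorphic map for $\underline{L}^{\ell}$ (its last patch now having boundary on $L_i$) together with one for $\underline{L}^{r}$ (its first patch having boundary on $L_{i+1}$), sharing the strip parameter. Hence $\cM_{\underline{L}}\cong \cM_{\underline{L}^{\ell}}\times\cM_{\underline{L}^{r}}$, compatibly with indices, and the differential on $CF^*(\underline{L}^{\ell})\otimes CF^*(\underline{L}^{r})$ is $d_{\ell}\otimes\id+\id\otimes d_{r}$ (over $\Z/2$ there are no signs; in general one checks the sign conventions of \cite{WWQuiltedFloer}). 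The algebraic Künneth theorem for a tensor product of complexes of vector spaces over a field then gives $HF^*(\underline{L})\cong HF^*(\underline{L}^{\ell})\otimes HF^*(\underline{L}^{r})$, and canonicity is inherited from the canonical bijection of generators.

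The main obstacle is analytic rather than formal: one must show that split Floer data can be chosen \emph{regular} and that Gromov compactness respects the splitting. For regularity I would perturb the two halves independently --- achieving transversality for $\underline{L}^{\ell}$ and for $\underline{L}^{r}$ separately, which is possible generically because each $M_j$ is monotone and the minimal Maslov numbers are at least three, and then the product data is automatically regular; the assumption that $M_0$ and $M_{r+1}$ are points ensures the two halves are genuinely independent quilted problems with no residual coupling. For compactness, the same monotonicity and Maslov $\ge 3$ hypotheses exclude disk and sphere bubbling, so a bounded-energy sequence of split quilted maps converges to a split broken trajectory and, crucially, no energy is exchanged across the split seam in the limit; this is exactly what makes the product structure descend to homology. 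Once these standard-but-delicate points are in place, the theorem reduces to the algebraic Künneth theorem.
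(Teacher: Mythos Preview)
The paper does not prove this theorem; it is quoted verbatim from Wehrheim--Woodward \cite{WWQuiltedFloer} as background in the review section, with no argument supplied. Your sketch---choose split Floer data along the product seam $L_i\times L_{i+1}$, identify the moduli space as a product $\mathcal{M}_{\underline{L}^\ell}\times\mathcal{M}_{\underline{L}^r}$, and apply the algebraic K\"unneth theorem over a field---is the standard argument and is essentially what Wehrheim--Woodward do in the cited reference, so there is nothing to compare against in the present paper.
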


\begin{thm}[Theorem 5.4.1 of \cite{WWQuiltedFloer} and Theorem 1, 2 of \cite{LL13}]\label{thm: quilted geometric composition equality}
For a cyclic generalized Lagrangian correspondence $\underline{L}$ such that
\begin{itemize}
\item $M_i$ are monotone with the same monotonicity constant,
\item the minimal Maslov numbers of $L_{i(i+1)}$ are at least three,
\item $\underline{L}$ is monotone, relatively spin and graded in the sense of Section 4.3 of \cite{WWQuiltedFloer}, and
\item $L_{(i-1)i} \circ L_{i(i+1)}$ is embedded in the sense above
\end{itemize}
then there is a canonical isomorphism
$$HF^*(\underline{L})=HF^*(L_{01},L_{12},\dots,L_{(i-1)i} \circ L_{i(i+1)},\dots,L_{r(r+1)})$$
where the orientation and grading on the right are induced by those on $\underline{L}$.
\end{thm}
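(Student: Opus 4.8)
The plan is to prove the isomorphism by realizing both sides as quilted Floer cohomologies and passing between them through a \textbf{strip-shrinking} degeneration, following \cite{WWQuiltedFloer} and its analytic refinement in \cite{LL13}. Concretely, the left-hand side $HF^*(\underline L)$ is the Floer cohomology of two product Lagrangians in a large product of the $M_j$, in which $L_{(i-1)i}$ and $L_{i(i+1)}$ appear as separate seam conditions on either side of the $M_i$-factor, whereas the right-hand side is the analogous product-Lagrangian Floer cohomology in which the $M_i$-factor has been removed and the two seams fused into the single condition $L_{(i-1)i}\circ L_{i(i+1)}$. First I would exploit the embeddedness hypothesis: it makes the fiber product $L_{(i-1)i}\times_{M_i}L_{i(i+1)}$ a smooth manifold mapping diffeomorphically onto $L_{(i-1)i}\circ L_{i(i+1)}$, which furnishes a canonical bijection between the generating sets of the two complexes. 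Under this bijection the Maslov and Conley--Zehnder indices, and the determinant lines used to orient the moduli spaces, correspond; verifying this is the bookkeeping that produces the ``induced orientation and grading'' clause, and it is carried out in Section 4.3 of \cite{WWQuiltedFloer}.

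Next I would introduce the one-parameter family of quilted domains in which the patch mapped to $M_i$ is a strip of width $\delta\in(0,1]$, with $\delta\to 0$ collapsing it to a seam. For $\delta$ bounded away from $0$ the quilted Floer cohomology is independent of the widths of the strips, by a standard continuation argument in which no bubbling off a shrinking neck can occur, so it equals the left-hand side. The monotonicity of the $M_j$ with a common constant and of $\underline L$, together with the index formula for quilts, keeps the expected dimensions of the moduli spaces $\mathcal{M}_\delta$ of finite-energy quilted pseudo-holomorphic maps independent of $\delta$ and equal to those of the collapsed ($\delta=0$) problem, so transversality for generic perturbation data can be arranged uniformly in $\delta$. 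It then remains to prove that for $\delta$ small enough the signed counts of rigid elements of $\mathcal{M}_\delta$ already agree with those of the collapsed problem $\mathcal{M}_0$, so that the Floer differentials match under the bijection above.

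This last comparison has two halves. For gluing I would use an implicit-function-theorem argument to ``open up'' each rigid collapsed quilted map into a unique nearby element of $\mathcal{M}_\delta$ for small $\delta$; embeddedness of the composition is precisely what keeps the linearized operator surjective when the thin $M_i$-neck is reinserted. For compactness I would take a sequence $u_{\delta_k}$ with $\delta_k\to 0$ and invoke Gromov compactness adapted to the degenerating quilt: the possible limits are a collapsed quilted map with bubbles attached. Disk bubbles on the various Lagrangians are excluded because their minimal Maslov number is at least three, sphere bubbles in the $M_j$ by monotonicity and a dimension count, and the genuinely new phenomenon --- a \emph{figure-eight} (or annulus) bubble forming along the shrinking seam --- is excluded by the \emph{a priori} energy--index estimate for quilted maps near the neck proved in \cite{WWQuiltedFloer} and refined in \cite{LL13}, which forces any such bubble to be constant; in the exact case one argues instead with action/energy bounds. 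Combining gluing and compactness yields a chain isomorphism for $\delta$ small, independent of the auxiliary choices up to chain homotopy, hence the asserted canonical isomorphism on cohomology.

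The hard part is exactly this figure-eight bubble analysis: establishing the energy quantization on the collapsing $M_i$-neck and showing that no energy escapes into such bubbles as $\delta\to 0$. This is where the monotone, minimal-Maslov-number-at-least-three hypotheses (or exactness) really enter, and it is the technically delicate core of \cite{WWQuiltedFloer} and \cite{LL13}; by comparison, checking that orientations and gradings match under the fiber-product identification is tedious but routine within their framework.
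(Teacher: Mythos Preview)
Your proposal correctly outlines the strip-shrinking argument of Wehrheim--Woodward and its refinement by Lekili--Lipyanskiy, and this is indeed how the theorem is proved in the cited references. However, note that the paper itself does not prove this statement: it appears in the review Section~\ref{s:review} and is simply quoted from \cite{WWQuiltedFloer} and \cite{LL13} without proof, so there is no ``paper's own proof'' to compare against beyond the citations themselves.
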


\begin{rmk}
In \cite{LL13}, Theorem \ref{thm: quilted geometric composition equality}  was extended to greater generality than stated here, which should be useful for extending our results to negatively monotone cases.
\end{rmk}

For a symplectomorphism $\phi \in Symp(M)$, the fixed point Floer cohomology can be defined as
$$HF^*(\phi)=HF^*(\Delta,Graph(\phi))=HF^*(Graph(\phi^{-1}),\Delta)$$ where the Lagrangian Floer cohomologies take place in $M \times M^-$.

\begin{rmk}
We follow the convention in \cite{WWQuiltedFloer}, where $HF^*(\phi)=HF^*(Graph(\phi),\Delta)$ in $M^- \times M$.
Therefore, we have $HF^*(\phi)=HF^*(\Delta,Graph(\phi))$ in $M \times M^-$.
\end{rmk}

An $A_\infty$ version of quilted Floer theory was also developed in \cite{MWWFunctor}.  This defines an $A_\infty$ structure for all generalized Lagrangian correspondence from $pt$ to $M$, denoted as $\fuk^\#(M)$.  Any Lagrangian correspondence $L^\flat$ from $M$ to $N$ defines an $A_\infty$ functor from $\fuk^\#(M)$ to $\fuk^\#(N)$.  In particular, there is an $A_\infty$ functor

\beq\label{e:MWW}\Phi: \fuk(M^-\times M)\to fun(\fuk^\#(M),\fuk^\#(M))\eeq
 which takes the graph of symplectomorphism $\phi$ to the action by $\phi$.  We also refer the reader to \cite[Section 5]{AS10} for a list of axioms for Mau-Wehrheim-Woodward's functor.

Now we restrict our concerns to the subcategory generated by two types of Lagrangians: product Lagrangians and graph of symplectomorphisms.  Then the functor $\Phi$ reduces to an $A_\infty$ functor with target $fun(Tw\fuk(M),Tw\fuk(M))$.  This is due to Abouzaid-Smith \cite{AS10} for the case of product Lagrangians, and the simple nature of geometric compositions with graph of symplectomorphisms.  We will denote this subcategory as $\wt\fuk(M\times M)$, and the subcategory generated solely by product Lagrangians as $\fuk(M\times M)^\times$.  What is also shown in \cite{AS10} was that

\bthm\label{t:prodLagFF}$\Phi$ is a fully faithful functor when restricted to $\fuk(M\times M)^\times$.\ethm

Alternatively, one may apply a close relative of $\Phi$ defined by Ganatra  \cite[section 9.3, 9.4]{Gan13}.  This is an $A_\infty$ functor $\sG:\wt \fuk(M\times M)\rightarrow Bimod(\fuk(M))$ (the original definition only include the identity symplectomorphism but this is an easy adaption).

Bimodules which are images of product Lagrangians has the form of $\sY_L^l\otimes_k\sY_{L'}^r$.   As in the case of functors, Ganatra showed a Yoneda-type proposition that $\sG$ is fully faithful when restricted to $\fuk(M\times M)^\times$, hence has a quasi-inverse $\sG^*$ from its image $Bimod(Fuk(M))^{\times}$.  Hence we have the following diagram which commutes up to quasi-isomorphism.

\beq\label{e:bimod-fun}\xymatrix{ && \fuk(M\times M)^\times\ar@{^{(}->}[d] && \\
Bimod(\fuk(M))\ar@{.>}[rru]^{\sG^*}&& \wt\fuk(M\times M)\ar[ll]^(.45){\sG}\ar[rr]_(.4){\Phi} && fun(\fuk(M),\fuk(M))\ar@{.>}[llu]_{\Phi^*}
}\eeq



\section{Proof of long exact sequences}\label{s:proofLES}

We construct Lagrangian cobordisms associated to the surgery identities and deduce the long exact sequences in this section.
Throughout the whole section, we assume all Lagrangians in $M$ are $\Z$ or $\Z/N$-graded.

\blem\label{l:cob} Let $L=L_1\#_{D} L_2, L_1\#_{D,E_2} L_2$ or $\cL_1\#_{\cD,E_2} \cL_2$ as surgeries of graded Lagrangians.
Then there is a graded Lagrangian cobordism $V$ from $L_1$ and $L_2$ (or $\cL_1$ and $\cL_2$) to $L$.

\elem

\begin{proof}
We give the proof for $L=L_1\#_{D,E_2} L_2$ and the proof for $L_1\#_{D} L_2$ and $\cL_1\#_{\cD,E_2} \cL_2$ are similar.
It suffices to consider $M=T^*L_1$ and $L_2=N^*_D$ is the conormal bundle of $D$ in $L_1$.
As usual, we assume a product metric on $L_1=K_1 \times K_2$ is chosen and $D \pitchfork (\{p\} \times K_2)$ for all $p \in K_1$ so that the $E_2$-flow clean surgery can be performed.

First note that $L_1 \times \R$ intersects cleanly with $L_2 \times i\R$ at $D \times \{0\}$.
Let the grading of $L_1$ be $\theta_i$.
We give a grading $\theta_{1r}$ to $L_1 \times \R$ by requiring $\theta_{1r}(p,z)=\theta_1(p)$ for all $p \in L_1$ and $z \in \R$.
On the other hand, we equip $L_2 \times i\R$ with grading $\theta_{2i}$ such that $\theta_{2i}(p,z)=\theta_1(p)-\frac{1}{2}$ for all $p \in L_1$ and $z \in \R$.
Then we have $Ind(L_1 \times \R|_{D \times \{0\}},L_2 \times i\R|_{D \times \{0\}})=Ind(L_1|_D,L_2|_D)+Ind(\R|_0,i\R|_0)=Ind(L_1|_D,L_2|_D)$.
Moreover, we also have $Ind(L_1|_D,L_2|_D)=dim(D)+1$ by the assumption that
graded $E_2$-flow surgery from $L_1$ to $L_2$ can be performed and Lemma \ref{l:cleanIndex2} .

Pick the standard metric on $\mathbb{R}$.
By Lemma \ref{l:cleanIndex2}, we can perform the graded Lagrangian surgery from $L_1 \times \mathbb{R}$ to $L_2 \times i\mathbb{R}$ resolving the clean intersection
by a $(E_2 \oplus \R)$-flow handle $H^{D,E_2 \oplus \R}_{\nu}$, where we canonically identify $T^*(L_1 \times \R)$ as a $E_1 \oplus E_2 \oplus \R$ bundle over $L_1 \times \R$.
We note that $E_2 \oplus \R$-flow is well-defined to give a smooth Lagrangian manifold because we stayed inside the injectivity radius (Lemma \ref{l:flowGlueCleanE2}).
Hence we have a graded embedded Lagrangian cobordism with four ends.

Let $\pi: M \times \mathbb{C} \to \mathbb{C}$ be the projection on second factor and $\pi_H=\pi|_{H^{D,E_2 \oplus \R}_{\nu}}$.
We define $S_+=\{(x,y)\in \mathbb{R}^2| y \ge x\}$ and $W= \pi_H^{-1}(S_+)$.
A direct check shows that $W$ is a smooth manifold with boundary $\pi_H^{-1}(0)=L$.
Let $W_0=W \cap \pi^{-1}([-3\epsilon,0]\times[0,3\epsilon])$.
It has three boundary components, namely $L_1 \times \{(-3\epsilon,0)\}$,
$L_2 \times (0,3\epsilon)$ and $L \times \{(0,0)\}$, while $L \times \{(0,0)\}$
is the only boundary component that is not cylindrical.
One then applies a trick due to Biran-Cornea (see Section 6 of \cite{BC13}).  This yields a Hamiltonian perturbation $\varphi$ supported on $\pi^{-1}([-\epsilon,\epsilon]\times [-\epsilon, \epsilon])$, so that $\varphi(W)$ has all three cylindrical ends.  By extending $\varphi(\pi^{-1}_H(0))$ to infinity and bending the cylindrical end corresponding to $L_2$ to the left, we get the desired Lagrangian cobordism $V$.

Finally, by the identification of gradings from ends to fiber Lagrangians, we conclude that it is a cobordism from
$L_1$ and $L_2$ to $L$.
\end{proof}

\begin{figure}[h]
\centering
\includegraphics[scale=1.5]{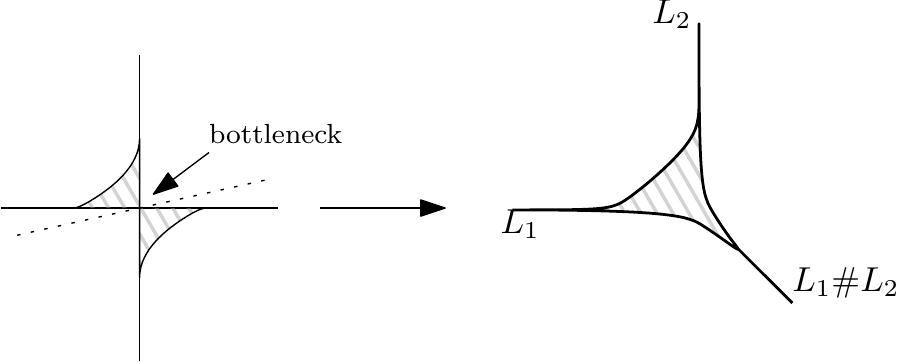}
\caption{Construction of a simple cobordism.}
\label{fig:SimCob}
\end{figure}

We call a cobordism obtained by Lemma \ref{l:cob} a {\bf simple cobordism }.
When $D$ is a single point, it reduces to the usual Lagrangian surgery and Lemma \ref{l:cob}
was discussed in Section 6 of \cite{BC13} in detail.

\begin{lemma}\label{l:exactSimCob}
Let $V$ be a simple cobordism from $L_1,L_2$ to $L$ and $D$ is connected.
If $L_1$ and $L_2$ are exact Lagrangians, then $L$ is exact and $V$ is also exact.
\end{lemma}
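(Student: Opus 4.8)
The plan is to build a global primitive for $\theta|_L$ by patching the given primitives $f_1$ on $L_1$ and $f_2$ on $L_2$ across the flow handle, using that the handle is itself an exact Lagrangian whose primitive is pinned down on its two gluing loci by the infinite-order vanishing of the admissible profile; the cobordism case is then the same argument ``one dimension up''. First I would pass to the Weinstein model of Corollary~\ref{c:flowGlueClean} (resp.\ its $E_2$-variant, Lemma~\ref{l:flowGlueCleanE2}), so that $M=T^*L_1$, $L_2=N^*_D$, and $L=(L_1\setminus U_1)\cup \iota(H^D_\nu)\cup (L_2\setminus U_2)$. By Remark~\ref{r:tilde} the handle $H^D_\nu$ (resp.\ $H^{D,E_2}_\nu$) is the image of the punctured disc bundle $(N^*_D)_\epsilon\setminus D\subset L_2$ under the time-one flow of the Hamiltonian $\wt\nu(\|\cdot\|)$ (resp.\ $\wt\nu(\|\pi_2(\cdot)\|)$) on $T^*L_1\setminus L_1$; hence it is exact, and the standard formula for the variation of a primitive under a Hamiltonian isotopy produces an explicit primitive $g$ on $\iota(H^D_\nu)$.

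The key step is to see that $g$ agrees, up to one additive constant on each relevant component, with $f_2$ near the ``outer'' seam $\{\|\xi\|=\epsilon\}$ and with $f_1$ near the ``inner'' seam $\exp(S_\lambda(N^*_D))\subset L_1$. Near the outer seam all derivatives of $\nu$ vanish, so the defining Hamiltonian is constant there, the handle coincides with $L_2$, and $g=f_2$ after normalising one constant. Near the inner seam, the argument of Lemma~\ref{l:flowGluePoint}/\ref{l:flowGlueClean} exhibits the handle as the graph, over a collar of the seam in $L_1$, of a $1$-form vanishing to infinite order at the seam; writing $\theta=\alpha+dH_1$ on the Weinstein neighbourhood with $\alpha$ the canonical $1$-form and $H_1|_{L_1}=f_1$ (such $H_1$ exists since the restriction $H^1(T^*L_1)\to H^1(L_1)$ is an isomorphism and $\theta|_{L_1}$ is exact), one reads off that $g\to f_1$ to infinite order along the seam, so $g-f_1$ is locally constant there. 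Lemmas~\ref{l:localEnergyClean} and \ref{l:localEnergyClean2} are the infinitesimal shadow of this picture. Now, since $D$ is connected, $H^D_\nu\cong(N^*_D)_\epsilon\setminus D$ and both seams are connected (when $D$ has codimension one one argues componentwise, using that $\wt\nu$ is still supported in a shell so $\iota(H^D_\nu)$ meets $L_1$ exactly along $\exp(S_\lambda(N^*_D))$), so the locally constant functions are genuine constants, which can be absorbed by a constant shift of $f_2$ on the component of $L_2$ involved. This yields a smooth $f_L$ on $L$ with $df_L=\theta|_L$, i.e.\ $L$ is exact.

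For $V$: by Lemma~\ref{l:cob}, $V$ is obtained from the $(E_2\oplus\R)$-flow surgery of $L_1\times\R$ with $L_2\times i\R=N^*_{D\times\{0\}}(L_1\times\R)$ inside $T^*(L_1\times\R)=M\times\C$, by restricting to $\pi_H^{-1}(S_+)$, applying a Hamiltonian perturbation $\varphi$, and extending/bending the end corresponding to $L_2$. Since the chosen primitive $\theta_\C$ of $\omega_\C$ restricts to $0$ on both $\R$ and $i\R$, the Lagrangians $L_1\times\R$ and $L_2\times i\R$ are exact with primitives $f_1$ and $f_2$ pulled back from the $M$-factor, $D\times\{0\}$ is connected, and the previous two paragraphs apply verbatim to show the surgered Lagrangian $L_1\times\R\,\#_{D\times\{0\},E_2\oplus\R}\,L_2\times i\R$ is exact. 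Restriction to the open set $\pi_H^{-1}(S_+)$, the Hamiltonian perturbation $\varphi$, and the bending/extending of the cylindrical $L_2$-end (done through an ambient exact isotopy, with the newly glued piece $L_2\times\gamma_2$ exact, its primitive being $f_2$ plus a primitive of $\theta_\C|_{\gamma_2}$) all preserve exactness; since $V$ is connected the local primitives patch to a global $f_V$, so $V$ is exact.

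The main obstacle I anticipate is the bookkeeping of the additive constants, i.e.\ showing that the handle's primitive can be matched \emph{simultaneously} to $f_1$ on the inner seam and to $f_2$ on the outer seam; this is precisely where connectedness of $D$ (hence of the handle and of $L$) enters, and the infinite-order vanishing of the admissible profile at both seams is what makes any such matching possible in the first place.
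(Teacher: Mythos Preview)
Your overall strategy---patching primitives across the handle using that the handle is the Hamiltonian flow of the exact $N^*_D\setminus D$---is exactly the paper's, and your argument for $V$ is correct. The gap is in your direct treatment of $L$ when $\operatorname{codim}_{L_1}D=1$. In that case $(N^*_D)_\epsilon\setminus D$, and hence the handle and both seams, have \emph{two} connected components, so the differences $g-f_i$ are only locally constant and you must show the two constants on either side coincide. Your parenthetical ``argue componentwise'' does not do this: each $f_i$ can be shifted by only a single constant on the connected $L_i$, and knowing \emph{where} the handle meets $L_1$ says nothing about whether the two values of $g-f_1$ agree.

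The paper handles this by a change of order. It first proves the patching only when $\operatorname{codim}D\ge 2$ (so that $L_i\cap\overline{H^{D,E_2}_\nu}$ is connected and the locally constant differences are genuine constants). It then observes that in constructing $V$ one resolves $L_1\times\R$ and $L_2\times i\R$ along $D\times\{(0,0)\}$, which \emph{always} has codimension $\ge 2$ in $L_i\times\R$; hence the surged Lagrangian, and after the remaining steps $V$ itself, is exact by the first case. Finally $L$ is exact simply because it is the fiber of $V$ over $(0,0)\in\C$. You in fact prove $V$ exact correctly (your two paragraphs apply there without any codimension-one caveat), so you are one observation away from a complete proof: deduce exactness of $L$ from that of $V$ rather than attacking $L$ directly.
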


\begin{proof}
We give the proof for $L=L_1\#_{D,E_2} L_2$.
Without loss of generality, we can assume $M=T^*L_1$, $L_2=N^*_D$ is the conormal bundle.  We first assume $codim_{L_i}(D)\ge 2$.

Since the $E_2$-flow handle $H_{\nu}^{D,E_2}$ is obtained by a Hamiltonian flow of $N^*_D \backslash D$, it is immediate that $H_{\nu}^{D,E_2}$ is an exact Lagrangian because $\mathcal{L}_{X^{\nu}} \alpha=dK$ for a function $K$.
Let $f_1,f_2$ and $f_H$ be a primitive of $\alpha$ restricted on $L_1$, $L_2$ and $H_{\nu}^{D,E_2}$, respectively.
Since we assume $D$ is of codimension two or higher, $(f_i-f_H)|_{L_i \cap \overline{H_{\nu}^{D,E_2}}}$ are locally constants and hence constants for $i=1,2$, where $ \overline{H_{\nu}^{D,E_2}}$ denotes the closure of the handle.
By possibly adding a constant to $f_1$ and $f_2$, we can assume $f_1$, $f_2$ and $f_H$ are chosen such that they match together to give a primitive on $L$.

Now we drop the codimension assumption and only assume $codim_{L_i}(D)\ge 1$.  We recall that in the proof of Lemma \ref{l:cob}, the first step for
 constructing $V$ is to resolve $L_1 \times \R$ and $L_2 \times i\R$ along $D \times \{(0,0)\}$, which has now $codim_{L_i\times\R}(D)\ge2$.  This process preserves exactness by what we just proved. Then we cut the cobordism into a half, do Hamiltonian perturbation near $L \times \{(0,0)\}$ and extend the cylindrical end.
All of these steps preserve the exactness of the Lagrangian and hence $V$ is exact.  The restriction of $V$ to the fiber over $\{(0,0)\}$ is precisely $L$, proving the exactness of the surgery.
\end{proof}

\begin{lemma}\label{l: preserving monotonicity in simple cobordism}
Let $V$ be a simple cobordism from $L_1,L_2$ to $L$.
If $L_1$ and $L_2$ are monotone Lagrangians such that either

\begin{enumerate}[(1)]
\item $\pi_1(L_1,D)=1$ or $\pi_1(L_2,D)=1$, or
\item the image of $\pi_1(L_i)$ to $\pi_1(M)$ is torsion for either $i=1,2$
\end{enumerate}
then $L$ is monotone and $V$ is also monotone.
\end{lemma}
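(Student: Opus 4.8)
The plan is to run the argument of Lemma~\ref{l:exactSimCob} with \emph{exact} replaced by \emph{monotone} throughout. First I reduce to the statement that a clean surgery $L=L_1\#_D L_2$, $L_1\#_{D,E_2}L_2$ or $\cL_1\#_{\cD,E_2}\cL_2$ (as in Lemma~\ref{l:cob}) along a submanifold $D$ of codimension $\ge 2$ in each factor is monotone, with the common monotonicity constant $\lambda$ of $L_1$ and $L_2$. Granting this, $V$ is monotone: the construction of Lemma~\ref{l:cob} first resolves $L_1\times\R$ and $L_2\times i\R$ along $D\times\{(0,0)\}$, which sits in codimension $\ge 2$ in each factor and for which the reduced claim applies — a disk in $M\times\C$ with boundary on $L_i\times\R$ has the same symplectic area and Maslov index as its projection to $M$, since $\R\subset\C$ is an exact Lagrangian, so $L_i\times\R$ is monotone in $M\times\C$ of constant $\lambda$, and hypotheses (1), (2) are unchanged by crossing with $\R$. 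The remaining steps producing $V$ (passing to the half-cobordism $W$, the Biran--Cornea Hamiltonian perturbation near the non-cylindrical end, and bending and extending the cylindrical ends to infinity) are ambient Hamiltonian isotopies or modifications outside a compact set and preserve monotonicity, and $L=V\cap(M\times\{(0,0)\})$ is then monotone because the area- and index-preserving map $\pi_2(M,L)\to\pi_2(M\times\C,V)$ commutes with both $\omega$ and $\mu$.

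For the reduced claim I would first record the topology: $L$ is obtained from $L_1,L_2$ by deleting tubular neighbourhoods $V_1\subset L_1$, $V_2\subset L_2$ of $D$ (each deformation retracting onto $D$) and inserting the flow handle $H$, which deformation retracts onto its normal sphere bundle $\partial V_1\subset L_1\setminus V_1$ and is a manifold cobordism from $\partial V_1$ to $\partial V_2\subset L_2\setminus V_2$. After the codimension-$\ge 2$ reduction these sphere fibres are connected (and $1$-connected once the codimension is $\ge 3$; in the borderline circle-bundle case $\pi_1$ of the handle is a central extension of $\pi_1(D)$ by $\Z$), so by van Kampen $\pi_1(L)$ is an amalgam of $\pi_1(L_1\setminus V_1)$, $\pi_1(H)$ and $\pi_1(L_2\setminus V_2)$, the inclusions $L_i\setminus V_i\hookrightarrow L_i$ being $\pi_1$-surjective. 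Consequently $\mathrm{im}(\pi_1(L)\to\pi_1(M))$ is generated by $\mathrm{im}(\pi_1(L_1)\to\pi_1(M))$ and $\mathrm{im}(\pi_1(L_2)\to\pi_1(M))$, amalgamated through $D$. Under hypothesis (1), say $\pi_1(L_1,D)=1$, the map $\pi_1(D)\to\pi_1(L_1)$ is onto, so $\mathrm{im}(\pi_1(L_1)\to\pi_1(M))\subseteq\mathrm{im}(\pi_1(L_2)\to\pi_1(M))$ and hence $\mathrm{im}(\pi_1(L)\to\pi_1(M))=\mathrm{im}(\pi_1(L_2)\to\pi_1(M))$; under hypothesis (2) the image of $\pi_1(L_i)$ in $\pi_1(M)$ is torsion for some $i$.

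For the monotonicity relation itself I would work on $H_2$: the homomorphism $\rho=\lambda\mu-\omega\colon H_2(M,L)\to\R$ annihilates the image of $H_2(M)$ (by monotonicity of $L_1$, equivalently of the ambient classes), hence factors through $H_2(M,L)/\mathrm{im}\,H_2(M)=\ker\big(H_1(L)\to H_1(M)\big)$; as $\rho$ is $\R$-valued it automatically annihilates all torsion in $H_2(M,L)$, so it suffices to show $\rho$ vanishes on non-torsion classes. Given such a class $\alpha$, the van Kampen description of $H_1(L)$ lets me express $\partial\alpha$ in terms of classes in $H_1(L_1\setminus V_1)$, $H_1(L_2\setminus V_2)$ and $H_1(H)$; capping off inside $M$ exhibits $\rho(\alpha)$ as a sum of $\rho$-values of disks bounded by $L_1$ and by $L_2$ — which vanish since $L_1,L_2$ are monotone with the \emph{same} constant $\lambda$ — plus the contribution of a residual piece supported in the Weinstein neighbourhood of the surgery, which is exact and whose Maslov index is pinned by the graded-surgery identity of Section~\ref{s:gradingEnergy} (Lemma~\ref{l:cleanIndex2}: $Ind(L_1|_D,L_2|_D)=\dim(D)+1$). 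Hypothesis (1) guarantees that, after consolidating everything to the $L_2$-side, no genuinely mixed class survives, while hypothesis (2) guarantees that the classes not accounted for by $L_1$- and $L_2$-disks map into a torsion subgroup of $\pi_1(M)$ and hence are torsion in $H_2(M,L)$; in either case $\rho(\alpha)=0$. I expect this last step — matching the area and Maslov index of an $L$-disk with those of $L_1$- and $L_2$-disks across the handle, and disposing of the residual handle class — to be the main obstacle; it is carried out in the embedded case in \cite[\S 6]{BC13}, and the argument adapts, using the grading bookkeeping already established, to the clean, $E_2$- and family-surgery variants at hand.
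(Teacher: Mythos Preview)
The reduction to codimension $\ge 2$ and the handling of $V$ via $L_i\times\R$ are fine and match the paper.  The core argument for monotonicity of $L$, however, has genuine gaps.

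First, under hypothesis (2) your key step is wrong as stated: having $\partial u$ map to a torsion element of $\pi_1(M)$ does \emph{not} make $[u]$ torsion in $H_2(M,L)$.  What it gives is that some multiple of $\partial u$ bounds a disk in $M$, and you then have to verify that each piece of the resulting decomposition is monotone --- which is the whole problem.  Second, the appeal to Lemma~\ref{l:cleanIndex2} for the ``residual handle class'' is misplaced: the graded-surgery condition $Ind(L_1|_D,L_2|_D)=\dim D+1$ controls the grading functions across the handle, not the Maslov number of a relative $2$-class.  Third, you yourself flag the capping step as ``the main obstacle'' and defer to \cite{BC13}, but the clean-intersection case is precisely what is at issue here; nothing has actually been proved.

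The paper's argument avoids all of this by working directly on representatives in $\pi_2(M,L)$.  Write $L=\mathring L_1\cup\mathring L_2$, where $\mathring L_2$ is the closure of the flowed image of $L_2\setminus D$ and $\mathring L_1$ is the closure of its complement.  In case (1), say $\pi_1(L_2,D)=1$: since $\pi_1(N^*_D,N^*_D\setminus D)=1$ by the codimension-$\ge 2$ reduction, any arc in $\mathring L_2$ with endpoints on the handle $H^{D,E_2}_\nu$ can be homotoped rel endpoints into the handle, which in turn retracts to its boundary in $\mathring L_1$.  Hence every disk $u:(\bD^2,\partial\bD^2)\to(M,L)$ can be homotoped to have $\partial u\subset\mathring L_1$, and monotonicity of $L_1$ finishes.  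In case (2), say $\mathrm{im}(\pi_1(L_2)\to\pi_1(M))$ is torsion: for each arc $I\subset\partial u\cap\mathring L_2$ with endpoints on $\partial\mathring L_2$, connect the endpoints by an arc $I'\subset\partial\mathring L_2$; then $m[I\cup I']$ bounds a disk $v$ in $M$ for some $m$.  This decomposes $m[u]=[mu-v]+[v]$ with $\partial v\subset\mathring L_2$ (which retracts to $L_2$) and $\partial(mu-v)$, after iterating, lying in $\mathring L_1$.  Both pieces are then disks with boundary on one of the original $L_i$, and monotonicity follows.  No homological capping, no grading input, and no ``residual handle class'' is needed.
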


\begin{proof}
Again we give the proof for $L=L_1\#_{D,E_2} L_2$ and
 we first assume that $codim_{L_i}(D)\ge2$.  For convenience we decompose $L=\mathring L_1\cup \mr L_2$.  Here $\mr L_2$ is the closure of the image of $L_2\backslash D$ under the $E_2$-flow defining the surgery, and $\mr L_1$ is the closure of the complement of $\mr L_2$.

In case (1) it suffices to prove the lemma when $\pi_1(L_2,D)=1$, since the slight asymmetry of $L_1$ and $L_2$ will be irrelevant.
First note that $\pi_1(U(D), U(D)\backslash D)=\pi_1(N^*_D,N^*_D \backslash D)=1$ by our assumption on $D$, where $U(D)$ is a tubular neighborhood of $D$ in $L_2$.
Since the flow handle $H_{\nu}^{D,E_2}$ is obtained by applying an $E_2$-flow on $N^*_D \backslash D$, any path in $\mr L_2$ with ends at $H_{\nu}^{D,E_2}$ can be homotoped to a path in $H_{\nu}^{D,E_2}$, while $H_{\nu}^{D,E_2}$ in turn retracts to its boundary component that lies on $\mr L_1$.

The upshot is, we can find for any element in $\pi_2(M,L)$ a representative $u: \mathbb{D}^2 \to M$ with boundary completely lie in $\mr L_1$.
Since $L_1$ is monotone, it finishes the proof for $L$.

Case (2) is similar.  Without loss of generality, assume the image of $\pi_1(L_2) \to \pi_1(M)$ is torsion. Take again any disk $u:\bD^2\rightarrow M$ with boundary on $L$, and assume $\partial u$ intersects $\partial \mr L_2$ transversally.  For any segment $I\subset \partial u$ contained in $\mr L_2$ satisfying $\partial I\subset \partial\mr L_2$, one connects the two endpoints of $\partial I$ by $I'\subset \partial\mr L_2$ (the relevant boundary is connected due to the assumption of connectedness and codimension of $D$). By assumption, we can take a disk $v:\bD^2\rightarrow M$ with $\partial v=m[I\cup I']$ for some integer $m$.  Then one may decompose $mu$ so that $m[u]=[mu-v]+[v]$, so that $\partial v\subset \mr L_2$.  By performing such a cutting iteratively, one may assume $\partial(mu-v)\subset\mr L_1$.  Since $\partial v$ retracts to $L_2\cap \mr L_2$, the monotonicity follows from that of $L_1$ and $L_2$ with such a decomposition.

Now in either case the monotonicity of $V$ is argued in a similar way as Lemma \ref{l:exactSimCob} because all processes involved preserve monotonicity.  The restriction to the fiber over the origin again removes the assumption of $codim_{L_i}D\ge 2$.
\end{proof}

\begin{thm}[\cite{Se03}\cite{WWfamily}\cite{BCIII}]\label{thm:long exact sequence 1}
Let $(M,\omega)$ be a monotone (or exact) symplectic manifold and $S^n$ ($n >1$) an embedded Lagrangian sphere.
For monotone (or exact) Lagrangians $L_1$ and $L_2$, there is a long exact sequence
$$ \dots \to HF^*(S^n,L_2) \otimes HF^*(L_1,S^n) \to HF^*(L_1,L_2) \to HF^*(L_1,\tau_{S^n}(L_2)) \to \dots$$
\end{thm}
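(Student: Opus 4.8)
The plan is to convert the graded surgery identity of Corollary~\ref{c: graded idenity for graph of Sn} into a Lagrangian cobordism, run it through Biran--Cornea's cone machine, and then pair the resulting relation with a product Lagrangian and identify the three Floer groups via quilted Floer theory.

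First I would feed the graded clean surgery $(S^n\times S^n)[1]\#_{\Delta_{S^n},E_2}\Delta=Graph(\tau_{S^n}^{-1})$ into Lemma~\ref{l:cob}, producing a graded Lagrangian cobordism $V\subset(M\times M^-)\times\C$ with ends $(S^n\times S^n)[1]$, $\Delta$ and $Graph(\tau_{S^n}^{-1})$ --- that is, a cobordism from $\big((S^n\times S^n)[1],\Delta\big)$ to $Graph(\tau_{S^n}^{-1})$, which is precisely the grading convention required by Theorem~\ref{thm: Cone from Lagrangian cobordism} with $k=2$, $k'=1$ and $L_1=S^n\times S^n$, $L_2=\Delta$. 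I would then check that $V$ is monotone (resp. exact): in the monotone case this is Lemma~\ref{l: preserving monotonicity in simple cobordism}, whose hypothesis~(1) holds because $\pi_1\big(S^n\times S^n,\Delta_{S^n}\big)=1$ for $n>1$ (both $S^n\times S^n$ and $\Delta_{S^n}\cong S^n$ are simply connected); in the exact case use Lemma~\ref{l:exactSimCob}. Theorem~\ref{thm: Cone from Lagrangian cobordism} then yields
$$Graph(\tau_{S^n}^{-1})\ \cong\ Cone\big(S^n\times S^n\longrightarrow\Delta\big)\qquad\text{in }\cD^\pi\fuk(M\times M^-).$$

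Next I would apply $HF^*(L_1\times L_2,-)$ to the exact triangle $S^n\times S^n\to\Delta\to Graph(\tau_{S^n}^{-1})\xrightarrow{[1]}$, which produces a long exact sequence with terms $HF^*(L_1\times L_2,S^n\times S^n)$, $HF^*(L_1\times L_2,\Delta)$ and $HF^*(L_1\times L_2,Graph(\tau_{S^n}^{-1}))$. These I would identify as follows: the first splits as $HF^*(L_1,S^n)\otimes HF^*(S^n,L_2)$ by the K\"unneth theorem (Theorem~\ref{thm: quilted Kunneth formula}) --- at the chain level strips with boundary on product Lagrangians split, and one applies a Poincar\'e-duality identification for the $M^-$ factor together with field coefficients; the second is $HF^*(L_1,L_2)$ by the diagonal/geometric-composition identification (Theorem~\ref{thm: quilted geometric composition equality}); and for the third, the symplectomorphism $\id_M\times\tau_{S^n}$ of $M\times M^-$ sends $Graph(\tau_{S^n}^{-1})$ to $\Delta$ and $L_1\times L_2$ to $L_1\times\tau_{S^n}(L_2)$, so this group is $HF^*(L_1,\tau_{S^n}(L_2))$. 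Substituting gives exactly the stated sequence. (The sharper functor-level form, Theorem~\ref{t:sphereTwist}, follows instead by applying the Mau--Wehrheim--Woodward functor $\Phi$ of~\eqref{e:MWW} to the same cone relation, using $\Phi(\Delta)=\id$, $\Phi(Graph(\tau_{S^n}^{-1}))=\tau_{S^n}$ and the Abouzaid--Smith computation $\Phi(S^n\times S^n)(-)\simeq hom(S^n,-)\otimes S^n$.)

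The part I expect to require the most care is twofold: (i) the monotonicity check --- this is exactly where $n>1$ is used, since for $n=1$ neither $S^1\times S^1$ nor the pair $(S^1\times S^1,\Delta_{S^1})$ is simply connected and $V$ need not be monotone, consistent with the sphere sequence genuinely failing in that dimension; and (ii) threading the grading shift $[1]$ through Biran--Cornea's end/cone conventions so that no spurious shift survives in the final sequence --- this rests on Corollary~\ref{c: graded idenity for graph of Sn}, hence on the index computation $Ind(\Lambda^-\times\Lambda,\Delta)=n$ of Lemma~\ref{l: grading of diagonal} and the surgery index lemmas. Everything else --- the surgery-to-Dehn-twist perturbations and the cobordism construction --- is already packaged in the cited results, so beyond these two points the argument is bookkeeping.
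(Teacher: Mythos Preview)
Your proposal is correct and follows essentially the same route as the paper: cobordism from the graded surgery identity, monotonicity/exactness via Lemmas~\ref{l:exactSimCob} and~\ref{l: preserving monotonicity in simple cobordism}, Biran--Cornea's cone theorem, then pairing with $L_1\times L_2$ and identifying the three terms via quilted Floer theory. The only cosmetic difference is your identification of the third term: the paper applies Theorem~\ref{thm: quilted geometric composition equality} directly to compute $Graph(\tau_{S^n}^{-1})\circ L_2=\tau_{S^n}(L_2)$, whereas you first apply the symplectomorphism $\id_M\times\tau_{S^n}$ to reduce to the diagonal case --- both are equivalent and equally valid.
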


\begin{proof}
By Lemma \ref{l:adjust}, Corollary \ref{c:GradedSurgProduct} and Lemma \ref{l:cob}, there is a Lagrangian cobordism $V$ from $S^n \times S^n[1]$ and the diagonal $\Delta$ to $Graph(\tau_{S^n}^{-1})$ in $M \times M^-$, where $M^-=(M,-\omega)$.  By Lemma \ref{l:exactSimCob}, \ref{l: preserving monotonicity in simple cobordism}, the monotonicity (exactness) of $(M,\w)$ implies the same property for $S^n\times S^n[1]$, $\Delta\subset M\times M^-$ and the corresponding cobordism $V$.

In either case, $Graph(\tau_{S^n}^{-1})$ is a cone from $S^n \times S^n$ to $\Delta$ in the Fukaya category of $M \times M^-$ by Theorem \ref{thm: Cone from Lagrangian cobordism}.
In particular, we have a long exact sequence
$$ \dots \to HF^*( L_1 \times L_2,S^n \times S^n) \to HF^*(L_1 \times L_2, \Delta) \to HF^*(L_1 \times L_2,Graph(\tau_{S^n}^{-1})) \to \dots$$
In the language of Lagrangian correspondence, we have $HF^*(L_1 \times L_2,S^n \times S^n)=HF^*(L_1,S^n \times S^n,L_2)=HF^*(L_1,S^n) \otimes HF^*(S^n,L_2)$ by Theorem \ref{thm: quilted Kunneth formula},
where $(L_1,S^n \times S^n,L_2)$ is a generalized Lagrangian correspondence in $\{pt\} \times M \times M \times \{pt\}$.
Similarly, we have $HF^*(L_1 \times L_2, \Delta)=HF^*(L_1, \Delta, L_2)=HF^*(L_1,\Delta \circ L_2)=HF^*(L_1,L_2)$ by Theorem \ref{thm: quilted geometric composition equality}.
Finally, we also have $HF^*(L_1 \times L_2,Graph(\tau_{S^n}^{-1}))=HF^*(L_1,Graph(\tau_{S^n}^{-1}) \circ L_2)=HF^*(L_1,\tau_{S^n}(L_2))$, by Theorem \ref{thm: quilted geometric composition equality} again.

We remark that although the results in \cite{WWQuiltedFloer} require a stronger monotonicity assumption on the generalized Lagrangian correspondence, the isomorphisms we need are classical (e.g. it can be proved by hand-crafted correspondence of relevant moduli spaces) and require only monotonicity assumptions on the Lagrangians.
\end{proof}

\bcor[\cite{Se4dim}\cite{WWfamily}]\label{c:Seideltwist-fixed} In the same situation as Lemma \ref{thm:long exact sequence 1}, $f\in Symp(M)$, then

\beq\label{e:exactSeqFix}\dots \to HF^*(\tau \circ f) \to HF^*(f) \to HF^*(f(S^n),S^n) \to \dots\eeq

\ecor

\bpf The exact sequence follows from applying the cohomological functor $HF^*(-,Graph(f))$ to the cone given by the cobordism.

\epf

The above result is predicted by Seidel \cite[Remark 2.11]{Se4dim} in a slightly different form from here.  This is solely due to the cohomological convention we took.
In the following theorem, we assume all involved symplectic manifolds and Lagrangians have the same monotonicity constant with minimal Maslov number at least three.

\begin{thm}[\cite{WWfamily}]\label{thm:long exact sequence 2}
Let $C$ be a spherically fibered coisotropic manifold over the base $(B,\w_B)$ in $(M,\omega)$.
Given Lagrangians $L_1$ and $L_2$ and assume the following monotonicity conditions:

 \begin{enumerate}[(i)]
 \item the generalized Lagrangian correspondence $(L_1,C^t,C,L_2)$ is monotone in the sense of \cite{WWQuiltedFloer} and,
 \item the simple cobordism corresponding to the surgery in Corollary \ref{l:fiberedTwist} is monotone.

     \end{enumerate}
 Then there is a long exact sequence
$$ \dots \to HF^*(L_1 \times C, C^t \times L_2) \to HF^*(L_1,L_2) \to HF^*(L_1,\tau_{C}(L_2)) \to \dots$$

In particular if the spherical fiber of $C$ has dimension $>1$ or $\pi_1(M)$ is torsion, (ii) is automatic.
\end{thm}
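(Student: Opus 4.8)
The plan is to run the argument of Theorem~\ref{thm:long exact sequence 1} essentially verbatim, substituting the family surgery identity for the one along $S^n$. By Corollary~\ref{l:fiberedTwist} and Lemma~\ref{l:familyGrade} there is a graded clean surgery identity $\wt C[1]\#_{\eD,E_2}\Delta = Graph(\tau_C^{-1})$ in $M\times M^-$, where $\eD=\Delta\cap\wt C$ and, in quilted language, $\wt C = C^t\circ C$. Lemma~\ref{l:cob} then produces a graded simple Lagrangian cobordism $V\subset (M\times M^-)\times\C$ with ends $\wt C[1]$, $\Delta$ on one side and $Graph(\tau_C^{-1})$ on the other. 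Hypothesis (ii) is precisely the statement that this simple cobordism is monotone, which together with Lemma~\ref{l: preserving monotonicity in simple cobordism} also forces $\wt C$ and $\Delta\subset M\times M^-$ to be monotone with the common monotonicity constant. Theorem~\ref{thm: Cone from Lagrangian cobordism} then identifies $Graph(\tau_C^{-1})$ with the cone of a morphism $\wt C\to\Delta$ in $\cD^\pi\fuk(M\times M^-)$.

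Applying the cohomological functor $HF^*(L_1\times L_2,-)$ to this exact triangle gives the long exact sequence
\[
\cdots\to HF^*(L_1\times L_2,\wt C)\to HF^*(L_1\times L_2,\Delta)\to HF^*(L_1\times L_2,Graph(\tau_C^{-1}))\to\cdots
\]
and it remains to translate the three terms. Viewing $(L_1,\Delta,L_2)$ and $(L_1,Graph(\tau_C^{-1}),L_2)$ as cyclic generalized Lagrangian correspondences and using that $\Delta\circ L_2 = L_2$ and $Graph(\tau_C^{-1})\circ L_2 = \tau_C(L_2)$ are embedded geometric compositions, Theorem~\ref{thm: quilted geometric composition equality} identifies the middle term with $HF^*(L_1,L_2)$ and the right term with $HF^*(L_1,\tau_C(L_2))$. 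For the left term I would expand $\wt C = C^t\circ C$ and apply Theorem~\ref{thm: quilted geometric composition equality} the other way to get $HF^*(L_1,C^t,C,L_2)$, then fold this cyclic correspondence to $HF^*(L_1\times C,C^t\times L_2)$ (a correspondence through $M$, $B$, $M$, which folds into a Floer complex in $M\times B^-\times M$). Hypothesis (i) supplies exactly the monotonicity of the generalized correspondence $(L_1,C^t,C,L_2)$ needed to define these quilted groups and to invoke Theorem~\ref{thm: quilted geometric composition equality}; as in the proof of Theorem~\ref{thm:long exact sequence 1}, the isomorphisms in question are classical and require only the stated monotonicity and Maslov-number hypotheses.

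Finally, for the last assertion: when the null-leaf $S^l$ has dimension $>1$, projection to the first factor exhibits $\wt C$ as an $S^l$-bundle over $C$ admitting $\eD$ as a section, so $\pi_1(\wt C,\eD)=1$; when $\pi_1(M)$ is torsion, the image of $\pi_1(\Delta)\cong\pi_1(M)$ in $\pi_1(M\times M^-)$ is torsion. In either case the hypotheses of Lemma~\ref{l: preserving monotonicity in simple cobordism} hold for the surgery $\wt C[1]\#_{\eD,E_2}\Delta$, so $V$ is automatically monotone and (ii) can be dropped. I expect the real difficulty to lie not in any single step but in the bookkeeping around Theorem~\ref{thm: quilted geometric composition equality}: verifying that $C^t\circ C=\wt C$ is an embedded composition (the fiber product $C\times_B C$ maps diffeomorphically onto $\wt C$, with the required transversality) and that $(L_1,C^t,C,L_2)$ meets the monotonicity/grading conditions of \cite{WWQuiltedFloer} used there — this is what hypothesis (i) is designed to absorb, and in the codimension-one case with $\pi_1(M)=1$ (where $\pi_1(\wt C)\neq1$) one must fall back, exactly as in Theorem~\ref{thm:long exact sequence 1}, on a hand-crafted identification of the relevant moduli spaces rather than the quoted monotonicity package.
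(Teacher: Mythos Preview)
Your proposal is correct and follows essentially the same route as the paper: build the simple cobordism from the family surgery identity (Corollary~\ref{l:fiberedTwist}), invoke Biran--Cornea to get the cone, and then unfold the three terms via quilted Floer theory. One small point worth tightening: for the ``automatic'' clause you need $\wt C$ monotone as an \emph{input} to Lemma~\ref{l: preserving monotonicity in simple cobordism}, and this comes from hypothesis~(i) (monotonicity of $(L_1,C^t,C,L_2)$ implies monotonicity of $\wt C=C^t\circ C$, cf.\ Remark~5.2.3 of \cite{WWQuiltedFloer}) rather than from~(ii), which is what you are trying to deduce.
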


\begin{proof}
The proof is analogous to Theorem \ref{thm:long exact sequence 1} with Lemma \ref{l:adjust} replaced by Lemma \ref{l:fiberedTwist}. Here we give a sketch.
First, $(L_1,C^t,C,L_2)$ being monotone implies $\widetilde{C}=C^t \circ C$ being monotone (See Remark 5.2.3 of \cite{WWQuiltedFloer}).
The Lagrangian cobordism in Lemma \ref{l:fiberedTwist} is monotone by Lemma \ref{l: preserving monotonicity in simple cobordism}.  It is not hard to verify $\pi_1(\wt C, \wt C\cap \Delta)=1$ when $codim_M C\ge2$.
Hence, Theorem \ref{thm: Cone from Lagrangian cobordism} applies either in this case or when $\pi_1(M)$ being torsion, and we obtain long exact sequence
$$ \dots \to HF^*( L_1 \times L_2,\widetilde{C}) \to HF^*(L_1 \times L_2, \Delta) \to HF^*(L_1 \times L_2,Graph(\tau_{C}^{-1})) \to \dots $$
With our assumption on monotonicity of $(L_1,C^t,C,L_2)$, we apply Theorem \ref{thm: quilted geometric composition equality} to obtain the desired result.
\end{proof}

There is a similar result on the fixed point version of family Dehn twist, and we will not state it explicitly here.

\section{Immersed Lagrangian cobordism}\label{s:immersedCob}

In this section, we provide a long exact sequence associated to the Dehn twist along $\mathbb{CP}^{\frac{m}{2}}$ using immersed objects.  We note the readers that this is an independent approach to a special case for $\CP^n$-objects, but it yields more information on connecting maps (see Corollary \ref{c:CpnCpnSphereMap} \ref{c:immersedLES}).
We achieve this by considering a reasonable immersed class of Lagrangian cobordisms.

\subsection{Review on immersed Lagrangian Floer theory}\label{s:immersedFloer}

We collect facts from immersed Lagrangian Floer theory that will be used later on in this section.  Our expositions follows \cite{AB14}, but one may find a much more general treatment in \cite{AJ10}.

We will restrict our attention to a very limited generality of the theory. From now on, any Lagrangian immersions $\iota:L\lar M$ have
\begin{itemize}
\item clean self-intersections which form a totally geodesic submanifold in $M$ for some metric (but not a covering)
\item a $\Z$-grading
\end{itemize}

When $M$ is non-compact, we assume it has cylindrical ends, and all almost complex structures in consideration should be compactible with these ends.  We also impose the following positivity condition holds for all Lagrangian immersions under consideration following \cite{AB14}:\\\\
{\bf (A)} $\text{If }E(p,q) >0 \text{ for some } (p,q)\in R_L \text{, then } Ind(p,q) \ge 3$.\\

Suppose $\iota_i:L_i\rightarrow M$, $i=0,1$ are a pair of Lagrangian immersions which intersect transversally at smooth points.  We define $\ov L_i=\iota_i(L_i)$ and $\ov R_{L_i}:=\iota_i(R_{L_i})$ to be the image of $L_i$ and the immersed points, respectively (see Definition \ref{d:branchJ}), and require a genericity assumption
$\ov R_{L_0}\cap \ov L_1 = \ov R_{L_1} \cap \ov L_0=\emptyset$.
We will define the immersed Floer cohomology $HF^*(\iota_0,\iota_1)$ in this setting.

\noindent\ul{\it\large Floer cochain group and differentials.}\\

The Floer cochain group is the $\Z$-graded abelian group defined as
$$CF^*(\iota_0,\iota_1):=\bigoplus_{x_i\in\iota_0(L_0)\cap\iota_1(L_1)}\bk\la x_i\ra$$

The differentials $d:CF^*(\iota_0,\iota_1)\rightarrow CF^{*+1}(\iota_0,\iota_1)$ is defined using exactly the same set of equations \eqref{e:Floer} as the embedded case, with one extra condition on the boundary

\begin{itemize}
\item $u(s,i)\in\iota(L_i)$ can be lifted continuously to curves on $L_i$ for $i=0,1$.
\end{itemize}

Equivalently, the Floer strip is not allowed to have any branch jumps on the boundary.  Orientations of these moduli spaces are handled analogously as in the embedded case, see \cite{AB14}\cite{AJ10}.\\

\noindent\ul{\it\large $d^2=0$.}\\

As usual, to prove the above-defined $d$ indeed gives a differential of the Floer complex, one needs to examine the moduli space $\cM(p_-,p_+)$ when $Ind(p_-)-Ind(p_+)=2$.  In the immersed case, the compactification of $\cM(p_-,p_+)$ is more complicated, because we have a new type of bubbling as follows.

Take any Lagrangian immersion $\iota:L\rightarrow M$.  Let $\mathbb{D}=\{z \in \mathbb{C} | \|z\| \le 1\}$ and $T \subset \partial \mathbb{D}$ be a set of discrete points, which is called a set of \textbf{branch jump marked points on disks}, ordered counterclockwisely as $(t_0,\dots,t_{|T|-1})$ such that $t_0$ is an incoming marked point and the others are outgoing.
We denote $T^-$ and $T^+$ as the set of incoming and outgoing marked points, respectively.
Let $\alpha_{|T|}: \{0,1,\dots,|T|-1 \} \to R_{L}$ be a function to specify the immersed points where branch jumps occur.
We require that $T \neq \emptyset$ and also fix a (domain independent) compatible almost complex structure $J$.

\begin{defn}\label{d:markedDisk}
The moduli space of \textbf{$\alpha_{|T|}$-marked disks} is defined to be
$$\widetilde{\mathcal{M}}(\alpha_{|T|})=\widetilde{\mathcal{M}}(\alpha_{|T|},L;J)$$
which consists of four tuples $\underline{u}=(u,T,\alpha_{|T|},l_T)$
\begin{itemize}
\item a continuous map $u: \mathbb{D} \to M $ which is smooth on $\mathbb{D} \backslash T$,
\item $u(\partial \mathbb{D}) \in \overline{L}$,
\item $\partial_su(s,t)+J(u(s,t))(\partial_tu(s,t))=0$ for all $(s,t)\in \mathbb{D} \backslash T$,
\item the energy $E(u)=\frac{1}{2}\int_{\mathbb{D} \backslash T}|du|^2$ is finite
\item $l_T$ is a continuous map $l_{T}: \partial\mathbb{D} \backslash T \to L$, which is called a lift, such that
$\iota_{L} \circ l_{T} = u|_{\partial\mathbb{D}-T}$,
\item for the $j$-th element $t_j \in T$, we have
$\alpha_{|T|}(j)=(\lim_{s \to t_j^-} l_{T}(s),\lim_{s \to t_j^+} l_{T}(s))$ if $t_j \in T^+$
(resp. $\alpha_{|T|}(j)=(\lim_{s \to t_j^+} l_{T}(s),\lim_{s \to t_j^-} l_{T}(s))$ if $t_j \in T^-$), where $s \to t_j^-$ stands for $s$ approaching to $t_j^-$ along a counterclockwise direction, and similarly for $s \to t_j^+$.
\end{itemize}

\end{defn}

We do not address the regularity
for $\widetilde{\mathcal{M}}(\alpha_{|T|})$.
In any case, there is a $Aut(\mathbb{D})$-action on $\widetilde{\mathcal{M}}(\alpha_{|T|})$ and we define
$$\mathcal{M}(\alpha_{|T|})=\widetilde{\mathcal{M}}(\alpha_{|T|})/Aut(\mathbb{D}).$$

Its compactification $\overline{\mathcal{M}}(\alpha_{|T|})$ consists of stable bubble trees by standard Gromov compactness, where nodes all locate at $R_L$.  We will referto such marked disks and their compactifications as \textbf{fishtails} following \cite{Ab08}.

\begin{figure}[h]
\centering
\includegraphics[scale=1.5]{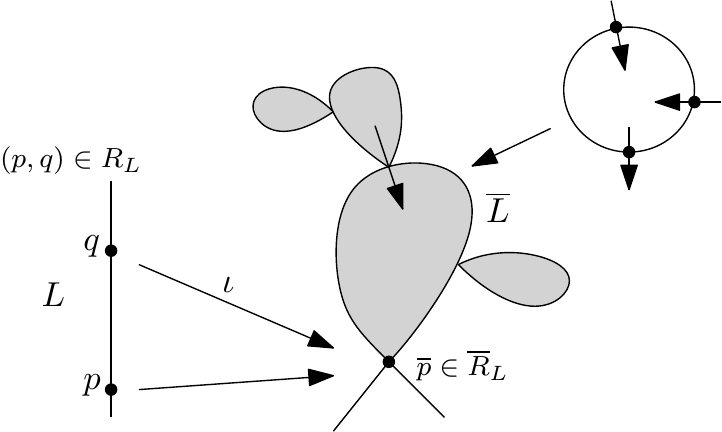}
\caption{A fishtail-type bubble which consists of a tree of fishtails.  The disk of main component is shown.}
\label{fig:fishtail}
\end{figure}

When fishtails appear as part of the compactification of $\cM(p_-,p_+)$, the strip decomposes accordingly, where the main component is described by the following \textbf{moduli space of marked strips}.

Given a set $T$ along with a partition $T=T_0 \cup T_1$, where $T_i \subset \R \times \{i\}$ for $i=0,1$, we call $T$ a set of {\bf branch jump marked points on strips}.  All such marked points are required to be \textit{outgoing} marked points.
The points of $T_0$ are ordered increasingly in $\R$-coordinate and the points of $T_1$ are ordered decreasingly.
The {\bf type} of $T$ is the pair of integers $\bt_T=(|T_0|,|T_1|)$.

For a fixed type $\bt$ of branched jumps we also associate to the marked strip two functions $\alpha_{i}: \{1,\dots,|T_i| \} \to R_i$ for $i=0,1$.
We define $\alpha=(\alpha_{0},\alpha_{1})$ similar to the disk case, which specifies the immersed points where the branch jumps occur.

We first define the moduli space of marked strips with fixed Lagrangian label.
For a one parameter family of compatible almost complex structure $\mathbf{J}=(J_t)_{t \in [0,1]}$ and two intersection points $p_{\pm} \in \iota_0(L_0)\cap\iota_1(L_1)$, we consider the following.

\begin{defn}\label{d:markedStrip}
The moduli space of $\alpha$-marked strip with fixed Lagrangian label from $p_-$ to $p_+$ is defined to be
$$\widetilde{\mathcal{M}}(p_{-},p_+,\alpha;\mathbf{J})$$
whose elements consist of four tuples $\underline{u}=(u,T,\alpha,l_T)$
\begin{itemize}
\item $T$ is a set of branch jump marked points with $\bt_T=\bt$ ($\bt$ is part of the data in $\alpha$).
\item a continuous map $u: \R \times [0,1] \to M $ which is smooth on $\R \times [0,1] \backslash T$,
\item $u(s,i) \in \iota_i(L_i)$, $i=0,1$,
\item $\lim_{t \to \pm \infty} u(s,t)=p_{\pm}$ uniformly in $t$,
\item $\partial_su(s,t)+J_t(u(s,t))\partial_tu(s,t)=0$ for all $(s,t)\in \R \times [0,1] \backslash T$,
\item the energy $E(u)=\frac{1}{2}\int_{\R \times [0,1] \backslash T}|du|^2$ is finite
\item $l_T=(l_{T,0},l_{T,1})$ is a pair of continuous maps $l_{T,i}: \R \times \{i\} \backslash T_i \to L_i$ for $i=0,1$, which are called lifts, such that
$\iota_{L_i} \circ l_{T,i} = u|_{\R \times \{i\} \backslash T_i}$,
\item for the $j$-th element $t_j \in T_0$ (resp. $t_j \in T_1$), we have
$\alpha_{0}(j)=(\lim_{s \to t_j^-} l_{T,0}(s),\lim_{s \to t_j^+} l_{T,0}(s))$
(resp. $\alpha_{1}(j)=(\lim_{s \to t_j^+} l_{T,1}(s),\lim_{s \to t_j^-} l_{T,1}(s))$)
\end{itemize}

\end{defn}

If every element is regular, this moduli space $\widetilde{\mathcal{M}}(p_{-},p_+,\alpha)$ is a smooth manifold with dimension $Ind(p_{-})-Ind(p_+)-\sum\limits_{j=1}^{|T_0|} Ind(\alpha_{0}(j))- \sum\limits_{j=1}^{|T_1|} Ind(\alpha_{1}(j))+|T|$.
There is an $\R$-action acting on  $\widetilde{\mathcal{M}}(p_{-},p_+,\alpha)$ by translation.
We define
$$ \mathcal{M}(p_{-},p_+,\alpha)=\widetilde{\mathcal{M}}(p_{-},p_+,\alpha)/ \R$$
which is a smooth manifold when $\widetilde{\mathcal{M}}(p_{-},p_+,\alpha)$ is.  When $T=\emptyset$ this is the usual moduli of holomorphic strips.

An element in the boundary of the compactification $\ov\cM(p_-,p_+)$ then consists of (an equivalence class of)
$(u_0,\dots,u_n,\overline{v^0_{1}},\dots,\overline{v^0_{k_0}},\dots,\overline{v^{n}_1},\dots,\overline{v^n_{k_n}})$
where $(p_0=p_-,p_1,\dots,p_n,p_{n+1}=p_+)$ is a sequence of intersection points in $\iota_0 \cap \iota_1$, $u_i\in\mathcal{M}(p_i,p_{i+1},\alpha^i)$ and $\bar v^i_j\in\overline{\mathcal{M}}(\alpha_{|T^i|})$ are fishtails rooted on $\partial u_i$.
When $Ind(p_-)-Ind(p_+)=2$ and under Assumption {\bf (A)}, \cite[Lemma 6.2]{AB14} shows that fishtails can be excluded.
This concludes that $d^2=0$.

\bigskip
\noindent\ul{\it\large Continuation maps and independence of auxiliary data.}\\

To prove the well-definedness of our Floer cohomology, we need to address the independence of auxiliary data.  To define continuation maps, we need a mild modification for $\alpha$-marked strips (Definition \ref{d:markedStrip}) to moving boundary conditions.

Suppose we have a Lagrangian immersion $\iota_1:L_1\rightarrow M$, and a family of Lagrangian immersion $\iota^s_0:L_0\rightarrow M$, so that $\iota^s_0=\phi^{\fH}_s\circ\iota_0$ for some Hamiltonian $\{\fH_s\}_{s\in[0,1]}$.  Assume that $\iota^{s}_{0}\pitchfork\iota_1$
for $s=0,1$ and $Im(\iota^0_0) \cap Im(\iota^1_0) \cap Im(\iota_1)=\emptyset$. These intersection conditions allow us to not use any Hamiltonian perturbation.
For convenience, we smoothly extend the family $\iota^s_0$ such that $\iota^s_0(L_0)=\iota^0_0(L_0)$ when $s\le0$ and $\iota^s_0(L_0)=\iota^1_0(L_0)$ when $s\ge1$.

For a smooth $((s,t)\in [0,1] \times [0,1])$-family of $\w_M \oplus \w_{\mathbb{C}}$-compatible almost complex structure $\mathbf{J}_s=(J_{s,t})$ such that $J_{s,t}=J_{0,t}$ for $s$ near $0$ and $J_{s,t}=J_{1,t}$ for $s$ near $1$, two intersection points $p_{-} \in \iota^1_0(L_0) \cap \iota_1(L_1)$ and $p_{+} \in \iota^0_0(L_0) \cap \iota_1(L_1)$, a type $\bt=(|T_1|,|T_2|)$ of a set of branch jump marked points on strips, an associated pair of functions
$\alpha=(\alpha_{0},\alpha_{1})$, we are interested in curves in the following moduli.

\begin{defn}\label{d:movMarkStrip}
The moduli space of $\alpha$-marked strip with moving Lagrangian label from $p_-$ to $p_+$ is defined to be
$$\widetilde{\mathcal{M}}^m(p_{-},p_+,\alpha)=\widetilde{\mathcal{M}}^m(p_{-},p_+,\alpha;\mathbf{J}_s)$$
which consists of four tuples $\underline{u}=(u,T,\alpha,l_T)$
\begin{itemize}
\item T is a set of branch jump marked points with $\bt_T=\bt$
\item a continuous map $u: \R \times [0,1] \to M$ which is smooth on $\R \times [0,1] \backslash T$,
\item $u(s,0) \in \iota^s_0(L_0)$, $u(s,1) \in \iota_1(L_1)$,
\item $\lim_{t \to \pm \infty} u(s,t)=p_{\pm}$ uniformly in $t$,
\item $\partial_su(s,t)+J_{s,t}(u(s,t))\partial_tu(s,t)=0$ for all $(s,t)\in \R \times [0,1] \backslash T$,
\item the energy $E(u)=\frac{1}{2}\int_{\R \times [0,1] \backslash T}|du|^2$ is finite
\item $l_T=(l_{T,0},l_{T,1})$ is a pair of continuous maps $l_{T,i}: \R \times \{i\} \backslash T_i \to L_i$ for $i=0,1$, which are called lifts, such that
$(\iota^s_0) \circ l_{T,0} = u|_{\R \times \{0\}-T_0}$ and $\iota_1 \circ l_{T,1} = u|_{\R \times \{1\} \backslash T_1}$,
\item for the $j$-th element $t_j \in T_0$ (resp. $t_j \in T_1$), we have
$\alpha_{0}(j)=(\lim_{s \to t_j^-} l_{T,0}(s),\lim_{s \to t_j^+} l_{T,0}(s))$
(resp. $\alpha_{1}(j)=(\lim_{s \to t_j^+} l_{T,1}(s),\lim_{s \to t_j^-} l_{T,1}(s))$)
\end{itemize}

\end{defn}

If every element is regular, this moduli space $\widetilde{\mathcal{M}}^m(p_{-},p_+,\alpha)$ is a smooth manifold with dimension $Ind(p_{-})-Ind(p_+)-\sum\limits_{j=1}^{|T_0|} Ind(\alpha_{0}(j))- \sum\limits_{j=1}^{|T_1|} Ind(\alpha_{1}(j))+|T|$.
There is no $\R$-action acting on  $\widetilde{\mathcal{M}}^m(p_{-},p_+,\alpha)$ by translation so
we define
$$ \mathcal{M}^m(p_{-},p_+,\alpha)=\widetilde{\mathcal{M}}^m(p_{-},p_+,\alpha)$$

As in the embedded case, we define continuation maps by considering a dimension $0$ moduli space $\cM^m(p_-,p_+)$ with type $\bt=\emptyset$ and prove such moduli problem gives a chain map between $CF(\iota_0^0,\iota_1)$ and $CF(\iota_0^1,\iota_1)$.  To this end, we examine the boundary of dimension $1$ moduli of the same type.
Again in the immersed case, the new issue is the possible existence of fishtails bubblings, and the rest of the boundary components contributes to the chain map.  With assumption {\bf(A)}, Alston-Bao \cite[Lemma 6.2]{AB14} again eliminated the fishtail bubblings similar to the proof of $d^2=0$.  This shows the continuation map defines a chain map between $CF(\iota_0^0,\iota_1)$ and $CF(\iota_0^1,\iota_1)$.

The last step is to show that such continuation maps give isomorphisms on cohomology.  This again follows from the original argument of Floer: we consider a family of continuation data, namely a $(r,s)$-parameter family of Lagrangian immersions $(\iota_0^{r,s})_{r,s \in [0,1]}$ (induced by $(r,s)$-parameter of Hamiltonian) interpolating $\iota_0^{0,s}$ and $\iota_0^{1,s}$ together with a $(r,s,t)$-parameter family $(J_{r,s,t})_{r,s,t \in [0,1]}$ interpolating $J_{0,s,t}$ and $J_{1,s,t}$, to construct a chain homotopy between the two continuation maps, by considering the boundary of a moduli space $\cM(p_-,p_+;\fJ_{r,s})$ defined analogously to Definition \ref{d:movMarkStrip}, where $ind(p_-)=ind(p_+)-1$.  We then again examine the boundary of a one dimensional moduli space of such shape, which gives the desired chain homotopy identity.  The key is again to use Assumption {\bf(A)} to exclude fishtail bubbles by a dimension count argument, and the rest of the proof is routine which we will not reproduce here.

\subsection{Immersed Lagrangian cobordism theory}\label{s:immersedCob}

\subsubsection{Immersed Lagrangian cobordisms and testing Lagrangians}\label{s:ILFreview}

In this subsubsection, we extend the Lagranian cobordism theory in \cite{BC13} to the immersed Lagrangian Floer setting (\cite{AB14}).
We assume $(M,\w)$ is exact, $2c_1(M)=0$ and any compact oriented exact Lagrangian immersion is equipped with integer grading.

We first introduce the notion of immersed cobordisms.

\bdf\label{d:conical}  Let $\iota:L\rightarrow (M,\w)$ be an immersion with clean self-intersection.  A \textbf{pinched Lagrangian associated to $\iota$} is a Lagrangian immersion with clean self-intersection $\wt\iota: \wt L \rightarrow M\times\C$ so that

\begin{itemize}
\item there is an open embedding $I_L:L \times \R \to \wt L$ so that its closure is also embedded.  We then identify $L \times \R$ as a subset of $ \wt L$,
\item for some $t_0\in\R$ and $z_0\in\C$, $\wt\iota|_{\wt L \backslash (L \times \{t_0\})}$ is an embedding, and $\wt\iota (x,t_0)=(\iota(x),z_0)$ for all $x \in L$,
\item there is a neighborhood $U$ of $z_0$ such that $ (\pi_{\mathbb{C}} \circ \wt\iota)^{-1}(U) \subset L \times \R$ and
the projection $Im(\pi_{\mathbb{C}}\circ\wt\iota) \cap U$ is a double cone region (pinched region) bounded by two straight line segments on $\C$.
\end{itemize}

\noindent In this case, $\wt\iota(L\times\{t_0\})$ is called the {\bf Lagrangian bottleneck}, $z_0$ is called the { \bf bottleneck on $\mathbb{C}$}
and $M \times \{z_0\}$ is called the {\bf bottleneck}.
When it is clear from the context, we use bottleneck to refer to $\wt\iota(L\times\{t_0\})$, $z_0$ or $M \times \{z_0\}$.
\edf

Most cases we will use pinched Lagrangians with domain $\wt L =L \times \R$, but later on we need the full flexibility of pinched Lagrangians (see Lemma \ref{l:CpnCpnImmersedSphere}).
We remark that if $\iota$ is an embedding and $\gamma : \R \to \mathbb{C}$ is a simple smooth curve,
$\iota \times \gamma$ is also considered a pinched Lagrangian because we allow the two straight lines on $\C$ forming the ``double cone" region to be the same line.

\begin{figure}[h]
\centering
\includegraphics[scale=1]{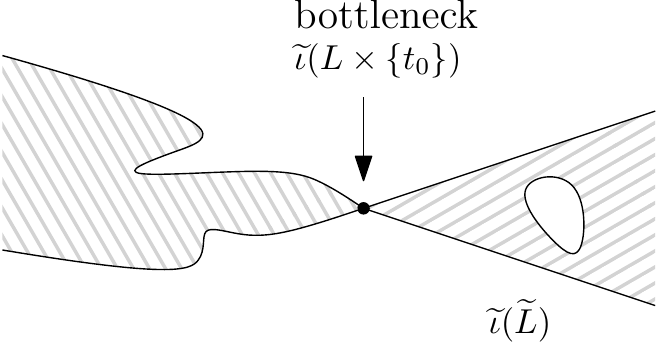}
\caption{A pinched Lagrangian.}
\label{fig:conical}
\end{figure}

\begin{figure}[h]
\centering
\includegraphics[scale=1.5]{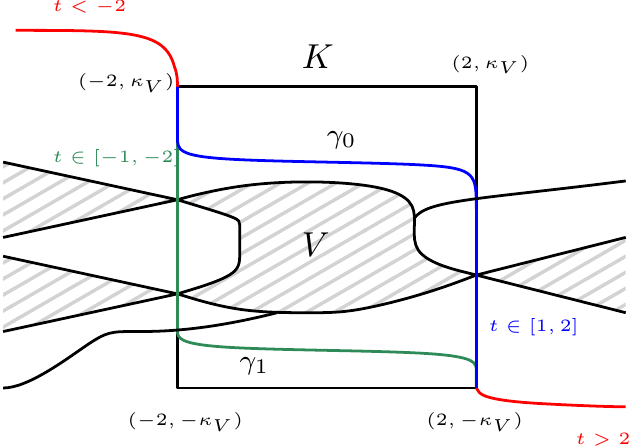}
\caption{A picture of immersed cobordism and horizontal isotopy. $\gamma_{0}$ consists of the red line and the blue line, while $\gamma_{1}$ consists of the red line and the green line. They illustrate the ends of an admissible
 horizontal isotopy from $\iota_{N,0}$ to $\iota_{N,1}$.}
\label{fig:testingCurve}
\end{figure}

\begin{defn}\label{d:ImmCobwBottleneck}
For Lagrangian immersions $\iota_{L_i}: L_i \to (M,\omega)$ and
$\iota_{L_j'}: L_j' \to (M,\omega)$,
a Lagrangian immersion $\iota_V: V \to (M \times \mathbb{C},\w_M \oplus \w_{\mathbb{C}})$ is an {\bf immersed Lagrangian cobordism with bottlenecks} from $(\iota_{L_1},\dots,\iota_{L_k})$ to $(\iota_{L_{k'}'},\dots,\iota_{L_1'})$
if there is $K'=[-1,1]\times (-\kappa_V,\kappa_V) \subset \mathbb{C}$ such that $\iota_V|_{V \backslash \iota_V^{-1}(M \times K')}$ is a disjoint union of pinched Lagrangians
    $$\wt\iota_i: \wt L_i=L_i\times(-\infty,-1)\rightarrow M\times\C,$$
     $$\wt\iota_j': \wt L'_j=L_j'\times(1,\infty)  \rightarrow M\times\C,$$

We assume the Lagrangian bottleneck of $\wt\iota_i$ (resp. $\wt\iota_j'$) being $\wt\iota_{i} (L_i \times \{-2\})$
(resp. $\wt\iota_{j}' (L_j' \times \{2\})$), whose bottleneck on $\mathbb{C}$ being $z_i$ (resp. $z_j'$).
 We also require $Re(z_i)=-2$ (resp. $Re(z_j')=2$) and $Im(z_{i_1}) < Im(z_{i_2})$ for $i_1 < i_2$ (resp. $Im(z_{j_1}') < Im(z_{j_2}')$ for $j_1 < j_2$).
 Finally, we require $Im(\wt\iota_i) \cup Im(\wt\iota_j') \subset \R \times (-\kappa_V,\kappa_V)$.
 We call $\wt\iota_i$ the \textbf{negative ends} and $\wt\iota_j'$ the \textbf{positive ends},
and $K=(-2,2)\times (-\kappa_V,\kappa_V)$ is called the {\bf body}.

\end{defn}

\begin{defn}\label{d:testingCurve}
A {\bf testing curve} $\gamma:\R \to \mathbb{C}$ is a smoothly embedded curve satisfying the following conditions:

 \begin{enumerate}[(1)]
 \item The projections to axes $\pi_x\circ\gamma(t)$ is non-decreasing and $\pi_y\circ\gamma(t)$ is non-increasing

 \item
$$ \pi_x\circ\gamma(t)=  \left\{
\begin{aligned}[l]
-2 &\text{ when } t\in[-2,-1],\\
2 & \text{ when } t\in[1,2].   \\
          \end{aligned} \right. $$
 \end{enumerate}
 and is strictly increasing otherwise.
$\pi_y\circ\gamma(t)$ is strictly decreasing when $t\in[-2,-1]$ or $[1,2]$, and is constant when $|t|>R$ for some $R\in\R$.

A \textbf{testing Lagrangian} is a product of Lagrangian immersion $\iota_N:N\rightarrow M$ with some testing curve $\gamma$, denoted as $\iota_{N,\gamma}:N\times\R\rightarrow M\times\C$.

\end{defn}

\bdf\label{d:horizontalIsot}
Let $\{ \gamma_s \}$ be a Hamiltonian isotopy (not necessarily compactly supported) from $\gamma_0$ to $\gamma_1$ within the class of testing curves in $\C$, requiring the image of $\gamma_s$ overlaps outside a sufficiently large compact set for all $s\in[0,1]$.  Then the product of such a family with $\iota_N:N\rightarrow M$, denoted as $\iota_{N,s}:N\times\gamma_s\rightarrow M\times\C$, is called a \textbf{horizontal isotopy}.

\edf

We alert the readers that the definition of horizontal isotopy here is different from the one in \cite{BC13} although they are similar in spirit.

\begin{defn}\label{d:admissibleLagrangian}
Let $\iota_V$ be an exact immersed Lagrangian cobordism with bottlenecks from $(\iota_{L_1},\dots,\iota_{L_k})$ to $(\iota_{L_{k'}'},\dots,\iota_{L_{1}'})$ with body $K=(-2,2) \times (-\kappa_V,\kappa_V)$.
An {\bf admissible testing curve} with respect to $\iota_V$ is a testing curve $\gamma$ such that the following holds:

\begin{itemize}
\item $\gamma$ passes through $(-2,\kappa_V)$ and $(2,-\kappa_V)$,
\item if $\gamma(t_0)$ lies on a bottleneck of $\iota_V$ on $\mathbb{C}$, then $t_0\in(-2,-1)$ or $t_0\in(1,2)$ (equivalently, $\gamma'(t)$ is pointing downwards).
\end{itemize}

For any exact Lagrangian immersion with clean self-intersection $\iota_N: N \to M$, if $\iota_{N,\gamma}=\iota_N \times \gamma$ intersects $\iota_{V}$ transversally at smooth points, then $\iota_{N,\gamma}$
is called an {\bf admissible testing Lagrangian}.

Lastly, an \textbf{admissible horizontal isotopy} $\iota_{N,s}:N\times\gamma_s\rightarrow M\times\C$ is a horizontal isotopy such that $\iota_{N,0}$ and $\iota_{N,1}$ are both admissible testing Lagrangians, which satisfies $Im(\iota_{N,0})\cap Im(\iota_{N,1})\cap \ov V=\emptyset$.

\end{defn}

\subsection*{Main Statement}

Let $\iota_{V}$ be a graded immersed Lagrangian cobordism with bottlenecks in $M \times \mathbb{C}$ and $K=(-2,2)\times (-\kappa_{V},\kappa_{V})$ be its body.
Let $\iota_{N,\gamma}=\iota_N \times \gamma$ be an admissible testing Lagrangian.

A {\bf semi-admissible almost complex structure} of $M \times \mathbb{C}$ is an
$\omega_M \oplus \omega_{\mathbb{C}}$-compatible almost complex structure $J$ such that it
is product-like ($J=J_{M} \oplus J_{\mathbb{C}}$) outside $M \times K$.
A semi-admissible $J$ is {\bf admissible} if there is an open set $K'$ with $Cl(K') \subset K$
such that $J$ is product-like outside $M \times K'$ (note that this \textit{includes a neighborhood of} $\partial K$).
An admissible (resp. semi-admissible) {\bf Floer data} $\mathbf{J}=\{J_t\}_{t \in [0,1]}$
is a choice of a smooth family of admissible (resp. semi-admissible) almost complex structure.
We remark on the various admissibility conditions.
The admissibility of the testing Lagrangian and horizontal isotopy implies that they are in a generic position without perturbation,
hence we do not include a Hamiltonian in the Floer data because we focus on the
Floer cohomology between $\iota_{N\times\gamma}$ and $\iota_{V}$ in this paper.
Admissibility of $\mathbf{J}$ is convenient for actual computations (see Theorem \ref{t:ImmersedFloerLES}),
while the semi-admissible Floer data is more flexible on the technical level.

The main result of this subsection is the following.

\begin{thm}\label{t:ImmersedFloerAd}
Let $\iota_{V}$ be an exact immersed Lagrangian cobordism with bottlenecks from $(\iota_{L_1},\dots,\iota_{L_k})$ to $(\iota_{L_{k'}'},\dots,\iota_{L_1'})$ such that all $\iota_{L_j}$, $\iota_{L_j'}$ satisfy the Assumption {\bf (A)}.
Let $\iota_{N,\gamma}$ be an admissible testing Lagrangian such that $\iota_N$ satisfies the Assumption {\bf (A)}.
Then for generic choice of admissible Floer data $\mathbf{J}$, the immersed Lagrangian Floer cohomology
$HF(\iota_{N,\gamma},\iota_{V})=H(CF(\iota_{N,\gamma},\iota_{V},\mathbf{J}),\partial)$
is well defined, independent of the choice of regular $\mathbf{J}$ and is invariant under admissible horizontal isotopy of $\iota_{N,\gamma}$.
\end{thm}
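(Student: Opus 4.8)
The plan is to follow the Biran--Cornea template for cobordism-induced Floer invariance, adapted to the immersed setting via the bottleneck trick, with the three usual ingredients: (i) energy/compactness control, (ii) exclusion of fishtail bubbles, (iii) a gluing/cobordism argument establishing invariance. First I would set up the moduli spaces: since $\iota_{N,\gamma}$ and $\iota_V$ are assumed to intersect transversally at smooth points and both satisfy Assumption \textbf{(A)}, the complex $CF(\iota_{N,\gamma},\iota_V,\mathbf{J})$ is generated by these intersection points and $\partial$ counts immersed Floer strips with no branch jumps on the boundary, exactly as in Section \ref{s:immersedFloer}. The key point that makes everything run is that, because $M$ is exact and $\iota_V$, $\iota_{N,\gamma}$ are exact Lagrangian immersions, there is a well-defined action functional whose differences control energy; combined with $J$ being product-like outside $M\times K'$ and the maximum principle in the $\mathbb{C}$-direction, one gets a priori $C^0$-bounds confining all relevant strips to $M\times K$ (using that $\overline{K'}\subset K$), so Gromov compactness applies.

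The heart of the argument is $\partial^2=0$ together with the exclusion of boundary-breaking anomalies coming from fishtails. For a one-dimensional moduli space $\mathcal{M}(p_-,p_+)$ with $\mathrm{Ind}(p_-)-\mathrm{Ind}(p_+)=2$, its compactification can in principle contain, besides the usual two-level strip breakings, configurations with a fishtail bubble $\bar v\in\overline{\mathcal{M}}(\alpha_{|T|})$ attached along $\partial u$ at a point of $\overline{R}_{L_j}$, $\overline{R}_{L_j'}$, or $\overline{R}_N$. Here I would invoke \cite[Lemma 6.2]{AB14} verbatim: Assumption \textbf{(A)} forces $E(\bar v)>0\Rightarrow \mathrm{Ind}(\alpha)\ge 3$, so the main strip component would have negative virtual dimension and such configurations do not occur in generic one-dimensional families. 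This gives $\partial^2=0$. The mild subtlety, which I would address carefully, is that the relevant immersed loci now include the self-intersections of $\iota_V$ itself (along the body $K$) and of the product $\iota_{N,\gamma}$; but $\iota_V$ is by hypothesis a \emph{graded} immersed cobordism with clean self-intersection satisfying \textbf{(A)}, and the grading/energy bookkeeping for the self-intersections of $\iota_V$ is controlled by those of the ends $\iota_{L_j},\iota_{L_j'}$ plus the (index-$1$) contribution of the $\mathbb{C}$-factor, so \textbf{(A)} propagates to $\iota_V$. Near the bottlenecks $M\times\{z_i\}$, $M\times\{z_j'\}$ the double-cone shape of $\pi_{\mathbb{C}}(V)$ together with the requirement that admissible testing curves cross bottlenecks only with $\gamma'$ pointing downward is precisely what rules out strips sneaking through the neck, by a standard open-mapping/winding argument on the projection $\pi_{\mathbb{C}}\circ u$.

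For independence of the admissible Floer data $\mathbf{J}$, I would run the standard continuation argument in the immersed category as already sketched in Section \ref{s:immersedFloer}: given two regular choices $\mathbf{J}^0,\mathbf{J}^1$, interpolate through an $s$-family of semi-admissible structures, define a continuation chain map by counting index-$0$ moduli of marked strips of empty type with moving almost complex structure, verify it is a chain map by examining index-$1$ moduli (again excluding fishtails by \textbf{(A)}), and build the chain homotopy from a two-parameter family. Using semi-admissible rather than admissible $J$'s in the interpolation gives the flexibility to connect any two admissible data. For invariance under an admissible horizontal isotopy $\iota_{N,s}:N\times\gamma_s\to M\times\mathbb{C}$, the disjointness condition $\mathrm{Im}(\iota_{N,0})\cap\mathrm{Im}(\iota_{N,1})\cap\overline V=\emptyset$ means no Hamiltonian perturbation is needed, and one sets up exactly the moving-boundary continuation maps of Definition \ref{d:movMarkStrip} with $L_0$ replaced by $N\times\gamma_s$ and $L_1$ by $V$; the same compactness (from exactness plus the maximum principle, noting testing curves are horizontal outside a compact set so energy stays finite) and the same fishtail exclusion yield an isomorphism on cohomology.

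The main obstacle I anticipate is \emph{compactness near the bottleneck and at infinity simultaneously}: unlike \cite{BC13} where the ends are honest products and one has genuine horizontal translation invariance, here the pinched ends are only asymptotically product-like, and worse, as emphasized in the introduction, most of the relevant holomorphic curves cannot be projected to $\mathbb{C}$ in any useful way (the projection need not be holomorphic for non-product $J$ over $K$). So the delicate part is to show that a sequence of strips cannot develop energy concentration at the Lagrangian bottleneck $\wt\iota(L\times\{t_0\})$ or escape along a non-compact end. I would handle this by a two-step argument: first confine curves to $M\times(\text{bounded region})$ using the action bound (exactness makes all periods vanish, so energy equals an action difference bounded by the finitely many generators), and second, near each bottleneck, exploit that $J$ \emph{is} product-like there (the bottleneck on $\mathbb{C}$ sits outside $K'$) so $\pi_{\mathbb{C}}\circ u$ is holomorphic locally and the double-cone geometry forces, by the maximum principle and the downward-pointing constraint on admissible $\gamma$, that no bubbling occurs at the neck — this is the precise role the bottleneck trick plays. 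Granting these two inputs, the rest is the routine transversality-and-gluing package, which I would only indicate rather than carry out.
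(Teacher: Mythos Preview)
Your overall architecture matches the paper's: confine curves via the bottleneck geometry, propagate Assumption \textbf{(A)} to $\iota_V$ and $\iota_{N,\gamma}$, exclude fishtails by index, and pass between admissible and semi-admissible data for invariance. However, there is a genuine gap in your treatment of \emph{regularity}, which you dismiss as ``routine transversality-and-gluing.'' It is not routine here, and it is in fact the technical heart of the argument.

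The issue is that admissible (and even semi-admissible) $\mathbf{J}$ is \emph{constrained} to be product-like outside $M\times K$ (in particular at the bottlenecks). Consequently, Floer strips lying entirely on a bottleneck $M\times\{z_i\}$ cannot be made regular by a generic choice within the admissible class in the usual way---there is no freedom to perturb $J$ there. The paper resolves this by showing (Lemma~\ref{l:operatorSplits}) that for such a strip $u$ the linearized operator $D_u$ splits as $D_u^v\oplus D_u^h$, where $D_u^v$ is the linearization of $u$ viewed as a strip in the fiber $(M,J^v)$ (regular for generic fiberwise $J^v$), and $D_u^h$ is the linearization of a constant map in $\mathbb{C}$ with a \emph{moving} boundary condition determined by the horizontal tangent $T_1^h$ of $V$; this has index zero and is surjective by automatic regularity. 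This splitting is what makes strips without branch jumps on the bottleneck regular (Lemma~\ref{l:indexUnchange}). Your proposal contains no analogue of this step.

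A second, related omission concerns the \emph{admissible} case specifically: since $J$ is product-like in a full neighborhood of $\partial K$, you need to show that any strip intersecting $M\times K$ actually reaches $M\times K'$ where you have genericity. The paper does this via a reflection-principle argument (Lemma~\ref{l:reflection}): if both asymptotes lie on the same bottleneck then $\pi_{\mathbb{C}}\circ u$ is constant, so a strip touching $K$ must have asymptotes on different bottlenecks and hence cross $K'$. You invoke open-mapping/projection reasoning only for compactness, not for this regularity reduction.

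Finally, your fishtail exclusion by citing \cite{AB14} verbatim is incomplete: in the compactification one may encounter a marked strip component $u_0$ lying on a bottleneck that is \emph{non-regular} in $M\times\mathbb{C}$ (again because $J$ is constrained). The paper's Lemma~\ref{l:nonregularControl} handles this by viewing $u_0$ as a regular strip in the fiber $M$ and running the index estimate there (via Lemma~\ref{l:indexUnchange}) to conclude the total index is at least $3$. Without this, your dimension count does not close.
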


The actual proof is carried out by passing to \textit{semi-admissible} Floer data: admissible data will be crucial for comparing the Floer cohomology between cobordisms with those on the bottlenecks, it does not provide enough genericity on a technical level.

\begin{thm}\label{t:ImmersedFloer}
Under the assumption of Theorem \ref{t:ImmersedFloerAd},
the conclusion of Theorem \ref{t:ImmersedFloerAd} holds for generic choice of semi-admissible Floer data $\mathbf{J}$.
\end{thm}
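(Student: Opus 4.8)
The plan is to follow the scheme of Biran--Cornea \cite{BC13} for embedded Lagrangian cobordisms, feeding in the immersed Floer package of Section \ref{s:immersedFloer} (after \cite{AB14}) in place of the embedded one, and replacing the usual cylindrical-end confinement by the bottleneck trick of \cite{BC2}. The first step is cheap because everything is exact: for any Floer strip $u$ with boundary on $\iota_{N,\gamma}$ and $\iota_V$ the energy $E(u)$ equals the difference of the well-defined action values at its two asymptotic intersection points, so there is a uniform energy bound depending only on the finitely many generators of $CF(\iota_{N,\gamma},\iota_V)$. The same holds for the moving-boundary moduli $\mathcal{M}^m$ of Definition \ref{d:movMarkStrip} used for continuation maps, the extra action contribution of the (possibly non-compactly supported) Hamiltonian on $\C$ generating an admissible horizontal isotopy being again bounded. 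Together with Assumption {\bf (A)} and the grading hypothesis, these are all the inputs needed to control bubbling in index $\le 2$.

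The heart of the matter, and the step I expect to be the main obstacle, is $C^0$-confinement: every such strip, together with every fishtail that can bubble off it, is contained in $M\times K''$ for a fixed compact $K''\supset K$. Outside $M\times K$ a semi-admissible $\mathbf{J}$ is split, so $v:=\pi_\C\circ u$ is $J_\C$-holomorphic with boundary on the testing curve $\gamma$ and on the projected ends $\pi_\C\circ\wt\iota_i$, $\pi_\C\circ\wt\iota_j'$. By Definition \ref{d:conical} each projected end is, near its bottleneck $z_i$ (resp. $z_j'$), a double-cone region with vertex at $z_i$, and by Definition \ref{d:admissibleLagrangian} the testing curve meets the vertical line through a bottleneck only while pointing strictly downward. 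An application of the open mapping principle to $v$, combined with this downward-corner geometry at the vertex, shows that $v$ cannot cross a bottleneck line into an end: were it to do so, a winding/maximum-principle argument at the vertex would force $v$ locally constant there, and then unique continuation would make $u$ constant in the $\C$-direction over the whole corresponding end, contradicting that $u$ is asymptotic to generators. This is precisely the mechanism for which the pinched model of Definition \ref{d:conical} was introduced in place of genuine cylindrical ends. The one new feature over \cite{BC2} is that a fishtail of the type in Definitions \ref{d:markedDisk}--\ref{d:markedStrip} may bubble off along an immersed end $L_i$; but such a bubble is rooted at a point whose $\C$-coordinate is one of the (locally constant) values of $v$ over that end, so its projection is a fishtail for $L_i$ inside $M$ and is confined for the same reason.

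Granting confinement, the remaining steps are routine adaptations of \cite{AB14} and \cite{BC13}. For transversality one observes that a non-constant confined strip must enter the region $M\times K$ where a semi-admissible $\mathbf{J}$ is unconstrained (this uses the geometry of admissible testing curves relative to the body), and there it is somewhere injective by unique continuation; hence for generic semi-admissible $\mathbf{J}$ all moduli of the relevant index are cut out transversally by Sard--Smale. No Hamiltonian term is needed, admissibility of $\iota_{N,\gamma}$ already placing the Lagrangians in generic position. For $d^2=0$ one compactifies the $1$-dimensional strip moduli: the boundary consists of two-level broken strips plus fishtail breakings; disc and sphere bubbles are excluded by exactness, and the fishtails are excluded by Assumption {\bf (A)} exactly as in \cite[Lemma 6.2]{AB14}, the relevant index difference being low enough. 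This produces a well-defined complex $CF(\iota_{N,\gamma},\iota_V)$.

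Finally, independence of the semi-admissible Floer data is proved by building continuation maps from the moving moduli $\mathcal{M}^m$: confinement holds verbatim (the almost complex structure is still split near the bottlenecks), the chain map property follows from the same boundary analysis as $d^2=0$, and the quasi-isomorphism property follows from the standard two-parameter chain-homotopy argument, fishtails being killed by {\bf (A)} at each stage. Invariance under an admissible horizontal isotopy $\iota_{N,s}$ is identical, the parameter $s$ now also moving the testing curve; confinement is uniform in $s$ because every $\gamma_s$ is a testing curve crossing bottleneck lines only downward, and the hypothesis $Im(\iota_{N,0})\cap Im(\iota_{N,1})\cap\ov V=\emptyset$ ensures the moving-boundary moduli are defined without auxiliary perturbation. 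This establishes Theorem \ref{t:ImmersedFloer}; the passage from semi-admissible to admissible Floer data claimed in Theorem \ref{t:ImmersedFloerAd} then follows from one further continuation argument interpolating between the two classes inside $M\times K$.
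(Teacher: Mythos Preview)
Your overall scheme matches the paper's, but there is a genuine gap in the transversality step. You claim that ``a non-constant confined strip must enter the region $M\times K$ where a semi-admissible $\mathbf{J}$ is unconstrained.'' This is false: the confinement argument (Lemma~\ref{l:compactness}) shows that a curve of interest either meets $M\times K$ \emph{or} lies entirely on a bottleneck $M\times\{z_i\}$. Strips of the second kind certainly exist---they are exactly the Floer strips of $(\iota_N,\iota_{L_i})$ in the fiber---and for these your somewhere-injectivity argument fails, since $\mathbf{J}$ is forced to be product-like on the bottleneck and you have no freedom to perturb in the mixed $M$--$\C$ directions. The paper handles this case separately: it shows (Lemma~\ref{l:operatorSplits}) that for such a strip the linearized operator $D_u$ splits as $D_u^v\oplus D_u^h$, where $D_u^v$ is the Floer operator for the strip viewed in $(M,J^v)$ (regular by genericity of $J^v$), and $D_u^h$ is the linearization of a constant map to $\C$ with \emph{moving} boundary condition coming from the two branches of the pinched end, which has index $0$ and is surjective by automatic regularity. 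This is the key step you are missing.

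There is a related issue in your fishtail exclusion. You invoke \cite[Lemma~6.2]{AB14} directly, but that argument presupposes that all marked strips with branch jumps in the relevant index range are regular, which would give the requisite dimension bound. Here, marked strips with branch jumps that lie on a bottleneck are \emph{not} in general regular for semi-admissible $\mathbf{J}$ (the paper explicitly says so after Lemma~\ref{l:regularity}). The paper circumvents this in Lemma~\ref{l:nonregularControl}: a non-regular marked strip on a bottleneck is nonetheless regular when viewed as a curve in $(M,J^v)$, and one computes from this fiberwise regularity, together with Assumption~{\bf (A)} for $\iota_{L_i}$ and the index identity of Lemma~\ref{l:indexUnchange}, that any stable curve containing a fishtail has index at least $3$. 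This is strictly more than the direct appeal to \cite{AB14} that you make.
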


From now on, a Floer data is a semi-admissible Floer data and any (family of) compatible almost complex structure
is taken in the space of semi-admissible almost complex structure, unless specified otherwise.\\

\noindent{\bf Note on a convention:} For ease of notations, we sometimes use $V_0$ in place of $N\times\gamma$,
$V_{0,s}$ in place of $N \times \gamma_s$ and $V_1$ in place of $V$ in the rest of the section.
\bigbreak

The well-definedness involves several steps. The first of which is to address the regularity of all rigid Floer strips. Secondly, we have to address the compactness of the moduli spaces of rigid Floer strips. These two steps guarantee that the differential $\partial$ is a well-defined map. Thirdly, we have to address the regularity as well as the Gromov compactification of one dimensional moduli of Floer strips to prove $\partial \circ \partial=0$. The independence of $\mathbf{J}$ uses the standard cobordism argument.
Finally, the invariance under admissible horizontal isotopy results from the regularity and Gromov compactification consideration of Floer strips with moving boundary condition.

We highlight several ingredients that are needed in our setting.
Since we require $\mathbf{J}$ to be product-like at the bottlenecks, there can be non-generic pseudo-holomorpic polygons on bottlenecks no matter how generic $\mathbf{J}$ is.
However we show that Floer strips are regular, which essential follows from a similar consideration in \cite{BC2}.
For compactness, we need the bottlenecks and the treatment in \cite{BC2} to argue that the curves we interested are inside a fixed compact set.
In the meanwhile, since our Lagrangians are immersed, bubbling at immersed points could possibly appear in the Gromov compactification in a priori.
If it happened, the argument would break down, say the differential might not square to zero.
This will be eliminated by the Assumption {\bf (A)} and an index computation modified from \cite{AB14}.

\subsubsection{Curves of interests and their compactness}\label{s:compactness}

Following the general theme of immersed Lagrangian Floer theory, what we need to examine is the compactness and regularity of following moduli spaces of holomorphic curves as well as excluding the fishtails bubbling.

\begin{defn}\label{d:curveInt} We call curves in the following moduli spaces the \textbf{curves of interests}:
\begin{itemize}
\item $\widetilde{\mathcal{M}}(p_{-},p_+,\alpha,\mathbf{J})$, $p_\pm\in \iota_{V_0}\cap\iota_{V_1}$,
\item    $\widetilde{\mathcal{M}}^m(p_{-},p_+,\alpha,\mathbf{J}_s)$, $p_- \in \iota_{N,1}\cap \iota_V$, $p_+ \in \iota_{N,0}\cap\iota_V$ for an admissible horizontal isotopy $\iota_{N,s}$,
\item $\widetilde{\mathcal{M}}(\alpha_{|T|},V_0; J)$ and $\widetilde{\mathcal{M}}(\alpha_{|T|},V_1; J)$, which appears in the boundary of the above two moduli spaces.
\end{itemize}
\end{defn}

The difficulty of compactness lies in that, immersed Lagrangian cobordisms with bottlenecks are non-compact Lagrangian submanifolds of $M\times\C$ with \textit{non-cylindrical ends} thus the usual compactness theorems of holomorphic curves does not apply.  Instead, we follow the idea of Biran and Cornea of projections to the $\C$-factor.  We first recall the following Proposition in \cite{BC2}, which is an application of open mapping theorem.

\begin{prop}[Proposition 3.3.1 of \cite{BC2}]\label{p: open mapping theorem}
Let ${\Sigma}$ be a connected Riemann surface (possibly with boundary) and ${u}: {\Sigma} \to \mathbb{C}$ be a holomorphic map.
If  ${u}({\Sigma}) \cap U \neq \emptyset$ for some connected open subset $U$ of $\mathbb{C}$ such that
\begin{itemize}
\item ${u}(\partial {\Sigma}) \cap U =\emptyset$, and
\item $ (\overline{{u}({\Sigma})} - {u}({\Sigma})) \cap U =\emptyset$
\end{itemize}
then $U \subset {u}({\Sigma})$.
\end{prop}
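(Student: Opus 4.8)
The statement to prove is Proposition \ref{p: open mapping theorem} (quoted from \cite{BC2}), an application of the open mapping theorem: if $u:\Sigma\to\C$ is holomorphic on a connected Riemann surface, and $U\subset\C$ is a connected open set met by $u(\Sigma)$ but disjoint from both $u(\partial\Sigma)$ and the ``added boundary'' $\overline{u(\Sigma)}\setminus u(\Sigma)$, then $U\subseteq u(\Sigma)$.

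The plan is to run a clopen-ness argument on $U$. First I would set $W = u(\Sigma)\cap U$, which is nonempty by hypothesis, and I want to show $W = U$; since $U$ is connected it suffices to show $W$ is both open and closed in $U$. Openness is immediate from the open mapping theorem for holomorphic maps: any point of $W$ is $u(z)$ for some interior point $z\in\Sigma\setminus\partial\Sigma$ (it cannot be the image of a boundary point since $u(\partial\Sigma)\cap U=\emptyset$); if $u$ is nonconstant near $z$ it is an open map there, so a neighborhood of $u(z)$ lies in $u(\Sigma)$, hence in $W$ after intersecting with $U$; and if $u$ is locally constant it is globally constant on $\Sigma$ by the identity theorem, in which case $u(\Sigma)$ is a single point and the inclusion $U\subseteq u(\Sigma)$ is trivial unless $U$ is that point — so we may assume $u$ is nonconstant.

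The main work is closedness of $W$ in $U$. Take a sequence $w_k = u(z_k)\in W$ with $w_k\to w_\infty\in U$. The point $w_\infty$ lies in $\overline{u(\Sigma)}$. If $w_\infty\notin u(\Sigma)$ then $w_\infty\in\overline{u(\Sigma)}\setminus u(\Sigma)$, contradicting the hypothesis that this set is disjoint from $U$. Hence $w_\infty\in u(\Sigma)$; moreover $w_\infty\in U$, so $w_\infty\in W$, proving $W$ is closed in $U$. I would phrase the compactness/limit step carefully so as not to claim convergence of $z_k$ in $\Sigma$ (which may be false) — the argument only needs that $w_\infty$ is a limit point of $u(\Sigma)$, i.e. lies in the closure. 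Combining openness and closedness with connectedness of $U$ gives $W=U$, i.e. $U\subseteq u(\Sigma)$.

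The only subtle point — and the one I would be most careful about — is the case distinction at a point where $u$ might be locally constant, and making sure the phrase ``$\overline{u(\Sigma)}-u(\Sigma)$'' is interpreted as the set-theoretic difference of the closure (taken in $\C$) and the image, so that the closedness step is clean; once those are pinned down the proof is a short topological argument and there is no serious obstacle. I would also remark that the hypotheses are exactly tailored so that neither the boundary values nor the ends of the (possibly noncompact) surface $\Sigma$ can obstruct the conclusion, which is why in later applications one arranges the $\C$-projections of the cobordism and testing Lagrangian to have their ``corners'' and asymptotic ends outside the relevant region $U$.
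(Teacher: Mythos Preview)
Your proof is correct and is the standard clopen argument one would expect; the paper itself does not supply a proof of this proposition but simply cites it from \cite{BC2}, so there is nothing to compare against beyond noting that your argument is the natural one.

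One small wrinkle worth tightening: your handling of the constant case is slightly off. If $u$ is constant with value $c$ and $\partial\Sigma=\emptyset$, then all hypotheses are satisfied with any open $U\ni c$, yet the conclusion $U\subset\{c\}$ fails. So the proposition as literally stated requires $u$ to be nonconstant (or $\partial\Sigma\neq\emptyset$, which forces nonconstancy via the first bullet). In all applications in the paper the relevant maps are nonconstant, so this is a harmless omission in the statement rather than a flaw in your argument; just state up front that $u$ may be assumed nonconstant, since otherwise either the hypotheses are vacuous or the conclusion is trivially about a single point meeting the boundary condition.
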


The bottleneck feature of $\overline V$ has the following consequence.

\begin{lemma}\label{l:compactness}
For any curve of interest $u:\Sigma \to M \times \mathbb{C}$, we have $Im(u) \subset M \times Cl(K)$.  Moreover, one of the following is true:
\begin{itemize}
\item $Im(\pi_\C\circ u)\cap K\neq\emptyset$.
\item $Im(\pi_\C\circ u)$ is a point of bottleneck.
\end{itemize}
\end{lemma}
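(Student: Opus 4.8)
The plan is to exploit the double-cone (bottleneck) geometry of $\overline V$ near each $z_i$ and $z_j'$ together with Biran--Cornea's open mapping argument (Proposition \ref{p: open mapping theorem}) applied to $\pi_{\C}\circ u$. First I would record the basic shape of $\overline V$: outside $M\times K'$ for $K'=[-1,1]\times(-\kappa_V,\kappa_V)$ the cobordism is a union of pinched Lagrangians $\wt\iota_i,\wt\iota_j'$, whose $\C$-projections are, near each bottleneck $z_i$ (resp. $z_j'$), a double cone bounded by two segments pointing into $\{\Re<-2\}$ (resp. $\{\Re>2\}$), and further out are horizontal strips contained in $\R\times(-\kappa_V,\kappa_V)$. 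The testing Lagrangian $\iota_{N,\gamma}$ projects to a testing curve $\gamma$, which is eventually horizontal, passes through $(-2,\kappa_V)$ and $(2,-\kappa_V)$, and — by admissibility — only meets a bottleneck point of $\overline V$ on $\C$ at parameters in $(-2,-1)$ or $(1,2)$, where $\gamma$ is strictly decreasing. Thus the boundary of any curve of interest projects into $\gamma\cup\pi_\C(\overline V)$, and all intersection points $p_\pm$ project into $Cl(K)$.

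Next I would run the open mapping argument in two directions. To show $\Im(\pi_\C\circ u)\subset Cl(K)$, suppose not: then $\pi_\C\circ u$ hits a point in one of the four unbounded ``quadrant-like'' regions of $\C\setminus Cl(K)$ (the regions to the left of $\Re=-2$, to the right of $\Re=2$, or above/below $\pm\kappa_V$). For a region $U$ disjoint from $\gamma$ and from $\pi_\C(\overline V)$ — which is the case for all of $\C\setminus Cl(K)$ except inside the cones at the $z_i,z_j'$, since the strip parts of $\overline V$ lie in $\R\times(-\kappa_V,\kappa_V)$ and $\gamma$ is bounded there — Proposition \ref{p: open mapping theorem} forces $U\subset \Im(\pi_\C\circ u)$, contradicting finiteness of energy (the $\C$-component of the energy would be infinite since $\omega_\C$ integrates positively over $U$, or more simply $\pi_\C\circ u$ cannot be surjective onto an unbounded set while having finite energy and the right asymptotics). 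The only subtlety is the cone regions: there $\pi_\C(\overline V)$ does enter $\C\setminus K$, so one cannot immediately apply the open mapping theorem to a neighborhood of a cone point. This is exactly where the bottleneck is designed to help: one takes $U$ to be a small disk centered at the cone vertex $z_i$ minus the two bounding segments, or rather uses that the two segments point \emph{away} from $K$ so that any component of $\Im(\pi_\C\circ u)$ reaching past $z_i$ would, by the open mapping theorem applied to the complement of $\overline V$'s projection and $\gamma$ inside that cone, have to fill the whole infinite cone, again contradicting finite energy. I would phrase this carefully: for each negative end the relevant $U$ is the open double-cone region with vertex $z_i$ (strictly inside the cone, hence with $u(\partial\Sigma)$ disjoint from it because $\gamma$ only touches the bottleneck points, which are the vertices, and $\overline V$'s strip ends are the two bounding rays), and $\overline{u(\Sigma)}\setminus u(\Sigma)$ is disjoint from $U$ because the only limit points not in the image are intersection points, which lie in $Cl(K)$. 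So $U\subset\Im(\pi_\C\circ u)$ — impossible. Hence $\Im(\pi_\C\circ u)\subset Cl(K)$.

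Finally, for the dichotomy: if $\Im(\pi_\C\circ u)$ is not a single bottleneck point, then $\pi_\C\circ u$ is nonconstant, so $\Im(\pi_\C\circ u)$ is open in $\C$ away from the boundary; I would argue it must meet the open body $K$. Indeed $Cl(K)\setminus K$ consists of the four sides of the rectangle $[-2,2]\times[-\kappa_V,\kappa_V]$; the vertical sides $\{\pm2\}\times[-\kappa_V,\kappa_V]$ are where the bottleneck lines $M\times\{z_i\}$ live and where $\overline V$ is product-like. If $\Im(\pi_\C\circ u)$ were contained in one of these sides it would be a point (a nonconstant holomorphic map to $\C$ has open image, which a line segment is not, unless the map is constant — for a curve with boundary one uses the boundary maximum principle / that $\pi_\C$ restricted to the product-like region has $\pi_\C\circ u$ holomorphic with image in a line forcing it constant). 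If it were contained in the horizontal sides $[-2,2]\times\{\pm\kappa_V\}$, the same openness argument applies. So a nonconstant $\pi_\C\circ u$ must enter $K$, giving $\Im(\pi_\C\circ u)\cap K\neq\emptyset$; and a constant $\pi_\C\circ u$ has image a single point which, since $u$ is a curve of interest (a Floer strip or fishtail with both ends on $\overline V$ or on $\iota_{N,\gamma}\cap\overline V$), must be a point lying on $\overline V\cap$ (testing Lagrangian or $\overline V$ itself), and one checks such a constant-$\C$-coordinate configuration can only occur over a bottleneck point — this uses that away from bottlenecks $\overline V$ and the testing Lagrangian are ``in general position along $\C$'' so no Floer strip can sit over a single non-bottleneck point. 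The main obstacle, as flagged, is handling the cone regions at the bottlenecks cleanly — making precise that $u(\partial\Sigma)$ genuinely avoids the open cone and that the ``missing limit points'' are only the (compactly located) intersection points — so that Proposition \ref{p: open mapping theorem} applies there; everything else is a routine open-mapping plus finite-energy argument, essentially transcribing the corresponding step of \cite{BC2} to the immersed setting where the extra fishtail components, having image also projecting under $\pi_\C$ and boundary on $\overline{V_0}$ or $\overline{V_1}$, are covered by the same reasoning.
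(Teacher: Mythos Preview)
Your overall strategy---apply Proposition~\ref{p: open mapping theorem} to $\pi_{\C}\circ u$ and use the bottleneck geometry to rule out escape---is the paper's strategy. But there is a genuine gap in the step you flag as the ``main obstacle''.

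You assert that for the open double-cone region $U$ with vertex $z_i$ one has $u(\partial\Sigma)\cap U=\emptyset$ because ``$\overline V$'s strip ends are the two bounding rays.'' This is false: by Definition~\ref{d:conical} the projection $\pi_{\C}(\overline V)$ near a bottleneck is the \emph{filled} double-cone region, not just its two bounding segments. (Concretely, in the construction of Lemma~\ref{l:PerturbationImmersedEnd} different points of $L$ are pushed to different heights in $\C$, sweeping out a two-dimensional region.) Hence the $V_1$-boundary of a strip can project into the interior of the cone, and Proposition~\ref{p: open mapping theorem} does not apply to your $U$.

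The paper circumvents this as follows. First one shows only the weaker containment $\pi_{\C}\circ u(\Sigma)\subset Cl(K)\cup \pi_{\C}(V)$, by applying Proposition~\ref{p: open mapping theorem} to the (unbounded) connected components of the complement of $Cl(K)\cup\pi_{\C}(V)\cup\gamma$; those regions genuinely avoid $u(\partial\Sigma)$. Then, to exclude the cone parts, one uses a \emph{path argument}: if the image meets both $K$ and some cone outside $Cl(K)$, take a path in the interior of $\Sigma$ joining preimages of the two; its $\pi_{\C}$-projection stays in $Cl(K)\cup\pi_{\C}(V)$ and therefore must pass through the cone vertex $z_i$. But an interior point of $\Sigma$ mapping to $z_i$ under the (locally holomorphic) projection has open image near $z_i$, hitting points with $\Re<-2$ lying outside both $Cl(K)$ and the cone---contradicting the first step. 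This replaces your direct application of open mapping to the cone.

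Two smaller points. For the constant-projection case the paper does not rely on a vague ``general position'' claim: it observes that when $Im(u)\cap(M\times K)=\emptyset$ the entire $\R\times\{0\}$ boundary projects into $\gamma\cap\partial K$, hence to a single bottleneck point $z_0$, and then invokes the reflection principle to force $\pi_{\C}\circ u$ constant. And the fishtail case is handled separately: fishtails lying entirely outside $M\times Cl(K)$ are not excluded by the argument, but they cannot arise as bubbles off strips because the strips themselves have already been shown to lie on a bottleneck, contradicting Gromov compactness. Your one-line ``covered by the same reasoning'' does not capture this.
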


\bpf First of all, note that the boundary punctures of $u$ are mapped to either an immersed point or an intersection $\iota_V\cap\iota_{N,\gamma}$. Therefore, the images of $\pi_\C\circ u$ must be precompact hence bounded.

It is clear that $\pi_\C\circ u\subset Cl(K)\cup\pi_\C(V)$. Otherwise, say if $u$ is a marked strip, $\pi_\C\circ u$ is either an unbounded region by Proposition \ref{p: open mapping theorem}, which is a contradiction; or is a constant map to a point outside $Cl(K)$, violating boundary conditions of curves of interests. On the other hand, say if $u$ is a fishtail, it could have boundary on $\iota_{N,\gamma}$ and $\pi_\C\circ u$ maps to a point on $\gamma \backslash Cl(K)$
but such curves cannot appear on the boundary component of  $\widetilde{\mathcal{M}}(p_{-},p_+,\alpha,\mathbf{J})$, or  $\widetilde{\mathcal{M}}^m(p_{-},p_+,\alpha,\mathbf{J}_s)$.

We now argue that, if $Im(u) \cap (M\times K)\neq\emptyset$ then $\pi_\C\circ u(\Sigma)\cap(\pi_\C(V) \backslash Cl(K))=\emptyset$.  Assume the contrary, let $z_1,z_2\in\Sigma$ with $\pi_\C\circ u(z_1)\in\pi_\C(V)\backslash Cl(K)$ and $\pi_\C\circ u(z_2)\in K$.  Take any path joining $z_1$ and $z_2$ in $\Sigma$ and stays inside interior of $\Sigma$ except possibly at endpoints, there is a point $z\in\Sigma$ such that $u(z)$ lies on a bottleneck (the path has to be projected within the shaded area in Figure \ref{fig:testingCurve} when it is outside $K$ by what we proved).  The open mapping theorem applied to $\pi_\C\circ u(z)$ immediately yields a contradiction to $\pi_\C\circ u\subset Cl(K)\cup\pi_\C(V)$.

Lastly, if $Im(u) \cap (M\times K)=\emptyset$, we want to show that $Im(u)$ is on a bottleneck.
Again assume the contrary.  We first examine the case when $u$ is a marked strip with fixed boundary conditions.
By the boundary conditions, the two asymptotes of $u$ get mapped to a bottleneck, say $M\times \{z_0\}$.
Moreover, the image of $\R\times \{0\}$ projects to $\gamma$ and hence project constantly to $z_0$,
which contradicts the open mapping theorem on $\C$ (with reflection principle) if $\pi_\C\circ u$ is not constant.
Hence $Im(u)$ is on a bottleneck if $u$ is a marked strip with fixed boundary conditions and $Im(u) \cap (M\times K)=\emptyset$.
The case is similar for the moving boundary case.
Finally, fishtails that completely lie outside $M \times Cl(K)$ might exist, but will never appear on the boundary of the moduli of the first two kinds of curves of interests: before bubbling off such a fishtail, the marked strips have to stay completely outside $M\times K$, while we just proved such strips have to lie on a bottleneck, leading to a contradiction to Gromov's compactness.

\epf

\subsubsection{Regularity}\label{s:regularity}

We address the regularity by the following Lemma.
\begin{lemma}\label{l:regularity}
For generic choice of semi-admissible Floer data $\mathbf{J}$ (resp. $\mathbf{J}_s$ for curves with moving boundary condition), the following curves of interest are regular
\begin{itemize}
\item $\alpha$-marked Floer strips $u$ with fixed (resp. moving) boundary condition such that $Im(\pi_2 \circ u) \cap K \neq \emptyset$, and
\item marked Floer strips without branch jumps.
\end{itemize}
\end{lemma}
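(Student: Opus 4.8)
The plan is to prove regularity by a standard Sard--Smale argument, with the only subtlety being the locations where we are \emph{not} allowed to perturb $\mathbf{J}$ — namely the bottleneck region $M\times\{z_0\}$ and the cylindrical ends, where we insist on product-like data. So the first step is to observe that by Lemma \ref{l:compactness}, any curve of interest $u$ whose image meets $M\times K$ must actually have $\pi_{\C}\circ u(\Sigma)\cap K\neq\emptyset$; in particular such a $u$ is not contained in any single bottleneck slice $M\times\{z\}$ nor in a cylindrical end. I would then invoke the open mapping theorem (Proposition \ref{p: open mapping theorem}) once more, in its somewhere-injectivity incarnation: for a non-constant holomorphic $\pi_{\C}\circ u$, the preimage of $K$ is an open subset of the domain on which $\pi_{\C}\circ u$ is an open map, so the set of points $z\in\Sigma$ with $u(z)\in M\times K$, $du(z)\neq 0$, and $z$ not a branch/boundary point is open and dense in $u^{-1}(M\times K)$. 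This furnishes an injective point for $u$ lying in the region where perturbations of $\mathbf{J}$ are unconstrained.

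Second, with an injective point in hand lying in the open set $M\times K$ (or $M\times K'$ in the admissible case), I would run the usual universal moduli space construction: form $\widetilde{\mathcal{M}}(p_-,p_+,\alpha;\mathcal{J})=\bigcup_{\mathbf{J}}\widetilde{\mathcal{M}}(p_-,p_+,\alpha;\mathbf{J})$ over the Banach manifold of semi-admissible Floer data (product-like outside $M\times K$), show that the extended linearized operator $D_u\oplus(\text{variation in }\mathbf{J})$ is surjective at any $(u,\mathbf{J})$ in it by testing against a variation of $\mathbf{J}$ supported near the injective point and using the standard argument that the $L^2$-pairing against the cokernel element vanishes only if the cokernel element itself vanishes on a dense set, hence identically. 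Then the Sard--Smale theorem gives a comagre set of regular $\mathbf{J}$. The marked Floer strips without branch jumps are handled by exactly the same reasoning — they are ordinary (un-marked) Floer strips for $(\iota_{N,\gamma},\iota_V)$, so the only point is again to locate an injective point outside the forbidden region, which Lemma \ref{l:compactness} supplies since such a strip is not contained in a bottleneck slice.

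The moving-boundary case $\widetilde{\mathcal{M}}^m(p_-,p_+,\alpha;\mathbf{J}_s)$ is essentially identical, with the domain-dependent family $\mathbf{J}_s$ replacing $\mathbf{J}$; here one perturbs $J_{s,t}$ for $s$ in the interior of $(0,1)$ (where $\mathbf{J}_s$ is unconstrained) near an injective point, and one notes that for generic $s$ the relevant strips are cut out transversally. I would also remark that for curves contained entirely in a bottleneck $M\times\{z_0\}$ — which by Lemma \ref{l:compactness} are the only curves of interest \emph{not} covered by the lemma — regularity cannot be achieved by perturbing $\mathbf{J}$, but these are exactly the pseudo-holomorphic polygons on the bottleneck that will be dealt with separately using the product structure and the results of \cite{BC2}; this is why the statement of Lemma \ref{l:regularity} is restricted to strips meeting $K$.

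The main obstacle is the bookkeeping around somewhere-injectivity for the relevant classes of curves: since the curves are immersed-Lagrangian Floer strips with branch-jump marked points, I need the standard structure theorem (non-constant simple $J$-holomorphic curves have a dense set of injective points, adapted to strips with Lagrangian boundary, cf. the discussion in \cite{AB14}), and I need to be sure that the injective point can be taken in $M\times K$ rather than being forced onto a bottleneck or a cylindrical end — this is precisely where Lemma \ref{l:compactness} is indispensable, as it rules out the degenerate alternative ``$\pi_{\C}\circ u$ is constant at a bottleneck'' for any curve whose image meets $M\times K$. Once that dichotomy is established the rest is the routine Sard--Smale machinery, which I would not reproduce in detail.
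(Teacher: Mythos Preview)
Your argument for the first bullet (curves meeting $M\times K$) is fine and matches the paper's one-line appeal to standard transversality, since $\mathbf{J}$ is unconstrained over $M\times K$.

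The gap is in your treatment of the second bullet. You write that unmarked Floer strips are ``not contained in a bottleneck slice'' by Lemma \ref{l:compactness}, and later that the statement of Lemma \ref{l:regularity} is ``restricted to strips meeting $K$''. Neither is correct. Lemma \ref{l:compactness} gives a dichotomy: either the strip meets $M\times K$ \emph{or} it lies entirely on a bottleneck $M\times\{z_0\}$. The second bullet of Lemma \ref{l:regularity} covers \emph{all} unmarked strips, in particular those sitting on a bottleneck, and the remark immediately following the lemma confirms this: only marked strips \emph{with at least one branch jump} on a bottleneck are excluded from the regularity claim. So you still owe an argument for unmarked strips on a bottleneck, and your Sard--Smale scheme cannot supply it, since at the bottleneck $\mathbf{J}$ is forced to be product-like.

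This bottleneck case is in fact the substantive part of the paper's proof. The paper shows (Lemma \ref{l:operatorSplits}) that for such a strip the linearized operator $D_u$ splits as $D_u^v\oplus D_u^h$: the vertical piece $D_u^v$ is the linearized operator of the strip viewed as a $J^v$-holomorphic curve in the fiber $M$ with boundary on $N$ and $L_i$, which is surjective for generic fiberwise $J^v$; the horizontal piece $D_u^h$ is the linearized operator of a constant map in $\mathbb{C}$ with moving Lagrangian boundary condition, of index zero, and is surjective by automatic regularity (Lemma \ref{l:indexUnchange}, citing \cite{SeLefschetzI} and \cite{BC2}). This splitting-plus-automatic-regularity argument is exactly what replaces the perturbation you are not allowed to make at the bottleneck.
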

Note that marked discs and marked Floer strips with at least one branch jump marked point that completely lie on the one of the bottlenecks are not necessarily regular for generic semi-admissible data, but this will not concern us.

\begin{proof}
If $u$ is a $\alpha$-marked Floer strip (possibly with moving boundary condition) such that $Im(\pi_2 \circ u) \cap K \neq \emptyset$, then it is regular for generic $\mathbf{J}$ (or $\mathbf{J}_s$) because we impose no constraint on $\mathbf{J}$ (or $\mathbf{J}_s$) in $M \times K$ (See [\cite{McDuff_Salamon_2004}]).
We remark that $\overline{V_0}$ and $\overline{V_1}$ being transversally intersecting at smooth points and $\overline{V_{0,0}}\cap \overline{V_{0,1}} \cap \overline{V_1} =\emptyset$ in the definition of admissible horizontal isotopy are needed for us to find the generic $\mathbf{J}$.

Now, we assume $u$ is a Floer strip without branch jumps with $Im(u) \cap M \times K =\emptyset$.
By Lemma \ref{l:compactness} together with the boundary conditions, $Im(u)$ must lie on a bottleneck with fixed Lagrangian boundary condition.

We want to argue that although $\mathbf{J}$ is not a product near the bottleneck (it is a product at the bottleneck), when studying the regularity of these $u$, we can still assume the Cauchy Riemann operator splits and it suffices to study the surjectivity of the operators in the two factors individually.
The following Lemma explains why the operator splits when $\mathbf{J}$ is domain-independent.
The situation for domain dependent $\mathbf{J}$ does not impose additional difficulties by identifying $\mathbf{J}$-holomorphic map as $\widetilde{\mathbf{J}}$-holomorphic section of $\Sigma \times M \to \Sigma$, where $\widetilde{\mathbf{J}}$ is a domain-independent vertical almost complex structure induced by $\mathbf{J}$ (See Chapter 8 of \cite{McDuff_Salamon_2004}, or Section $(8h),(8i)$ of \cite{Seidelbook}).

\begin{lemma}\label{l:operatorSplits}
Let $J$ be an $\w \oplus \w_{\mathbb{C}}$-compatible almost complex structure such that $J|_{M \times (\mathbb{C} \backslash K)}=J^v\oplus J^h$ is a product in $M \times (\mathbb{C} \backslash K)$.
If $u:\Sigma=\R \times [0,1] \to M \times \mathbb{C}$ is $J$-holomorphic with boundary conditions $u(s,0)\in \overline{V_0}$, $u(s,1)\in \overline{V_1}$ and asymptotic to $p_-,p_+ \in \overline{V_0} \cap \overline{V_1}$ such that $Im(u)$ lies on a bottleneck, then the linearized operator
$D_u:W^{k,p}(\Sigma,u^*(T(M\times \mathbb{C}))) \to W^{k-1}(\Sigma,\Lambda^{0,1}\otimes_Ju^*T(M \times \mathbb{C}))$
splits as a direct sum of two linearized Cauchy-Riemann operators $D_u^v:W^{k,p}(\Sigma,u^*(TM)) \to W^{k-1}(\Sigma,\Lambda^{0,1}\otimes_{J^v}u^*TM)$
and $D_u^h:W^{k,p}(\Sigma,u^*(T\mathbb{C})) \to W^{k-1}(\Sigma,\Lambda^{0,1}\otimes_{J^h}u^*T\mathbb{C})$.

Here $D^v_u$ is the Cauchy-Riemann operator when $u$ is considered as a $J^v$-holomorphic curve on the fiber, and $D_u^h$ is a linearized operator of a constant solution for a Cauchy Riemann problem with moving boundary condition.

\end{lemma}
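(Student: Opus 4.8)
The plan is to reduce the problem to a standard splitting-of-the-Cauchy--Riemann-operator computation by first localizing to the bottleneck. Since $Im(u)$ lies on a bottleneck $M\times\{z_0\}$ with $z_0$ the bottleneck on $\mathbb{C}$, we have $\pi_{\mathbb{C}}\circ u\equiv z_0$, and in particular $Im(u)\subset M\times\{z_0\}$, which is contained in the region $M\times(\mathbb{C}\backslash K)$ where $J=J^v\oplus J^h$ is product-like by hypothesis. Hence along $u$ the complex structure is genuinely a product, even though globally it is not. First I would write $u=(u_M,u_{\mathbb{C}})$ with $u_{\mathbb{C}}\equiv z_0$ constant, and observe that the boundary conditions $u(s,i)\in\overline{V_i}$ decompose compatibly: near the bottleneck $\overline{V_i}$ is a pinched Lagrangian, so in the product chart $M\times(\mathbb{C}\backslash K)$ it has the form $\overline{L_i}\times(\text{line segment})$ (or more precisely its image does), so the boundary condition on the $M$-factor is $u_M(s,i)\in\overline{L_i}$ and on the $\mathbb{C}$-factor is the constant $z_0$ sitting on the boundary line segment of the double-cone region.

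Next I would invoke the standard fact (as in Chapter 3 of \cite{McDuff_Salamon_2004}, or the analogous discussion in \cite{BC2}) that for a product almost complex structure the linearized operator $D_u$ on $u^*T(M\times\mathbb{C})=u_M^*TM\oplus u_{\mathbb{C}}^*T\mathbb{C}$ respects the direct sum decomposition, because the connection and the almost complex structure both split. This gives $D_u=D_u^v\oplus D_u^h$, where $D_u^v$ is the linearized operator of $u_M$ viewed as a $J^v$-holomorphic strip in $M$ with boundary on $(\overline{L_0},\overline{L_1})$ and asymptotics the two components of $p_\pm$ in $M$, and $D_u^h$ is the linearized operator at the constant map $z_0$ for the Cauchy--Riemann problem on $\mathbb{C}$ with the moving boundary condition coming from the two boundary line segments of the pinched regions. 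For the domain-dependent case $\mathbf{J}=\{J_t\}$, I would pass to the graph/section trick: identify $J_t$-holomorphic strips with $\widetilde{\mathbf{J}}$-holomorphic sections of $\Sigma\times(M\times\mathbb{C})\to\Sigma$ for a domain-independent vertical almost complex structure $\widetilde{\mathbf{J}}$ induced by $\mathbf{J}$ (Chapter 8 of \cite{McDuff_Salamon_2004}, or $(8h),(8i)$ of \cite{Seidelbook}); since $\mathbf{J}$ is still product-like along the bottleneck, $\widetilde{\mathbf{J}}$ splits as $\widetilde{J}^v\oplus\widetilde{J}^h$ there, and the same splitting argument applies to the associated linearized operator. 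Contracting back, one recovers the asserted splitting $D_u=D_u^v\oplus D_u^h$.

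The main obstacle I anticipate is purely bookkeeping at the boundary: one must check that near the bottleneck the pinched Lagrangian $\overline{V_i}$ really is, after the product identification $M\times(\mathbb{C}\backslash K)\cong M\times(\mathbb{C}\backslash K)$, literally a product $\overline{L_i}\times(\text{segment})$ rather than merely ``product-like up to something bounded,'' so that the boundary condition for $D_u$ splits cleanly into a totally real boundary condition on $u_M^*TM$ and a (moving) totally real boundary condition on $u_{\mathbb{C}}^*T\mathbb{C}$. This is built into Definition \ref{d:conical} — outside the body the cobordism is a disjoint union of pinched Lagrangians whose $\mathbb{C}$-projections near the bottleneck are honest straight line segments bounding a double-cone region — so the verification amounts to unwinding that definition and noting that the two straight segments give the moving boundary condition for $D_u^h$. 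Once the splitting is in place, the identifications of $D_u^v$ and $D_u^h$ with the two stated operators are immediate from the definitions, and no energy estimate or compactness input is needed.
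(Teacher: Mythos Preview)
Your overall plan---split $u$, split $J$, split $D_u$---matches the paper's, but the justification you give for why the boundary condition splits is wrong, and the error is not just bookkeeping.

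You assert that near the bottleneck the pinched Lagrangian $\overline{V_1}$ is ``literally a product $\overline{L_i}\times(\text{segment})$'' and that Definition~\ref{d:conical} guarantees this. It does not. The definition only says the \emph{image of the $\mathbb{C}$-projection} is a double cone bounded by two straight segments; it says nothing about the Lagrangian itself being a product. In fact the pinched Lagrangians actually used (e.g.\ $\iota_{S^\Delta}$ in Lemma~\ref{l:PerturbationImmersedEnd}, built as the graph of $d(-\widetilde g_R\widetilde g_S)$) are emphatically not products: different points $x$ on the bottleneck fiber have different horizontal tangent directions $T^h_1(x)\subset T_{z_0}\mathbb{C}$. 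This variation is precisely the source of the \emph{moving} boundary condition for $D_u^h$: the line $\gamma^l_s$ at parameter $s$ is determined by $T^h_1(u(s,1))$, which changes as $u(s,1)$ moves along the bottleneck. Your picture---two fixed segments giving the moving condition---is not what is happening, and note that if $\overline{V_1}$ really were a product the boundary condition would be \emph{fixed}, contradicting your own conclusion. The paper handles this correctly by splitting $TV|_{\text{bottleneck}}=T^v\oplus T^h$ pointwise (using the Lagrangian condition to see $T^h$ has rank one), and then observing that this splits the domain of $D_u$ even though $T^h$ is not constant.

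A second, smaller point you gloss over: the linearized operator contains the term $(\partial_\xi J)(u)\,\partial_t u$, which involves first derivatives of $J$, not just its value along $Im(u)$. Saying ``$J$ splits along $u$'' is not enough; you need $\partial_\xi J$ to split there too. The paper notes that this holds because $J$ is product-like on the \emph{closed} set $M\times(\mathbb{C}\backslash K)$ containing the bottleneck, so its derivatives at the bottleneck (in all directions, including toward $K$) agree with those of $J^v\oplus J^h$ by smoothness.
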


\begin{proof}
Let $l_i:\R \times \{i\} \to V_{i}$ be such that $\iota_{V_{i}} \circ l_i=u|_{\R \times \{i\}}$ for $i=0,1$.
Without loss of generality, assume the bottleneck is $M \times \{z_1\}$, and the corresponding Lagrangian immersion on this bottleneck is $\iota_{L_1}$.
The linearized Fredholm operator is given by
$$D_u \xi =\eta ds-J(u) \eta dt$$
 where $\eta=\frac{1}{2}(\partial_s \xi +J(u)\partial_t \xi +\partial_{\xi}J(u) \partial_t u)$ and
 $\xi \in W^{k,p}(\Sigma,u^*(T(M\times \mathbb{C})))$ satisfies $\xi(s,i) \in l_i^*TV_{i}$.

Notice that we have a canonical splitting $TV|_{\iota_{V}^{-1}(M\times\{z_1\})}=T^v_1 \oplus T^h_1$.
Here, the vertical splitting is given by $T^v_1 = \{v \in TV| d(\iota_{V})(v) \in T(M\times\{z_1\})\}$ which is rank $n$ and the horizontal rank $1$ splitting is given by
$T^h_1= \{ v \in TV|_{\iota_{V_1}^{-1}(M\times\{z_1\})}| d(\pi_M \circ \iota_{V})(v)=0   \}  $.
We note that $T^h_1$ being well-defined and of rank $1$ uses the fact that $\iota_{V}$ is Lagrangian.
There is also a canonical splitting of $T(N\times\gamma)$ as $T^v_0 \oplus T^h_0$.
Therefore, the domain of $D_u$ splits.
If we write $u=(u^v,u^h)=(\pi_M \circ u, \pi_\C \circ u)$, then
the first summand $Dom^v(D_u)$ of domain of $D_u$ is $\{\xi^v \in W^{k,p}(\Sigma,u^*(TM))| \xi^v(s,i) \in l_i^*T^v_i, i=0,1\}$ and the
second summand $Dom^h(D_u)$ is defined similarly.
Obviously, the target of $D_u$ also canonically splits into vertical $Tar^v(D_u)$ and horizontal $Tar^h(D_u)$ components because $J$ splits in the image of $u$.

We define $D^v_u: Dom^v(D_u) \to Tar^v(D_u)$ to be
$$D^v_u \xi^v =\eta^v ds-J^v(u^v) \eta^v dt$$
 where $\eta^v=\frac{1}{2}(\partial_s \xi^v +J^v(u^v)\partial_t \xi^v +\partial_{\xi^v}J^v(u^v) \partial_t u^v)$
 and similarly for $D^h_u: Dom^h(D_u) \to Tar^h(D_u)$.
We claim that for any $\xi^v \in Dom^v(D_u)$ and $\xi^h \in Dom^h(D_u)$, we have
$$D_u^v\xi^v+D_u^h\xi^h=D_u (\xi^v+\xi^h)$$
The only less obvious equality is $\partial_{\xi^v}J^v(u^v) \partial_t u^v+\partial_{\xi^h}J^h(u^h) \partial_t u^h=\partial_{\xi^v+\xi^h}J(u) \partial_t u$.
This involves the the first order derivative of $J$.
Since $J$ is product-like over $M \times (\mathbb{C} \backslash K)$, the first order derivative of $J$ can be computed in this product-like region (The key point is that this set is closed and includes $M\times\{z_1\}$).
Therefore, we have $D_u=D_u^v\oplus D_u^h$.

Finally, we want to explicitly identify geometric meaning of $D_u^v$ and $D_u^h$ as linearized operators.
The first operator $D_u^v$ is exactly the linearized operator obtained by regarding $u^v: \Sigma \to (M,\w,J^v)$ as a $J^v$-holomorphic map with fixed Lagrangian boundary conditions on $N$ and $L_1$.
The second operator $D_u^h$ is the linearized operator of the constant $J^h$-holomorphic map $u^h: \Sigma \to (\mathbb{C},\w_{\mathbb{C}},J^h)$
with moving Lagrangian boundary conditions $u^h(s,0)\in \gamma$ and $u^h(s,1) \in \gamma^l_s$, where $\gamma^l_s$ is a family of lines such that $T_{z_1}\gamma^l_s=d(\pi_{\C} \circ \iota_{V})T^h_1(u(s,1))$ and $T^h_1(u(s,1))$ is the horizontal component of the canonical splitting of $TV$ at $l_1(s)$.  See Figure \ref{fig:FredholmSplit} for the $u^h$ part.

\end{proof}

\begin{figure}[h]
\centering

\includegraphics[scale=1.28]{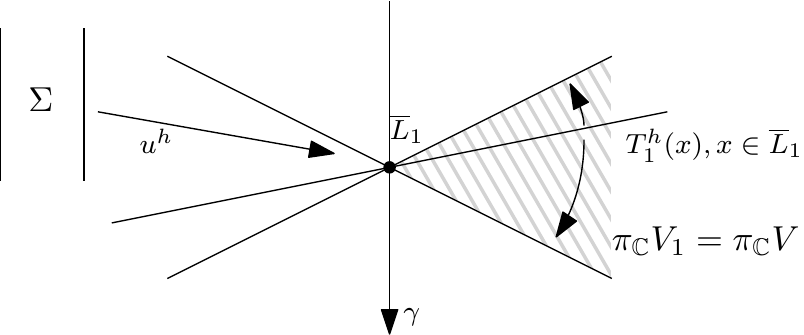}

\caption{The moving boundary problem for $u^h$.}
\label{fig:FredholmSplit}
\end{figure}

Next, we want to examine the Fredholm indices of $D^v_u$ and $D^h_u$ coming from the splitting.

\begin{lemma}\label{l:indexUnchange}
Let $\widetilde{p}_-,\widetilde{p}_+ \in \overline{N} \cap \overline{L_i}$ (resp. $\widetilde{p}_-,\widetilde{p}_+ \in \overline{N} \cap \overline{L_j'}$).
Let $p_-=\widetilde{p}_- \times z_i,p_+=\widetilde{p}_+ \times z_i$ (resp. $p_-=\widetilde{p}_- \times z_j',p_+=\widetilde{p}_+ \times z_j'$ ) be the corresponding points on the bottleneck $M \times z_i$ (resp. $M \times z_j'$).
Then we have $Ind(p_-)-Ind(p_+)=Ind(\widetilde{p}_-)-Ind(\widetilde{p}_+)$.

Moreover, any Floer strip without branch jump marked points from $p_-$ to $p_+$ on the bottleneck is regular for generic choice of $\mathbf{J}$ that is product like in $M \times (\mathbb{C} \backslash K)$.
\end{lemma}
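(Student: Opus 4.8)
The plan is to deduce both assertions from the splitting of the linearized operator established in Lemma \ref{l:operatorSplits}, together with the grading conventions fixed in Section \ref{s:review} and the index formula \eqref{e:index}. For the index equality, I would observe that $D_u = D_u^v \oplus D_u^h$ implies $\ind D_u = \ind D_u^v + \ind D_u^h$. The operator $D_u^v$ is the linearized Cauchy--Riemann operator for $u^v$ with fixed Lagrangian boundary conditions on $\overline N$ and $\overline{L_i}$; since $u^v$ is a constant strip at a transverse intersection point of these two \emph{finite-dimensional} Lagrangian immersions (after passing to lifts, so the intersection is honestly transverse and the relevant index is the local Floer index), its Fredholm index is exactly $\Ind(\widetilde p_-) - \Ind(\widetilde p_+)$ by the standard correspondence between the analytic index of a constant strip and the difference of Maslov-type gradings, as in \eqref{e:index}. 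The operator $D_u^h$ is the linearization at a constant $J^h$-holomorphic map in $\C$ with the moving boundary condition depicted in Figure \ref{fig:FredholmSplit}: the boundary line $\gamma$ is fixed while $\gamma_s^l$ rotates \emph{downward} through the bottleneck because the testing curve is admissible (its derivative points downward there, by Definition \ref{d:admissibleLagrangian}). I would compute the index of this one-dimensional moving-boundary problem directly — it is a model computation entirely analogous to Example \ref{Example: perturbation of double cover} — and the point is precisely that the admissibility of $\gamma$ forces the rotation to be in the direction that contributes index $0$, not $1$. Hence $\ind D_u^h = 0$ and the index equality $\Ind(p_-) - \Ind(p_+) = \Ind(\widetilde p_-) - \Ind(\widetilde p_+)$ follows. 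The analogous statement on the positive ends $M \times z_j'$ is identical, with the roles of the two boundary components swapped, and the downward rotation still gives index $0$.

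For the regularity statement, I would argue that surjectivity of $D_u$ on the bottleneck follows from surjectivity of each summand. The horizontal operator $D_u^h$ is the linearization of a constant solution of a moving-boundary problem in $\C$ whose boundary data has been arranged (again by admissibility of $\gamma$) so that the model operator has trivial cokernel; this is a fixed, explicit one-dimensional operator independent of any choices, so there is nothing to perturb and it is automatically regular. The vertical operator $D_u^v$ is the linearization at the constant strip $u^v \equiv p$ for the pair $(\overline N, \overline{L_i})$ inside $(M, J^v)$; since $\overline N \pitchfork \overline{L_i}$ transversally at smooth points by the admissibility hypothesis on the testing Lagrangian (and $\widetilde p_\pm$ are genuinely transverse intersection points after lifting, in the immersed sense), a constant strip at a transverse intersection is automatically regular — no genericity of $\mathbf J$ is needed for this either. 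Therefore every Floer strip without branch-jump marked points lying on a bottleneck is regular, and this is compatible with demanding $\mathbf J$ be product-like outside $M \times K$; genericity is only invoked (via Lemma \ref{l:regularity}) for the strips whose image meets $M \times K$.

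The main obstacle I expect is the bookkeeping in the horizontal index computation: one must pin down the precise sign of the rotation of $\gamma_s^l$ as $u^h$ traverses the bottleneck, match it against the orientation conventions for the moving-boundary Cauchy--Riemann problem (which in turn feed into the grading convention for cobordisms fixed after Definition \ref{d:cob}), and check that the two bottleneck cases — negative ends $M \times z_i$ versus positive ends $M \times z_j'$ — are handled consistently, since the cobordism's two boundary components enter $D_u^v$ and $D_u^h$ asymmetrically. A secondary subtlety is to make sure the lifts $l_i$ are used correctly so that what I am calling a ``transverse intersection point'' really does make the constant strip regular in the immersed category; this is where the genericity assumption $\overline R_{L_0} \cap \overline L_1 = \overline R_{L_1} \cap \overline L_0 = \emptyset$ from Section \ref{s:immersedFloer} and the admissibility of the testing Lagrangian are used. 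Once these orientation/grading points are settled the rest is routine linear Fredholm theory.
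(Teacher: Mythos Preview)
Your proposal contains a genuine error that breaks the regularity argument: you repeatedly assert that the vertical component $u^v$ is a \emph{constant} strip, but this is false. A Floer strip on the bottleneck $M\times\{z_i\}$ has constant $\mathbb{C}$-projection $u^h\equiv z_i$, while its $M$-projection $u^v$ is an honest (generally non-constant) $J^v$-holomorphic strip from $\widetilde p_-$ to $\widetilde p_+$; these are distinct points of $\overline N\cap\overline{L_i}$. Consequently your claim that ``a constant strip at a transverse intersection is automatically regular --- no genericity of $\mathbf J$ is needed'' is incorrect: surjectivity of $D_u^v$ requires a generic choice of the fibre almost complex structure $J^v=J_t|_{M\times\{z_i\}}$, exactly as in ordinary Lagrangian Floer theory. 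The paper uses precisely this: since $\mathbf J$ is product-like outside $M\times K$, the family $J^v$ at the bottleneck can be chosen generically in $(M,\omega)$ to make $D_u^v$ surjective, while $D_u^h$ (which \emph{is} a constant map, of index zero, with moving boundary) is surjective by automatic regularity.

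For the index equality your strategy via $\mathrm{ind}\,D_u=\mathrm{ind}\,D_u^v+\mathrm{ind}\,D_u^h$ is in principle sound --- the Fredholm index of $D_u^v$ equals $Ind(\widetilde p_-)-Ind(\widetilde p_+)$ regardless of whether $u^v$ is constant --- but the paper takes a more direct route that avoids invoking any strip at all. It splits the tangent planes $T_{p_\pm}V_0=T_{\widetilde p_\pm}\overline N\oplus T_{z_i}\gamma$ and $T_{p_\pm}V_1=T_1^v(p_\pm)\oplus T_1^h(p_\pm)$, so that $Ind(p_\pm)=Ind(\widetilde p_\pm)+Ind(T_{z_i}\gamma,T_1^h(p_\pm))$; the key observation is that $T_1^h(x)$ stays transverse to $T_{z_i}\gamma$ for all $x\in\overline{L_i}$ (by the pinched shape of the bottleneck), and since $\overline{L_i}$ is connected the horizontal index $Ind(T_{z_i}\gamma,T_1^h(\cdot))$ is constant, hence cancels in the difference. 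This argument is pointwise and does not require computing the sign of any rotation or the existence of a holomorphic map.
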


\begin{proof}

We assume the bottleneck is $M \times \{z_1\}$ and the corresponding Lagrangian immersion is $\iota_{L_1}$.
As mentioned in the proof of Lemma \ref{l:operatorSplits}, we have a canonical splitting of $T_{p_\pm} (N\times\gamma)
=T_{\wt p_\pm}(\ov N)\oplus T_{z_1}\gamma := T_0^v(p_\pm) \oplus T_0^h(p_\pm)$ and $T_{p_\pm} V=T^v_1(p_\pm)\oplus T^h_1(p_\pm)$.
Here, we have a canonical identification 
 $T^v_1(p_{\pm})=T_{\widetilde{p}_{\pm}}\overline L_1$.
We also have,  for all $x \in \overline L_1=\overline{V_1} \cap M\times\{z_1\}$, that $T^h_1(x)$ is transversal to  $T_{z_1}\gamma=T_{\pi_\C(p_{\pm})}\gamma$, by viewing them as two lines in $\mathbb{C}$.

Hence the index calculation splits as
\beq\label{e:splitInd}Ind(T_{p_{\pm}}V_0,T_{p_{\pm}}V_1)=Ind(T_{\widetilde{p}_{\pm}}N,T_{\widetilde{p}_{\pm}}L_1)
+Ind(T_{z_1}\gamma,T^h_1(p_{\pm}))\eeq
By assumption, $\overline L_1$ is connected and $T_{\pi_\C(p_{\pm})}\gamma=T_{z_1}\gamma$ is transversal to $T^h_1(x)$ for all $x \in \overline L_1$. This implies that
\beq\label{e:equalInd}Ind(T_{z_1}\gamma,T^h_1(p_-))=Ind(T_{z_1}\gamma,T^h_1(p_+)).\eeq
As a result,
\begin{eqnarray*}
&&Ind(p_-)-Ind(p_+) \\
&=&Ind(T_{p_-}V_0,T_{p_-}V_1)-Ind(T_{p_+}V_0,T_{p_+}V_1) \\
&\stackrel{\eqref{e:splitInd},\eqref{e:equalInd}}{=}&Ind(T_{\widetilde{p}_-}N,T_{\widetilde{p}_-}L_1)-Ind(T_{\widetilde{p}_+}N,
T_{\widetilde{p}_+}L_1) \\
&=&Ind(\widetilde{p}_-)-Ind(\widetilde{p}_+)
\end{eqnarray*}
The first and last equality are from definitions.

Finally, by Lemma \ref{l:operatorSplits}, we can split the linearized operator $D_u$ into the vertical component $D^v_u$ and the horizontal component $D^h_u$.
The operator $D^v_u$ is surjective for generic choice of family of $\w$ compatible almost complex structure in $(M,\w)$.
The operator $D^h_u$ can be identified as the linearized operator of constant map from strip to $\mathbb{C}$ with moving boundary condition and of index zero. This is surjective by automatic regularity (See Section $4$ of \cite{SeLefschetzI} and compare to Section $4.3$ of \cite{BC2}).
\end{proof}

This finishes the proof of Lemma \ref{l:regularity}.
\end{proof}

The next lemma addresses the regularity for admissible $\mathbf{J}$ in view of Theorem \ref{t:ImmersedFloerAd}.

\begin{lemma}\label{l:regularityAd}
For generic choice of admissible Floer data $\mathbf{J}$, the following curves of interest are regular
\begin{itemize}
\item $\alpha$-marked Floer strips $u$ with fixed boundary condition such that $Im(\pi_2 \circ u) \cap K \neq \emptyset$, and
\item marked Floer strips without branch jumps with fixed boundary condition.
\end{itemize}
\end{lemma}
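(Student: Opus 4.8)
The strategy is to reproduce the transversality argument of Lemma~\ref{l:regularity} almost verbatim, the only new feature being that an admissible $\mathbf{J}$ may be perturbed only inside $M\times K'$ for a fixed open set $K'$ with $Cl(K')\subset K$. I would fix such a $K'$ at the outset, chosen large enough that every point of $\overline{V_0}\cap\overline{V_1}$ whose $\C$-coordinate lies in $K$ is already contained in $M\times K'$. By Lemma~\ref{l:compactness}, every curve of the two types in the statement falls into exactly one of two cases: either (A) $Im(\pi_\C\circ u)\cap K\neq\emptyset$, or (B) $u$ is a marked strip without branch jumps whose whole image lies on a single bottleneck.

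Case (B) is handled as in the proof of Lemma~\ref{l:indexUnchange}. Such a $u$ maps into a bottleneck $M\times\{z_i\}$ (or $M\times\{z_j'\}$), on which $\mathbf{J}$ is product-like because $z_i\notin Cl(K')$, so by Lemma~\ref{l:operatorSplits} the linearized operator splits as $D_u=D_u^v\oplus D_u^h$. The horizontal summand $D_u^h$ is the linearization of a constant map to $\C$ with moving boundary condition and vanishing index, hence surjective by automatic regularity; the vertical summand $D_u^v$ is the linearization of a $J^v$-holomorphic strip in $(M,\omega)$ with boundary on $N$ and the relevant $\iota_{L_i}$, hence surjective for a generic choice of the $M$-component of $\mathbf{J}$, which is free to vary even at the bottleneck, exactly as in Lemma~\ref{l:indexUnchange}. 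So these curves are regular.

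For case (A), Lemma~\ref{l:compactness} gives $Im(u)\subset M\times Cl(K)$, and Proposition~\ref{p: open mapping theorem} forces $\pi_\C\circ u$ to be either constant or to have open image. If $\pi_\C\circ u\equiv z_0$, then the asymptotic points $p_\pm$ lie in $M\times\{z_0\}\subset M\times K$, hence in $M\times K'$ by our choice, so $z_0\in K'$. If $\pi_\C\circ u$ is non-constant, its image is an open connected subset of $Cl(K)$ that meets $K$, has boundary values on $\gamma\cup\pi_\C(V)$, and is asymptotic to $\pi_\C(p_\pm)$; since $\gamma$ and the projections of the ends of $V$ meet the top and bottom edges of $Cl(K)$ only at the corners through which $\gamma$ exits $K$, the maximum principle for $\pi_\C\circ u$ prevents this image from being confined to a collar of $\partial K$, so it meets $K'$. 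In either case $u$ passes through $M\times K'$; choosing there an interior point (or a boundary point not lying over a branch jump) at which $u$ is somewhere injective, the usual Sard--Smale argument over the Banach manifold of admissible Floer data perturbed inside $M\times K'$, run as in \cite{AB14} and \cite{McDuff_Salamon_2004}, makes $u$ regular for generic $\mathbf{J}$. Intersecting the residual sets so obtained over the countably many relevant homotopy classes finishes the proof.

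The step I expect to be the main obstacle is the collar estimate in case (A): that a non-constant holomorphic projection confined to $Cl(K)$ and asymptotic to the bottleneck intersection points must reach into $K'$. This is what reconciles the rigidity of $\mathbf{J}$ near $\partial K$ — needed later in order to compare Floer complexes on cobordisms with those on the bottlenecks — with the freedom required to achieve transversality. Everything else is a routine transcription of Lemma~\ref{l:regularity}.
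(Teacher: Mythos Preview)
Your overall structure matches the paper's: case (B) is handled identically via Lemma~\ref{l:operatorSplits} and automatic regularity, and in case (A) the goal is indeed to show that $u$ must enter $M\times K'$ so that the standard Sard--Smale argument applies. The gap is exactly where you suspect it: your ``collar estimate'' is not a proof. Once you assume $u$ misses $M\times K'$ (so that $\pi_\C\circ u$ is genuinely holomorphic and the open mapping dichotomy applies), both asymptotes must project to bottlenecks, and you then need to exclude two scenarios: both asymptotes on the \emph{same} bottleneck, and the two asymptotes on \emph{different} bottlenecks. Your maximum-principle sketch does not cleanly handle either. There is no obstruction, from the maximum principle alone, to a non-constant holomorphic map from a strip landing in an annular collar; the boundary conditions have to be used more sharply.

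The paper closes this gap with two separate arguments (packaged as Lemma~\ref{l:reflection}). For the same-bottleneck case, observe that $\pi_\C\circ u(s,0)$ lies on $\gamma$ and limits to the same bottleneck value $z_0$ at both ends; since the admissible testing curve is embedded and strictly monotone there, this forces $\pi_\C\circ u(\R\times\{0\})\equiv z_0$. Now $\pi_\C\circ u$ is holomorphic near $\R\times\{0\}$ (we are outside $K'$), so the Schwarz reflection principle extends it across the real boundary; the extension is constant on $\R\times\{0\}$ and hence identically $z_0$. But $z_0\notin K$, contradicting $Im(\pi_\C\circ u)\cap K\neq\emptyset$. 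For the different-bottleneck case, the paper imposes an additional requirement on $K'$ that your choice omits: the connected components of $\overline{V_1}\setminus(M\times K')$ are in bijection with the pinched ends of $V_1$. With this, the path $u(\R\times\{1\})\subset\overline{V_1}$ must pass from one component to another and therefore through $M\times K'$. Add this condition to your definition of $K'$ and replace the vague collar argument by these two steps; then your proof is complete.
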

We do not address regularity for curves with moving boundary condition for admissible $\mathbf{J}_s$.

\begin{proof}
We choose an open set $K'$ with $Cl(K') \subset K$ such that
$\overline{V_0} \cap \overline{V_1} \cap (M \times K) \subset M \times K'$
and the connected components of $\overline{V_1} \backslash (M \times K')$ are in bijection with pinched Lagrangian ends of $\overline{V_1}$.

We first establish the following lemma about a marked Floer strip $u$
having both asymptotic points on the bottlenecks.

\begin{lemma}\label{l:reflection}
Let $\mathbf{J}$ be an admissible Floer data.
If both asymptotic points of a marked Floer strip $u$ with fixed boundary condition are on the same bottleneck,
then $u$ completely lies on the bottleneck.

If, instead, the two asymptotic points lie on different bottlenecks, then $Im(\pi_2 \circ u) \cap K' \neq \emptyset$.
\end{lemma}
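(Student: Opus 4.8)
The plan is to exploit the open mapping theorem for holomorphic maps to $\C$, together with the fact that the almost complex structure $\mathbf{J}$ is product-like on $M\times(\C\backslash K')$ (that is precisely what admissibility buys us, as opposed to mere semi-admissibility). First I would consider $u^h=\pi_\C\circ u$. By Lemma \ref{l:compactness}, $Im(u)$ either meets $M\times K$ or lies entirely on a bottleneck; since the bottleneck on $\C$ is a single point, in the latter case $u^h$ is constant and there is nothing to prove. So assume $Im(u)\cap(M\times K)\neq\emptyset$, hence $Im(u^h)$ is not a point.

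Suppose both asymptotes lie on the same bottleneck $M\times\{z_0\}$, so $u^h(s,t)\to z_0$ as $s\to\pm\infty$. The boundary arcs $u(\R\times\{0\})$ and $u(\R\times\{1\})$ lie on $\overline{V_0}=\overline N\times\gamma$ and $\overline{V_1}$ respectively; in particular $\pi_\C$ of the first arc lies on the testing curve $\gamma$, and $\pi_\C$ of the second lies on $\pi_\C(\overline V_1)$, which near $z_0$ is contained in the two straight segments bounding the pinched (double-cone) region of the corresponding pinched Lagrangian end. The idea is to use the bottleneck point $z_0$: the region of $\C$ cut out to the \emph{left} of $z_0$ by the testing curve $\gamma$ (using that $\gamma$ passes downward through the bottleneck, by admissibility of $\gamma$, Definition \ref{d:admissibleLagrangian}) together with the pinched segments of $\overline V_1$ forms a region $W_{z_0}$ such that $u^h$ cannot enter it: the boundary conditions force $u^h(\partial\Sigma)$ to stay on $\gamma\cup\pi_\C(\overline V_1)$, the closure-minus-image set of $u^h$ consists of the asymptotic point $z_0$ which is on $\partial W_{z_0}$, and so by Proposition \ref{p: open mapping theorem} (applied to the complementary unbounded components) one gets that $Im(u^h)$ is forced into an unbounded region, contradicting the precompactness of $Im(u^h)$ established in the proof of Lemma \ref{l:compactness}. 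Hence $u^h$ is constant equal to $z_0$ and $Im(u)$ lies on the bottleneck.

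For the second statement, suppose the asymptotes lie on two \emph{different} bottlenecks, say $u^h(s,t)\to z_-$ as $s\to-\infty$ and $\to z_+$ as $s\to+\infty$ with $z_-\neq z_+$. Then $u^h$ is a nonconstant holomorphic map whose image is a precompact subset of $Cl(K)\cup\pi_\C(V)$ joining $z_-$ to $z_+$. If $Im(\pi_\C\circ u)\cap K'=\emptyset$, then by the positioning of the bottlenecks (all on the vertical lines $\mathrm{Re}=\pm2$, with $Cl(K')$ separating them from each other inside the body $K$) a path in $Im(u^h)$ from $z_-$ to $z_+$ would have to pass through $K'$, or else $u^h$ would be forced — again via the open mapping theorem applied in the product-like region and the boundary/asymptotic constraints — into an unbounded region of $\C$, contradicting precompactness. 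I would make this precise by choosing $K'$ as in the statement so that distinct bottleneck points lie in distinct connected components of $\overline V_1\backslash(M\times K')$, so any continuous path in $Im(u)$ connecting them must cross $M\times K'$; projecting gives $Im(\pi_\C\circ u)\cap K'\neq\emptyset$.

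The main obstacle I expect is the careful bookkeeping of the planar regions: one must verify that the testing curve $\gamma$, the pinched double-cone segments of the ends of $\overline V_1$, and the bottleneck points together bound a region into which $u^h$ provably cannot penetrate, and that this is compatible with $\gamma$ pointing strictly downward at the bottleneck (the admissibility condition). Getting the orientation of the cones right, and handling the marked-strip boundary punctures (which map to immersed points or to $\overline V_0\cap\overline V_1$, hence keep the image precompact), is the delicate part; the open mapping theorem input itself is then routine once the regions are correctly identified.
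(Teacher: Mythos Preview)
Your argument for the second assertion is essentially correct and matches the paper: once $K'$ is chosen so that the connected components of $\overline{V_1}\backslash (M\times K')$ are in bijection with the pinched ends, the connected path $u(\R\times\{1\})\subset\overline{V_1}$ joins two distinct components and hence must meet $M\times K'$.

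For the first assertion, however, your open-mapping approach has a genuine gap. Two problems: first, $\pi_\C(\overline{V_1})$ is not just the ``pinched segments'' near $z_0$ --- inside the body $K$ it is the projection of the entire cobordism, so the boundary condition on $\R\times\{1\}$ does not cut out a useful planar region $W_{z_0}$ in the way you describe. Second, and more fundamentally, $u^h=\pi_\C\circ u$ is holomorphic only on $(u^h)^{-1}(\C\backslash K')$; inside $K'$ you have no holomorphicity and Proposition~\ref{p: open mapping theorem} cannot be applied. Even granting that $Im(u^h)\subset Cl(K)$ (which is already Lemma~\ref{l:compactness}), constraining the image does not force $u^h$ to be constant --- a nonconstant $u^h$ with image in $Cl(K)$ and both asymptotes at $z_0$ is not excluded by any region-avoidance argument alone.

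The paper's route is different and avoids this entirely. It exploits the $\R\times\{0\}$ boundary, where $u^h$ lands on the testing curve $\gamma$. Since both asymptotes project to $z_0$, one argues (as in Lemma~\ref{l:compactness}) that $u^h(\R\times\{0\})\equiv z_0$. Because $z_0\notin K'$, admissibility of $\mathbf{J}$ makes $u^h$ genuinely holomorphic on a neighbourhood of $\R\times\{0\}$; now the Schwarz reflection principle extends $u^h$ across $\R\times\{0\}$, and the extended holomorphic map is constant on the line $\R\times\{0\}$, hence constant by the identity theorem. It follows that $u^h|_{(u^h)^{-1}(\C\backslash K')}$ is constant equal to $z_0$, and by continuity $u^h$ never enters $K'$ at all, so $u^h\equiv z_0$. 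The missing ingredient in your attempt is precisely this reflection step on the testing-Lagrangian side.
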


\begin{proof}
First consider the case that the two asymptotic points of $u$ lie on the same bottleneck, say $M \times \{z_0\}$.
Let $\mathbf{J}$ be product-like away from $M \times K'$

Since $u(\R \times \{0\}) \subset \overline{V_0}$ and the two asymptotes project to $z_0$,
we have that $\pi_2 \circ u(\R \times \{0\})=z_0$ as in Lemma \ref{l:compactness}.
Moreover, $\pi_2 \circ u$ is holomorphic over $\mathbb{C} \backslash K'$ apart from branch jump marked points
(ie. $\pi_2 \circ u|_{(\pi_2 \circ u)^{-1}(\mathbb{C} \backslash K')}$ is holomorphic).
In particular, $\pi_2 \circ u$ is holomorphic near $\R \times \{0\}$.
By reflection principle, we can extend the domain of $\pi_2 \circ u|_{(\pi_2 \circ u)^{-1}(\mathbb{C} \backslash K')}$ across $\R \times \{0\}$, but this extended holomorphic map sends $\R\times\{0\}$ to $z_0$.  This concludes that $\pi_2 \circ u|_{(\pi_2 \circ u)^{-1}(\mathbb{C} \backslash K')}$, and hence $\pi_2 \circ u$, is a constant.

The second half of the lemma is obvious because the two different asymptotes
guarantee that $Im(u)$ intersects at least two different connected components of $\overline{V_1} \backslash (M \times K')$ and hence
the boundary condition on $u$ implies $u(\R \times \{1\}) \cap (M \times K') \neq \emptyset$, where the almost complex structure has full genericity.
\end{proof}

Now, we continue the proof of Lemma \ref{l:regularityAd}.

If a marked strip $u$ intersects $M \times K$, then by Lemma \ref{l:reflection}, the two asymptotes of $u$
are either on different bottlenecks or one of them is not on a bottleneck.
In either case, $u$ must intersect $M \times K'$, and hence regular for generic admissible $\mathbf{J}$.

We now address the regularity of a marked strip $u$ that has no branch jump.
It suffices to consider $u$ that does not intersect $M \times K$, in other words, $u$ that completely lie on a bottleneck.
The regularity of these $u$ for generic admissible $\mathbf{J}$ can be argued as in the proof of Lemma \ref{l:regularity}.
\end{proof}

\subsubsection{Positivity assumption on cobordism}

\begin{lemma}\label{l:preservingPositivityAssumption}
The Lagrangian immersions $\iota_{V}$ and $\iota_{N,\gamma}$ satisfy Assumption {\bf (A)} if $\iota_N, \iota_{L_i},\iota_{L_j'}$
and all immersed points of $\iota_{V}$ in $M \times K$ satisfy Assumption {\bf (A)}.
\end{lemma}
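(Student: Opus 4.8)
The plan is to verify Assumption {\bf (A)} for $\iota_{N,\gamma}$ and $\iota_V$ by examining each self-intersection point and relating its index and action (energy) to the corresponding quantities for the factor Lagrangians. Recall that $\iota_{N,\gamma}=\iota_N\times\gamma$, so its set of branch jumps is $R_{\iota_{N,\gamma}}=R_{\iota_N}\times\Delta_\gamma$, i.e. branch jumps of $\iota_{N,\gamma}$ are exactly pairs $((p,z),(q,z))$ with $(p,q)\in R_{\iota_N}$ and $z\in\R$ (since $\gamma$ is an embedded curve it contributes no self-intersections). Using the convention that $\C$ carries the standard complex volume form and $M\times\C$ carries the product quadratic complex volume form, the grading of $\iota_{N,\gamma}$ restricted to such a point splits, exactly as in the index computation of Lemma \ref{l:indexUnchange} and equation \eqref{e:splitInd}: the $\C$-factor contributes $0$ to the index of a branch jump (both branches of $\gamma\times\{z\}$ agree tangentially), so $Ind((p,z),(q,z))=Ind(p,q)$. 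Likewise the primitive of the (exact) one-form on $N\times\R$ is $f_N\circ\pi_N$ plus a contribution from $\gamma$ which is equal on the two branches, so the energy of branch jump satisfies $E((p,z),(q,z))=E(p,q)$. Hence {\bf (A)} for $\iota_N$ immediately gives {\bf (A)} for $\iota_{N,\gamma}$.

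For $\iota_V$ the argument is the same in spirit but one must account for the three regions of the cobordism: the negative ends $\wt\iota_i$, the positive ends $\wt\iota_j'$, and the body $M\times K$. Over the cylindrical part of a negative end $\wt\iota_i$, the pinched Lagrangian restricted to $L_i\times(-\infty,-2)$ is of the product form (away from the bottleneck), so its branch jumps are those of $\iota_{L_i}$ crossed with a point of $\R$; as in the previous paragraph the index and the energy of such a branch jump equal those of the corresponding branch jump of $\iota_{L_i}$, and {\bf (A)} for $\iota_{L_i}$ transfers. Here one uses that the bottleneck region, where the projection to $\C$ is a double cone, lies inside the product part $L_i\times\R$ and introduces no new self-intersections: the bottleneck is a clean (in fact transverse) self-tangency of the $\C$-projection only, and over it the Lagrangian is still $\iota_{L_i}\times(\text{two line segments})$, whose self-intersection set is again $R_{\iota_{L_i}}\times\{pt\}$ with unchanged index (the $\C$-factor again contributes $0$) and unchanged energy. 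The same applies to the positive ends $\wt\iota_j'$. Finally, the self-intersection points of $\iota_V$ lying in $M\times K$ are, by hypothesis, assumed to satisfy {\bf (A)} directly.

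Combining: every branch jump of $\iota_V$ is either a branch jump in $M\times K$ (covered by hypothesis), or a branch jump on a cylindrical end (with index and energy inherited from $\iota_{L_i}$ or $\iota_{L_j'}$, hence covered by {\bf (A)} for those), so $\iota_V$ satisfies {\bf (A)}. I expect the only delicate point — and the place worth writing carefully — to be the claim that the pinched/bottleneck modification does not create additional self-intersections and does not change the index contribution, i.e. that the double-cone shape in the $\C$-factor is, from the point of view of self-intersections and their gradings, equivalent to a product of $\iota_{L_i}$ with an embedded curve; once that is granted the rest is the routine splitting of indices and primitives over a product, already carried out in Lemma \ref{l:indexUnchange}.
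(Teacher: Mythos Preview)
Your overall strategy---split the self-intersections of $V$ into those in $M\times K$ and those on the pinched ends, and for each branch jump compare energy and index to the fiber quantity---matches the paper's. The energy argument is fine. But both of your index computations are off, and your picture of the pinched ends is wrong.

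For $\iota_{N,\gamma}$: the two branches at $((p,z),(q,z))$ do share the line $T_z\gamma$ in the $\C$-factor, but this does \emph{not} make the $\C$-contribution vanish. In the formula $Ind = n + \theta_1 - \theta_0 - 2\,Angle$ the ambient half-dimension $n$ has gone up by one while the angle from the $\C$-factor is zero (clean along $T_z\gamma$), so $Ind((p,z),(q,z)) = Ind(p,q) + 1$, as the paper states. This only helps {\bf (A)}, so your conclusion survives, but the stated equality is false.

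For the bottleneck ends of $V$: the pinched Lagrangian is \emph{not} of the form $\iota_{L_i}\times(\text{curve})$---indeed the whole purpose of the pinching is to avoid the non-clean ray of self-intersections such a product would create (see the discussion around Figure~\ref{fig: sample cobordism}). By Definition~\ref{d:conical} the end is an embedding away from $L_i\times\{t_0\}$, so all branch jumps on an end sit at the single bottleneck fiber; there the horizontal tangent line $T^h_1(x,t_0)\subset T_{z_i}\C$ genuinely depends on $x\in L_i$ and ranges over the double-cone sector. The paper's argument is to run a path in $L_i\times\{z_i\}$ from $p_-$ to $p_+$, note that the induced path of graded horizontal lines stays within the cone, and conclude that the $\C$-contribution is $0$ or $1$. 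Hence $Ind(p_-,p_+) = Ind(\wt p_-,\wt p_+) + \epsilon$ with $\epsilon\in\{0,1\}$, which again preserves {\bf (A)}. So the genuinely delicate point is bounding this horizontal index contribution via the cone shape---not the absence of new self-intersections, which is immediate from the definition.
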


\begin{proof}

We start by computing the energy on the cobordisms.  For any immersed point $(p_-,p_+)\in R_{N\times\gamma}$,
we write it as  $(\wt p_-, \wt p_+) \times z$ for the corresponding $(\wt p_-, \wt p_+)\in R_N$ and $z \in \C$.
By taking a split primitive one form $\alpha\oplus\alpha_\C$ and a curve $\wt l \subset N$ with ends at $\wt p_-$ and $\wt p_+$, one has

\beq\label{e:energy}E(\widetilde p_-,\widetilde p_+)= \int_{\wt l} \iota_N^*\alpha
= \int_{l} \iota_{V_1}^*(\alpha+\alpha_{\mathbb{C}})= E(p_-,p_+).\eeq

where $l= \wt l \times z$.
This computation also applies to immersed points on bottlenecks of $V$.

We now consider the index part.   For the $N\times\gamma$ case, a direct calculation gives $Ind(\widetilde p_-,\widetilde p_+)+1=Ind(p_-,p_+)$ which implies the conclusion for $N\times\gamma$.

For an immersed Lagrangian cobordism with bottleneck $\iota_V$, let $(\widetilde{p}_-,\widetilde{p}_+) \in R_{L_1}$ be a branch jump type  at the bottleneck $L_1\subset M\times\{z_1\}$, and denote $(p_-,p_+)=(\wt p_-,\wt p_+)\times\{z_1\}\subset R_V$.
Let $T_{p_{\pm}}=(d\iota_{V})(T_{p_{\pm}}V)$.
We can write $T_{p_{\pm}}=T^v_1(p_{\pm}) \oplus T^h_1(p_{\pm})$, where $T^v_1(p_{\pm}),T^h_1(p_{\pm})$ are the vertical part
and the horizontal part in $T_{p_\pm}V$ explained in Lemma \ref{l:indexUnchange}.
Since the quadratic complex volume form defining gradings splits, the grading of $T_{p_\pm}$ and $T^v_1(p_\pm)$ determines a grading on $T^h_1(p_{\pm})$, denoted as $\theta^h_{\pm}$.  We claim that $Ind((T^h_1(p_{-}),\theta^h_-),(T^h_1(p_{+}),\theta^h_+))=0$ or $1$.

Take a path $l(p_-,p_+)\subset L_1\times\{z_1\}$ with ends at $p_\pm$.  $l(p_-,p_+)$ induces a path of graded Lagrangian subspace as above from the grading of $V$, which decomposes similarly as before.  The horizontal component $l^h(p_-,p_+)$, as a path in $Gr_{lag}^\infty(\C)$, is bounded within the conic region on the plane specified by the bottleneck, which has to have index $0$ or $1$.  This index coincides with $Ind((T^h_1(p_{-}),\theta^h_-),(T^h_1(p_{+}),\theta^h_+))$.

Therefore, we have
\beq\label{e:index}Ind(T_{p_-},T_{p_+})=Ind(\widetilde{p}_-,\widetilde{p}_+)+\epsilon(p_-,p_+),\quad \epsilon(p_-,p_+)=0\text{ or }1.\eeq

Combining this with \eqref{e:energy} the result follows.
\end{proof}

\subsubsection{Proof of Theorem \ref{t:ImmersedFloer}}

We now may prove the well-definedness of $HF(\iota_{N,\gamma},\iota_V)$ and the invariance of admissible isotopy.  The usual proof sketched in Section \ref{s:ILFreview} mostly applies: we study the boundary moduli space of holomorphic strips with no branch jumps, $\widetilde{\mathcal{M}}(p_{-},p_+,\alpha=\emptyset,\mathbf{J})$, for index gap $Ind(p_-)-Ind(p_+)=2$; and $\widetilde{\mathcal{M}}^m(p_{-},p_+,\alpha,\mathbf{J}_s)$, $p_- \in\iota_{N,1}\cap\iota_V$, $p_+ \in\iota_{N,0}\cap\iota_V$ for an admissible isotopy $\iota_{N,s}$ and $Ind(p_-)-Ind(p_+)=1$ (for the chain homotopy, consider a family version of latter moduli space with zero index gap).  What we need to reconsider in our situation is the compactification for these moduli spaces, and the absence of fishtails.

The compactness was essentially proved in \ref{s:compactness}, because the only part to address is that the pinched ends are usually not cylindrical in an ordinary sense.  But we showed in Lemma \ref{l:compactness} that any holomorphic curve in the above moduli spaces has to project into $Cl(K)$ in the semi-admissible setting, hence reducing the problem to ordinary Gromov compactness theorem.

For the fishtails, we have the following lemma for generic semi-admissible data:

\begin{lemma}\label{l:nonregularControl}
Let $u$ be a stable curve on the boundary of a moduli space of the first two kinds among curves of interests (Defimition \ref{d:curveInt}).
Suppose $u$ consists of a single marked strip $u_0$ (possibly non-regular) with $k$ ($k \ge 1$) branch jumps,
together with fishtail bubble trees $\overline v_j$ rooted at the branch jumps of $u_0$.
Then the index of $u$ is at least three.

\end{lemma}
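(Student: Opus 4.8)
The plan is to compare the index of $u$ to that of an auxiliary marked strip \emph{without} branch jumps, obtained by ``capping off'' each fishtail bubble tree, and then to invoke Assumption \textbf{(A)} together with the index/energy relations established earlier. First I would recall that $u$ sits in the Gromov boundary of $\widetilde{\mathcal{M}}(p_-,p_+,\alpha=\emptyset;\mathbf{J})$ (or its moving-boundary analogue), and that for a boundary configuration the total index is preserved: $Ind(u)=Ind(p_-)-Ind(p_+)$ (or the moving-boundary count), so it suffices to show this number is $\ge 3$, or equivalently to bound $Ind(u_0)$ together with the indices of the $\overline v_j$. The additivity of the index over the bubble tree gives
\[
Ind(u)=Ind(u_0)+\sum_{j=1}^{k}Ind(\overline v_j),
\]
where by Definition \ref{d:markedStrip} the index of $u_0$ as an $\alpha$-marked strip already subtracts $\sum_j Ind(\alpha_0(j))+\sum_j Ind(\alpha_1(j))$, i.e. the indices of the branch jump types at which the fishtails are attached. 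So the contribution of the $j$-th fishtail $\overline v_j$, rooted at a branch jump of type $(\widetilde p_-^{\,j},\widetilde p_+^{\,j})$, is $Ind(\overline v_j)-Ind(\widetilde p_\mp^{\,j},\cdot)$, and I want to show each such combined contribution is nonnegative and controlled.

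The key step is a monotonicity/energy argument on each fishtail. Each $\overline v_j$ is a tree of $J$-holomorphic disks with boundary on $\overline{V_0}$ or $\overline{V_1}$ and all nodes at self-intersection points; its boundary reads off a single incoming branch jump (the root) and a collection of outgoing ones, and a telescoping of the energy identity over the tree shows that the energy of $\overline v_j$ equals $E(\widetilde p^{\,j})$ minus the sum of the energies at the outgoing nodes, hence by exactness and positivity of areas is $\ge 0$; combined with the index-area relation for the exact/monotone setting this forces $Ind(\overline v_j)\ge Ind$ of its incoming branch jump type whenever the fishtail is non-constant, and Assumption \textbf{(A)} (applied via Lemma \ref{l:preservingPositivityAssumption}, which guarantees $\iota_V$ and $\iota_{N,\gamma}$ also satisfy \textbf{(A)}) then gives $Ind(\overline v_j)\ge 3$ for each $j$, or more precisely that the net contribution of each fishtail to $Ind(u)$ is $\ge 2$. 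Then I would note that the main component $u_0$, carrying at least one genuine boundary puncture at $p_-$ and at $p_+$ with $p_\pm\in\iota_{N,\gamma}\cap\iota_V$ and being somewhere-injective in the $\mathbb{C}$-direction (it projects either into $Cl(K)$ or onto a bottleneck, by Lemma \ref{l:compactness}), is regular by Lemma \ref{l:regularity}, so $Ind(u_0)\ge 1$; adding $Ind(u_0)\ge 1$ to the $\ge 2$ contribution of at least one fishtail ($k\ge 1$) yields $Ind(u)\ge 3$.

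The main obstacle I anticipate is making the energy telescoping on a general fishtail \emph{bubble tree} (not just a single disk) rigorous: one has to order the tree from the root outward, match the branch jump data at each internal node (the outgoing type of a parent equals the incoming type of its child), and check that the resulting alternating sum of energies and of indices collapses correctly, using the sign convention $E(p,q)=-E(q,p)$ and $Ind(p,q)+Ind(q,p)=$ (appropriate dimension) for the branch jump pairs. This is exactly the kind of index bookkeeping carried out in \cite[Lemma 6.2]{AB14} for $d^2=0$ and for the continuation maps, and I would adapt that computation verbatim, the only new input being that the ``ambient'' Lagrangians here are $\iota_V$ and $\iota_{N,\gamma}$ rather than closed Lagrangians — which is harmless since Lemma \ref{l:preservingPositivityAssumption} transfers Assumption \textbf{(A)} to them, and Lemma \ref{l:compactness} confines all curves of interest to $M\times Cl(K)$ where the usual Gromov compactness and index theory apply. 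A minor secondary point is the constant-fishtail case: if some $\overline v_j$ is constant it contributes index $0$ but then it is not a genuine bubble and can be absorbed, so one may assume every fishtail in the configuration is non-constant.
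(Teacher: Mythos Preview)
Your argument has a genuine gap at the step where you claim ``the main component $u_0$ \ldots is regular by Lemma \ref{l:regularity}, so $Ind(u_0)\ge 1$.'' Lemma \ref{l:regularity} establishes regularity only for $\alpha$-marked strips that either intersect $M\times K$ or have \emph{no} branch jumps. Here $u_0$ carries $k\ge 1$ branch jumps, and nothing prevents it from lying entirely on a bottleneck (your own parenthetical acknowledges this possibility). In that case $\pi_{\mathbb C}\circ u_0$ is constant, so $u_0$ is certainly not somewhere-injective in the $\mathbb C$-direction, and Lemma \ref{l:regularity} gives no conclusion. Indeed the paper states just before Lemma \ref{l:regularity} that marked strips with branch jumps lying on a bottleneck ``are not necessarily regular for generic semi-admissible data.'' The statement of the lemma itself flags $u_0$ as ``possibly non-regular'' precisely for this reason.

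The paper's proof handles exactly this case, and its mechanism is what your proposal is missing. When $u_0$ is non-regular it must lie on a bottleneck $M\times\{z_i\}$; there the almost complex structure is a product and generic on the $M$-factor, so $u_0$ \emph{is} regular when regarded as a marked strip in $(M,\omega)$ between $\iota_N$ and $\iota_{L_i}$. One then applies the fiber dimension formula
\[
Ind(\widetilde p_-)-Ind(\widetilde p_+)-\sum_{i=1}^k Ind(\widetilde\alpha(i))+k-1\ \ge\ 0,
\]
transfers back via Lemma \ref{l:indexUnchange} to get $Ind(p_-)-Ind(p_+)\ge \sum_i(Ind(\widetilde\alpha(i))-1)+1$, and finally uses Assumption {\bf (A)} on the fiber Lagrangians (each $\widetilde\alpha(i)$ has positive energy because a nontrivial fishtail is attached) to obtain $Ind(\widetilde\alpha(i))\ge 3$ and hence the bound $\ge 3$. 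Your fishtail energy/index telescoping is in the right spirit, but without this passage to the fiber you cannot extract a lower bound on $Ind(u_0)$ in the non-regular case, and the argument does not close.
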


\begin{proof}
We first assume $u_0$ is non-regular.
By Lemma \ref{l:regularity}, a non-regular marked strip from $p_-$ to $p_+$ must lie on a bottleneck.
Since the almost complex structure is product like at the bottleneck and we choose it to be regular on fibers, this marked strip is regular when viewed as a strip in $(M,\w)$ (from $\wt p_-$ to $\wt p_+$).
Therefore, calculated on $M$,
$Ind(\wt p_-)-Ind(\wt p_+)-\sum_{i=1}^k Ind(\wt\alpha(i))+k-1 \ge 0$, where $\wt\alpha(i)$ are the branch jump marked points \cite[Corollary 5.6]{AB14}.  This implies

\begin{eqnarray*}
&& Ind(p_-)-Ind(p_+)\\
&=& Ind(\wt p_-)-Ind(\wt p_+)\quad (\text{Lemma} \ref{l:indexUnchange})\\
&\ge& \sum(Ind(\wt\alpha(i))-1)+1\\
&\ge& 3\end{eqnarray*}

The sum of indices of $u_0$ and $\overline v_j$ equals $Ind(p_-)-Ind(p_+)$ by definition, which is at least three in turn.
The proof when $u$ is regular is a similar index computation and easier.
\end{proof}

For the definition and invariance of immersed Floer theory involving cobordisms, we do not need to consider the moduli space with index gap $3$ or higher.  This implies we recover the absence of fishtails for all relevant cases, hence concluding Theorem \ref{t:ImmersedFloer}.

\begin{proof}[Proof of Theorem \ref{t:ImmersedFloerAd}]
We first make one observation.
From the proof for semi-admissible case,  Lemma \ref{l:regularityAd} already guarantees the Floer cohomology is well-defined (compare Lemma \ref{l:regularity}) because the well-definedness does not involve any moving boundary condition.

To show that the Lagrangian Floer cohomology is not only well-defined but also invariant
under admissible horizontal isotopy, a direct attempt would be to address the regularity for marked strips with moving boundary conditions
for Lemma \ref{l:regularityAd} as in Lemma \ref{l:regularity}.
However, there is no obvious reason that it is true and we bypass this issue as follows.

If $\iota_{N,s}$ is an admissible horizontal isotopy from $\iota_{N,0}$ to $\iota_{N,1}$ and $\mathbf{J}_0,\mathbf{J}_1$ are
generic admissible Floer data that computes the Floer cohomology for $HF(\iota_{N,0},\iota_{V})$ and $HF(\iota_{N,1},\iota_{V})$,
 then there is a generic choice of {\it semi-admissible} $\mathbf{J}_s$ connecting $\mathbf{J}_0$ and $\mathbf{J}_1$ inducing a chain map (which is also a chain homotopy)
 from $CF(\iota_{N,0},\iota_{V})$ to $CF(\iota_{N,1},\iota_{V})$, by Theorem \ref{t:ImmersedFloer}.
The invariance under admissible horizontal isotopy is hence concluded.
\end{proof}

\subsubsection{Biran-Cornea's cobordism exact sequence in the immersed setting}\label{s:LES}

\begin{thm}\label{t:ImmersedFloerLES}
Let $\iota_{V}$ be an exact immersed Lagrangian cobordism with bottlenecks from
$(\iota_{L_1},\dots,\iota_{L_k})$ to $(\iota_{L'_{k'}},\dots,\iota_{L'_{1}})$ such that
$\iota_{L_i}$ and all interior immersed points of $\iota(V)$ satisfy Assumption {\bf(A)} for all $i$,
then for any cleanly immersed Lagrangian $\iota_N$ which is not a covering and satisfies the positivity assumption,
we have
$$Cone (CF(\iota_N,\iota_{L_1}) \to \dots \to CF(\iota_N,\iota_{L_k})) \cong Cone(CF(\iota_N,\iota_{L'_{k'}})\rightarrow\cdots\rightarrow CF(\iota_N,\iota_{L'_{1}})) $$

\end{thm}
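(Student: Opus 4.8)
The plan is to follow the now-standard Biran-Cornea strategy for extracting an iterated-cone relation from a Lagrangian cobordism, but carried out entirely within the immersed Floer package developed in Sections \ref{s:immersedFloer}--\ref{s:immersedCob}. The key technical point is that Theorems \ref{t:ImmersedFloerAd} and \ref{t:ImmersedFloer} already supply a well-defined, isotopy-invariant complex $CF(\iota_{N,\gamma},\iota_V)$ for any admissible testing Lagrangian $\iota_{N,\gamma}=\iota_N\times\gamma$, and the whole proof amounts to computing this complex in two different ways by choosing two different admissible testing curves $\gamma$.

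First I would fix an admissible testing curve $\gamma_0$ that, outside the body $K$, has its negative end running below all the negative bottlenecks $z_i$ and its positive end running above all the positive bottlenecks $z_j'$ --- concretely, $\gamma_0$ should cross each pinched end $\wt L_i\times(-\infty,-1)$ and $\wt L_j'\times(1,\infty)$ transversally in exactly one point, at a height where the projection to $\C$ lies in the product-like region. For an \emph{admissible} (not merely semi-admissible) Floer datum $\mathbf{J}$, Lemma \ref{l:compactness} forces every curve of interest either to meet $M\times K$ or to lie entirely on a bottleneck, and Lemma \ref{l:reflection} upgrades this: a marked strip with both asymptotes on the same bottleneck lies on that bottleneck. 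Using the splitting Lemma \ref{l:operatorSplits} and the index identity Lemma \ref{l:indexUnchange}, the part of the differential supported on the bottleneck $M\times\{z_i\}$ is identified with the immersed Floer differential of $(\iota_N,\iota_{L_i})$ (the horizontal factor being a regular constant strip of index $0$). Thus with this choice of $\gamma_0$ the complex $CF(\iota_{N,\gamma_0},\iota_V)$ is, as a graded group, $\bigoplus_i CF(\iota_N,\iota_{L_i})[?]\oplus\bigoplus_j CF(\iota_N,\iota_{L_j'})[?]$, and I would argue — exactly as in Section 6 of \cite{BC13} — that the structure of the ends (the ordering of the $Im(z_i)$ and $Im(z_j')$, which controls which strips are energy-positive hence which off-diagonal components of $d$ survive) makes this the \emph{iterated cone} $Cone(CF(\iota_N,\iota_{L_1})\to\cdots\to CF(\iota_N,\iota_{L_k}))$ on one side and $Cone(CF(\iota_N,\iota_{L_{k'}'})\to\cdots\to CF(\iota_N,\iota_{L_1'}))$ on the other, with the prescribed grading shifts absorbed into the cone convention (cf. the discussion after Theorem \ref{thm: Cone from Lagrangian cobordism}).

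The second step is to produce a single admissible horizontal isotopy of testing curves $\{\gamma_s\}$ from a $\gamma_0$ adapted to the negative ends to a $\gamma_1$ adapted to the positive ends, with $Im(\iota_{N,0})\cap Im(\iota_{N,1})\cap\ov V=\emptyset$; such a $\gamma_s$ exists because the space of testing curves is connected and one can slide the "corner" of the curve from the lower-left to the upper-right past the body. Then Theorem \ref{t:ImmersedFloerAd} gives $HF(\iota_{N,\gamma_0},\iota_V)\cong HF(\iota_{N,\gamma_1},\iota_V)$, and more: the continuation chain map is a chain homotopy equivalence, so the two iterated-cone complexes are quasi-isomorphic, which is the assertion of the theorem. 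One subtlety I would flag: to get an isomorphism of \emph{cones} (as objects, i.e. a filtered quasi-isomorphism) rather than merely of their homologies, I would want the continuation map to respect the action filtration coming from the heights of the bottlenecks, exactly as in \cite{BC13}; in the immersed setting this follows from the energy computation \eqref{e:energy} together with Lemma \ref{l:localEnergyClean2}-type monotonicity of primitives, but I should check it rather than quote it.

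The main obstacle I anticipate is not the cone bookkeeping but the \textbf{absence of fishtail bubbling in the continuation/homotopy moduli spaces with moving boundary}, since — as noted just before the proof of Theorem \ref{t:ImmersedFloerAd} — admissible $\mathbf{J}_s$ does not give full genericity at the bottlenecks. The resolution is the one already built into Section \ref{s:immersedCob}: run the moving-boundary argument with generic \emph{semi-admissible} $\mathbf{J}_s$ interpolating the two admissible data (legitimate by Theorem \ref{t:ImmersedFloer}), and invoke Lemma \ref{l:nonregularControl} together with Assumption {\bf (A)} on $\iota_N$, the $\iota_{L_i}$, $\iota_{L_j'}$ and the interior immersed points of $\iota_V$ to see that any configuration with a branch-jump bubble has index $\ge 3$, hence cannot appear on the boundary of the $0$- or $1$-dimensional moduli relevant to the chain map and chain homotopy. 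The remaining boundary strata are then the standard ones, and the argument of Floer closes as in the embedded case. I would also need to observe that the genericity requirement $\ov R_{\iota_{L_i}}\cap\ov{\iota_N}=\ov R_{\iota_N}\cap\ov{\iota_{L_i}}=\emptyset$ and transverse-at-smooth-points intersections can be arranged by an admissible horizontal perturbation of $\iota_{N,\gamma}$ before starting, which is harmless by the isotopy invariance just established.
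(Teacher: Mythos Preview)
Your overall strategy---compute $CF(\iota_{N,\gamma},\iota_V)$ for two admissible testing curves and compare via the horizontal-isotopy invariance of Theorem~\ref{t:ImmersedFloerAd}---is exactly the paper's approach. However, your first and second paragraphs describe \emph{different} curves both called $\gamma_0$: in the first, $\gamma_0$ crosses \emph{every} pinched end (negative and positive), while in the second, $\gamma_0$ is ``adapted to the negative ends'' only. The first version is problematic on two counts. First, an admissible testing curve has $\pi_y\circ\gamma$ non-increasing, so a single curve can cross all $z_i$ (on the vertical segment at $x=-2$) and then all $z_j'$ (at $x=2$) only if $\max_j Im(z_j')<\min_i Im(z_i)$, which is not part of Definition~\ref{d:ImmCobwBottleneck}. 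Second, even when such a curve exists, the resulting complex is one large iterated cone over all the $L_i$ and $L_j'$ together; the claim that this single complex ``is'' the left-hand cone \emph{and} the right-hand cone has no meaning as written, and no isotopy has yet entered to relate anything.

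The paper proceeds directly with your second-paragraph setup: take $\widecheck\gamma$ passing through only the negative bottlenecks $z_i$ and $\widehat\gamma$ through only the positive bottlenecks $z_j'$. For $\widecheck\gamma$, all intersections $\iota_{N,\widecheck\gamma}\cap\ov V$ lie on the negative bottlenecks, and a projection argument (the sign of $\partial_s(\pi_\C\circ u)(s_0,0)$ on $\partial K$, using Lemma~\ref{l:compactness} and Lemma~\ref{l:reflection}) shows the differential is block lower-triangular with diagonal blocks $\partial_i$ on $CF(\iota_N,\iota_{L_i})$---this is the left-hand iterated cone. The same reasoning for $\widehat\gamma$ gives the right-hand cone. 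The quasi-isomorphism is then a direct citation of Theorem~\ref{t:ImmersedFloerAd}; your third paragraph's re-derivation of fishtail exclusion, while correct, is already packaged into that theorem and need not be repeated here. (Also: the filtration is obtained topologically from the projection, not from an energy/primitive argument; Lemma~\ref{l:localEnergyClean2} plays no role.)
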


\begin{proof}
The main idea of the proof follows from \cite[Theorem 2.2.1]{BC13}.
By \cite{AB14}, $HF(\iota_N,\iota_{L_1})$ is invariant under Hamiltonian isotopy in $M$ as long as {\bf (A)} is satisfied.
Therefore, we can assume
$\iota_N$ intersect $\iota_{L_i} $ transversally at smooth points for all $i$, possibly after a Hamiltonian perturbation of $\iota_N$.  Similar restrictions applies also to $L_j'$.

Let $\widehat{\gamma}$ be an \textit{admissible} testing curve passing through only bottlenecks of $L_j'$ and $\widecheck\gamma$ passing through only bottlenecks of $L_i$ (eg.  $\widehat{\gamma}=\gamma_0$ and $\widecheck\gamma=\gamma_1$ in Figure \ref{fig:testingCurve}).
Then $\iota_{N,\widehat\gamma}$ and $\iota_{N,\widecheck\gamma}$
are admissible testing Lagrangian with clean self-intersection that are admissible horizontal isotopic.
Also , we have
$\iota_{N,\wh\gamma} \cap \overline{V}= \overline{N} \cap (\overline L_1' \cup \dots \cup \overline L_{k'}')$
and $\iota_{N,\wc\gamma} \cap \overline{V}= \overline{N} \cap (\overline{L_1} \cup \dots \cup \overline{L_k})$.

By Theorem \ref{t:ImmersedFloerAd}, for a generic {\textit admissible} Floer data
$(\mathbf{H}=0,\mathbf{J})$, the Floer cohomology
$CF(\iota_{N,\wh\gamma},\iota_{V})$ and $CF(\iota_{N, \widecheck{\gamma}},\iota_{V})$ are well-defined
and chain homotopic, which we will identify the two cochain complexes with an iterated mapping cone.  It suffices to focus on $\iota_{N,\wc\gamma}$ below.

Note that all intersections $\iota_{N,\wc\gamma}\cap\ov V$ are contained in the bottlenecks and we consider Floer strips between them. 
Note also that for any holomorphic strip $u$, from Lemma \ref{l:compactness} one has $\partial_s(\pi_\C\circ u)(s_0,0)$ points upward or vanishes in the complex plane, as long as $\pi_\C\circ u(s_0,0)\in\partial K$.

This simple fact has two consequences.  Firstly, if a strip $u$ without branch jumps does not have $\pi_\C\circ u(\R \times \{0\})$ being a constant,
the two asymptotes of $u$ are on different bottlenecks and $u$
contributes to the differential from the bottleneck of $L_{i_0}$ to $L_{i_1}$ for some $i_0<i_1$.
Consequently, if $\pi_\C\circ u(\R \times \{0\})$ is a constant, then $u$ completely lies on a bottleneck by Lemma \ref{l:reflection}.
and contributes to the differential from the bottleneck of $L_{i}$ to itself for some $i$.
This gives a filtration of $CF(\iota_{N,\wc\gamma},\iota_V)$.

Secondly, for two points $p_{\pm}\in \iota_{N,\wc\gamma}\cap \ov V$ that are on the bottleneck of $L_{i}$,
we have that $Ind(p_-)-Ind(p_+)$ is the same
no matter viewing it as an intersection point of $\iota_{N,\wc\gamma}$ and $\ov V$,
or as that of $\iota_N$ and  $\iota_{L_i}$ by Lemma \ref{l:indexUnchange}.
Moreover, there is an obvious bijection between Floer strips contributing to $CF(\iota_{N,\wc\gamma},\ov V)$ with both
asymptotes on $\iota_{L_i}$ and
Floer strips contributing to $CF(\iota_{N},\iota_{L_{i}})$.
Furthermore, these strips are all regular, as argued in Lemma \ref{l:indexUnchange}.
As a result, the differential $\check{\partial}$ of the chain complex $CF(\iota_{N,\wc\gamma},\ov V)$
is a lower triangular matrix with the diagonal entries being $\partial_i$, the differential of $CF(\iota_N,\iota_{L_i})$.
The result follows.

\end{proof}

\subsection{Immersed construction}\label{s:ImmersedConstruction}

In this subsection, we let $L_0=\mathbb{CP}^{\frac{m}{2}} \subset T^*L_0$ be equipped with a grading.
We want to examine the Dehn twist long exact sequence for the complex projective space
when a Lagrangian $L_1$ intersects $L_0$ transversally at a point.
We interpret the cone relation predicted by Huybrechts and Thomas geometrically using immersed Lagrangians cobordism.
Precisely, we want to show the existence of the following two immersed cobordisms

\begin{enumerate}[(1)]
\item from $L_0[-2]$ and $L_0$ to an immersed sphere $S_{\looparrowright}$,
\item from $S_{\looparrowright}$ and $L_1$ to $\tau_{L_0}(L_1)$.
\end{enumerate}

We start by constructing the immersed sphere $S_{\looparrowright}$ associated to a Lagrangian $\CP^{\frac{m}{2}}$ in a Weinstein neighborhood.
\begin{lemma}\label{l:immersedSphere}
Let $x_0\in L_0$ and $D=\{x \in L_0| dist(x_0,x)=\pi \}= \mathbb{CP}^{\frac{m}{2}-1}$.
Let $L_{-1}$ be the graph of differential of $h(\cdot)=-dist^2(\cdot,x_0)$ in $T^*L_0$ equipped with the induced grading from $L_0$.

Then $S_{\looparrowright}=L_{-1}[-1]\#_{D}L_0$ is a graded immersed Lagrangian sphere with $R_{S_\looparrowright}=\{(\qno,q_0),(q_0,\qno)\}$, where $\qno$ and $q_0$ are both $x_0$ as a point on $T^*L_0$, while $\qno$ and $q_0$ comes from the branch $L_{-1}$ and $L_0$, respectively.  Moreover,
\begin{itemize}
\item $Ind(q_{-1},q_0)=-1$ (resp. $Ind(q_0,\qno)=m+1$), and
\item $E(\qno,q_0)<0$ (resp. $E(q_0,\qno)>0$)
\end{itemize}
\end{lemma}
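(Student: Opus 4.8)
\textbf{Proof proposal for Lemma \ref{l:immersedSphere}.}

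The plan is to analyze the clean surgery $S_\looparrowright = L_{-1}[-1]\#_D L_0$ using the local models already developed in the paper. First I would describe $L_{-1}$ concretely: since $h(\cdot)=-\mathrm{dist}^2(\cdot,x_0)$ on $\CP^{m/2}$ has a unique maximum at $x_0$ and a Morse--Bott minimum along $D=\CP^{m/2-1}$ (the cut locus), the graph $L_{-1}=\mathrm{Graph}(dh)$ meets the zero section $L_0$ exactly along $\{x_0\}\cup D$; near $D$ the intersection is clean (this is where the surgery is performed) and near $x_0$ it is transverse but \emph{not} resolved, so $x_0$ survives as the unique self-intersection of $S_\looparrowright$, giving $R_{S_\looparrowright}=\{(q_{-1},q_0),(q_0,q_{-1})\}$ with $q_{-1}$ the branch on $L_{-1}$ and $q_0$ the branch on $L_0$. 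Topologically, $L_{-1}$ minus a neighborhood of $D$ is a disk around $x_0$ (a disk bundle fiber collapses), and the flow handle $H^D_\nu$ attached along $D$ caps $L_0\setminus(\text{nbhd of }D)$ — which is also a disk — so $S_\looparrowright$ is a sphere (this is really the content of Lemma \ref{l:Surgery=Dehn} read in the fiber direction; $S_\looparrowright$ is the immersed sphere obtained by surgering two disks along $D$). Smoothness and the Lagrangian condition away from $x_0$ follow from Corollary \ref{c:flowGlueClean} and Lemma \ref{l:flowGlueClean}.

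Next I would compute the index $Ind(q_{-1},q_0)$. The graded surgery $L_{-1}[-1]\#_D L_0$ is performed along $D$, and by Lemma \ref{l:cleanIndex1} (or its $E_2$-variant) the surgery is \emph{graded} precisely when $Ind(L_{-1}[-1]|_D, L_0|_D)=\dim(D)+1 = (m-2)+1 = m-1$. Now $\dim D = m-2$ and by Corollary \ref{c:MorseBott} applied to $h$ — which has Morse--Bott \emph{minimum} at $D$ — we get $Ind(L_0|_D,\mathrm{Graph}(dh)|_D)=n-k_2$ with $n=m$, $k_2=\dim D = m-2$, so $Ind(L_0|_D, L_{-1}|_D)=2$; reversing the order and applying the shift $[-1]$ via Example \ref{e:gradingShift} gives $Ind(L_{-1}[-1]|_D, L_0|_D) = m - 2 + 1 = m-1$, confirming the surgery is graded and fixing all the conventions. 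Then $Ind(q_{-1},q_0)$ is the index at the \emph{other} intersection point $x_0$, where $h$ has a Morse \emph{maximum}: by Corollary \ref{c:MorseBott}, $Ind(L_0|_{x_0},\mathrm{Graph}(dh)|_{x_0})=n=m$, hence $Ind(\mathrm{Graph}(dh)|_{x_0},L_0|_{x_0})=n-m+(\text{shift})$... more carefully, $Ind(L_{-1}|_{x_0},L_0|_{x_0}) = -Ind(L_0|_{x_0},L_{-1}|_{x_0}) + 2n - 2\cdot(\text{something})$; I would instead use that for a point $Ind(p_1,p_2)+Ind(p_2,p_1)=n$ together with $Ind(L_0|_{x_0},L_{-1}|_{x_0})=m$ to get $Ind(L_{-1}|_{x_0},L_0|_{x_0})=0$, then apply the $[-1]$ shift on the first factor to obtain $Ind(q_{-1},q_0)=0-(-1)=-1$. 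The complementary index then follows from $Ind(q_0,q_{-1}) = m - Ind(q_{-1},q_0) + (\text{grading shift correction})$; tracking the shift carefully yields $Ind(q_0,q_{-1})=m+1$.

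Finally, the energy/action signs. Since $L_{-1}=\mathrm{Graph}(dh)$ is exact with primitive $h$ (Example \ref{e:primitiveGraph}) and $L_0$ is the zero section with primitive $0$, and the flow handle $H^D_\nu$ is an exact Lagrangian on which the primitive \emph{decreases along the handle} by Lemma \ref{l:localEnergyClean}, one patches a global primitive $f$ on $S_\looparrowright$. At $x_0$ the branch $q_0$ lies on $L_0$ (primitive $0$) and the branch $q_{-1}$ lies on $L_{-1}$ (primitive $h(x_0) = -\mathrm{dist}^2(x_0,x_0)=0$, but the patching constant shifts this): the point is that traversing from the $L_0$-sheet to the $L_{-1}$-sheet one passes through the handle where the primitive strictly decreases, so $f(q_{-1}) < f(q_0)$, i.e. $E(q_{-1},q_0) = -f(q_{-1})+f(q_0) > 0$... — here I must be careful with the sign convention $E(p,q)=-f_L(p)+f_L(q)$; matching the direction of the handle (from $L_1$ to $L_2$ the primitive decreases, and here the surgery is from $L_{-1}[-1]$ to $L_0$) gives $E(q_{-1},q_0)<0$ and $E(q_0,q_{-1})>0$ as claimed. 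The main obstacle I anticipate is purely bookkeeping: getting every grading shift $[-1]$, every order-reversal $Ind(p_1,p_2)\leftrightarrow Ind(p_2,p_1)$, and every sign in $E$ consistent with the paper's conventions in Section \ref{s:gradingEnergy} — the geometric content (sphere, one double point, Morse data of $-\mathrm{dist}^2$) is straightforward, but a single misplaced sign would flip $-1$ and $m+1$, so I would pin down one case (say $m=2$, where $\CP^1=S^2$ and everything can be checked by hand against Lemma \ref{l:Surgery=Dehn}) as a consistency check before asserting the general formula.
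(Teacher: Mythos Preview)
Your strategy is exactly the paper's: read off the intersection structure of $L_{-1}=\mathrm{Graph}(dh)$ with $L_0$ from the Morse--Bott data of $h=-\mathrm{dist}^2(\cdot,x_0)$, check the graded--surgery condition at $D$ via Lemma~\ref{l:cleanIndex1}, compute the index at the surviving double point $x_0$, and trace the primitive across the handle with Lemma~\ref{l:localEnergyClean}. Two bookkeeping corrections, both of which you flagged as risks. First, at the Morse--Bott \emph{minimum} $D$ the correct value from Example~\ref{e:MorseBottIndex} is $Ind(L_0|_D,L_{-1}|_D)=\dim D=m-2$, not $2$ (the stated formula $n-k_2$ in Corollary~\ref{c:MorseBott} is inconsistent with the example it summarizes); then $Ind(L_{-1}|_D,L_0|_D)=m$ and the shift $[-1]$ on the first slot \emph{subtracts} one by Example~\ref{e:gradingShift}, giving $m-1$ --- you reach the right answer, but through two cancelling sign errors. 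Second, for the energy the handle direction alone is not enough: you also need, as the paper uses explicitly, that the primitive $h$ on $L_{-1}$ \emph{increases} from its value $-\pi^2$ on $D$ to $0$ at $x_0$; only the combination of this with the decrease across the handle from the $L_{-1}$--side to the $L_0$--side forces $f(q_{-1})>f(q_0)$ and hence $E(q_{-1},q_0)<0$.
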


\begin{proof}
It is clear that $L_{-1}$ intersect with $L_0$ transversally at $x_0$ and cleanly at $D$.
The calculation of index is similar to Example \ref{e:MorseBottIndex}.
Precisely, since $x_0$ is maximum of $h(\cdot)$, we have $Ind(L_{0}|_{x_0},L_{-1}|_{x_0})=m$ and $Ind(L_{-1}|_{x_0},L_{0}|_{x_0})=0$.
On the other hand, $D$ is the minimum of $h(\cdot)$ and $D$ is of dimension $m-2$ so we have
$Ind(L_{0}|_D,L_{-1}|_D)=m-2$ and hence $Ind(L_{-1}|_D,L_{0}|_D)=m$.
Note that, $Ind(L_{-1}[-1]|_D,L_{0}|_D)=m-1$ so Lemma \ref{l:cleanIndex1} implies
that $S_{\looparrowright}$ is graded.
It is clear that it is an immersed sphere with the only immersed point at $x_0$.
The first bullet is also clear by the index computation before surgery, since the grading remains unchanged outside the surgery site.

For the energy, we consider the canonical one form $\alpha$ on $T^*L_0$.
Since $L_{-1}$ is the graph of $dh$, $h$ is a primitive function of $\alpha|_{L_{-1}}$. Note that $h$ attains its maximum at $x_0$ and minimum at $D$;
also by Lemma \ref{l:localEnergyClean}, the value of the primitive function of  $L_{-1}[-1]\#_{D}L_0$
decreases along the handle and the primitive function of $L_0$ is a constant.  Therefore, when we apply the gluing of primitive functions as in Lemma \ref{l:exactSimCob}, we have $E(\qno,q_0)<0$.
Finally, we have $E(q_0,\qno)=-E(\qno,q_0)>0$.
\end{proof}

We continue to use $q_0$ (resp. $q_{-1}$) to denote $x_0$ regarded as on the $L_0$ (resp. $L_{-1}$) branch of $S_{\looparrowright}$.

\begin{lemma}\label{l:SnCpn=Dehn}
Let $x_0 \in L_0$, $F_{x_0}$ be the cotangent fiber at $x_0$ and $S_{\looparrowright}$ as in Lemma \ref{l:immersedSphere}.
We equip a grading to $F_{x_0}$ such that $Ind(L_0|_{x_0},F_{x_0}|_{x_0})=0$.
Then there is a graded Hamiltonian isotopy from $S_{\looparrowright}[1]\#_{q_0}F_{x_0}$ to $\tau_{L_0}(F_{x_0})$.
\end{lemma}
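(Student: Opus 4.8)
\textbf{Proof plan for Lemma \ref{l:SnCpn=Dehn}.}

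The plan is to chain together the two surgery descriptions that have already been set up: the characterization of $\tau_{L_0}(F_{x_0})$ as an iterated surgery in Lemma \ref{l:cpnFiber}, and the definition $S_\looparrowright = L_{-1}[-1]\#_D L_0$ from Lemma \ref{l:immersedSphere}. The key point is that performing the surgery $\#_{q_0}F_{x_0}$ on $S_\looparrowright$ should, after unwinding the definition of $S_\looparrowright$, reassemble exactly into the two-step surgery $\CP^{m/2}\#_D(\CP^{m/2}\#_{x_0}F_{x_0})$ whose Hamiltonian isotopy class is $\tau_{L_0}(F_{x_0})$ by Lemma \ref{l:cpnFiber}. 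First I would observe that $S_\looparrowright$ coincides with $F_{x_0}$-unrelated data only near the zero section: away from a neighborhood of the zero section the branch of $S_\looparrowright$ through $q_0$ is just (an open piece of) the zero section $L_0$, so the surgery $S_\looparrowright\#_{q_0}F_{x_0}$ is supported where $S_\looparrowright$ looks like $L_0\cup L_{-1}$. Concretely, using the flow-handle picture (Corollary \ref{c:FlowSurgPT} and Example \ref{lemma: flow handle=Lagrangian handle}), gluing a flow handle $H_{\nu_\lambda}$ at $q_0$ onto $S_\looparrowright$ replaces a piece of the $L_0$-branch by the handle, while the $L_{-1}$-branch (which near $x_0$ is the graph of $d(-\mathrm{dist}^2(\cdot,x_0))$, i.e.\ a geodesic-flowed copy of $F_{x_0}$ out to radius $\pi$) stays. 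The union of the handle with the $L_{-1}$-branch is, up to the usual reparametrization of admissible/semi-admissible profiles, precisely the flow handle of radius $\lambda+\pi$ computed from $F_{x_0}$; this is the same profile-concatenation already exploited in the proof of Lemma \ref{l:cpnFiber} (see Figure \ref{fig:isotopy-admToDT2}).

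Next I would carry out the bookkeeping that makes this an identity of \emph{graded} Lagrangians. On the smooth level the previous paragraph gives $S_\looparrowright\#_{q_0}F_{x_0} = L_0\#_D(L_0\#_{x_0}F_{x_0})$ up to compactly supported Hamiltonian isotopy, and the right-hand side is Hamiltonian isotopic to $\tau_{L_0}(F_{x_0})$ by Lemma \ref{l:cpnFiber}. For the gradings, I would feed the index data of Lemma \ref{l:immersedSphere} — namely $Ind(q_0,q_{-1})=m+1$ — together with the grading normalization $Ind(L_0|_{x_0},F_{x_0}|_{x_0})=0$ into Corollary \ref{c:ptIndex}: one must check that the pair $(S_\looparrowright[1], F_{x_0})$ meets transversally at $q_0$ with index exactly $1$, so that the point surgery is a graded surgery. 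Here the shift $[1]$ on $S_\looparrowright$ is exactly what converts the relevant branch index into $1$: the branch of $S_\looparrowright$ through $q_0$ is the $L_0$-branch, whose index against $F_{x_0}$ at $x_0$ is $0$ before shifting and $1$ after the $[1]$ shift, by Example \ref{e:gradingShift}. Then Lemma \ref{l:cleanIndex1} (applied to the clean $D$-surgery that appears when one re-reads the construction as $L_0\#_D(\cdots)$) together with Corollary \ref{c:MorseBott} pins down the induced grading on $S_\looparrowright[1]\#_{q_0}F_{x_0}$, and one verifies it agrees with the grading on $\tau_{L_0}(F_{x_0})$ transported through the Hamiltonian isotopy (grading is propagated along Lagrangian isotopies by Remark \ref{r:propagateGrading}, and the isotopies here are all compactly supported Hamiltonian, hence carry gradings canonically).

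Finally I would assemble these pieces: the smooth/Hamiltonian identification from the first paragraph, the transversality-and-index check from the second, and invariance of induced gradings under the (compactly supported, Hamiltonian) isotopies, to conclude that $S_\looparrowright[1]\#_{q_0}F_{x_0}$ and $\tau_{L_0}(F_{x_0})$ are Hamiltonian isotopic as graded Lagrangians. The main obstacle I anticipate is the grading accounting at the immersed point and across the clean locus $D$: one has to be careful that the $[1]$ shift in the statement is consistent simultaneously with (i) the index at $q_0$ being $1$ so Corollary \ref{c:ptIndex} applies, and (ii) the index at $D$ matching $\dim D + 1$ so Lemma \ref{l:cleanIndex1} gives a graded gluing with $m=0$ — and that these two constraints are compatible is essentially forced by the computation $Ind(q_0,q_{-1})=m+1$ from Lemma \ref{l:immersedSphere}, but spelling it out cleanly (keeping track of which branch is ``$L_1$'' and which is ``$L_2$'' in each surgery, and the resulting sign/shift conventions) is the delicate part. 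Everything else is a reprise of arguments already given for Lemma \ref{l:cpnFiber} and Lemma \ref{l:immersedSphere}.
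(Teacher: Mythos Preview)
Your strategy of reducing to Lemma~\ref{l:cpnFiber} is different from the paper's, which instead invokes Lemma~\ref{l:rotLag}: the paper observes that $S_\looparrowright[1]\#_{q_0}F_{x_0}$ is $G_{x_0}$-invariant, writes it as $G_{x_0}\cdot c(t)$ for a lifted curve $c(t)=(\gamma(t),f(t)\gamma'(t))$, checks that the embeddedness criterion $f(t_0)\neq -f(2\pi-t_0)$ holds (the surgery resolved the only immersed point), and concludes directly from Lemma~\ref{l:rotLag}.

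Your route via Lemma~\ref{l:cpnFiber}, however, has a genuine gap at the key identification step. You assert that the $L_{-1}$-branch is ``a geodesic-flowed copy of $F_{x_0}$ out to radius~$\pi$'' and that the union of the $q_0$-handle with this branch is a flow handle of radius $\lambda+\pi$. This is not correct. By construction $L_{-1}$ is the \emph{graph} of $d(-\mathrm{dist}^2(\cdot,x_0))$, i.e.\ a section of $T^*L_0\to L_0$; along a geodesic $\gamma$ from $x_0$ it sits at $(\gamma(t),-2t\,\gamma'(t))$ for $t\in(0,\pi)$, hence on the \emph{negative} side of the zero section. The flow handle $H_{\nu_\lambda}$ at $q_0$, by contrast, lies on the \emph{positive} side (it is $\{(\nu(|p|)\tfrac{p}{|p|},p)\}$ with $|p|>0$). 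These two pieces are separated by the $L_0$-annulus and the $D$-handle inside $S_\looparrowright$; they do not concatenate into a single flow handle from $F_{x_0}$, and the profile-concatenation trick from Lemma~\ref{l:cpnFiber} does not apply to them.

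What \emph{is} true is that the two surgeries (at $q_0$ and at $D$) are supported in disjoint regions, so one can rewrite $S_\looparrowright[1]\#_{q_0}F_{x_0}$ as $L_{-1}[\ast]\#_D(L_0[\ast]\#_{x_0}F_{x_0})$ for suitable shifts. To finish along your line one would then need to replace the outer $L_{-1}$ by $L_0$ via a Hamiltonian isotopy compatible with the $D$-surgery (e.g.\ the flow of $-h\circ\pi$, which fixes $\pi^{-1}(D)$ pointwise since $D$ is critical for $h$). That argument can be made to work, but it is not the one you wrote, and it is no shorter than the paper's direct appeal to Lemma~\ref{l:rotLag}.
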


\begin{proof}
By Lemma \ref{c:ptIndex}, we can perform a graded surgery $S_{\looparrowright}[1]\#^{\nu_\lambda}F_{x_0}$
for some small $\lambda$.  To understand $S_\looparrowright[1]\#^{\nu_\lambda} F_{x_0}$ we use Lemma \ref{l:rotLag} and the remark following it.  One considers a geodesic $\gamma(t)$ on $\CP^{m/2}$ and takes a smooth lift on $S_\looparrowright[1]\#^{\nu_\lambda} F_{x_0}$, starting from $F_{x_0}$.  This lift written as $(\gamma(t),f(t)\gamma'(t))$ clearly has no points with $f(t_0)=-f(2\pi-t_0)$ (in fact the only immersed point at $x_0$ was resolved by the surgery).  Therefore from Lemma \ref{l:rotLag}
 and Remark \ref{r:rotLag}, we concluded the proof.

\end{proof}

We consider a pinched perturbation of $\iota_{S_{\looparrowright}} \times \R$ which will be used to construct immersed cobordism with bottleneck associated to the surgery in Lemma \ref{l:SnCpn=Dehn}.
Let $q_0,\qno \in S^m$ be the points defined above.  We define a bottleneck perturbation $\iota_{S^\Delta}$ by the following auxiliary data.

\begin{itemize}
\item Let $\widetilde{g}_R: \mathbb{R} \to \mathbb{R}$ be a Morse function with a unique critical point which is a maximum at $x=-2$ that is $C^1$ small.
\item Let $\widetilde{g}_S: S^m \to \mathbb{R}$ be a non-negative function such that $\widetilde{g}_S=0$ outside the $\delta$ neighborhood of $\qno$
and $\widetilde{g}_S=1$ inside the $\delta/2$ neighborhood of $\qno$.
\item Consider a Weinstein neighborhood $N_1$ of $S^m \subset T^*S^m$ and $N_2$ of $\R\subset\C$.
Extend $\widetilde{g}_R$ and $\widetilde{g}_S$ to a function
$\widetilde{g}_S:N_1\rightarrow \mathbb{R}$ and $\widetilde{g}_R:N_2\to\mathbb{R}$, respectively, by pulling back from projection.

\item Extend the domain of $\iota_{S_{\looparrowright}}$ to $N_1$ to define symplectic immersion $\iota_{N_1}: N_1 \to T^*L_0$.
Also let $\iota_{N_2}: N_2 \to \mathbb{C}$ be the canonical symplectic embedding by the convention $z=q-ip$.

\end{itemize}

We then use the map
$\iota_{N_1} \times \iota_{N_2}: N_1 \times N_2 \to M \times \mathbb{C}$ and define a time-one flow of $S^n \times \mathbb{R}$ by the product Hamiltonian $\widetilde{g}_R \widetilde{g}_S$ be
$S^\Delta=\phi^{\widetilde{g}_R\widetilde{g}_S}(S^m \times \mathbb{R}) \subset N_1 \times N_2$ and
define $\iota_{S^\Delta}=\iota_{N_1} \times \iota_{N_2}|_{S^\Delta}$.

\begin{lemma}\label{l:PerturbationImmersedEnd}
If the $C^1$ norm of $\widetilde{g}_R$ is sufficiently small,
$\iota_{S^\Delta} $ is a pinched Lagrangian immersion with a single transversal immersed point.
One of the two branch jumps has index $m+1$ with positive energy,
while the other has index $0$ with negative energy.
\end{lemma}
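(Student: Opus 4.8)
The plan is to verify that $\iota_{S^\Delta}$ fits Definition \ref{d:conical} of a pinched Lagrangian, and to track exactly how the Hamiltonian perturbation $\widetilde g_R\widetilde g_S$ affects the immersed point data of $\iota_{S_\looparrowright}\times\R$. First I would observe that $S^\Delta=\phi^{\widetilde g_R\widetilde g_S}_1(S^m\times\R)$ is automatically an immersed Lagrangian, being the time-one image of an immersed Lagrangian under a Hamiltonian flow; smoothness and cleanness of self-intersections only needs to be checked near the immersed point(s). Away from a $\delta$-neighborhood of $\qno$ on the $S^m$-factor (and away from a neighborhood of $x=-2$ on the $\R$-factor) the function $\widetilde g_R\widetilde g_S$ vanishes, so $\iota_{S^\Delta}$ agrees with $\iota_{S_\looparrowright}\times\R$ there; this already gives the open embedding $I_L\colon S^m\times\R\hookrightarrow S^\Delta$ with embedded closure, and identifies the pinched region on $\C$ with the ``double cone'' shape cut out near $x=-2$ by the graph of $d(\widetilde g_R)$ (whose $C^1$-smallness makes the two bounding line segments genuine straight-ish segments meeting at the bottleneck point $z_0$ corresponding to $x=-2$). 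So the first two bullets of Definition \ref{d:conical} and the bottleneck requirement follow from the construction, with $\wt\iota_{S^\Delta}(S^m\times\{t_0\})$ the Lagrangian bottleneck sitting over the critical point of $\widetilde g_R$.

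Next I would analyze the immersed points. The self-intersection of $\iota_{S_\looparrowright}\times\R$ is $R_{S_\looparrowright}\times\{$diagonal in $\R\times\R\}$, i.e. the two branch jumps $(\qno,q_0)$ and $(q_0,\qno)$ spread along a line $\{(x,x)\}$. The whole point of the perturbation is to turn this non-clean (one-dimensional, non-transverse along the $\R$-direction) self-intersection into a single transverse point: the factor $\widetilde g_R$ has a unique nondegenerate critical point at $x=-2$, so the flow $\phi^{\widetilde g_R\widetilde g_S}_1$ moves the $L_{-1}$-branch (which is where $\widetilde g_S=1$, near $\qno$) relative to the $L_0$-branch by an amount governed by $d\widetilde g_R$, which vanishes to first order only at $x=-2$. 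Hence the two branches of $S^\Delta$ meet only over $x=-2$, and the transversality of that intersection is exactly the nondegeneracy of the critical point of $\widetilde g_R$ together with the original transversality of $q_0\pitchfork\qno$ in $T^*L_0$ (Lemma \ref{l:immersedSphere} gives $\mathrm{Ind}(q_0,\qno)=m+1$, so the branches of $S_\looparrowright$ are transverse there). This is the step I expect to be the main obstacle: one must write the local model near $(\qno,x=-2)$, check that the perturbed branches intersect transversally as an unordered pair of Lagrangian planes in $T^*S^m\times\C$, and confirm no new self-intersections are created elsewhere (this uses $C^1$-smallness of $\widetilde g_R$ so that the perturbation never pushes $S^\Delta$ far enough to create spurious crossings, and uses that $\widetilde g_S$ is supported near $\qno$ only).

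Finally I would compute the index and energy of the two branch jumps. For the indices: the grading on $S^\Delta$ is the one propagated from $S_\looparrowright[1]\times\R$ by the Hamiltonian isotopy (Remark \ref{r:propagateGrading}), so the $S^m$-factor contribution to $\mathrm{Ind}$ is unchanged from Lemma \ref{l:immersedSphere}, namely $m+1$ for $(q_0,\qno)$ and $-1$ for $(\qno,q_0)$; the $\C$-factor contributes according to the Morse index of $\widetilde g_R$ at its critical point as in Example \ref{Example: perturbation of double cover}, i.e. $+1$ at a maximum in the direction $\qno\to q_0$ and $0$ in the reverse direction (the grading conventions of Example \ref{e:gradingShift} make these add with the right signs). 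This yields $(m+1)+0=m+1$ for one branch jump and $(-1)+1=0$ for the other. For the energy: by the splitting of primitives (the primitive on $S^\Delta$ is the sum of the primitive on $S_\looparrowright$ pulled back and $\widetilde g_R$ restricted to the $\C$-factor), one has $E_{S^\Delta}(\text{branch jump})=E_{S_\looparrowright}(\text{corresponding branch jump})+(\widetilde g_R(\text{pt})-\widetilde g_R(\text{pt}))$, and since both branches sit over the same critical point the $\widetilde g_R$ terms cancel, so $E(q_0,\qno)=E_{S_\looparrowright}(q_0,\qno)>0$ and $E(\qno,q_0)=E_{S_\looparrowright}(\qno,q_0)<0$ by Lemma \ref{l:immersedSphere}. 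Matching these with the indices gives the claim: the index-$(m+1)$ branch jump has positive energy and the index-$0$ branch jump has negative energy, completing the proof.
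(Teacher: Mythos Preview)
Your proposal follows essentially the same strategy as the paper: show the perturbation separates the two branches away from $x=-2$, use nondegeneracy of the critical point for transversality, then compute index and energy by splitting into $M$- and $\C$-factors. Two points deserve correction.

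First, the Hamiltonian $\widetilde g_R\widetilde g_S$ does \emph{not} vanish away from a neighborhood of $x=-2$; only $\widetilde g_S$ is compactly supported (near $\qno$), while $\widetilde g_R$ is a Morse function on all of $\R$. The perturbation is therefore nontrivial along the entire line $\{\qno\}\times\R$. The reason the branches separate for $x\neq -2$ is exactly the paper's argument: since $\widetilde g_S(q_0)=0$ the $q_0$-branch is unperturbed and projects to $y=0$, while near $\qno$ the branch is the graph of $d(-\widetilde g_R)$ in the $\C$-direction, which has nonzero $y$-coordinate precisely when $\widetilde g_R'(x)\neq 0$, i.e.\ when $x\neq -2$.

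Second, your energy argument has a gap. The correction term to the primitive coming from the perturbation is $-\widetilde g_R\widetilde g_S$, not $\widetilde g_R$ alone. Since $\widetilde g_S(q_0)=0$ but $\widetilde g_S(\qno)=1$, the two contributions do \emph{not} cancel: one picks up an extra $\widetilde g_R(-2)$ in $E((\qno,-2),(q_0,-2))$. The paper handles this either by normalizing $\widetilde g_R(-2)=0$, or more robustly by observing that $C^0$-smallness of $\widetilde g_R$ keeps the sign of $E$ unchanged from the fiberwise value computed in Lemma~\ref{l:immersedSphere}. Your index computation, by contrast, is correct and matches the paper's splitting $\mathrm{Ind}(\qno,q_0)=(-1)+1=0$, $\mathrm{Ind}(q_0,\qno)=(m+1)+0=m+1$.
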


\begin{proof}
In $N_1 \times N_2$, $S^\Delta$ equals the graph of $d(-\widetilde{g}_R\widetilde{g}_S)=-(\widetilde{g}_Rd\widetilde{g}_S+\widetilde{g}_Sd\widetilde{g}_R)$.
When $d\widetilde{g}_R(x) > 0$, or equivalently $x <-2$ , the fact that $\widetilde{g}_S \ge 0$
implies that $\iota_{S^\Delta}$ has non-positive $p$-coordinate in $N_2$ and hence non-negative $y$-coordinate in $(\mathbb{C},dx \wedge dy)$ factor.
In particular, $\phi^{\widetilde{g}_R\widetilde{g}_S}(\qno,x)$ has positive $y$ coordinate for any $x < -2$ while $\phi^{\widetilde{g}_R\widetilde{g}_S}(q_0,x)$ has zero $y$-coordinate
because $\widetilde{g}_S(\qno)>0$ and $\widetilde{g}_S(q_0)=0$.
This means $\iota_{S^\Delta}(\qno,x)
\neq \iota_{S^\Delta}(q_0,x)$ for all $x < -2$.
When the $C^0$ norm of $\widetilde{g}_R$ is sufficiently small, the vertical perturbation ($N_1$ direction) given by $-\widetilde{g}_Rd\widetilde{g}_S$ is insignificant
and we have $\iota_{S^\Delta}|_{\{x < -2\}}$ is a Lagrangian embedding.
The same is true when $x > -2$ (See Figure \ref{fig:perturbation}).
Finally, it is obvious that there is exactly one transversal immersed point of $\iota_{S^\Delta} $ when $x = -2$ since $g''_R(-2)\neq0$,
coming from the transversal immersed point of $\iota_{S_{\looparrowright}}$.

\begin{figure}[h]
\centering
\includegraphics[scale=1.2]{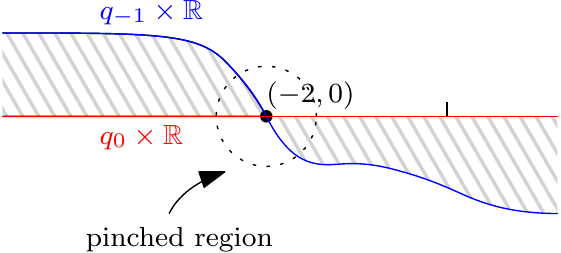}
\caption{A pinched perturbation $\iota_{S^\Delta}$.}
\label{fig:PinPerturbation}
\end{figure}

\begin{figure}[h]
\centering
\includegraphics[scale=1.2]{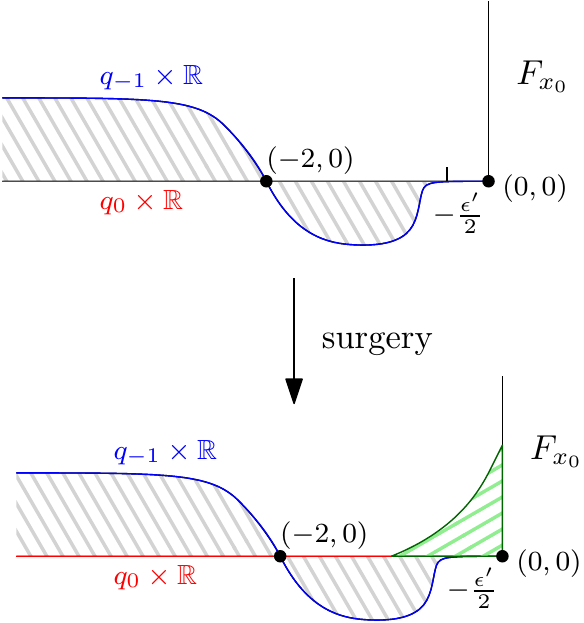}
\caption{Projection of half of $\iota_{S^\Delta}\# (F_{x_0} \times i\R)$: part of $q_0\times\R$ is removed to glue the green handle, which removes all immersed points from the cobordism.}
\label{fig:perturbation}
\end{figure}

To calculate the energy, we may assume $\widetilde{g}_R(-2)=0$ so that the immersed point is
$(\iota_{S_{\looparrowright}}(\qno),-2)=(\iota_{S_{\looparrowright}}(q_0),-2)$.
Consider the product immersion $(S^m \times \mathbb{R},\iota_{S_{\looparrowright}} \times Id)$ with the set of branch jump types
$$\{ ((\qno,t),(q_0,t)), ((q_0,t),(\qno,t)) | t \in \mathbb{R} \}$$
 where $E((\qno,t),(q_0,t))<0$. The $C^0$ control of $\widetilde{g}_R$ guarantee that $E((\qno,-2),(q_0,-2))$ remains negative after perturbation.

 For the index part, the Lagrangian tangent plane at $(\qno,-2)$ and $(q_0,-2)$ can be simultaneously decomposed into $M$ and $\C$ components.
 For the $\C$-component, the index is nothing but $Ind(\pi_\C(\phi^{\widetilde{g}_R\widetilde{g}_S}(\qno\times\R)),\pi_\C(q_0\times\R))=1$.
 Combining this with the computation on fiber in Lemma \ref{l:immersedSphere}, it follows that $Ind(\qno,q_0)=0$.  The case of $Ind(q_0,\qno)=m+1$ follows similarly.
\end{proof}

\begin{lemma}\label{l:SnFiberDehnCob}
Let $L_0, x_0$ be as above, and $L_1 \subset (M,\w)$ be a Lagrangian that intersects $L_0$ transversally $x_0$.
Then there is an immersed Lagrangian sphere $\iota_{S_{\looparrowright}}:S^m \to (M^{2m},\w)$ and an
immersed Lagrangian cobordism with bottleneck from $\iota_{S_{\looparrowright}}$ and $L_1$ to $\tau_{L_0}(L_1)$.
\end{lemma}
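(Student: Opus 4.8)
The plan is to combine the surgery-to-Dehn-twist identification in Lemma \ref{l:SnCpn=Dehn} with the bottleneck perturbation of Lemma \ref{l:PerturbationImmersedEnd}, and then run the general simple-cobordism construction (Lemma \ref{l:cob}) in the immersed category. First I would recall from Lemma \ref{l:immersedSphere} that $S_\looparrowright = L_{-1}[-1]\#_D L_0$ is a graded immersed Lagrangian sphere in $T^*L_0$ with a single transverse self-intersection at $x_0$, and from Lemma \ref{c:ptIndex} that the graded surgery $S_\looparrowright[1]\#_{q_0} F_{x_0}$ makes sense (the index at $q_0$ being $0$ means $\mathrm{Ind}(S_\looparrowright[1]|_{x_0},F_{x_0}|_{x_0})=1$, which is what Corollary \ref{c:ptIndex} requires). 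By Lemma \ref{l:SnCpn=Dehn} this surgery is graded Hamiltonian isotopic to $\tau_{L_0}(F_{x_0})$, and since everything is supported in a Weinstein neighborhood of $L_0$, $F_{x_0}$ may be replaced by the general Lagrangian $L_1$ meeting $L_0$ transversally at $x_0$; thus $S_\looparrowright[1]\#_{x_0} L_1 \simeq \tau_{L_0}(L_1)$ as graded Lagrangians.

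Next I would produce the cobordism. Following the standard picture of Section 6 of \cite{BC13} as adapted in Lemma \ref{l:cob}, one forms $\iota_{S_\looparrowright}\times \mathbb{R}$ and $L_1\times i\mathbb{R}$ in $M\times\mathbb{C}$, which meet cleanly (here transversally, since $S_\looparrowright\pitchfork L_1=\{x_0\}$), and resolves along a flow handle of one dimension higher, then cuts in half, perturbs near the single non-cylindrical boundary component $\tau_{L_0}(L_1)\times\{0\}$ by the Biran--Cornea trick, and extends the end of $L_1$ to infinity. The new point compared to the embedded case is that $S_\looparrowright$ is immersed, so the naive product $\iota_{S_\looparrowright}\times\mathbb{R}$ has a ray of non-clean self-intersections; this is exactly the pathology flagged in the introduction. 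The remedy is to replace $\iota_{S_\looparrowright}\times\mathbb{R}$ by the pinched perturbation $\iota_{S^\Delta}$ of Lemma \ref{l:PerturbationImmersedEnd}, which has a \emph{single} transverse self-intersection concentrated at the bottleneck $x=-2$, with branch-jump data of index $0$ (negative energy) and index $m+1$ (positive energy) — so Assumption \textbf{(A)} is satisfied by the interior immersed points of the cobordism. I would then glue the surgery handle of Lemma \ref{l:SnCpn=Dehn} (the green handle in Figure \ref{fig:perturbation}) onto $\iota_{S^\Delta}$, removing part of the $q_0\times\mathbb{R}$ branch; as the caption of Figure \ref{fig:perturbation} indicates, this \emph{eliminates} all immersed points from the handle region, so the resulting cobordism has self-intersections only at the bottlenecks.

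The grading is handled as in Lemma \ref{l:cob}: give $S_\looparrowright\times\mathbb{R}$ the product grading, give $L_1\times i\mathbb{R}$ the grading shifted by $-\tfrac12$ so that the clean-intersection index condition $\mathrm{Ind}=\dim D+1$ holds (here $D$ is a point, so the index must be $1$, matching $\mathrm{Ind}(S_\looparrowright[1]|_{x_0},L_1|_{x_0})=1$), and check that the induced gradings on the three ends are $S_\looparrowright$, $L_1$ and $\tau_{L_0}(L_1)$ up to the cobordism shift convention of Definition \ref{d:cob}. One also needs $S_\looparrowright$, $L_1$ and the interior immersed points to satisfy \textbf{(A)}; for $S_\looparrowright$ this is Lemma \ref{l:immersedSphere} (the two branch jumps have indices $-1$ and $m+1$, and only the positive-energy one, index $m+1\ge 3$ since $m\ge 2$, matters), and for the cobordism interior it is Lemma \ref{l:PerturbationImmersedEnd}; $L_1$ is embedded hence automatically admissible. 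I expect the main obstacle to be verifying that the pinched handle really does give a globally embedded (away from the prescribed bottleneck) Lagrangian cobordism with the correct cylindrical-with-bottleneck ends in the sense of Definition \ref{d:ImmCobwBottleneck} — in particular that the $C^1$-small perturbation data can be chosen compatibly with the flow-handle construction so that no new intersections are created where the handle meets $\iota_{S^\Delta}$, and that the resulting object meets the structural requirements ($Re(z_i)=\pm2$, monotone imaginary parts, containment in $\mathbb{R}\times(-\kappa_V,\kappa_V)$). This is essentially a careful local-model bookkeeping, paralleling Lemma \ref{l:PerturbationImmersedEnd} and the $E_2$-flow handle gluing lemmas, but it is where the real work lies.
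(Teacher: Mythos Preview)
Your proposal is correct and follows essentially the same route as the paper: reduce to $M=T^*L_0$ and $L_1=F_{x_0}$, build the naive simple cobordism from $\iota_{S_\looparrowright}\times\R$ and $F_{x_0}\times i\R$ via surgery at $q_0\times(0,0)$, observe that the resulting $S_\looparrowright$-end has a ray of non-clean self-intersections, and repair this by replacing $\iota_{S_\looparrowright}\times\R$ with the pinched perturbation $\iota_{S^\Delta}$ of Lemma~\ref{l:PerturbationImmersedEnd} before performing the surgery.

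The one point the paper makes explicit, which resolves exactly the compatibility issue you flag as the ``main obstacle'', is the choice of the perturbation data: one takes $g_R$ with $g_R\equiv 0$ for $x\ge -\epsilon'/2$ (and $g_S\equiv 0$ near $q_0$ as in Lemma~\ref{l:PerturbationImmersedEnd}), so that $\iota_{S^\Delta}$ literally coincides with $\iota_{S_\looparrowright}\times\R$ near $q_0\times(0,0)$. The surgery handle can then be glued exactly as in the naive construction, and the pinched end at $x=-2$ is the only bottleneck. Your anticipation that this is ``careful local-model bookkeeping'' is accurate; this single support condition is the whole trick.
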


\begin{proof}

Without loss of generality, we can assume $M=T^*L_0$ and $L_1=F_{x_0}$.
We proceed in a way similar to Lemma \ref{l:cob} using $S_{\looparrowright}\#_{q_0}F_{x_0}$ obtained from Lemma \ref{l:SnCpn=Dehn}.

We first illustrate the idea by constructing a ``naive cobordism".
Consider $\iota_{S_{\looparrowright}} \times \R$ and $F_{x_0} \times i\R$ in $T^*L_0 \times \mathbb{C}$.
Then we perform Lagrangian surgery at $q_0 \times (0,0)$ supported in an $\epsilon'$-neighborhood of $q_0$.
The resulting Lagrangian  whose fiber at $(0,0)$ is $S_{\looparrowright}[1]\#F_{x_0}$ and hence embedded
(Precisely, the surgery is done on the branch $L_0 \times \R$ of $\iota_{S_{\looparrowright}} \times \R$).
We can cut this cobordism in half and do a Hamiltonian perturbation and extend the cylindrical end as in Lemma \ref{l:cob}.
This Lagrangian immersion has all its immersed points on the end $\iota_{S_{\looparrowright}} \times ((-\infty,-\epsilon') \times \{0\})$
because when $\epsilon$ is small, the handle added does not produce new immersed points.
However, it is not yet an immersed Lagrangian cobordism with bottleneck (the self-intersection is not clean).
To make it an immersed Lagrangian cobordism with bottleneck, we need to perturb this end by Lemma \ref{l:PerturbationImmersedEnd} before the surgery.

The actual construction of the cobordism, detouring a bit, starts from a perturbed copy of $\iota_{S_\looparrowright}\times\R$.  We choose a smooth $g_R$ so that $g'_R(x)>0$ for $x \in (-\infty,-2)$, $g'_R(x)<0$ for $x \in (-2,-\frac{\epsilon'}{2})$  and
$g_R(x)=0$ for $x\ge -\epsilon'/2$ and $g_S=\wt g_S$ as in Lemma \ref{l:PerturbationImmersedEnd}.
This defines a pinched Lagrangian $\iota_{S^\Delta}$ as in the first picture in Figure \ref{fig:perturbation}.  Notice that $g_R=0$ for $x\ge -\epsilon'/2$ and $g_S=0$ near $q_0$, which implies that $\iota_{S^\Delta}$ coincide with
$\iota_{S_{\looparrowright}} \times \R$ for $x \ge -\epsilon'/2$ near $q_0 \times \R$.
This allows us to perform Lagrangian surgery from $\iota_{S^\Delta}$ to $F_{x_0} \times i\R$
at $q_0 \times (0,0)$ whose fiber at $(0,0)$ being $S_{\looparrowright}[1]\#F_{x_0}$.

The resulting surgery coincides with the naive cobordism near $M\times(0,0)$, hence can be extended to an actual cobordism as above.  It is a cobordism with a single bottleneck at $x=-2$ by the fact that $S^\Delta$ is pinched.

\end{proof}

\begin{corr}\label{c:immersedSphereCone}
Let $L_0,L_1 \subset (M,\w)$ be as above.
For any clean immersed Lagrangian $\iota_N:N \to (M,\w)$ satisfying the Assumption {\bf (A)} which is not a covering,
we have the long exact sequence
$$\dots \to HF^*(\iota_N,\iota_{S_{\looparrowright}}) \to HF^*(\iota_N, L_1) \to HF^*(\iota_N,\tau_{L_0}(L_1)) \to \dots$$
\end{corr}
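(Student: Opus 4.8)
The plan is to apply the immersed Lagrangian cobordism machinery developed in Section \ref{s:immersedCob}, specifically Theorem \ref{t:ImmersedFloerLES}, to the cobordism produced in Lemma \ref{l:SnFiberDehnCob}. First I would recall that Lemma \ref{l:SnFiberDehnCob} gives an exact immersed Lagrangian cobordism with bottleneck $\iota_V$ from $(\iota_{S_\looparrowright}, L_1)$ to $\tau_{L_0}(L_1)$ inside $M \times \C$. To feed this into Theorem \ref{t:ImmersedFloerLES}, I must check that the hypotheses hold: namely that $\iota_{S_\looparrowright}$, $L_1$, $\tau_{L_0}(L_1)$, and all interior immersed points of $\iota_V$ satisfy Assumption {\bf (A)}, and that the testing object $\iota_N$ is a clean immersed Lagrangian, not a covering, satisfying {\bf (A)}. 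The latter is exactly the hypothesis of the corollary. For the former: $L_1$ and $\tau_{L_0}(L_1)$ are embedded (Assumption {\bf (A)} is vacuous for embedded Lagrangians), while $\iota_{S_\looparrowright}$ has a single immersed point whose two branch jumps were computed in Lemma \ref{l:immersedSphere} to have index $m+1 \ge 3$ with positive energy and index $-1$ with negative energy; since the branch jump with positive energy has index $m+1 \ge 3$ (here $m = \dim_\R L_0 = 2n \ge 2$, indeed $\ge 4$ in the interesting range), Assumption {\bf (A)} holds. The interior immersed points of $\iota_V$ are controlled by Lemma \ref{l:PerturbationImmersedEnd}: the construction in Lemma \ref{l:SnFiberDehnCob} removes all immersed points from the body of the cobordism via the surgery, placing the unique immersed point on the negative end, whose branch jumps have index $m+1$ with positive energy and index $0$ with negative energy, again satisfying {\bf (A)}.

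Next I would invoke Theorem \ref{t:ImmersedFloerLES} directly. With $V$ a cobordism from the pair $(\iota_{S_\looparrowright}, L_1)$ to the single Lagrangian $\tau_{L_0}(L_1)$, the theorem yields a quasi-isomorphism
\[
Cone\big(CF(\iota_N, \iota_{S_\looparrowright}) \to CF(\iota_N, L_1)\big) \cong CF(\iota_N, \tau_{L_0}(L_1)).
\]
Here I should be careful about the grading conventions: Theorem \ref{thm: Cone from Lagrangian cobordism} and its immersed analogue require the ends to carry grading shifts $(L_1[k-1], \ldots, L_k)$, so the precise input is that $V$ is a graded cobordism from $(\iota_{S_\looparrowright}[1], L_1)$ to $\tau_{L_0}(L_1)$; this is exactly what is recorded in the grading computations behind Lemma \ref{l:SnCpn=Dehn} and Lemma \ref{l:immersedSphere}, where the shift $[1]$ on $S_\looparrowright$ in $S_\looparrowright[1]\#_{q_0}F_{x_0}$ appears. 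Passing from a quasi-isomorphism of mapping cones to a long exact sequence is then the standard homological algebra of the triangle $CF(\iota_N,\iota_{S_\looparrowright}) \to CF(\iota_N,L_1) \to CF(\iota_N,\tau_{L_0}(L_1)) \to [1]$, and taking cohomology produces
\[
\dots \to HF^*(\iota_N,\iota_{S_\looparrowright}) \to HF^*(\iota_N, L_1) \to HF^*(\iota_N,\tau_{L_0}(L_1)) \to \dots
\]
as claimed.

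I expect the main obstacle to be bookkeeping rather than a genuine difficulty: verifying that the grading shifts in Lemma \ref{l:SnFiberDehnCob}'s cobordism line up with the convention of Theorem \ref{t:ImmersedFloerLES} (in particular that the bottleneck-perturbed end $\iota_{S^\Delta}$ inherits the grading making it $\iota_{S_\looparrowright}[1]$ rather than $\iota_{S_\looparrowright}$, which is precisely what the index computations in Lemma \ref{l:PerturbationImmersedEnd} are arranged to give), and confirming that $\iota_N$ may be Hamiltonian-perturbed to be transverse to all of $\iota_{S_\looparrowright}$, $L_1$, $\tau_{L_0}(L_1)$ at smooth points while preserving Assumption {\bf (A)} — this last point is handled by the invariance statement from \cite{AB14} quoted at the start of the proof of Theorem \ref{t:ImmersedFloerLES}. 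A secondary subtlety is that $S_\looparrowright$ is an immersed \emph{sphere}, so $\iota_N$ being transverse to it and avoiding its immersed point is generic; this is the standard genericity assumption $\ov R_{L_0}\cap \ov L_1 = \emptyset$ from Section \ref{s:immersedFloer}, which can be arranged by a small Hamiltonian push of $\iota_N$. Once these conventions are pinned down, the corollary is an immediate consequence of Theorem \ref{t:ImmersedFloerLES} applied to the cobordism of Lemma \ref{l:SnFiberDehnCob}.
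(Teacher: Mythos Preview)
Your proposal is correct and follows essentially the same approach as the paper: verify that $\iota_{S_\looparrowright}$ satisfies Assumption {\bf (A)} via Lemma \ref{l:immersedSphere}, then apply Theorem \ref{t:ImmersedFloerLES} to the cobordism supplied by Lemma \ref{l:SnFiberDehnCob}. The paper's own proof is two sentences; your additional checks on the embedded ends, the interior immersed points (removed by the surgery, with the remaining one at the bottleneck handled by Lemma \ref{l:PerturbationImmersedEnd}), and the grading/transversality bookkeeping are accurate elaborations of what the terse argument is implicitly using.
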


\begin{proof}
By Lemma \ref{l:immersedSphere}, $\iota_{S_{\looparrowright}}$ satisfies the Assumption {\bf (A)}.
Therefore, the result follows by Theorem \ref{t:ImmersedFloerLES} and Lemma \ref{l:SnFiberDehnCob}.

\end{proof}

We also want an immersed Lagrangian cobordism from $L_0[-2]$ and $L_0$ to $\iota_{S_{\looparrowright}}$.

\begin{lemma}\label{l:CpnCpnImmersedSphere}
There is a graded immersed Lagrangian cobordism with bottleneck from $L_0'[-2]$ and $L_0$ to $\iota_{S_{\looparrowright}}$.
\end{lemma}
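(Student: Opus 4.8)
\textbf{Proof plan for Lemma \ref{l:CpnCpnImmersedSphere}.}

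The plan is to mimic the construction of $\iota_{S_\looparrowright}$ itself (Lemma \ref{l:immersedSphere}) one dimension up, realizing the surgery $S_\looparrowright = L_{-1}[-1]\#_D L_0$ as the fiber over the origin of a suitable immersed cobordism. Recall $L_{-1}$ is the graph of $d(-\mathrm{dist}^2(\cdot,x_0))$ in $T^*L_0$, it meets $L_0$ transversally at $x_0$ and cleanly along $D=\mathbb{CP}^{\frac m2-1}$, and $S_\looparrowright$ is embedded away from the single immersed point $x_0$. First I would set up, inside $T^*L_0\times\mathbb C$, the Lagrangians $L_{-1}\times\R$ and $L_0\times i\R$, which intersect cleanly along $D\times\{(0,0)\}$ together with the extra transverse point $x_0\times\{(0,0)\}$. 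As in Lemma \ref{l:cob}, I would perform the $(E_2\oplus\R)$-flow clean surgery at $D\times\{(0,0)\}$ (legitimate since we stay within the injectivity radius, by Lemma \ref{l:flowGlueCleanE2}), with the grading bookkeeping of Lemma \ref{l:cleanIndex1} forcing the shift: using $Ind(L_{-1}[-1]|_D, L_0|_D)=m-1=\dim(D\times\{0\})+1$, the surgery is graded and the $L_{-1}$ end acquires the shift $[-2]$ after the $\R$-factor index is accounted for (this is exactly the $-1$ from $L_{-1}\mapsto L_{-1}[-1]$ plus the $-1$ from the extra $\R$-direction in the index count, cf. the proof of Lemma \ref{l:cob}). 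This produces a graded embedded Lagrangian cobordism with four cylindrical ends corresponding to $L_0'[-2]$, $L_0$ at $-\infty$ and two ends at $+\infty$; cutting along the diagonal $S_+=\{y\ge x\}$ as in Lemma \ref{l:cob} and applying the Biran--Cornea bending trick collapses it to a cobordism whose single positive end is the fiber over the origin, which is Hamiltonian isotopic to $S_\looparrowright$ by construction.

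The new feature, absent in Lemma \ref{l:cob}, is the transverse intersection point $x_0\times\{(0,0)\}$ lying \emph{outside} the clean intersection locus $D$: resolving only along $D$ leaves $x_0$ as a self-intersection of the resulting Lagrangian that forms a ray (as in the example of Figure \ref{fig: sample cobordism}), so the naive cobordism is not an immersed cobordism \emph{with bottlenecks} in the sense of Definition \ref{d:ImmCobwBottleneck}. The remedy is identical in spirit to Lemma \ref{l:SnFiberDehnCob}: before performing the surgery I would replace the offending end $L_{-1}\times\R$ (more precisely the branch carrying $q_{-1}$) by a pinched perturbation $\iota_{S^\Delta}$ built from an auxiliary Morse profile $\widetilde g_R$ on $\R$ with a single maximum and a bump function $\widetilde g_S$ supported near $q_{-1}$, exactly as in Lemma \ref{l:PerturbationImmersedEnd}. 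For $C^1$-small $\widetilde g_R$ this concentrates all self-intersections of the cobordism into a single transverse immersed point sitting on a bottleneck, giving an immersed cobordism with bottlenecks; the index/energy check for the two branch jumps is the $\C$-factor-plus-fiber computation of Lemma \ref{l:PerturbationImmersedEnd}, using $Ind(q_{-1},q_0)=-1$ and $Ind(q_0,q_{-1})=m+1$ from Lemma \ref{l:immersedSphere} and adding the $\pm1$ coming from the planar direction. This also confirms that the bottleneck perturbation is compatible with the grading, since the complex volume form splits.

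The main obstacle I anticipate is the interplay between the \emph{clean} surgery along $D$ and the \emph{pinched} perturbation at $x_0$ happening in the same cobordism: one must choose the support $\delta$ of $\widetilde g_S$ near $q_{-1}$, the perturbation scale, and the surgery neighborhood $\epsilon$ of $D$ all small and disjoint, and verify that the perturbation genuinely produces the ``double cone'' region on $\C$ required by Definition \ref{d:conical} while the clean surgery handle near $D$ is unaffected and still glues smoothly (admissibility of $\nu$ near $r=0$, as in Lemma \ref{l:flowGlueClean}). Granting this separation of scales, the rest is routine: the four-ended cobordism is exact because every step (the Hamiltonian-flow handle, the pinched perturbation which is a graph of an exact one-form, the Biran--Cornea bending) preserves exactness as in Lemma \ref{l:exactSimCob}, and the grading assertion follows from Lemma \ref{l:cleanIndex1} together with the $\C$-factor index computation above; the shift $[-2]$ on $L_0'$ matches the shift already recorded in the surgery identity of Lemma \ref{l:RpnHpnSurgIdentity} for the $\mathbb{CP}$ case, so no new grading input is needed.
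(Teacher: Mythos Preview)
Your plan contains a misidentification that leads you to the wrong fix. After the $(E_2\oplus\R)$-flow surgery of $L_{-1}\times\R$ and $L_0\times i\R$ along $D\times\{0\}$, the resulting four-ended Lagrangian immersion has exactly \emph{one} transverse self-intersection, namely $x_0\times\{(0,0)\}$; it is not a ray. The ray phenomenon of Figure~\ref{fig: sample cobordism} and Lemma~\ref{l:SnFiberDehnCob} arises there because one of the \emph{input} Lagrangians, $S_\looparrowright$, is already immersed, so its product with a half-line carries a half-line of immersed points. Here both inputs $L_{-1}$ and $L_0$ are embedded, so the only self-intersection is the single unresolved point $x_0\times\{0\}$. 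A ray would only appear if you first cut along $S_+$ and then extend the $S_\looparrowright$-boundary cylindrically --- an artifact of your chosen procedure, not of the problem. Accordingly, your proposed remedy --- applying the pinched perturbation of Lemma~\ref{l:PerturbationImmersedEnd} to $L_{-1}\times\R$ before the surgery --- has no content: that construction collapses a ray of self-intersections of an immersed product into a single point, but $L_{-1}\times\R$ is embedded, so there is nothing to collapse.

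The paper's argument is much shorter and exploits precisely this observation. Since the unique immersed point of the four-ended surgery already sits over the origin in $\C$, the origin already functions as the bottleneck: the fiber there is $S_\looparrowright$, and over any other point of $\C$ the Lagrangian is embedded. One therefore keeps the full four-ended surgery rather than cutting it, and uses a Hamiltonian isotopy on the $\C$-factor alone to rearrange it into the shape of Definition~\ref{d:ImmCobwBottleneck}: the second-quadrant portion together with its two outgoing rays becomes the body with the embedded negative ends $L_0'[-2]$ and $L_0$, while the fourth-quadrant portion is isotoped to the right of the body and serves directly as the pinched positive end carrying $S_\looparrowright$, with the origin as its bottleneck. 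No auxiliary perturbation of the type in Lemma~\ref{l:PerturbationImmersedEnd} is needed.

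If you insisted on the cut-and-extend route, the correct fix would be to apply Lemma~\ref{l:PerturbationImmersedEnd} to the cylindrical $S_\looparrowright$-end \emph{after} extending --- not to $L_{-1}\times\R$ before surgery. That also works but is needlessly roundabout compared to the paper's direct argument.
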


\begin{proof}
Recall from the construction of a simple cobordism (Lemma \ref{l:cob}), we constructed a surgery of $L_0'[-2]\times\R$ and $L\times i\R$ using the geodesic flow on the product.  This surgery clearly has a bottleneck at $M\times (0,0)$ (see the left of Figure \ref{fig:SimCob}).  Therefore, by appropriate Hamiltonian isotopy on the $\C$-factor alone, one may adjust the resulting surgery Lagrangian submanifold into an actual immersed Lagrangian cobordism as desired (in this case the part of surgery Lagrangian in the fourth quadrant will be isotoped to the part outside $K$).
\end{proof}

\begin{corr}\label{c:CpnCpnSphere}
For any clean immersed Lagrangian $\iota_N:N \to (T^*L_0,\w)$ satisfying the Assumption {\bf (A)} which is not a covering,
we have the long exact sequence
$$\dots \to HF^*(\iota_N, L_0[-2]) \to HF^*(\iota_N,L_0) \to HF^*(\iota_N,\iota_{S_{\looparrowright}}) \to \dots$$
\end{corr}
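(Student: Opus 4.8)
The plan is to combine the immersed cobordism exact sequence machinery of Section \ref{s:immersedCob} with the specific cobordism produced in Lemma \ref{l:CpnCpnImmersedSphere}, exactly as Corollary \ref{c:immersedSphereCone} was deduced from Theorem \ref{t:ImmersedFloerLES} and Lemma \ref{l:SnFiberDehnCob}. First I would recall that Lemma \ref{l:CpnCpnImmersedSphere} provides a graded immersed Lagrangian cobordism with bottleneck $\iota_V$ from the pair $(L_0[-2], L_0)$ to the single immersed Lagrangian $\iota_{S_\looparrowright}$, inside $T^*L_0 \times \C$. The two incoming ends $L_0[-2]$ and $L_0$ are embedded, hence trivially satisfy Assumption {\bf (A)}; the outgoing end $\iota_{S_\looparrowright}$ satisfies {\bf (A)} by Lemma \ref{l:immersedSphere} (its branch jumps have index $-1$ and $m+1$, and the only positive-energy jump is the one of index $m+1 \ge 3$). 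The interior immersed points of $\iota_V$ all come from the bottleneck perturbation and the handle, and by the index/energy bookkeeping in Lemma \ref{l:PerturbationImmersedEnd} (equation \eqref{e:index} and \eqref{e:energy}) they inherit {\bf (A)} from the ends as in Lemma \ref{l:preservingPositivityAssumption}. So all the hypotheses of Theorem \ref{t:ImmersedFloerLES} are in place.

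Next I would apply Theorem \ref{t:ImmersedFloerLES} directly with the testing Lagrangian $\iota_N$ (which is assumed clean, non-covering, and satisfying {\bf (A)}). That theorem yields a quasi-isomorphism of iterated cones
$$Cone\big(CF(\iota_N, L_0[-2]) \to CF(\iota_N, L_0)\big) \cong Cone\big(CF(\iota_N, \iota_{S_\looparrowright})\big),$$
where the right-hand ``iterated cone'' over a single complex is just $CF(\iota_N,\iota_{S_\looparrowright})$ itself. Unwinding the left-hand mapping cone, one gets a short exact sequence of complexes
$$0 \to CF(\iota_N, L_0) \to Cone \to CF(\iota_N,L_0[-2])[1] \to 0,$$
and the long exact sequence in cohomology of this cone, together with the identification of the cone's cohomology with $HF^*(\iota_N,\iota_{S_\looparrowright})$, produces precisely the stated triangle
$$\dots \to HF^*(\iota_N, L_0[-2]) \to HF^*(\iota_N,L_0) \to HF^*(\iota_N,\iota_{S_\looparrowright}) \to \dots,$$
where the degree shift $[-2]$ on the first term is exactly the grading prescription of Theorem \ref{thm: Cone from Lagrangian cobordism}/Theorem \ref{t:ImmersedFloerLES} for a cobordism with two incoming ends (the first end is shifted by $k-1 = 1$, but here the cobordism of Lemma \ref{l:CpnCpnImmersedSphere} is arranged with ends $L_0'[-2]$ and $L_0$, so the grading labels already record the $[-2]$).

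Since essentially all the content has been front-loaded into Lemma \ref{l:CpnCpnImmersedSphere} and Theorem \ref{t:ImmersedFloerLES}, the proof itself is short: cite these two results and read off the long exact sequence. The one point deserving a sentence of care — and the only place I expect friction — is checking that the \emph{interior} immersed points of the cobordism $\iota_V$ of Lemma \ref{l:CpnCpnImmersedSphere} satisfy {\bf (A)}, since that cobordism is built by a surgery of $L_0'[-2]\times\R$ with $L_0\times i\R$ along their clean intersection followed by a Hamiltonian isotopy in the $\C$-factor; one must verify no new immersed point with positive energy and index $<3$ is created. This follows as in Lemma \ref{l:preservingPositivityAssumption}: the handle is obtained by a Hamiltonian flow, so energies of branch jumps are unchanged and indices change by the horizontal contribution $\epsilon \in \{0,1\}$, and the embeddedness of $L_0$ means there are no branch jumps away from the handle region. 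With that observation the hypotheses of Theorem \ref{t:ImmersedFloerLES} hold and the corollary follows.

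\begin{proof}
By Lemma \ref{l:CpnCpnImmersedSphere} there is a graded immersed Lagrangian cobordism with bottleneck $\iota_V$ from $(L_0[-2], L_0)$ to $\iota_{S_\looparrowright}$ in $T^*L_0 \times \C$. The ends $L_0[-2]$ and $L_0$ are embedded, hence satisfy Assumption {\bf (A)}, and $\iota_{S_\looparrowright}$ satisfies {\bf (A)} by Lemma \ref{l:immersedSphere}, since its two branch jumps have indices $-1$ and $m+1$ with energies of opposite sign, and the only positive-energy branch jump has index $m+1 \ge 3$. The interior immersed points of $\iota_V$ arise from the handle in the surgery defining the cobordism; as the handle is obtained by a Hamiltonian flow, the energies of the corresponding branch jumps are unchanged and their indices change only by a horizontal term $\epsilon \in \{0,1\}$ as in Lemma \ref{l:preservingPositivityAssumption}, so these points also satisfy {\bf (A)}. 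Hence the hypotheses of Theorem \ref{t:ImmersedFloerLES} are satisfied.

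Let $\iota_N : N \to (T^*L_0,\w)$ be a clean immersed Lagrangian which is not a covering and satisfies {\bf (A)}. Applying Theorem \ref{t:ImmersedFloerLES} to $\iota_V$ with testing Lagrangian $\iota_N$ gives a quasi-isomorphism
$$Cone\big(CF(\iota_N, L_0[-2]) \to CF(\iota_N, L_0)\big) \cong CF(\iota_N, \iota_{S_\looparrowright}).$$
Taking the long exact sequence in cohomology associated to this mapping cone and using the above identification of its cohomology with $HF^*(\iota_N,\iota_{S_\looparrowright})$ yields
$$\dots \to HF^*(\iota_N, L_0[-2]) \to HF^*(\iota_N,L_0) \to HF^*(\iota_N,\iota_{S_\looparrowright}) \to \dots,$$
as claimed.
\end{proof}
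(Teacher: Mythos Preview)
Your proof is correct and matches the paper's (implicit) approach: the corollary is stated without proof in the paper, as it follows immediately from Theorem \ref{t:ImmersedFloerLES} applied to the immersed cobordism with bottleneck furnished by Lemma \ref{l:CpnCpnImmersedSphere}, in exactly the same way that Corollary \ref{c:immersedSphereCone} was deduced. One minor inaccuracy: the cobordism of Lemma \ref{l:CpnCpnImmersedSphere} actually has no interior immersed points in $M\times K$ --- the only self-intersection (coming from the unresolved transverse point $x_0$, not from the handle) sits on the bottleneck of the $S_\looparrowright$ end --- so your discussion of interior branch jumps is covering a vacuous case, but this does no harm.
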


\section{Computations of connecting maps}\label{s:connectingMaps}

While the theory of Lagrangian cobordism provides a convenient way of proving isomorphism of mapping cones, a general difficulty is to determine the connecting maps involved in these cones.
In the case of a simple cobordism such that $L_1 \pitchfork L_2=\{p\}$, one may adapt the analysis of \cite[Chapter 10]{FOOO_Book} to find the actual count of the connecting map.
In this section, we explain how to ``compute" some connecting maps through the following simple algebraic fact.

\begin{lemma}\label{l:rankone}
Given chain complexes $A,B$ over a field $\mathbb{K}$ and $c,c' \in hom^0(A,B)$ which are closed.
Assume that $0 \neq t \in \mathbb{K}$ and $[c]=t[c']$. Then $cone(c)$ is quasi-isomorphic to $cone(c')$.
\end{lemma}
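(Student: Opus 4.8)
The plan is to exhibit an explicit isomorphism of complexes between $cone(c)$ and $cone(c')$ using nothing more than rescaling by the nonzero scalar $t$ together with a chain homotopy correction. Recall that for a closed degree-zero morphism $c\colon A\to B$, the mapping cone is $cone(c)=A[1]\oplus B$ with differential $\begin{pmatrix} d_A & 0 \\ c & d_B \end{pmatrix}$ (signs according to the conventions of \cite{Seidelbook}). Since $[c]=t[c']$ in $H^0(hom(A,B))$, there is $h\in hom^{-1}(A,B)$ with $c - tc' = d(h) = d_B\circ h + h\circ d_A$ (up to the usual Koszul sign, which I will track carefully but suppress here).

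First I would write down the candidate map $\Phi\colon cone(c')\to cone(c)$ given in block form by $\Phi=\begin{pmatrix} t\cdot \mathrm{id}_A & 0 \\ h & \mathrm{id}_B \end{pmatrix}$, i.e. $\Phi(a,b)=(ta,\ h(a)+b)$. The key step is the direct verification that $\Phi$ is a chain map, which is exactly where the identity $c-tc'=d_B h + h d_A$ gets used: comparing the $B$-components of $\Phi\circ d_{cone(c')}$ and $d_{cone(c)}\circ \Phi$ produces the terms $c\cdot t\,\mathrm{id}$ versus $t\,c' + d_B h + h d_A$, which agree precisely by the homotopy relation. The $A$-component is immediate since $t$ is central. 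Second, I would observe that $\Phi$ is an isomorphism of graded modules: its matrix is lower triangular with invertible diagonal entries $t\cdot\mathrm{id}_A$ (invertible as $t\neq 0$ in the field $\mathbb{K}$) and $\mathrm{id}_B$, so it has a two-sided inverse $\begin{pmatrix} t^{-1}\mathrm{id}_A & 0\\ -t^{-1}h & \mathrm{id}_B\end{pmatrix}$. An isomorphism of complexes is in particular a quasi-isomorphism, giving the claim.

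The main obstacle — really the only thing requiring care — is getting the signs right in the mapping cone differential and in the Leibniz rule $d(h)=d_B h \pm h d_A$, since the paper follows the cohomological conventions of \cite{Seidelbook} rather than the homological ones of \cite{BC13}; a wrong sign would force $\Phi$ to be defined with $\pm h$ or $\pm t$ in a different slot, but the structure of the argument is unaffected. I would also remark that the statement and proof are purely formal and hold over any field (or even more generally whenever $t$ is a unit), which is all that is needed for the applications to connecting maps in Section~\ref{s:connectingMaps}, where $A$ and $B$ will be Floer complexes $CF(\iota_N,-)$ and the rank-one input $[c]=t[c']$ comes from a one-dimensional Floer cohomology group. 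No deformation-of-$A_\infty$-structure subtleties arise because we only compare the underlying cones as ordinary chain complexes.
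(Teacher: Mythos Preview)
Your proposal is correct and is essentially the paper's argument: the paper also writes down an explicit lower-triangular isomorphism of complexes with the homotopy in the off-diagonal slot, the only cosmetic difference being that it rescales the $B$-summand, sending $(a,b)\mapsto(a,\,tb+\eta(a))$, rather than the $A$-summand. One small fix: with your stated direction $cone(c')\to cone(c)$ and the relation $c-tc'=d(h)$, the diagonal entry on $A[1]$ must be $t^{-1}$ rather than $t$ (equivalently, your $\Phi$ as written is a chain map $cone(c)\to cone(c')$)---precisely the sort of slot adjustment you anticipated.
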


\begin{proof}
This is a straightforward verification by sending $(a,b) \in A[1] \oplus B$ to $(a,tb+\eta(a))$, where $\eta$ is a chain homotopy between $c$ and $tc'$.
\end{proof}

Lemma \ref{l:rankone} can be upgraded to a categorical level, for example, using Yoneda lemma.
This means the quasi-isomorphism type of a non-trivial mapping cone is determined by the choice in $\mathbb{P}Hom^0(A,B)$.
Hence, it suffices to compute the connecting morphisms up to a rescaling factor
when only the quasi-isomorphism type of the cone is concerned.
In particular, when $rank(Hom^0(A,B))=1$, the cone between $A$ and $B$ can have only one quasi-isomorphism type that is not the direct sum.  The following perturbation lemma will be useful for excluding the direct sum.

\begin{figure}[h]
\centering
\includegraphics[scale=1]{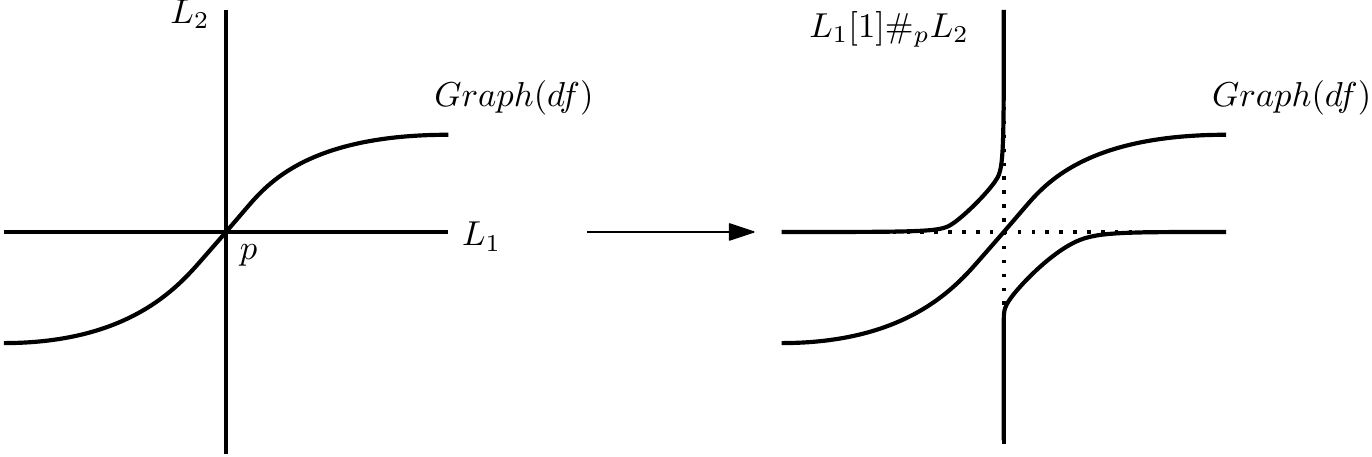}
\caption{Resolving the degree zero intersection by surgery}
\label{fig:throughPert}
\end{figure}

\blem\label{l:throughPert} Let $L_1$, $L_2\subset M$ be a pair of $\Z$-graded exact Lagrangian submanifolds.  Assume $L_1 \cap L_2=D$ with index $Ind(L_1|_D,L_2|_D)=\dim(D)=k$ and the intersection is clean.
Let $f: L_1 \to \R$ (resp. $f: L_2 \to \R$) be a Morse-Bott function which attains maximum (resp. minimum) at $D$.
Then the graph of $df$ as a perturbation $\wt L_1$ of $L_1$ (resp. $\wt L_2$ of $L_2$) in a Weinstein neighborhood satisfies

$$\wt L_1\pitchfork(L_1[1]\#_D L_2)=(\wt L_1\pitchfork L_1[1])\backslash\{D\},$$
and respectively,
$$\wt L_2\pitchfork(L_1[1]\#_D L_2)=(\wt L_2\pitchfork L_2)\backslash\{D\},$$

\noindent are correspondence of intersections preserving degrees.
\elem

\bpf  Pick a Weinstein neighborhood $W$ of $L_1$ such that $L_2$ can be identified as a conormal bundle (Proposition \ref{p:Poz99}).
Let $\wt L_1$ be the graph of $df$ and identify $\wt L_1$
as a Lagrangian in $W$ and hence in $M$.
Pick a Darboux chart $U \subset W$ centered at a point $p \in D$ such that $L_1$ is identified with $\R^n$ and $L_2$ is identified with $N^*_{\R^k}$.
The $U$ can be chosen such that $f=c\sum\limits_{i=1}^{n-k} x_i^2$ in the Darboux chart $U$
for some small negative constant $c$ and hence $\R^k$ is the only critical submanifold of $f$ in $U$.

Let $L_3=L_1[1] \#_D L_2$, where the surgery takes place in $U$. 
We have $Graph(df)=\{(\overrightarrow x,2c\overrightarrow x)|\overrightarrow x \in \R^n\}$ in $T^*\R^n=U$.
On the other hand, the flow handle is given by $H^D=\{(exp(\overrightarrow v),\overrightarrow v)|\overrightarrow v \in N^*_{\R^k}\}$, where $exp$ denotes the exponential map.
Since $c<0$, one sees that the two Lagrangians do not intersect in this Darboux chart $U$ by checks on signs.
Since $p \in D$ is arbitrary, the flow handle does not intersect $Graph(df)$.

The perturbation $\wt L_2$ is constructed similarly, except $f$ is taken to have a critical minimum submanifold along $D$ on $L_2$.  We leave the details to the reader.

\epf

We exploit consequences of this simple fact.  In the rest of this section all Lagrangians will be assumed to be $\Z$-graded and exact.

\begin{corr}\label{c:SurgeryTriangle}(Surgery exact triangle)
Let $L_1,L_2$ be graded exact closed embedded Lagrangians.
Assume $L_1 \cap L_2=D$ is connected such that $Ind(L_1|_D,L_2|_D)=\dim(D)=k$ and the intersection is clean.
Let $L_3=L_1[1] \#_D L_2$.
Suppose also that there is a Morse-Bott function $f : L_1 \to \R$ (or $f : L_2 \to \R$) such that
$f$ attains local maximum (resp. minimum) exactly at $D$ (ie. no points other than $D$ attains a local maximum).
Then there is an exact triangle $$L_1 \xrightarrow{[D]} L_2 \to L_3 \to L_1[1] $$
\end{corr}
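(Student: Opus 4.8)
The plan is to combine the cobordism-to-cone machinery with the "rank-one" rigidity of Lemma \ref{l:rankone}. First, by Lemma \ref{l:cob} the graded clean surgery $L_3=L_1[1]\#_D L_2$ comes with a graded (simple) Lagrangian cobordism $V$ from $L_1,L_2$ to $L_3$ in $M\times\C$, and by Lemma \ref{l:exactSimCob} the exactness of $L_1,L_2$ makes both $L_3$ and $V$ exact; the grading convention matches the one demanded by Theorem \ref{thm: Cone from Lagrangian cobordism}. Applying that theorem gives an isomorphism in $\cD^\pi\fuk(M)$ identifying $L_3$ with $Cone(c)$ for some closed degree-zero morphism $c\in hom^0(L_1,L_2)$; equivalently there is an exact triangle $L_1\xrightarrow{c} L_2\to L_3\to L_1[1]$. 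What remains is to identify $c$ (up to nonzero scalar) with the canonical class $[D]\in HF^0(L_1,L_2)$, and to check that $c$ is not nullhomotopic so that the cone is not the direct sum $L_1[1]\oplus L_2$.

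For the identification of $c$ up to scalar, the key point is a dimension count: under the hypothesis that $Ind(L_1|_D,L_2|_D)=\dim(D)=k$, the clean-intersection Floer complex $CF^*(L_1,L_2)$ has its degree-zero part concentrated along $D$, and a Morse-Bott computation (using the Morse-Bott function $f$ on $L_1$ or $L_2$ with unique critical submanifold $D$) shows $\rk HF^0(L_1,L_2)=1$, generated by $[D]$. Then Lemma \ref{l:rankone} (in its categorical, Yoneda-upgraded form mentioned after the lemma) tells us the quasi-isomorphism type of a nonzero cone over a class in $HF^0(L_1,L_2)$ is unique; hence it suffices to show $c\neq 0$, i.e. that $L_3$ is genuinely a nontrivial cone and not the split object. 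This is precisely where Lemma \ref{l:throughPert} enters: take the perturbation $\wt L_1$ (the graph of $df$ for the given Morse-Bott $f$ with maximum along $D$), so that $\wt L_1\pitchfork L_3 = (\wt L_1\pitchfork L_1[1])\setminus\{D\}$ with degrees preserved. Comparing Euler characteristics / total ranks: $CF^*(\wt L_1,L_3)$ has one fewer generator than $CF^*(\wt L_1,L_1[1])\oplus CF^*(\wt L_1,L_2)$ would if $c$ were zero — more precisely the degree-zero generator $D$ of $CF^*(\wt L_1,L_2)$ and the top-degree generator of $CF^*(\wt L_1,L_1[1])$ that the splitting would contribute are exactly the pair removed. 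Hence $HF^*(\wt L_1,L_3)\neq HF^*(\wt L_1,L_1[1])\oplus HF^*(\wt L_1,L_2)$, so the triangle does not split, forcing $[c]\neq 0$. Combined with $\rk HF^0(L_1,L_2)=1$, we conclude $[c]=t[D]$ for some $t\in\mathbb{K}^\times$, and by Lemma \ref{l:rankone} $Cone(c)\cong Cone([D])$, which gives the asserted exact triangle $L_1\xrightarrow{[D]} L_2\to L_3\to L_1[1]$.

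The main obstacle I anticipate is the non-splitting argument: one has to be careful that the perturbation $\wt L_1$ used in Lemma \ref{l:throughPert} is simultaneously an admissible test object for the cone $L_3=Cone(c)$ in the sense that $CF^*(\wt L_1, L_3)$ really computes the total complex $CF^*(\wt L_1,L_1[1])\oplus CF^*(\wt L_1,L_2)$ with a differential whose off-diagonal block is (the pairing with) $c$; this requires knowing the cobordism-induced identification $L_3\simeq Cone(c)$ is compatible with testing against $\wt L_1$, i.e. that the iterated-cone description of $CF^*(\wt L_1,-)$ from Biran--Cornea applies. Granting that (it follows from Theorem \ref{thm: Cone from Lagrangian cobordism} applied with the auxiliary Lagrangian $\wt L_1$), the rank bookkeeping is routine. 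A secondary, purely bookkeeping obstacle is checking the Morse-Bott index computation that pins down $\rk HF^0(L_1,L_2)=1$ and that the surviving generator is $[D]$; this is the clean-intersection analogue of Example \ref{e:MorseBottIndex} and Corollary \ref{c:MorseBott}, and the hypothesis $Ind(L_1|_D,L_2|_D)=k=\dim D$ is exactly tailored so that $D$ sits in degree $0$ with nothing else there.
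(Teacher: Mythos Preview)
Your overall strategy matches the paper's exactly: the simple cobordism of Lemma \ref{l:cob} together with Theorem \ref{thm: Cone from Lagrangian cobordism} produces an exact triangle $L_1\xrightarrow{c}L_2\to L_3\to L_1[1]$ for some closed $c\in hom^0(L_1,L_2)$; one then argues $\rk HF^0(L_1,L_2)\le 1$, so by Lemma \ref{l:rankone} it remains to show $[c]\neq 0$, and this is done by testing with the perturbation $\wt L_1$ from Lemma \ref{l:throughPert}.

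The nonvanishing step, however, is not correctly executed in your write-up. You describe ``the degree-zero generator $D$ of $CF^*(\wt L_1,L_2)$ and the top-degree generator of $CF^*(\wt L_1,L_1[1])$'' as ``the pair removed'', and appeal to Euler characteristics or total ranks. But Lemma \ref{l:throughPert} says precisely that
\[
\wt L_1\pitchfork L_3=(\wt L_1\pitchfork L_1[1])\setminus\{D\},
\]
so there are \emph{no} generators coming from the $L_2$-side, and exactly one generator (namely $D$, on the $L_1[1]$-side) is removed. A single missing chain generator does not by itself force a discrepancy in cohomology, so a global rank or Euler-characteristic comparison is insufficient. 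The paper's argument is degree-specific: test with $\wt L_1[1]$ and look in degree $0$. Since $D$ is assumed to be the \emph{unique} local maximum of $f$, it is the unique degree-$0$ generator of $CF^*(\wt L_1[1],L_1[1])$ (representing the unit of $HF^*(L_1,L_1)$); after removing it there is nothing left, so $CF^0(\wt L_1[1],L_3)=0$. If the triangle split one would have $HF^0(\wt L_1[1],L_3)\supset HF^0(\wt L_1[1],L_1[1])\ni e_{L_1}\neq 0$, a contradiction. This is exactly where the hypothesis ``$f$ attains local maximum exactly at $D$'' enters.

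A minor point: you assert $\rk HF^0(L_1,L_2)=1$ via a Morse--Bott computation. The paper only claims (and only needs) $\rk HF^0(L_1,L_2)\le 1$, obtained by exhibiting a Hamiltonian perturbation with $\rk CF^0(L_1,L_2)=1$; once $[c]\neq 0$ is established, this forces equality and identifies $[c]$ with $[D]$ up to the scalar allowed by Lemma \ref{l:rankone}.
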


\begin{proof}
When $D$ is a point, the exact triangle is known to Fukaya-Oh-Ohta-Ono \cite{FOOO_Book} in its cohomological version, and is a direct consequence of Biran-Cornea's cobordism theory in the categorical version.
We focus on the derivation of the connecting map $c_s:L_1\rightarrow L_2$.

We assume $f : L_1 \to \R$ attains local maximum exactly as $D$. The case for a Morse-Bott function on $L_2$ is similar.
Since there is a Hamiltonian perturbation of $L_1$ such that $CF^0(L_1,L_2)$ is of rank one, and hence $HF^0(L_1,L_2)$ is at most rank one.
By Lemma \ref{l:rankone}, it suffices to show that the first connecting map is non-zero.

By Lemma \ref{l:throughPert} there is no degree zero element
in $CF(\wt L_1[1],L_3)$ (note: $Ind(\wt L_1|_D,L_1|_D)=n-Ind(L_1|_D,\wt L_1|_D)=0$ by Example \ref{e:MorseBottIndex}).
If the connecting map is zero, $HF^0(\wt L_1[1],L_3)=HF^0(\wt L_1[1],L_1[1])\oplus HF^0(\wt L_1[1],L_2)$, which is at least rank one, so we arrive at a contradiction.
\end{proof}

We now consider Seidel's long exact sequence, which is slightly more involved.
Consider the exact triangle in $\mathcal{F}uk(M \times M^-)$.
\beq\label{e:catCone}S^n \times S^n \xrightarrow{c_d} \Delta \to Graph(\tau_{S}^{-1}) \to S^n \times S^n[1]\eeq
Since $HF^0(S^n \times S^n, \Delta)=HF^0(S^n,S^n)=\mathbb{K}$, we may apply Lemma \ref{l:rankone}.  Considering morphisms from $L_1\times L_2$ yields
$$\dots \to HF^*(L_1 \times L_2, S^n \times S^n) \xrightarrow{c_d} HF^*(L_1 \times L_2, \Delta) \to HF^*(L_1 \times L_2, Graph(\tau_{S}^{-1})) \to \dots$$
The fact that the connecting map $c_d$ is non-zero can be verified by plugging $L_1 \times L_2=S^n \times S^n$ and do a simple rank computation.  Hence, the connecting map $c_d$ can be taken as the multiplication with any non-zero element of  $HF^0(S^n \times S^n, \Delta)$.

To show that $c_d$ can be taken as the evaluation map, we give a proof of the following lemma communicated to the second author by S. Mau, which is a simple instance of quilt unfolding.

\begin{lemma}
Under the isomorphism $HF^*(L_1 \times L_2 , \Delta)=HF^*(L_1,L_2)$
and $HF^*(L_1 \times L_2, K \times K)=HF^*(K,L_2)\otimes HF^*(L_1,K)$,
the homomorphism
\begin{eqnarray}\label{eqn:Seidel1}
HF^*(L_1 \times L_2, K \times K) \xrightarrow{\mu^2(\bar{e}_K,\cdot)} HF^*(L_1 \times L_2,\Delta)
\end{eqnarray}
is identified with the evaluation map $HF^*(K,L_2)\otimes HF^*(L_1,K) \to HF^*(L_1,L_2)$.
Here $\bar{e}_K \in HF^0(K \times K, \Delta)$ is the image of the fundamental class under the isomorphism
$HF^*(K,K) \to HF^*(K \times K, \Delta)$
\end{lemma}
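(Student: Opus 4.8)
The plan is to unfold both sides of the claimed identity into honest quilted Floer-theoretic expressions and then match the holomorphic curves counted by each. On the left-hand side, the multiplication $\mu^2(\bar e_K,\cdot)$ takes place in $\fuk(M\times M^-)$, with inputs in $HF^*(K\times K,\Delta)$ and $HF^*(L_1\times L_2, K\times K)$ and output in $HF^*(L_1\times L_2,\Delta)$. The relevant triangle products count holomorphic triangles in $M\times M^-$ with boundary on $L_1\times L_2$, $K\times K$ and $\Delta$. First I would invoke the ``strip-shrinking''/quilt-unfolding principle (as in \cite{WWComposition}\cite{MWWFunctor}, and the statement attributed to S. Mau): a triangle in $M\times M^-$ with one boundary component on the diagonal $\Delta$ is in canonical bijection (via folding/unfolding) with a \emph{quilted} triangle in the single copy of $M$, whose seam conditions are $L_1$, $K$, $L_2$ — equivalently, a count of holomorphic strips/polygons contributing to $\mu^2: HF^*(K,L_2)\otimes HF^*(L_1,K)\to HF^*(L_1,L_2)$. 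Under this dictionary the element $\bar e_K$, being the image of the fundamental class $e_K\in HF^*(K,K)$ under $HF^*(K,K)\xrightarrow{\sim} HF^*(K\times K,\Delta)$, becomes precisely the strip-like unit insertion at the $K$-seam, so the unfolded operation is exactly the (triangle) product $HF^*(K,L_2)\otimes HF^*(L_1,K)\to HF^*(L_1,L_2)$.

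The key steps, in order: (1) recall the two isomorphisms in play — $HF^*(L_1\times L_2,\Delta)\cong HF^*(L_1,L_2)$ from Theorem \ref{thm: quilted geometric composition equality} applied to $\Delta\circ L_2 = L_2$, and $HF^*(L_1\times L_2, K\times K)\cong HF^*(K,L_2)\otimes HF^*(L_1,K)$ from Theorem \ref{thm: quilted Kunneth formula} — and fix the sign/grading conventions so that both identifications are the canonical ones; (2) set up the quilted surface: a disk with three boundary marked points, two interior seams meeting the diagonal seam, which after unfolding $\Delta$ becomes a disk with three boundary punctures and boundary conditions $(L_1,K,L_2)$; (3) check that the moduli space defining the left-hand map and the moduli space defining the evaluation map are identified by this geometric correspondence, including matching of energies, Maslov/grading data, and orientations, so the structure coefficients agree over $\K$; (4) identify $\bar e_K$ with the unit insertion — here one uses that $e_K$ is represented by a Morse-theoretic maximum (or the strip-like-end unit) and that the folding isomorphism $HF^*(K,K)\to HF^*(K\times K,\Delta)$ is a ring map sending $e_K$ to $\bar e_K$; (5) conclude that $\mu^2(\bar e_K,\cdot)$ equals $\operatorname{ev}: HF^*(K,L_2)\otimes HF^*(L_1,K)\to HF^*(L_1,L_2)$.

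The main obstacle is step (3): carefully justifying that strip-shrinking along the diagonal seam is an honest bijection of moduli spaces at the chain level (or at least induces the claimed map on cohomology), rather than merely a heuristic, and that it respects orientations and the grading shifts so that no spurious sign enters. This is exactly the analytic content of the quilted composition theorems \cite{WWComposition}\cite{LL13}\cite{MWWFunctor}, and the cleanest route is to cite those results directly for the isomorphisms of the underlying groups and then observe that the triangle product is compatible with them — i.e. that the composition isomorphism is not just a vector-space isomorphism but is functorial for the $A_\infty$-module structure, which is what lets us transport $\mu^2(\bar e_K,-)$ to $\operatorname{ev}$. A secondary subtlety is verifying that $\bar e_K$ is genuinely a unit for the relevant module action (so that $\mu^2(\bar e_K,-)$ is the correct thing to unfold), which follows from the unitality of the quilted/Mau-Wehrheim-Woodward structure but should be stated explicitly.
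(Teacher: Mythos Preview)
Your proposal is correct and follows essentially the same approach as the paper: both arguments unfold the $\mu^2$-triangle in $M\times M^-$ along the diagonal seam into a quilted picture in $M$, and then identify the $\bar e_K$-input with the unit $e_K$ so that the unfolded count is exactly the evaluation triangle $HF^*(K,L_2)\otimes HF^*(L_1,K)\to HF^*(L_1,L_2)$. The paper's version is terser and picture-driven (splitting off a shaded region that reduces to a quilted cylinder identifying the constraint with $e_K$, then invoking a standard gluing), but the logical skeleton---unfold along $\Delta$, recognize the remaining input as the unit---is identical to yours.
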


\begin{proof}

The homomorphism \ref{eqn:Seidel1} is obtained by counting quilted surfaces in the shape of the left of Figure \ref{fig:Quiltedfirst}.
\begin{figure}[h]
\centering
\includegraphics[scale=0.9]{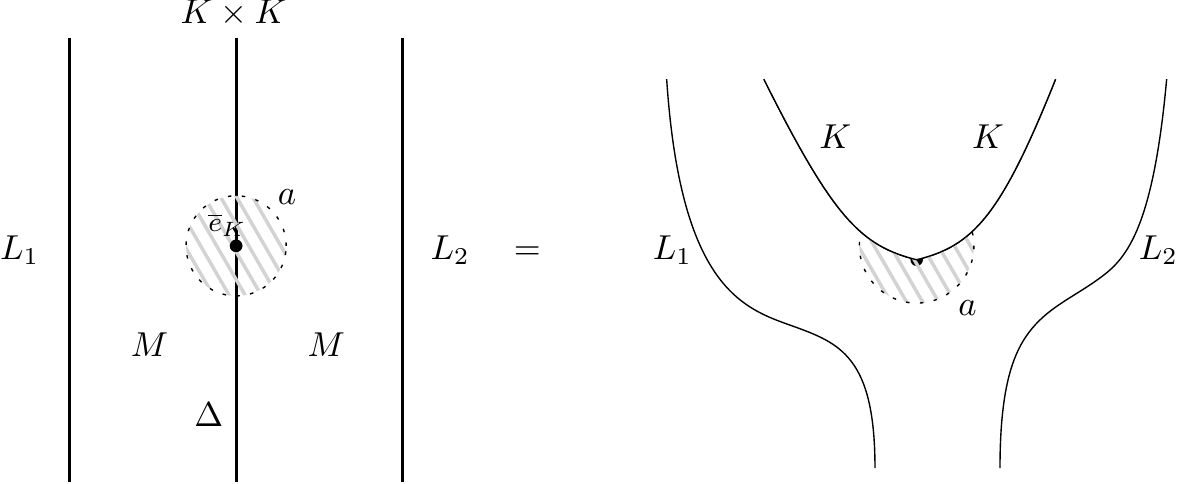}
\caption{Unfolding of $c_d$}
\label{fig:Quiltedfirst}
\end{figure}
Splitting the shaped region from the quilted surface, the rest of the surface is equivalent to counting
holomorphic curves shown in the right of Figure \ref{fig:Quiltedfirst}, where the evaluation at a point $a$ is constrained by the output from the shaded region.
The latter is given by rigid quilted cylinders shown on the left of Figure \ref{fig:Quilted3}, which is in turn equivalent to the
holomorphic section on the right.
\begin{figure}[h]
\centering
\includegraphics[scale=0.9]{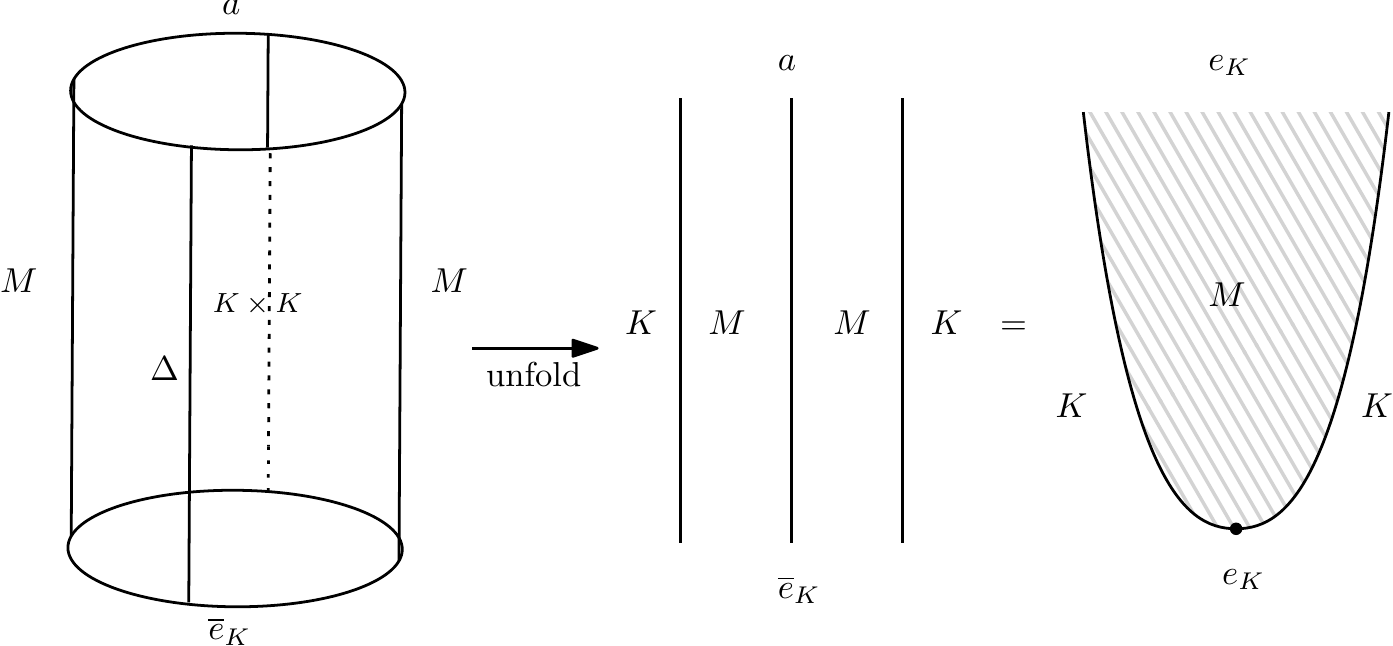}
\caption{Unfolding of the shaded region}
\label{fig:Quilted3}
\end{figure}
This shows $a=e_K$, meaning the constraint is only decorative.  A standard gluing thus concludes the lemma.

\end{proof}

A similar application of Lemma \ref{l:rankone} and quilt unfolding also gives an explicit description of $c_{f}:HF^*(f)\to HF^*(f(S^n),S^n)$ in \eqref{e:exactSeqFix} (this is the composition with $c$ in \eqref{e:catCone}).  Consider holomorphic half-strips $u: \R^{\ge0}\times [0,1]\to (M,J)$ for suitable choice of Floer data, with the following boundary conditions

$$
\left\{ \begin{aligned}
        &u(0,t)\in S^n \\
        &f(u(s,1))=u(s,0)\\
        &\lim_{s\to+\infty}u(s,t)=x\in Fix(f).
                          \end{aligned} \right.
                          $$

Rigid counting of these holomorphic half-strips defines a chain map $\CO_{f}:CF(f)\to CF(f(S^n),S^n)$.  To show that $[\CO_{f}]$ is identified with $c_{f}$, consider the unfolding shown in Figure \ref{fig:unfoldHF(f)} for $l=u(s,1)$.  Hence we showed that:

\bcor\label{c:fixedConn} $c_f=[\CO_f]$.\ecor

\begin{figure}[h]
\centering
\includegraphics[scale=0.9]{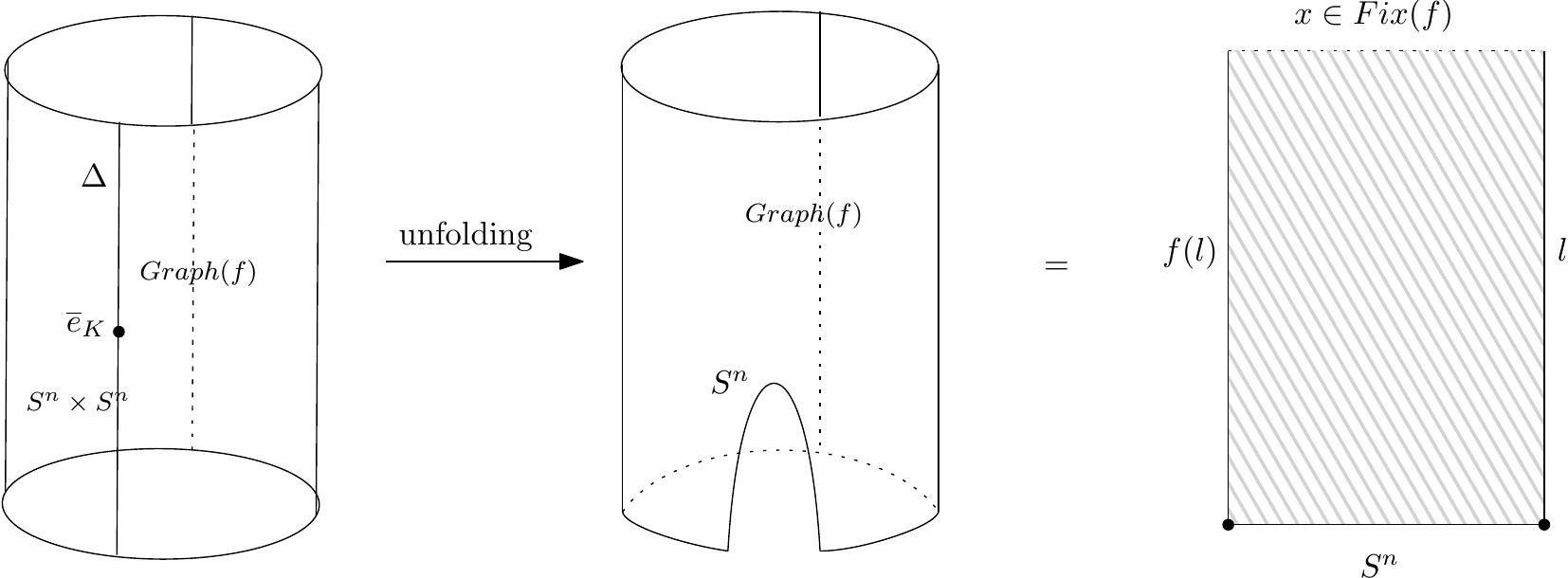}
\caption{The unfolding of $c_f$}
\label{fig:unfoldHF(f)}
\end{figure}

In the last two corollaries we consider the case of $\CP^n$-twists.  However, the reader should use caution here: they only hold in situations that we may upgrade Corollary \ref{c:CpnCpnSphere} and \ref{c:immersedSphereCone} to the categorical level.  While we believe this is true in general, we have not developed sufficient tools in the current paper to claim it as a theorem.  Nonetheless, we still include them here to make our discussions complete.

\begin{corr}\label{c:CpnCpnSphereMap}
Assuming Corollary \ref{c:CpnCpnSphere} can be upgraded to a categorical cone.
Let $S=\mathbb{CP}^{\frac{m}{2}}$ and $S_{\looparrowright}$ be the immersed sphere in Lemma \ref{l:immersedSphere}.
Let $0 \neq [h] \in HF^2(S,S)$.
Then there is a long exact sequence for any $N$
$$  \dots \to HF^*(N,S[-2]) \xrightarrow{\mu^2([h],\cdot)} HF^*(N,S) \to HF^*(N,S_{\looparrowright}) \to \dots $$
\end{corr}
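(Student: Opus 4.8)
The plan is to deduce Corollary \ref{c:CpnCpnSphereMap} from Corollary \ref{c:CpnCpnSphere} by the same mechanism used for Seidel's exact sequence in \eqref{e:catCone}: once we know that the long exact sequence comes from a genuine mapping cone $S_\looparrowright = \mathrm{Cone}(S[-2] \xrightarrow{c} S)$ in a suitable category (this is precisely the hypothesis that Corollary \ref{c:CpnCpnSphere} upgrades to a categorical cone), the whole problem reduces to identifying the connecting morphism $c \in HF^0(S[-2],S) = HF^2(S,S)$ up to a nonzero scalar. First I would record that $HF^2(S,S) = HF^2(\mathbb{CP}^{m/2},\mathbb{CP}^{m/2})$ is one-dimensional, spanned by a class $[h]$ (the degree-$2$ generator of the quantum/Floer cohomology of $\mathbb{CP}^{m/2}$, equivalently the hyperplane class); this is the place where the projective-space geometry enters, exactly as in Huybrechts--Thomas's $\mathbb{P}^n$-objects. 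Given one-dimensionality, Lemma \ref{l:rankone} tells us that the quasi-isomorphism type of $\mathrm{Cone}(c)$ is determined by the line $\mathbb{K}\cdot c \in \mathbb{P}HF^2(S,S)$, so either $c = 0$ (and $S_\looparrowright \simeq S \oplus S[-1]$) or $c$ is a nonzero multiple of $[h]$ and may be taken to be $[h]$ itself.

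Then I would rule out $c = 0$. The cleanest route is the grading/energy data already computed in Lemma \ref{l:immersedSphere}: the immersed sphere $S_\looparrowright$ has branch-jump set $R_{S_\looparrowright} = \{(q_{-1},q_0),(q_0,q_{-1})\}$ with $Ind(q_{-1},q_0) = -1$, $Ind(q_0,q_{-1}) = m+1$ and $E(q_{-1},q_0) < 0$, $E(q_0,q_{-1}) > 0$. If $c$ were zero, then $S_\looparrowright$ would be quasi-isomorphic in the Fukaya category to the direct sum $S[-2] \oplus S$, and one can extract a contradiction by applying $HF^*(S_\looparrowright,-)$ or $HF^*(-,S_\looparrowright)$, or more simply by a rank count against a suitable test object $N$ (e.g. the cotangent fiber $F_{x_0}$, for which $S_\looparrowright \# F_{x_0} = \tau_S(F_{x_0})$ by Lemma \ref{l:SnCpn=Dehn}, so $HF^*(F_{x_0},S_\looparrowright)$ can be computed two ways). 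Alternatively, I would invoke the immersed cobordism of Lemma \ref{l:SnFiberDehnCob} together with Corollary \ref{c:immersedSphereCone}: the existence of the nontrivial Dehn twist $\tau_S$ forces $S_\looparrowright$ not to split, because a split $S_\looparrowright$ would make $\tau_S$ act trivially on the relevant Floer groups, contradicting known non-vanishing computations for $\mathbb{CP}^{m/2}$-twists. Either way, $c \neq 0$, hence $c = \mu^2([h],\cdot)$ up to rescaling.

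Finally I would assemble the exact sequence: applying the cohomological functor $HF^*(N,-)$ to the categorical cone $S[-2] \xrightarrow{\mu^2([h],\cdot)} S \to S_\looparrowright \to S[-1]$ yields
$$\cdots \to HF^*(N,S[-2]) \xrightarrow{\mu^2([h],\cdot)} HF^*(N,S) \to HF^*(N,S_\looparrowright) \to \cdots$$
for any clean immersed $N$ satisfying Assumption {\bf (A)}, which is the statement. The main obstacle is genuinely the one flagged in the text: promoting Corollary \ref{c:CpnCpnSphere} from a long exact sequence of Floer groups (as proved via Theorem \ref{t:ImmersedFloerLES}) to an actual mapping-cone relation in a triangulated category containing immersed objects — this requires an $A_\infty$-level immersed cobordism formalism which the paper does not develop, and is why the corollary is stated conditionally. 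Everything downstream of that, namely the one-dimensionality of $HF^2(S,S)$, the appeal to Lemma \ref{l:rankone}, and the non-splitting argument, is comparatively routine given the grading and energy bookkeeping already in place.
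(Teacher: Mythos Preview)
Your overall strategy matches the paper exactly: observe that $HF^0(S[-2],S)=HF^2(S,S)=\K\cdot[h]$ is one-dimensional, invoke Lemma~\ref{l:rankone}, and reduce to showing the connecting map is nonzero by exhibiting a test object $N$ for which $HF^*(N,S_\looparrowright)\not\cong HF^*(N,S[-2])\oplus HF^*(N,S)$.

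Where you diverge is in the choice of test object, and this is where your argument is incomplete. The paper simply takes $N$ to be a Hamiltonian perturbation of $S$ itself, arranged so that $CF^*(N,S_\looparrowright)$ has rank exactly two. Then $HF^*(N,S_\looparrowright)$ has total rank at most $2$, while $HF^*(S,S[-2])\oplus HF^*(S,S)\cong H^*(\CP^{m/2})^{\oplus 2}$ has total rank $m+2>2$; this is an immediate contradiction with no further bookkeeping. Your suggested $N=F_{x_0}$ does \emph{not} yield a total-rank contradiction: both $CF^*(F_{x_0},S_\looparrowright)$ and $HF^*(F_{x_0},S[-2])\oplus HF^*(F_{x_0},S)$ have total rank $2$. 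One can rescue that choice by comparing \emph{graded} ranks --- the two chain generators of $CF^*(F_{x_0},S_\looparrowright)$ lie in degrees $m$ and $m+1$, whereas the direct sum is concentrated in degrees $m$ and $m+2$ --- but you did not carry this out, and your phrase ``rank count'' suggests you had the coarser comparison in mind. Your alternative via nontriviality of $\tau_S$ is too vague to evaluate: you do not say which computation would be contradicted, and there is no reason a split $S_\looparrowright\simeq S[-2]\oplus S$ would force $\tau_S$ to act trivially on Floer groups, since the second cone $S_\looparrowright\to L\to\tau_S(L)$ could still be nontrivial.
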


\begin{proof}
Since $HF^0(S[-2],S)=\K\cdot [h]$, in view of Corollary \ref{c:CpnCpnSphere} and Lemma \ref{l:rankone},
it suffices to prove that the connecting map is non-zero.
In other words, we want to show that $HF^*(N,S[-2]) \oplus HF^*(N,S) \neq HF^*(N,S_{\looparrowright})$ for some $N$.
It follows directly by taking $N$ to be an appropriate perturbation of $S$, so that  $CF^*(N,S_\looparrowright)$ has rank equal to two.
\end{proof}

\begin{corr}\label{c:immersedLES}
Assume Corollary \ref{c:immersedSphereCone} can be upgraded to a categorical cone.
Let $S=\mathbb{CP}^{\frac{m}{2}}$, $L$ and $N$ be embedded exact Lagrangians in $M$.
Assume $S \pitchfork L =\{p\}$ and $Ind(S|_p,L|_p)=0$.
Let $S_{\looparrowright}$ be the immersed sphere in Lemma \ref{l:immersedSphere} constructed by $x_0=p$.

Denote the two generators as $q_0\in CF^0(S_\looparrowright,L)$ and $q_{-1}\in CF^{-1}(S_\looparrowright,L)$ which are both geometric point $\{p\}$.
Then there is a long exact sequence for any $N$
$$  \dots \to HF^*(N,S_{\looparrowright}) \xrightarrow{\mu^2([q_0],\cdot)} HF^*(N,L) \to HF^*(N,\tau_{S}(L)) \to \dots $$
\end{corr}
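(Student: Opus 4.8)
\textbf{Proof proposal for Corollary \ref{c:immersedLES}.}

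The plan is to combine the categorical cone coming from Corollary \ref{c:immersedSphereCone} (which we are now assuming can be upgraded to the category level) with the rank-one rescaling trick of Lemma \ref{l:rankone}. Granting the upgrade, Corollary \ref{c:immersedSphereCone} furnishes an exact triangle $S_\looparrowright \to L \to \tau_S(L) \to S_\looparrowright[1]$ in (a suitable twisted/split-closed version of) the immersed Fukaya category, where the connecting morphism $c \in Hom^0(\iota_{S_\looparrowright},L)$ is a priori unknown. Applying the cohomological functor $HF^*(N,-)$ to this triangle immediately produces the long exact sequence with some connecting map $\mu^2([c],\cdot)$. So the entire content of the statement is the identification of $[c]$, up to a nonzero scalar, with $[q_0]$.

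First I would compute $HF^0(\iota_{S_\looparrowright},L)$. Since $S\pitchfork L=\{p\}$ with $Ind(S|_p,L|_p)=0$, and $S_\looparrowright = L_{-1}[-1]\#_D L_0$ is obtained by a graded surgery that leaves $L_0$ unchanged near $p$ while the $L_{-1}$-branch contributes at $p$ a second intersection point, the cochain complex $CF^*(\iota_{S_\looparrowright},L)$ is generated (after a small Hamiltonian perturbation making everything transverse away from $p$) by exactly the two geometric points over $p$, namely $q_0$ in degree $0$ and $q_{-1}$ in degree $-1$; here the degree of $q_0$ is $Ind(S|_p,L|_p)=0$ and the degree of $q_{-1}$ is $Ind(S|_p,L|_p)-1 = -1$ because the grading shift $[-1]$ on the $L_{-1}$ branch was absorbed into the construction of $S_\looparrowright$ in Lemma \ref{l:immersedSphere} (cf. the index bullet $Ind(q_{-1},q_0)=-1$ there). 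No immersed-point bubbling interferes with $\mu^1$ on this complex by Assumption \textbf{(A)} for $\iota_{S_\looparrowright}$, verified in Lemma \ref{l:immersedSphere}. Consequently $HF^0(\iota_{S_\looparrowright},L)$ is at most rank one, spanned by $[q_0]$ (and if $q_0$ were a coboundary the cone would split, which the perturbation argument below excludes).

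Given that $rank\,HF^0(\iota_{S_\looparrowright},L)\le 1$, Lemma \ref{l:rankone} says the only quasi-isomorphism class of a nontrivial cone is the one with connecting map a nonzero multiple of $[q_0]$; thus it suffices to show $[c]\neq 0$, equivalently that the cone does not split, equivalently that the long exact sequence is not the trivial split one for \emph{some} test object $N$. I would obtain this exactly as in Corollary \ref{c:CpnCpnSphereMap}: take $N$ to be a suitable Hamiltonian perturbation of $L$ itself (or of $S$). Using Lemma \ref{l:throughPert} — applied to the clean surgery presentation of $\tau_S(L)$ coming from Theorem \ref{t:surgeries}(4) and Lemma \ref{l:SnFiberDehnCob}, with a Morse-Bott function on $L$ attaining its extremum along the relevant intersection locus — one shows that the perturbed $\wt L$ meets $\tau_S(L)$ in fewer points than it meets $S_\looparrowright[1]$ and $L$ together, so $HF^*(N,\tau_S(L))\neq HF^*(N,S_\looparrowright[1])\oplus HF^*(N,L)$, forcing the connecting map nonzero. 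Combining with the rank-one identification finishes the proof. The main obstacle, beyond the already-granted categorical upgrade, is the bookkeeping in the second step: making the perturbation $N$ of $L$ genuinely compatible with the immersed cobordism picture (so that $HF^*(N,\iota_{S_\looparrowright})$, $HF^*(N,L)$, $HF^*(N,\tau_S(L))$ are all computed by transverse, fishtail-free complexes) and carrying out the intersection-point count via Lemma \ref{l:throughPert} in the immersed setting where the self-intersection point of $S_\looparrowright$ sits over $p$; this is where one must be careful that the perturbation does not create or destroy the $q_0$, $q_{-1}$ generators.
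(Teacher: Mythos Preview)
Your overall strategy matches the paper's: pass to the categorical cone, observe that $CF^*(\iota_{S_\looparrowright},L)$ has exactly the two generators $q_0,q_{-1}$ so that $HF^0(\iota_{S_\looparrowright},L)$ has rank at most one, invoke Lemma \ref{l:rankone}, and then exclude the split case by a perturbation computation with $N=\wt L$ as in Lemma \ref{l:throughPert}. The paper does exactly this.

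There is, however, a genuine gap in your nonvanishing step. Your inference ``$\wt L$ meets $\tau_S(L)$ in fewer points than it meets $S_\looparrowright[1]$ and $L$ together, hence the cohomology does not split'' is not valid: a smaller chain-level rank does not by itself bound the cohomology of the putative direct sum. The paper instead makes a \emph{degree-specific} count. After the adapted Lemma \ref{l:throughPert} argument (the surgery removes the $q_0$-branch intersection but the $q_{-1}$-branch survives), one has $CF^m(\wt L,\tau_S(L))=\K\cdot q_{-1}$, hence rank at most one in degree $m$. On the split side, $HF^m(\wt L,L)=HF^m(L,L)$ already contributes one copy of $\K$, and the paper shows that $HF^m(\wt L,S_\looparrowright[1])$ contributes a \emph{second} copy. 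This second copy is precisely $HF^{-1}(S_\looparrowright,L)^\vee$, and to see that it is nonzero the paper first invokes Corollary \ref{c:CpnCpnSphereMap} (the $S[-2]\to S\to S_\looparrowright$ triangle) to compute $HF^{-1}(S_\looparrowright,L)=\K\cdot[q_{-1}]$. You skipped this preliminary computation, and without it the degree-$m$ rank comparison cannot be completed. Your direct chain-level observation that $rank\,HF^0\le 1$ is efficient, but it does not supply the companion fact $HF^{-1}(S_\looparrowright,L)=\K$ that the contradiction actually uses.
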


\begin{proof}
First of all, we want to show that $HF^*(L,S_\looparrowright) \neq 0$.
Note that, we have $HF^{m+2}(L,S[-2])=HF^m(L,S)=\K\cdot [p^\vee]$ and $HF^{m+2}(L,S)=0$.
By Corollary \ref{c:CpnCpnSphereMap}, we have
$$  \cdots\to HF^{m+1}(L,S_{\looparrowright})\to HF^{m+2}(L,S[-2]) \xrightarrow{\mu^2([h],\cdot)} HF^{m+2}(L,S) \to  \dots $$
which shows that $HF^{-1}(S_\looparrowright,L)=(HF^{m+1}(L,S_\looparrowright))^\vee =\K\cdot [q_{-1}]$.
In particular, $q_0$ and $q_{-1}$ are both cocycles since they are the only generators.
It also follows that

\begin{equation} \label{eq:1}
HF^*(S_\looparrowright,L)=\left\{ \begin{aligned}
         &\K,\quad *=-1,0 \\
                 &0,\quad \text{otherwise}
                          \end{aligned} \right.
                          \end{equation}

both $HF^0(S_{\looparrowright},L)$ and $HF^{-1}(S_{\looparrowright},L)$ have rank one.
From Lemma \ref{l:rankone} again the only thing to show is the non-vanishing of the connecting map $c_p\in Hom^0(S_\looparrowright,L)$.

Choose $N=\wt L$ to be the perturbation of $L$ as in Lemma \ref{l:throughPert}, we have a graded identification of intersection points $\wt L \cap  (\tau_{S}(L)\backslash\{q_{-1}\}) = (\wt L\cap L)\backslash\{p\} $, none of which has degree $\ge m$.  Note that our situation slightly differs from \ref{l:throughPert} since $S_\looparrowright$ has two branches of intersections at $p$ and the same proof there removes the intersection $q_0$ but $q_{-1}$ survives (Figure \ref{fig:throughPertCPn}).
We now have $Ind(\wt L|_{q_{-1}},\tau_{S}(L)|_{q_{-1}})=Ind(L|_{q_{-1}},L_{-1}|_{q_{-1}})=m$ and hence 
$CF^{m}(\wt L,\tau_{S}(L))=\K\cdot q_{-1}$ so the cohomology has at most rank one.
However, $HF^m(\wt L,S_{\looparrowright}[1])\oplus HF^m(\wt L,L)=HF^0(S_{\looparrowright}[1],L) \oplus HF^m(L,L)$ has rank two.
Therefore, one cannot have a short exact sequence in degree $n$ so the connecting map is non-vanishing.
\end{proof}

\begin{figure}[h]
\centering
\includegraphics[scale=1]{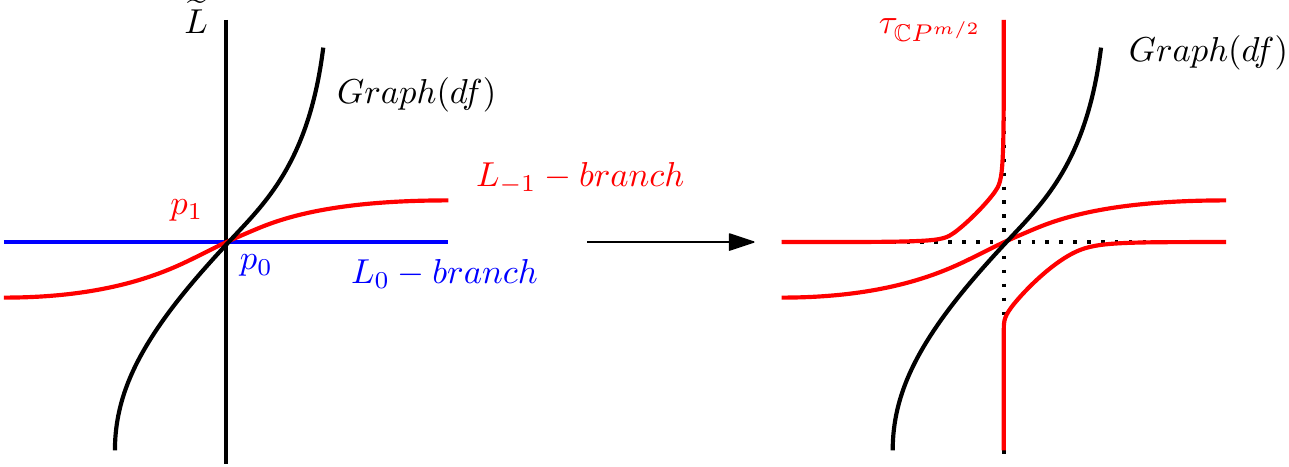}
\caption{Resolving the degree zero intersection by surgery with $S_\looparrowright$}
\label{fig:throughPertCPn}
\end{figure}

\section{Categorical point of view}

\subsection{$\CP^n$-twist and $\bP^n$-objects}

We recall the definition of $\bP^n$-objects from both $A$-side and $B$-side.

\bdf[\cite{HT06}, Definition 1.1]\label{d:bPn} Let $X$ be a smooth projective varieties.  An object $\cE\in D^b(X)$ is called a $\bP^n$-object if $\cE\otimes \w_X\cong\cE$ and $Ext^*(\cE,\cE)$ is isomorphic as a graded ring to $H^*(\bP^n,\C)$.
\edf

From here Huybrechts and Thomas defined an autoequivalence of $D^b(X)$. This is the Fourier-Mukai functor induced by the following iterated mapping cone in $D^b(X)$

\beq\label{e:HTcone} Cone(Cone(\cE^\vee\boxtimes\cE[-2]\rightarrow \cE^\vee\boxtimes\cE)\rightarrow \cO_\Delta).\eeq

We will not pursue the connecting maps in \eqref{e:HTcone} in this paper, but the readers should consult \cite{HT06} in case of interests.  On the $A$-side, one has the following notion of $\CP^n$-objects (and similarly for $\mathbb{RP}^n,\mathbb{HP}^n$-objects) in $A_\infty$-categories.

\bdf[\cite{Ha11}, Definition 3.1]\label{d:APn} A $\CP^n$-object (resp. $\mathbb{RP}^n,\mathbb{HP}^n$-object) in $\cA$ is a pair $(V,h)$ for $V\in Ob(\cA)$ and $h\in hom^\dag(V,V)$ such that

\begin{enumerate}[(1)]
\item $\mu^1(h)=0$ and $Hom(V,V)\cong \K[h]/h^{n+1}$ as a graded ring,
\item There is a map $\int:Hom^{!}(V,V)\rightarrow k$ such that for any $X\in\cA$, the bilinear map $Hom^{!-k}(X,V)\otimes Hom^k(V,X)\rightarrow Hom^{!}(V,V)\rightarrow \K$
    is non-degenerate.
\end{enumerate}

Here, $\dag=2,1,4$, respectively, for $\CP^n$, $\mathbb{RP}^n$ and $\mathbb{HP}^n$. $!=2n,n,4n$, respectively, for $\CP^n$, $\mathbb{RP}^n$ and $\mathbb{HP}^n$.
\edf

A typical example of $\CP^n$-object is given by an exact Lagrangian $\CP^n$ in $\fuk(M)$ for an exact symplectic manifold $M$.  In \cite{HT06} it is conjectured that the Lagrangian $\CP^n$-twist is mirror to a $\bP^n$-twist in the derived category of the mirror variety.  Based on this speculation, \cite{Ha11} constructed an algebraic version of the $\CP^n$-twist in $A_\infty$ category and conjectured that is exactly the auto-equivalence induced by a Lagrangian $\CP^n$-twist.  Our main result in this direction is to confirm this conjecture up to determination of connecting maps in an exact symplectic manifold $M$.

\bthm\label{t:catPn} Given a Lagrangian $\CP^n=S\subset M$ (resp. $\mathbb{RP}^n,\mathbb{HP}^n$).  The auto-equivalence induced by Lagrangian $S$-twist is equivalent to the following iterated cone in the category of $fun(\fuk(M),\fuk(M))$

$$Cone(hom(S,-)\otimes S[-\dag]\rightarrow hom(S,-)\otimes S\rightarrow id_{\fuk(M)}).$$
where $\dag=2,1,4$, respectively, for $\CP^n$, $\mathbb{RP}^n$ and $\mathbb{HP}^n$.
\noindent The corresponding cone in the bimodule category $Bimod(\fuk(M))$ also holds.

\ethm

\bpf The construction from Lemma \ref{l:CPnCob}, grading consideration from Lemma \ref{l: graded identity for graph of CPm/2}, \ref{l:RpnHpnSurgIdentity} and the main theorem in \cite{BC2} implies a quasi-isomorphism of iterated cones in $\fuk(M\times M^-)$
$$Cone((S\times S)[-\dag]\rightarrow S\times S\rightarrow\Delta_M)\cong Graph(\tau_{S}^{-1}).$$

The desired assertion is simply the image of this equality under the M'au-Wehrheim-Woodward functor $\Phi$.  The counterpart in the bimodule category follows replacing the functor $\Phi$ by the functor $\sG$ (See the end of Section \ref{s:review}).

\epf

Similarly, by replacing Lemma \ref{l:CPnCob} with Lemma \ref{l:fiberedTwist}, Lemma \ref{l: graded identity for graph of CPm/2} and \ref{l:RpnHpnSurgIdentity} with Lemma \ref{l:familyGrade}, we have

\bthm\label{t:familyPn} Given a projectively coisotropic manifold $C \subset M$.  The auto-equivalence induced by family projective-twist is equivalent to the following iterated cone in the category of $fun(\fuk^{\#}(M),\fuk^{\#}(M))$

$$Cone(\wt C[-\dag]\rightarrow \wt C \rightarrow id_{\fuk^{\#}(M)}).$$
where $\dag=2,1,4$, respectively, if the projective fiber is $\CP^l$, $\mathbb{RP}^l$ and $\mathbb{HP}^l$.
\ethm

We remark that the functor $\wt C$ should be regarded as the composite of the functors $C^t: \fuk^{\#}(M) \to \fuk^{\#}(B)$ and $C: \fuk^{\#}(B) \to \fuk^{\#}(M)$.

\subsection{Long exact sequences as cones of functors/bimodules}\label{s:functorCones}

We may recapitulate results from  Section \ref{s:proofLES} on the functor level.  Recall that Seidel proved the following categorical version of Theorem \ref{thm:long exact sequence 1} in \cite{Seidelbook} to Theorem \ref{t:sphereTwist}.

\beq\label{e:catLES} \rightarrow hom(S^n,L)\otimes S^n\xrightarrow{ev} L\rightarrow \tau_{S^n}L\xrightarrow{[1]}\eeq

This result can be considered as a consequence of our previous results in two equivalent point of views as functors and bimodules.  The first one is straightforward given the M'au-Wehrheim-Woodward's $A_\infty$-functor $\Phi$ \eqref{e:MWW}.

Given our cobordism construction, Corollary \ref{c:admissibleToDehnProduct}, Corollary \ref{c: graded idenity for graph of Sn} and Lemma \ref{l:cob}, and combining with the main result from \cite{BC2}, we indeed have a cone
in $\fuk(M\times M)$
\beq\label{e:productCone} S^n\times (S^n)^-\rightarrow \Delta\rightarrow Graph(\tau_{S^n}^{-1})\xrightarrow{[1]}.\eeq

Hence, under $\Phi$ this turns into a cone of functors
\beq\label{e:functorCone} hom(S^n,-)\otimes S^n\rightarrow Id_{Tw\fuk(M)}\rightarrow \Phi_{\tau_{S^n}}\xrightarrow{[1]}\eeq

\noindent or, if a compactly supported symplectomorphism $\phi\times id$ is applied to the cobordism, the resulting cone reads
\beq\label{e:functorCone2} hom(\p(S^n),-)\otimes S^n\rightarrow \Phi_\phi\rightarrow \Phi_{\tau_{S^n}\circ\phi}\xrightarrow{[1]}\eeq

Evaluating \eqref{e:functorCone} at any object $L\subset\fuk(M)$ hence recovers \eqref{e:catLES}, while further evaluating another object gives the cohomological version Theorem \ref{thm:long exact sequence 1}.  Corollary \ref{c:Seideltwist-fixed} follows from \eqref{e:functorCone2} considering the morphisms to the identity functor in simple cases.    For the family Dehn twist Lemma \ref{l:fiberedTwist}, we may also interpret it as a cone of functors, but we need to go to general Lagrangian correspondence framework: by the time of writing, it is not clear the functor induced by $\wt C$ has target reduced to the derived Fukaya category even for spherically coisotropic manifolds.

\subsection{Categorical automorphisms of $ADE$-singularities}

In this section, we investigate the compactly supported symplectomorphisms of Milnor fibers $W$ of an $ADE$-singularity in a categorical point of view.  In other words, we are interested in the image of the natural homomorphism $Symp_c(W)\rightarrow Aut(Fuk(W))$. The goal is to show that

\bthm\label{t:AnFun} For any compactly supported symplectomorphism $\phi\in Symp_c(W)$, $\Phi_\phi\in D^\pi Aut(\fuk(W))$ is split generated by compositions of Dehn twists along the standard vanishing cycles.
\ethm

Here $D^\pi Aut(\fuk(W))$ is considered the image in $Fun(D^\pi\fuk(W), D^\pi\fuk(W))$ induced by compactly supported symplectomorphisms.  More precisely, a symplectomorphism induces an $A_\infty$ autoequivalence on $\fuk(W)$, and $D^\pi Aut(\fuk(W))$ consists of the split closure of the image of such autoequivalences under the following functor (see \cite[(1.10)]{Seidelbook})
$$H(fun(\fuk(W),\fuk(W)))\rightarrow Fun(H(\fuk(W)), H(\fuk(W))).$$
We start by considering a slightly more generalized situation.  Recall that a \textit{weighted homogenous polynomial} $q$ satisfies $q(\lambda^{\beta_1}z_1,\dots,\lambda^{\beta_n}z_n)=\lambda^\beta q(z_1,\dots, z_n)$ for some integers $(\beta; \beta_1,\dots,\beta_n)$.  Then $q:\C^n\rightarrow \C$ has an isolated singularity at $(0,0)$, and the nearby fibers are called the \textit{Milnor fiber of the weighted homogeneous singularity $W_q$}.

Seidel \cite{SeGraded}\cite{Seidelbook} studied the symplectic nature of $W_q$ through its monodromy around $0\in\C$ as follows.  Consider $q^{-1}(D^2(1)\backslash\{0\})$, one may choose a symplectic connection which is trivial near infinity in the fibers.  This induces a monodromy $f\in Symp_c(W_q)$ by parallel transport around the origin, which decomposes into a sequence of Dehn twists along Lagrangian spheres $\{L_i\}_{i=1}^l$ by perturbing $q$ into a Lefschetz fibration.  The Lagrangian spheres $L_i$ are indeed the vanishing cycles of this Lefschetz fibration.

It is shown in \cite{SeGraded} that the iterate $f^\beta|_U=id[k_q]|_U$.  Here $k_q=2(\beta-\sum\beta_i)\in \Z$ and $U\subset W_q$ is a compact set which can be chosen arbitrarily large by varying the choice of symplectic connections.  In the rest of the section we further assume that $$\sum\beta_i\neq\beta.$$
The following observation allows one to show that $\{L_i\}_{i=1}^l$ split generates $D^\pi\fuk(W_q)$, the Fukaya category generated by compact Lagrangian branes.

\blem[\cite{Seidelbook}, (5e)]\label{l:twist-generate} Let $C$ be a twisted complex split generated by objects $L_1,\dots, L_l$, and there is an exact triangle
$$C\rightarrow L\xrightarrow{t}L[\alpha]\rightarrow C[1]$$
for some perfect complex $L$ and $\alpha\in\Z\backslash\{0\}$.  Then $L_1,\dots,L_l$ split generate $L$.
\elem

The lemma follows from part of the octahedron axiom, which asserts that the cone of the composition
\beq\label{e:compCone} t[d\alpha]\circ\cdots\circ t[\alpha]\circ t: L\rightarrow L[(d+1)\alpha]\eeq

is generated by shifted copies of $C$.  The boundedness assumption on $L$ then implies the vanishing of \eqref{e:compCone} when $d$ is large enough, as desired.

Now take the vanishing cycles $L_i$ involved in the monodromy $f$ for $W_q$.  From \eqref{e:functorCone}, there is an equality of twisted complexes
$$\Phi_{f^\beta\circ\phi }\cong Cone(\sC\rightarrow \Phi_\phi).$$

Here $\sC$ is an iterated cone formed by functors of the form $\Phi_{L_i\times\wt L_i}$, where $\wt L_i$ is a Lagrangian sphere differed from $L_i$ by $\phi^{-1}$ and a composition of twists by vanishing cycles.

We note that although $f^\beta\neq id[k_q]$ as a symplectomorphism, $\Phi_{f^\beta}=id[k_q]$ as auto-equivalences on $\fuk(W_q)$ (while the case will be drastically different when wrapped version is considered).  This still does not put us back to the framework of Lemma $\ref{l:twist-generate}$: it is unclear that $H^0(hom_{\fuk(W_q)}(\Phi_\phi,\Phi_\phi[d]))=HH^d(\fuk(W_q))=0$ for a symplectomorphism $\phi$ and large $d$ in our case, and the naive expectation that $rank(HH^*(\fuk(V)))<\infty$ does not always hold for a Stein manifold $V$ (this was pointed out to the authors by Nick Sheridan).

The good news is, the $\text{zero}^{\text{th}}$ order term of arbitrary natural transformation in $hom_{\fuk(W_q)}(\Phi_\phi,\Phi_\phi[d])$ necessarily vanishes in $0$-th cohomology.  More precisely, a natural transformation $T\in hom^0(\eF,\eG)$ consists of a sequence $T=(T^0,T^1,\dots)$, where $T^0\in hom^0(\eF X,\eG X).$

 When $\eF=id$ and $\eG=(f^\beta)^d$ and $d$ is large, $0\equiv[T^0]\in Hom^0(H(\eF),H(\eG))$. This is equivalent to the statement that $HF^0(L,L[dk_q])=0$ for any compact Lagrangian brane when $dk_q>n$.  As a result, we do have an exact triangle in $D^\pi(Aut(\fuk(W_q)))$

\beq\label{e:ConeinAut} \sC\rightarrow\Phi_\phi\xrightarrow{0}\Phi_\phi[d]\rightarrow\sC[1]\eeq

which implies functors $\Phi_{L\times L'}$ split generate $D^\pi(Aut(\fuk(W_q)))$ by Lemma \ref{l:twist-generate}, where $L$ is one of the vanishing cycle and $L'$ is a Lagrangian sphere.  We could further reduce $L'$ to a vanishing cycle as well.

\blem\label{l:vanishingGen} If $\{L_i\}$ split generate $\fuk(W)$ for any Stein manifold $W$, any functor of the form $\Phi_{K_0\times K_1}$ is split generated by $\Phi_{L\times L'}$.
\elem

\bpf We recall from \cite[2.5]{Gan13} (see \cite{SeHom} for the $A_\infty$-module version) that, given an $A_\infty$-categories $\sA,\sB,\sC$ and a $(\sB,\sC)$-bimodule $\sP$, the convolution $\Gamma_\sP:\sM\mapsto\sP\otimes_{\sB}\sM$ defines a $dg$-functor
from $(\sA,\sB)-mod$ to $(\sA,\sC)-mod$.

For the geometric situation at hand, consider $\fuk(W_q)-mod$ as $(\fuk(W_q),\mathbf{k})$-bimodules, and let $\sP_L=\sY_L^l$, the image of $L$ under the left Yondeda embedding $\sI^l$.  We then obtain the following composition of $A_\infty$ functors

\begin{align*}\fuk(W_q)
\xrightarrow{\sI^r}&\text{mod-}\fuk(W_q)
\xrightarrow{\Gamma_{\sP_L}}Bimod(\fuk(W_q))^\times \\
\xrightarrow{\sG^*}&\fuk(W_q\times W_q)^\times
\xrightarrow{\Phi}fun(\fuk(W_q),\fuk(W_q)).\end{align*}

Here $Bimod(W_q)^\times\subset Bimod(W_q)$ denotes the subcategory generated by bimodules of shape $\sY^l_K\otimes_k\sY^r_{K'}$.

A consequence is that, a cone $L_1\rightarrow L_2\rightarrow L_3\xrightarrow{[1]}L_1[1]$ in $\fuk(W_q)$ is mapped to a cone of functor $\Phi_{L\times L_1}\rightarrow\Phi_{L\times L_2}\rightarrow\Phi_{L\times L_3}\xrightarrow{[1]}\Phi_{L\times L_1}[1]$, which concludes the lemma.
\epf

\bcor\label{c:vanishingGen} $D^\pi(Aut(\fuk(W_q)))$ is split generated by $\Phi_{L\times L'}$, where $L$ and $L'$ are vanishing cycles.

\ecor

\bpf
Now we may apply the theorem by Seidel \cite{SeGraded}\cite{Seidelbook} that any compact exact Lagrangian sphere in $W_q$ is split generated by vanishing cycles.  This decomposes arbitrary $\Phi_{K_0\times K_1}$ into functors of the form $\Phi_{L\times L'}$ for $L,L'$ are both vanishing cycles by Lemma \ref{l:vanishingGen}.

\epf

Now Theorem \ref{t:AnFun} is an immediate consequence: for any two vanishing cycles $L$ and $L'$ in the $A_n$-Milnor fiber, there is a composition of Dehn twists $T_{L,L'}$ along vanishing cycles sending $L$ to $L'$ up to Hamiltonian isotopy.  Therefore, $T^{-1}_{L,L'}$ and $\tau_L\circ T^{-1}_{L,L'}$ generate $\Phi_{L\times L'}$.

The extension to $D$ and $E$-type singularties was pointed out to us by Ailsa Keating: in fact, one may conclude the same result for weighted homogeneous singularities for which the smoothing is a plumbing of $T^*S^n$ according to a \textit{simple graph}.  To see this, one notices that the only place we need to specialize to $A_n$-type Milnor fiber is to prove the existence of $T_{L,L'}$.  But for simple graph plumbings, one may always find an $A_n$-subgraph connecting $L$ and $L'$ and use the composition of Dehn twists there.

\brmk Ailsa Keating and Ivan Smith also suggested Theorem \ref{t:AnFun} should hold for an even larger class of singularities.  In particular, the conclusion holds provided that the discriminant locus in the miniversal deformation base is irreducible.  In this case, as explained to us by Denis Auroux, the fundamental group of the discriminant complement is normally generated by a single meridian, which can be translated back to the language of $T_{L,L'}$.  However, the authors do not know whether this class includes all weighted homogeneous singularities.

\ermk

\bibliography{FukRef}

\bibliographystyle{plain}

\end{document}